\newcolumntype{C}[1]{>{\centering\arraybackslash}p{#1}}
\newcommand{\Om}{\Omega}
\newcommand{\Ga}{\Gamma}
\newcommand{\LL}{\mathcal{L}}
\newcommand{\V}{\mathcal{V}}
\newcommand{\W}{\mathcal{W}}
\renewcommand{\SS}{\mathcal{S}}
\newcommand{\N}{\mathbb{N}}
\newcommand{\R}{\mathbb{R}}
\newcommand{\dG}{\, d\Gamma}
\newcommand{\ds}{\, ds}
\newcommand{\dx}{\, dx}
\newcommand{\dt}{\, dt}
\newcommand{\pd}{\partial}
\newcommand{\pdnu}{\pd_{\bm{n}}}
\newcommand{\abs}[1]{\left| #1 \right|}
\newcommand{\norm}[1]{\| #1 \|}
\newcommand{\inn}[2]{ \langle #1 , #2  \rangle}
\newcommand{\scp}[2]{ \left( #1 , #2  \right)}
\newcommand{\bigscp}[2]{\big( #1 , #2 \big)}
\newcommand{\mean}[1]{\langle #1 \rangle}
\newcommand{\eps}{\varepsilon}
\newcommand{\Lx}{\Delta}
\newcommand{\LB}{\Delta_{\Gamma}}
\newcommand{\Ll}[1]{#1^{L}}
\newcommand{\Th}{\mathcal{T}_h}
\newcommand{\n}{\mathbf{n}}
\newcommand{\grad}{\nabla}
\newcommand{\gradg}{\nabla_\Gamma}
\newcommand{\mo}{m_\Omega}
\newcommand{\mg}{m_\Gamma}
\newcommand{\del}{\partial}
\newcommand{\delt}{\partial_t}
\newcommand{\deln}{\partial_\n}
\newcommand{\lu}{\bar u}
\newcommand{\lmu}{\bar \mu}
\newcommand{\ltheta}{\bar \theta}
\newcommand{\lthN}{{\bar\theta}_{N}}
\newcommand{\hu}{\hat u}
\newcommand{\hmu}{\hat \mu}
\newcommand{\htheta}{\hat \theta}
\newcommand{\D}{\mathcal D}
\newcommand{\Vk}{{\mathcal{V}^\kappa}}
\newcommand{\Vm}{{\mathcal{V}^\kappa_{m}}}
\newcommand{\Wk}{{\mathcal{W}^\kappa_\beta}}
\newcommand{\Wm}{{\mathcal{W}^\kappa_{\beta,m}}}
\newcommand{\Wo}{{\mathcal{W}^\kappa_{\beta,0}}}
\newcommand{\Wod}{{(\Wo)^{-1}}}
\renewcommand{\H}{\mathcal{H}}
\newcommand{\Hm}{\H_{\beta,m}}
\newcommand{\Ho}{\H_{\beta,0}}
\newcommand{\intO}{\int_\Omega}
\newcommand{\intG}{\int_\Gamma}
\newcommand{\Xk}{\mathcal{X}^\kappa}
\newcommand{\Yk}{\mathcal{Y}^\kappa}
\newcommand{\wto}{\rightharpoonup}
\newcommand{\emb}{\hookrightarrow}
\newcommand{\PM}{\mathcal{P}_M}
\newcommand{\mom}{m_\Om}
\newcommand{\mga}{m_\Ga}
\newcommand{\labitem}[2]{%
\def\@itemlabel{\textbf{#1}}
\item
\def\@currentlabel{#1}\label{#2}}
\newcommand{\bs}[1]{\boldsymbol{#1}}
\newcommand{\mb}[1]{\mathbf{#1}}
\newcommand{\rkla}[1]{{\left(#1\right)}}
\newcommand{\trkla}[1]{{(#1)}}
\newcommand{\gkla}[1]{{\left\{#1\right\}}}
\newcommand{\tgkla}[1]{{\{#1\}}}
\newcommand{\ekla}[1]{{\left[#1\right]}}
\newcommand{\tekla}[1]{{[#1]}}
\newcommand{\tabs}[1]{|{#1}|}
\newcommand{\diam}{\operatorname{diam}}
\newcommand{\muO}{\mu} 
\newcommand{\muG}{\theta} 
\newcommand{\muOh}{\mu_h} 
\newcommand{\muGh}{\theta_h} 
\newcommand{\muOvec}{P} 
\newcommand{\muGvec}{\Theta} 
\newcommand{\iOmega}{\int_\Omega}
\newcommand{\iGamma}{\int_\Gamma}
\newcommand{\UhO}{U\h^\Omega}
\newcommand{\UhG}{U\h^\Gamma}
\newcommand{\h}{_h}
\newcommand{\Ihop}{\mathcal{I}_h}
\newcommand{\Ih}[1]{\Ihop\gkla{#1}}
\newcommand{\IhGop}{\mathcal{I}_h^\Gamma}
\newcommand{\IhG}[1]{\IhGop\gkla{#1}}
\newcommand{\ids}{I}
\newcommand{\dtaum}{\partial_{\tau}^-}
\newcommand{\nn}{^{n}}
\newcommand{\no}{^{n-1}}
\newcommand{\MOmega}{\mathbf{M}_\Omega} 
\newcommand{\MGamma}{\mathbf{M}_\Gamma} 
\newcommand{\LOmega}{\mathbf{L}_\Omega} 
\newcommand{\LGamma}{\mathbf{L}_\Gamma} 
\newcommand{\restr}[2]{\ensuremath{
  \left.\kern-\nulldelimiterspace 
  #1 
  \vphantom{\big|} 
  \right|_{{#2},h} 
  }}
\newcommand{\trestr}[2]{\ensuremath{
		#1 
		|_{{#2},h} 
}}
\newcommand{\extend}[2]{\ensuremath{
  \left.\kern-\nulldelimiterspace 
  #1 
  \vphantom{\big|} 
  \right|^{{#2},h} 
  }}
\newcommand{\textend}[2]{\ensuremath{
		#1 
		|^{{#2},h} 
}}
\newcommand{\GammaSymb}{\Gamma} 
\newcommand{\OmegaSymb}{\overline{\Omega}} 
\newcommand{\Overset}[3][0pt]{ %
			\ensuremath{\overset{\raise#1
			\hbox{\scriptsize\ensuremath{#2}}}{#3}}
}
\newcommand{\InnerSymb}{ \Overset[-3pt]{\circ}{\Omega}} 
\newcommand{\SubInnerSymb}{\raisebox{-2pt}{\scriptsize\ensuremath{\InnerSymb}}}
\newcommand{\SubInnerSymbH}{\raisebox{-2pt}{\scriptsize\ensuremath{\InnerSymb,h}}}
\newcommand\RInn{R\SubInnerSymb}
\newcommand{\Sub}[2][0pt]{%
	\raisebox{#1}{\scriptsize\ensuremath{#2}}
}
\newcommand{\eye}{\mathds{1}}
\newcommand{\tl}{^{\tau}}
\newcommand{\tp}{^{\tau,+}}
\newcommand{\tm}{^{\tau,-}}
\newcommand{\tpm}{^{\tau,(\pm)}}
\theoremstyle{plain}
\newtheorem{thm}{Theorem}[section]
\newtheorem{prop}[thm]{Proposition}
\newtheorem{lemma}[thm]{Lemma}
\newtheorem{cor}[thm]{Corollary}
\theoremstyle{definition}
\newtheorem{remark}{Remark}[section]
\numberwithin{equation}{section}
\renewenvironment{proof}[1][\proofname]{%
	\par\pushQED{\qed}\normalfont%
	\topsep6\p@\@plus6\p@\relax
	\trivlist\item[\hskip\labelsep\bfseries#1\@addpunct{.}]%
	\ignorespaces
}{%
	\popQED\endtrivlist\@endpefalse
}
\renewcommand\paragraph{\@startsection{paragraph}{4}{\z@}%
	{1ex \@plus1ex \@minus.2ex}%
	{-1em}%
	{\normalfont\normalsize\bfseries}}
\renewcommand\subparagraph{\@startsection{paragraph}{4}{\z@}%
	{1ex \@plus1ex \@minus.2ex}%
	{-1em}%
	{\normalfont\normalsize\itshape\quad}}
\begin{document}
\title{ \bfseries Phase-field dynamics with transfer of materials:\\ The Cahn--Hilliard equation with reaction rate dependent dynamic boundary conditions}

\author{Patrik Knopf \footnotemark[1] \and Kei Fong Lam \footnotemark[2] \and Chun Liu \footnotemark[3] \and Stefan Metzger \footnotemark[4]}

\date{\today}

\renewcommand{\thefootnote}{\fnsymbol{footnote}}

\footnotetext[1]{Fakult\"at f\"ur Mathematik, Universit\"at Regensburg, 93053 Regensburg, Germany, \\ \tt(\href{mailto:Patrik.Knopf@ur.de}{patrik.knopf@ur.de}) }
\footnotetext[2]{Department of Mathematics, Hong Kong Baptist University, Kowloon Tong, Hong Kong, \\
	\tt(\href{mailto:akflam@hkbu.edu.hk}{akflam@hkbu.edu.hk}) }
\footnotetext[3]{Department of Applied Mathematics, Illinois Institute of Technology, Chicago, IL 60616, \\
	\tt(\href{mailto:cliu124@iit.edu}{cliu124@iit.edu}) }
\footnotetext[4]{Department Mathematik, Friedrich-Alexander-Universit\"at Erlangen-N\"urnberg, 91058 Erlangen, \\ Germany, 
	\tt(\href{mailto:stefan.metzger@fau.de}{stefan.metzger@fau.de}) }

\maketitle

\begin{center}
	\textit{This is a preprint version of the paper. Please cite as:} \\  
	P. Knopf, K.F. Lam, C. Liu, S. Metzger,\\ 
	ESAIM: Mathematical Modelling and Numerical Analysis 55(1): 229-282, 2021  \\ 
	\url{https://doi.org/10.1051/m2an/2020090}
\end{center}

\bigskip

\begin{abstract}
	The Cahn--Hilliard equation is one of the most common models to describe phase separation
processes of a mixture of two materials. For a better description of short-range interactions between the
material and the boundary, various dynamic boundary conditions for the Cahn--Hilliard equation have been proposed and investigated in recent
times. Of particular interests are the model by Goldstein, Miranville and Schimperna (Physica D, 2011) and the model by Liu and Wu (Arch.~Ration.~Mech.~Anal., 2019). Both of these models satisfy similar physical properties but differ greatly in their mass conservation behaviour. In this paper we introduce a new model which interpolates between these previous models, and investigate analytical properties such as the existence of unique solutions and convergence to the previous models mentioned above in both the weak and the strong sense. For the strong convergences we also establish rates in terms of the interpolation parameter, which are supported by numerical simulations obtained from a fully discrete, unconditionally stable and convergent finite element scheme for the new interpolation model. 
\end{abstract}

\noindent \textbf{Key words. } Cahn--Hilliard equation; Dynamic boundary conditions; Relaxation by Robin boundary conditions; Gradient flow; Finite element analysis \\

\noindent \textbf{AMS subject classification.} 35A01, 35A02, 35A35, 35B40, 65M60, 65M12 

\newpage

\begin{small}
	\setcounter{tocdepth}{2}
	\hypersetup{linkcolor=black}
	\tableofcontents
\end{small}

\setlength\parskip{1ex}

\section{Introduction}
The Cahn--Hilliard equation was originally introduced in \cite{cahn-hilliard} to model phase separation and de-mixing processes in binary alloys, while later applications have been found in mathematical models of phenomena arising in material sciences, life sciences and image processing. 
In certain applications (e.g., in hydrodynamic applications such as contact line problems), it turned out to be essential to model short-range interactions of the binary mixture with the solid wall of the container more accurately. To this end, several dynamic boundary conditions have recently been proposed and investigated in the literature. Below we review two such models in more detail and introduce a new system with dynamic boundary conditions which can be regarded as an interpolation between these two previous models.

The standard Cahn--Hilliard equation as introduced in \cite{cahn-hilliard} reads as follows:
\begin{subequations}
\label{CH}
\begin{alignat}{2}
	\label{CH:1}
	&\delt u = \mom \Lx \mu &&\quad\text{in } Q_T := \Omega\times (0,T),\\
	\label{CH:2}
	&\mu = -\eps \Lx u + \eps^{-1} F'(u) &&\quad\text{in } Q_T,\\
	\label{CH:3}
	&u\vert_{t=0}=u_0 &&\quad\text{in } \Omega.
\end{alignat}
\end{subequations}
Here, $\Omega\subset\R^d$ (where $d\in\{2,3\}$) denotes a bounded domain with boundary $\Gamma:=\del\Omega$ whose unit outer normal vector field is denoted by $\n$. 
The functions $u=u(x,t)$ and $\mu=\mu(x,t)$ depend on time $t\in [0,T]$ (with fixed but arbitrary $T>0$) and position $x\in\Omega$. 
The symbol $\Lx$ denotes the Laplace operator in $\Omega$. 
The symbol $\mo$ denotes a mobility parameter which is assumed to be a positive constant. This is a typical assumption, although non-constant mobilities find a use in some situations (see e.g.~\cite{elliotgarcke}). 
In order to describe a mixture of two materials, the phase-field variable $u$ represents the difference of two local
relative concentrations. 
After a short period of time, the solution $u$ will attain values
close to $\pm 1$ in large regions of the domain $\Omega$. These regions, which correspond to the pure phases of the materials, are 
separated by a small interfacial region whose thickness is proportional to a small parameter $\eps>0$.
As the time evolution of the phase-field variable $u$ is governed by chemical reactions, 
the function $\mu$ stands for the chemical potential in the bulk (i.e., in $\Omega$). 
It can be expressed as the Fr\'echet derivative of the following free energy of Ginzburg--Landau type:
\begin{align}
E_\text{bulk}(u) = \int_\Omega \frac \eps 2|\grad u|^2 + \frac 1 \eps F(u) \dx.
\end{align}
In this context, the function $F$ represents the bulk potential which usually has a double-well shape, i.e., it attains its minima at $-1$ and $1$ and has a local maximum at $0$. A typical choice is the smooth double-well potential $F(s) = \frac{1}{4}(s^2-1)^2$ (see Remark~\ref{rem:pot}). As the time-evolution of $u$ is considered in a bounded
domain, suitable boundary conditions have to be imposed. The homogeneous Neumann conditions
\begin{align}
\label{HNC}
\del_\n \mu = 0, \quad
\del_\n u = 0 \quad\text{on}\;\; \Sigma_T:=\Gamma\times(0,T).
\end{align}
are the classical choice. The no-flux condition $\eqref{HNC}_1$ leads to mass conservation in the bulk
\begin{align}
\int_\Omega u(t) \dx = \int_\Omega u(0) \dx, \quad t\in[0,T]
\end{align}
and both conditions in \eqref{HNC} imply that the bulk free energy satisfies the following maximal dissipation law:
\begin{align}
\frac{d}{dt} E_\text{bulk}\big(u(t)\big) + \mom \intO |\grad\mu(t)|^2 \dx = 0, \quad t\in(0,T).
\end{align}
We point out that the Cahn--Hilliard equation subject to the boundary conditions \eqref{HNC} can be interpreted as a gradient flow of type $H^{-1}$ of the bulk free energy $E_\text{bulk}$ \cite{Cowan}.

The Cahn--Hilliard equation \eqref{CH} with homogeneous Neumann conditions \eqref{HNC} is already very well understood and there exists an extensive literature (see, e.g., 
\cite{Abels-Wilke,Bates-Fife,Cherfils,elliotgarcke,elliotzheng,zheng,rybka,pego,miranville-book}). However, it became clear that this model is not satisfactory in some situations as it neglects certain
influences of the boundary to the bulk dynamics, such as separate chemical reactions occurring on the boundary are not taken into account. To provide a better description of
interactions between the solid wall and the binary mixture, physicists 
suggested to add a surface free energy that is also of Ginzburg--Landau type (cf. \cite{Fis1,Fis2,Kenzler}):
\begin{equation}\label{DEF:ENS}
 E_\text{surf}(u)  = \intG \frac{\kappa \delta }{2} \abs{\gradg u}^2 + \frac{1}{\delta} G(u) 
  \dG.
\end{equation}
Hence, the total free energy $E=E_\text{bulk}+E_\text{surf}$ reads as
\begin{equation}\label{DEF:EN}
E(u) 
= \int_\Omega \frac \eps 2|\grad u|^2 + \frac 1 \eps F(u) \dx
	+ \intG \frac{\kappa \delta }{2} \abs{\gradg u}^2 + \frac{1}{\delta} G(u) \dG.
\end{equation} 
Here $\gradg$ denotes the surface gradient on $\Gamma$, $G$
is a surface potential, $\kappa$ is a non-negative parameter acting as a weight for surface diffusion effects and $\delta>0$ is related to the thickness of the interfacial regions on the boundary. In the case $\kappa = 0$ this problem
is related to the moving contact line problem \cite{thompson-robbins}. In view of this energy $E$, various dynamic boundary conditions
have been proposed and analysed in the literature, of which we mention
\cite{colli-fukao-ch,colli-gilardi,Gal1,GalWu,colli-gilardi-sprekels,liero,mininni,miranville-zelik,motoda,racke-zheng,WZ, FukaoWu, Wu}. 

In particular, we now want to highlight two Cahn--Hilliard models with dynamic boundary conditions in more detail. In both models, the dynamic boundary  conditions have  a Cahn--Hilliard type structure and both systems can be interpreted as a gradient flow of type $H^{-1}$ of the total free energy $E$ (see \cite[s.~3]{GK}). However, these models have completely different mass conservation properties.

\paragraph{The GMS model.}

The following model with dynamic boundary condition has been introduced by G.~Goldstein, A.~Miranville and G.~Schimperna \cite{GMS}: 
\begin{subequations}\label{CH:GMS}
	\begin{alignat}{3}
	&u_t = \mom \Lx  \mu, 
	&&\quad \mu = - \eps \Lx u + \eps^{-1} F'( u) &&\quad \text{ in } Q_T,  \label{CH:GMS:1} \\
	& u_t = \mga \LB \theta - \beta \mom \pdnu \mu,
	&&\quad \theta = - \delta \kappa \LB u + \delta^{-1} G'(u) + \eps \pdnu u   &&\quad \text{ on } \Sigma_T, \label{CH:GMS:2} \\
	& \mu\vert_{\Sigma_T} = \beta\theta  &&\quad \text{on } \Sigma_T, \label{CH:GMS:3} \\ 
	& u(0) = u_0   &&\quad \text{on } \overline{\Omega}, \label{CH:GMS:4}
	\end{alignat}
\end{subequations}
where $\beta> 0$. The symbol $\LB$ denotes the Laplace--Beltrami operator on the surface $\Gamma$.  For convenience, we use the authors' initials and call it the \textit{GMS model}. 
It can be regarded as an extension of a model previously introduced by Gal \cite{Gal1} who proposed the equation $ u_t = -\beta \pdnu \mu + \gamma \mu $, for some constant $\gamma$, instead of $\eqref{CH:GMS:2}_1$.
In \eqref{CH:GMS}, the parameter $\mga$ denotes the mobility on the boundary and is assumed to be a positive constant. To describe chemical reactions occurring only at the boundary, an additional chemical potential $\theta$ has been introduced, and so, chemical reactions between the bulk and the surface are taken into account by the coupling condition \eqref{CH:GMS:3}. This means that in this model, the chemical potentials in the bulk and on the boundary can differ by the factor $\beta$, i.e., they are directly proportional. In \cite{GMS}, $\beta$ is even allowed to be a uniformly positive function in $L^\infty(\Ga)$. 
 We can thus say that, by the relation \eqref{CH:GMS:3}, the potentials $\mu$ and $\theta$ are in a chemical equilibrium. 

We observe that a (sufficiently regular) solution to the GMS equation satisfies the mass conservation law
\begin{align}
	\label{GMS:MASS}
	\beta \intO u(t) \dx + \intG u(t) \dG = \beta \intO u(0) \dx + \intG u(0) \dG, \quad t\in [0,T],
\end{align}
which allows one to interpret the parameter $\beta$ as a weight of the bulk mass compared to the surface mass. Moreover, the maximal energy dissipation law 
\begin{align}
	\label{GMS:NRG}
	\frac{d}{dt} E\big(u(t)\big) + \mom \intO |\grad\mu(t)|^2 \dx +  \mga \intG|\gradg \theta(t)|^2 \dG = 0
\end{align}
is satisfied for all $t\in[0,T]$. In particular, we observe that the dissipation rate is greatly influenced by the values of the mobilities $\mom$ and $\mga$. 

In addition to \cite{GMS}, the GMS model is also discussed in the recent book \cite{miranville-book}.

\paragraph{The LW model.}

Another model with dynamic boundary condition has been derived by an energetic variational approach by the third author and H.~Wu \cite{LW}:  
\begin{subequations}
	\label{CH:LW}
	\begin{alignat}{3} 
	&u_t = \mom\Lx  \mu, 
	&&\quad \mu = - \eps \Lx u + \eps^{-1} F'( u) &&\quad \text{ in } Q_T,  \label{CH:LW:1} \\
	& u_t = \mga\LB \theta,
	&&\quad \theta = - \delta \kappa \LB u + \delta^{-1} G'(u) + \eps \pdnu u   &&\quad \text{ on } \Sigma_T, \label{CH:LW:2} \\
	& \pdnu \mu = 0 &&\quad \text{on } \Sigma_T, \label{CH:LW:3} \\ 
	& u(0) = u_0   &&\quad \text{on } \overline{\Omega}, \label{CH:LW:4}
	\end{alignat}
\end{subequations}
which we will refer to as the \textit{LW model}. Again, the function $\theta$ can be interpreted as the chemical potential on the boundary $\Gamma$. The crucial difference to the GMS model is that \eqref{CH:GMS:3} is replaced by the no mass flux condition $\deln\mu = 0$. This means that the chemical potentials $\mu$ and $\theta$ are not directly coupled. However, mechanical interactions between the bulk and the surface materials are still taken into account by the trace relation for the phase-field variables. This is reflected in the equations as the elliptic subproblems $\big(\eqref{CH:LW:1}_1,\eqref{CH:LW:3}\big)$ and $\eqref{CH:LW:2}_1$ are coupled only by the trace relation for $u_t$.

Compared to \eqref{GMS:MASS}, we obtain distinctly different mass conservation laws
\begin{align}
\label{LW:MASS}
\intO u(t) \dx = \intO u(0) \dx 
\quad\text{and}\quad 
\intG u(t) \dG = \intG u(0) \dG, \quad t\in [0,T],
\end{align}
meaning that the bulk mass and the surface mass are conserved \textit{separately}. However, the maximal energy dissipation law \eqref{GMS:NRG} is still satisfied by solutions of this system.

 For an efficient numerical treatment of system \eqref{CH:LW}, we refer the reader to \cite{Metzger2019}. 

Let us mention that a variant of the system \eqref{CH:LW} was proposed and investigated in \cite{KL}, where equation $\eqref{CH:LW:2}$ is replaced by 
\begin{align*} 
	v_t = \mga \LB \theta,
	\quad \theta = - \delta \kappa \LB v + \delta^{-1} G'(v) + \eps \pdnu u   \quad \text{ on } \Sigma_T,
\end{align*}
with a function $v$ that can be interpreted as the difference in volume fractions of two different materials restricted to the boundary. The relation between $u$ and $v$ is described by the Robin type transmission condition
\begin{align*} 
	\eps K\deln u = H(v) - u \quad \text{ on } \Sigma_T
\end{align*}
with $K>0$ and a function $H\in C^2(\R)$ satisfying suitable growth conditions. In particular, it is rigorously established in \cite{KL} that, in the case $H(s) = s$, solutions of this model converge to solutions of \eqref{CH:LW} in the limit $K\to 0$ in some suitable sense.

\paragraph{A more general class of  dynamic boundary conditions  based on finite, positive  reaction rates.}
To provide a more general description of the interactions between the materials in the bulk and the materials on the surface, we now propose that $\mu$ and $\theta$ are coupled by the Robin type boundary condition $L \pdnu  \mu = \beta  \theta -  \mu$ where $L>0$ acts as a relaxation parameter.
The system of equations then reads as 
\begin{subequations}\label{CH:INT}
\begin{alignat}{3}
& u_t = \mom\Lx  \mu, 
&&\quad  \mu = - \eps \Lx  u + \eps^{-1} F'( u) 
&&\quad \text{ in } Q_T, \label{CH:INT:1}\\
& u_t = \mga \LB  \theta - \beta \mom \pdnu\mu, 
&&\quad  \theta = - \delta\kappa \LB  u + \delta^{-1} G'( u) +  \eps  \pdnu  u 
&&\quad \text{ on } \Sigma_T, \label{CH:INT:2}\\
&L \pdnu  \mu = \beta  \theta -  \mu &&\quad \text{on } \Sigma_T, \label{CH:INT:3} \\
& u(0) = u_0 &&\quad \text{in } \overline{\Omega}, \label{CH:INT:4}
\end{alignat}
\end{subequations}
where $\beta\neq 0$ and $L>0$.  Here, in contrast to the GMS model \eqref{CH:GMS}, the chemical potentials $\mu$ and $\theta$ are generally not directly proportional, i.e., they are not in equilibrium.
Reactions between the materials are taken into account by the relation \eqref{CH:INT:3} where the constant $1/L$ can be interpreted as the reaction rate.  Here, the term \textit{reactions} is to be understood in a general sense including chemical reactions as well as adsorption or desorption processes. The mass flux $\deln \mu$, i.e., the motion of the materials towards and away from the boundary, is directly driven by differences in the chemical potentials. 

We observe that solutions of \eqref{CH:INT} satisfy the same mass conservation law \eqref{GMS:MASS} as solutions of the GMS model \eqref{CH:GMS}. However, we obtain an additional term in the dissipation rate depending on the relaxation parameter $L$. To be precise, it holds that 
\begin{align}
\label{INT:NRG}
\frac{d}{dt} E\big(u(t)\big) + \mom \intO |\grad\mu(t)|^2 \dx + \mga \intG |\gradg \theta(t)|^2 \dG + \frac{\mom}{L} \intG (\beta\theta-\mu)^2 \dG = 0
\end{align}
for all $t\in[0,T]$.  In particular, this implies that the total free energy $E$ is decreasing along solutions and since it is bounded from below (at least for reasonable choices of $F$ and $G$), we infer that $\frac{d}{dt} E(u(t))$ converges to zero as $t\to\infty$. As a consequence, the chemical potentials will tend to the equilibrium $\mu = \beta \theta$ over the course of time.

The Robin type condition \eqref{CH:INT:3} now allows us to establish a connection between the GMS model \eqref{CH:GMS} and the LW model \eqref{CH:LW} despite their very different chemical and physical properties. Suppose that $\beta>0$ and that $(u^L, \mu^L, \theta^L)$ is a solution of the system \eqref{CH:INT} corresponding to the parameter $L>0$.
Let $(u_*, \mu_*, \theta_*)$ denote its formal limit as $L\to 0$ and let $(u^*, \mu^*, \theta^*)$ denote its formal limit as $L\to\infty$. 
Passing to the limit in the Robin boundary condition, we deduce that 
\begin{align*}
\beta\theta_* = \mu_* \quad\text{on } \Sigma_T
\quad\text{and}\quad
\pd_n \mu^* = 0\quad\text{on } \Sigma_T.
\end{align*}
 This corresponds to the limit cases of instantaneous reactions ($1/L\to\infty$), where the chemical potentials are always in equilibrium, and  a vanishing reaction rate  ($1/L\to0$).
We infer that $(u_*, \mu_*, \theta_*)$ is a solution to the GMS model while $(u^*, \mu^*, \theta^*)$ is a solution to the LW model. These formal considerations are established rigorously in Section~4. In this regard, the Cahn--Hilliard system \eqref{CH:INT} can be interpreted as an \textit{interpolation} between the GMS model and the LW model  by
using finite, positive reaction rates.  

\paragraph{Structure of this paper.} Our paper is structured as follows: In Section~2 we introduce some notation, assumptions, preliminaries and important tools. Section~3 is devoted to the existence, uniqueness  and regularity of weak solutions to \eqref{CH:INT}, 
as well as a summary of the well-posedness results for the GMS model \eqref{CH:GMS} and the LW model \eqref{CH:LW}.
In Section~4 we investigate the asymptotic limits $L\to\infty$ and $L\to 0$, establishing also convergence rates for these limits. In Section~5 we present an efficient, unconditionally stable numerical scheme to solve the problems \eqref{CH:GMS}, \eqref{CH:LW}, and \eqref{CH:INT}, 
demonstrating also the convergence of discrete solutions.  Finally, in Section~6 we present and interpret the plots of several numerical simulations to illustrate the convergence results for $L\to 0$ and $L\to\infty$. We also measure some of the corresponding numerical convergence rates and discuss to what extent they match our analytical predictions.

\section{Notation and preliminaries}

\paragraph{Notation.} Throughout this paper we use the following notation:  For any $1 \leq p \leq \infty$ and $k \geq 0$, the standard Lebesgue and Sobolev spaces defined on $\Omega$ are denoted as $L^p(\Omega)$ and $W^{k,p}(\Omega)$, along with the norms $\norm{\cdot}_{L^p(\Omega)}$ and $\norm{\cdot}_{W^{k,p}(\Omega)}$.  For the case $p = 2$, these spaces become Hilbert spaces and we use the notation $H^k(\Omega) = W^{k,2}(\Omega)$. Note that $H^0(\Omega)$ can be identified with $L^2(\Omega)$. A similar notation is used for Lebesgue and Sobolev spaces on $\Gamma$.  For any Banach space $X$, we denote its dual space by $X'$ and the associated duality pairing by $\inn{\cdot}{\cdot}_X$.  If $X$ is a Hilbert space, we denote its inner product by $(\cdot, \cdot)_X$.  We define
\begin{align*}
\mean{u}_\Omega := \begin{cases}
\frac{1}{\abs{\Omega}} \inn{u}{1}_{H^1(\Omega)} & \text{ if } u \in H^1(\Omega)', \\
\frac{1}{\abs{\Omega}} \int_\Omega u \dx & \text{ if } u \in L^1(\Omega)
\end{cases}
\end{align*}
as the spatial mean of $u$, where $\abs{\Omega}$ denotes the $d$-dimensional Lebesgue measure of $\Omega$.  The spatial mean for $v \in H^1(\Gamma)'$ and $v \in L^1(\Gamma)$ can be defined analogously. The definition of a tangential gradient on a Lipschitz surface can be found in \cite[Defn.~3.1]{Buffa}. 
For brevity, we also use the notation
\begin{align*}
\LL^p := L^p(\Omega) \times L^p(\Gamma)
\quad\text{and}\quad
\H^k := H^k(\Omega) \times H^k(\Gamma)
\quad \text{for all $p\in [1,\infty]$ and $k\ge 0$}.
\end{align*}

\bigskip

\paragraph{Assumptions.}
\begin{enumerate}[label=$(\mathrm{A \arabic*})$, ref = $\mathrm{A \arabic*}$]
	\item \label{ass:dom} We take $\Omega \subset \R^d$ with $d \in \{2,3\}$ to be a bounded domain whose Lipschitz boundary is denoted by $\Gamma$. Moreover, we fix an arbitrary final time $T>0$ and we write $Q_T:=\Omega\times(0,T)$ as well as $\Sigma_T:=\Gamma\times(0,T)$.
	\item \label{ass:const} We assume that the constants that are involved in the systems \eqref{CH:GMS}, \eqref{CH:LW} and \eqref{CH:INT} satisfy $\mo, \mg, \eps,\delta>0$, $L>0$, $\kappa\ge 0$ and $\beta\neq 0$ 
	. In Section 3 and Section 4, we set $\mo = \mg = \eps = \delta = 1$ in \eqref{CH} as their values have no impact on the mathematical analysis we will carry out. Since the regularity of weak solutions to the systems \eqref{CH:INT}, \eqref{CH:GMS} and \eqref{CH:LW} will depend on the parameter $\kappa\ge 0$, it is convenient to use the following notation: 
	\begin{align}
	\label{DEF:XY}
	\Xk := 
	\begin{cases}
	H^{1/2}(\Gamma) &\text{ if } \kappa=0,\\
	H^1(\Gamma) &\text{ if } \kappa>0,
	\end{cases}
	\qquad
	\Yk := 
	\begin{cases}
	H^1(\Gamma)' &\text{ if } \kappa=0,\\
	L^2(\Gamma) &\text{ if } \kappa>0.
	\end{cases}
	\end{align}
	\item  \label{ass:pot}
	We assume that the potentials $F$ and $G$ are non-negative and exhibit a decomposition $F=F_1+F_2$ and $G=G_1+G_2$ with  $F_1,F_2,G_1,G_2 \in C^1(\R)$ such that the following properties hold:
	\begin{enumerate}[label=$(\mathrm{A 3 \roman*})$, ref = $\mathrm{A 3 \roman*}$]
		\item \label{ass:pot:1} $F_1$ and $G_1$ are convex non-negative functions.
		\item \label{ass:pot:2} There exist exponents $p,q\ge 2$ as well as constants $a_{F},c_{F}>0$ and $b_{F}\ge 0$ such that for all $s\in\R$,
		\begin{alignat*}{3}
		\SwapAboveDisplaySkip
		a_{F}\abs{s}^p - b_{F} &\le \;F(s) &&\le c_{F}(1+\abs{s}^p), \\
		a_{G}\abs{s}^q - b_{G} &\le \;G(s) &&\le c_{G}(1+\abs{s}^q), \\
		a_{F'}\abs{s}^{p-1} - b_{F'} &\le \abs{F'(s)} && \le c_{F'}(1+\abs{s}^{p-1}), \\
		a_{G'}\abs{s}^{q-1} - b_{G'} &\le \abs{G'(s)} && \le c_{G'}(1+\abs{s}^{q-1}).
		\end{alignat*}
		This means that $F$ and $G$ have polynomial growth of order $p$ and $q$, respectively.
		\item \label{ass:pot:3} $F_2'$ and $G_2'$ are Lipschitz continuous. Consequently, there exist positive constants $d_F$, $d_G$, $d_{F'}$ and $d_{G'}$ such that for all $s\in\R$,
		\begin{alignat*}{3}
		&\abs{F_2'(s)} \le d_{F'}(1+\abs{s}), \quad &&\abs{G_2'(s)} \le d_{G'}(1+\abs{s}), \\ 
		&\abs{F_2(s)} \le d_F(1+\abs{s}^2), \quad &&\abs{G_2(s)} \le d_G(1+\abs{s}^2).
		\end{alignat*}
	\end{enumerate}
	\item \label{ass:pot:reg} For the higher regularity results we additionally assume that $\Omega$ is of class $C^3$, that $p\le 4$ in \eqref{ass:pot:2} and that there exist a positive constants $c_{F''},c_{G''}>0$ such that
	\begin{align*}
		0\le F_1''(s) \leq c_{F''}(1 + |s|^{p-2}), \quad 0\le G_1''(s) \leq 
		\begin{cases}
			c_{G''} (1 + |s|^{q-2}), &\text{if}\; \kappa>0,\\
			c_{G''} &\text{if}\; \kappa=0
		\end{cases} 
	\end{align*}
	for all $s\in\R$. 
\end{enumerate}

\begin{remark}	\label{rem:pot} 
	We point out that the smooth double well potential
	\begin{align*}
	W_\text{dw}(s)=\tfrac 1 4 (s^2-1)^2,\quad s\in\R,
	\end{align*} 
	is a suitable choice for $F$ and $G$ as it satisfies \eqref{ass:pot} with $p = 4$ and $q = 4$. However, singular potentials like the logarithmic potential or the obstacle potential are not admissible.
\end{remark}

\paragraph{Preliminaries.}
\begin{enumerate}[label=$(\mathrm{P \arabic*})$, ref = $\mathrm{P \arabic*}$]
\item \label{pre:V} For fixed $\kappa \geq 0$ we define the Hilbert space
\begin{align*}
\Vk & := \begin{cases}
\{ \phi \in H^1(\Omega) \, : \, \phi \vert_\Gamma \in H^1(\Gamma) \}, & \kappa > 0, \\
H^1(\Omega), & \kappa = 0,
\end{cases}
\end{align*}
endowed with the inner product and its induced norm
\begin{align*}
(\phi, \psi)_{\V^{\kappa}} & := \begin{cases}
(\phi, \psi)_{H^1(\Omega)} + (\phi \vert_\Gamma, \psi \vert_\Gamma)_{H^1(\Gamma)}, & \kappa > 0, \\
(\phi, \psi)_{H^1(\Omega)}, & \kappa = 0,
\end{cases}
\qquad \norm{\phi}_{\V^{\kappa}} := (\phi, \phi)_{\V^{\kappa}}^{1/2}.
\end{align*}
Moreover, we use the notation $\V := \V^1 = \{ \phi \in H^1(\Omega) \, : \, \phi \vert_\Gamma \in H^1(\Gamma) \}$ and we define
\begin{align*}
	\inn{\phi}{\zeta}_{\V,\beta} := \beta \inn{\phi}{\zeta}_{H^1(\Omega)} + \inn{\phi}{\zeta}_{H^1(\Gamma)}
\end{align*}
for all functions $\phi\in \V'$ and $\zeta\in\V$. If $\beta>0$, this product defines a duality pairing of $\V'$ and $\V$ which is equivalent to the standard one. In particular,
\begin{align*}
	\norm{\phi}_{\V',\beta} := \sup\big\{\, \big|\inn{\phi}{\zeta}_{\V,\beta}\big| \,:\, \zeta\in\V \text{ with } \norm{\zeta}_{\V} = 1 \big\}
	\quad \text{ for } \phi\in\V'
\end{align*}
defines a norm on the space $\V'$ which is equivalent to the standard norm.
\item \label{pre:H} For any $\beta\neq 0$ and $m\in\R$, we define 
\begin{align*}
	\Hm &:= \left\{ (\eta,\xi) \in \H^1 \,:\, \beta\abs{\Omega}\mean{\eta}_\Om + \abs{\Gamma}\mean{\xi}_\Ga = m\right\}.
\end{align*}
For any $L > 0$ and $\beta \neq 0$ we introduce an inner product on $\Ho$ by
\begin{align*}
\bigscp{(\phi,\psi)}{(\eta,\xi)}_{L,\beta} := \intO \nabla \phi \cdot \nabla \eta \dx + \intG \gradg \psi \cdot \gradg \xi + \frac{1}{L}(\beta \psi-\phi)(\beta\xi-\eta) \dG,
\end{align*}
for all $(\phi,\psi),(\eta,\xi) \in \Ho$. Its induced norm is given $\norm{\,\cdot\,}_{L,\beta}:= \scp{\cdot}{\cdot}_{L,\beta}^{1/2}$. 
\item \label{pre:W} 
For any $\beta\neq 0$, $m\in \R$ and any $\kappa \geq 0$, we define 
\begin{align*}
	\Wm &:= \left\{ \eta \in \Vk \,:\, \beta\abs{\Omega}\mean{\eta}_\Om + \abs{\Gamma}\mean{\eta}_\Ga = m\right\} \subset \Vk, \\
	\Wod & :=  \left\{ \phi \in (\Vk)' \,:\, \beta\abs{\Omega}\mean{\phi}_\Om + \abs{\Gamma}\mean{\phi}_\Ga = 0 \right\} \subset (\Vk)'.
\end{align*}
Let $\phi \in \Wod$ be arbitrary.
From \cite[Thm.~3.3]{knopf-liu} we infer the existence of a unique weak solution  $\SS(\phi) = (\SS_\Omega(\phi),\SS_\Gamma(\phi))\in \Ho$ to the elliptic problem
\begin{subequations}
	\label{EQ:LIN}
\begin{align}
- \Lx \SS_\Om &= - \phi && \text{ in } \Omega, \\
- \LB \SS_\Ga + \beta \pdnu \SS_\Om &= - \phi && \text{ on } \Gamma, \\
\pdnu \SS_\Om &= \tfrac 1 L(\beta \SS_\Ga - \SS_\Om) && \text{ on } \Gamma.
\end{align}
\end{subequations}
This means that $\SS(\phi)$ satisfies the weak formulation
\begin{align}
	\label{WF:LIN}
	\bigscp{\SS(\phi)}{(\zeta,\xi)}_{L,\beta} = - \inn{\phi}{\zeta}_{H^1(\Om)} - \inn{\phi}{\xi}_{H^1(\Ga)}
\end{align}
for all test functions $(\zeta,\xi)\in\H^1$. Thus, we can define the solution operator
\begin{align*}
\SS: \Wod \to \Ho,\quad \phi \mapsto \SS(\phi) = (\SS_\Omega(\phi),\SS_\Gamma(\phi)),
\end{align*}
and according to \cite[Cor.~3.5]{knopf-liu}, 
\begin{align*}
	\bigscp{\phi}{\psi}_{L,\beta,*} := \bigscp{\SS(\phi)}{\SS(\psi)}_{L,\beta}, \quad
	\norm{\cdot}_{L,\beta,*}:=\scp{\cdot}{\cdot}_{L,\beta,*}^{1/2} \quad
	\text{for all}\; \phi, \psi\in \Wod
\end{align*}
defines an inner product and its induced norm on the space $\Wod$. 
Since $\Wo\subset \Wod$, $\scp{\cdot}{\cdot}_{L,\beta,*}$ can also be used as an inner product on $\Wo$.
Moreover, $\norm{\cdot}_{L,\beta,*}$ is also a norm on $\Wo$ but $\Wo$ is not complete with respect to this norm.
\end{enumerate}

\smallskip

\begin{remark}\label{rem:gradflow}
	To motivate the implicit time discretisation used in the proof of the well-posedness result Theorem~\ref{THM:WP}, we point out that 
	the Cahn--Hilliard system \eqref{CH:INT} can be expressed as a gradient flow of the energy $E$ that was introduced in \eqref{DEF:EN}  with respect to the inner product $\scp{\cdot}{\cdot}_{L,\beta,*}$ on $\Wod$. The gradient flow equation reads as follows:
	\begin{align}
	\label{EQ:GF:DIR}
		\bigscp{\delt u}{\eta}_{L,\beta,*} = - \frac{ \delta E(u) }{\delta u}[\eta], \quad\text{for all}\; \eta \in \Wo \cap L^\infty(\Omega),\; \eta\vert_\Gamma\in L^\infty(\Gamma).
	\end{align}
	The requirement $ \delt u \in \Wod$  will be verified in Theorem~\ref{THM:WP}. 
	For a more detailed derivation of the gradient flow equation in similar situations see \cite[s.\,3]{GK} and \cite[s.\,3]{KL}.
\end{remark}

\smallskip
We will also need the following interpolation type inequality:

\begin{lemma} \label{LEM:INT}
	Suppose that \eqref{ass:dom} and \eqref{ass:const} hold. Then, for any $\alpha>0$, there exists a constant $C_\alpha>0$ depending only on $L$, $\alpha$, $\beta$ and $\Omega$ such that for all $u\in \Wo$,
	\begin{align*}
	\norm{u}_{L^2(\Om)}^2 + \norm{u}_{L^2(\Ga)}^2 \le \alpha \norm{\grad u}_{L^2(\Omega)}^2 + C_\alpha \norm{u}_{L,\beta,*}^2 .
	\end{align*}
\end{lemma}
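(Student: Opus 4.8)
The plan is to argue by contradiction, exploiting the compactness of the embedding $\Vo \emb \LL^2$ together with the fact that the three terms defining $\norm{\cdot}_{L,\beta,*}^2$ control the full $\V$-seminorm structure. Suppose the claim fails for some $\alpha>0$. Then there is a sequence $(u_k)_{k\in\N}\subset \Wo$ with
\begin{align*}
	\norm{u_k}_{L^2(\Om)}^2 + \norm{u_k}_{L^2(\Ga)}^2 = 1
	\quad\text{and}\quad
	1 > \alpha \norm{\grad u_k}_{L^2(\Omega)}^2 + k \norm{u_k}_{L,\beta,*}^2
\end{align*}
for every $k$. From this one reads off that $\norm{\grad u_k}_{L^2(\Omega)}$ stays bounded, hence (since $\norm{u_k}_{L^2(\Om)}\le 1$) the sequence is bounded in $H^1(\Omega)$, and the trace terms give boundedness in $\Vo$ as well when $\kappa>0$. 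Thus along a subsequence $u_k \wto u$ in $\Vo$ and, by the compact embeddings $H^1(\Omega)\emb L^2(\Omega)$ and $H^1(\Omega)\emb L^2(\Gamma)$ (trace theorem plus Rellich), $u_k \to u$ strongly in $\LL^2$, so $\norm{u}_{L^2(\Om)}^2 + \norm{u}_{L^2(\Ga)}^2 = 1$; in particular $u\neq 0$. Also $u\in\Wo$ since the affine mass constraint passes to the limit.

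Next I would extract information from $\norm{u_k}_{L,\beta,*}^2 < 1/k \to 0$. Recalling \eqref{pre:W}, one has $\norm{u_k}_{L,\beta,*} = \norm{\SS(u_k)}_{L,\beta}$, but it is cleaner to use the defining relation \eqref{WF:LIN}: testing with $(\zeta,\xi)=\SS(u_k)$ is circular, so instead I use the variational characterisation to get, for every $(\zeta,\xi)\in\H^1$,
\begin{align*}
	\big| \inn{u_k}{\zeta}_{H^1(\Om)} + \inn{u_k}{\xi}_{H^1(\Ga)} \big|
	= \big| \bigscp{\SS(u_k)}{(\zeta,\xi)}_{L,\beta} \big|
	\le \norm{\SS(u_k)}_{L,\beta}\, \norm{(\zeta,\xi)}_{L,\beta}
	\le \tfrac{1}{\sqrt k}\, \norm{(\zeta,\xi)}_{L,\beta},
\end{align*}
where $\norm{(\zeta,\xi)}_{L,\beta}^2 = \norm{\grad\zeta}_{L^2(\Om)}^2 + \norm{\gradg\xi}_{L^2(\Ga)}^2 + \tfrac1L\norm{\beta\xi-\zeta}_{L^2(\Ga)}^2$. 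The left-hand side converges (using $u_k\wto u$ in $\Vo$) to $\inn{u}{\zeta}_{H^1(\Om)} + \inn{u}{\xi}_{H^1(\Ga)}$, while the right-hand side tends to $0$. Hence $\inn{u}{\zeta}_{H^1(\Om)} + \inn{u}{\xi}_{H^1(\Ga)} = 0$ for all $(\zeta,\xi)\in\H^1$ whose trace-compatibility is irrelevant here since $\zeta,\xi$ are independent; choosing first $\xi=0$ and varying $\zeta$ over $H^1(\Om)$ forces $u\equiv$ const in $\Omega$, and then $\zeta=0$, $\xi$ varying over $H^1(\Ga)$ forces $u\vert_\Gamma\equiv$ const on $\Gamma$. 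Combined with $u\in\Wo$, i.e. $\beta\abs{\Om}\mean{u}_\Om+\abs{\Ga}\mean{u}_\Ga = 0$, the two constants satisfy one linear relation; but this alone does not yet kill $u$.

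To finish I would return to the $\norm{\cdot}_{L,\beta,*}$ condition with a sharper choice of test function rather than the crude Cauchy--Schwarz bound above, since the Robin coupling term $\tfrac1L(\beta\SS_\Ga-\SS_\Om)$ in \eqref{EQ:LIN} is precisely what links the bulk and surface constants. Concretely, passing to the limit in the weak formulation \eqref{WF:LIN} for $u_k$: since $u_k\to u$ in $\LL^2$ and $\norm{\SS(u_k)}_{L,\beta}\to 0$, the left side tends to $0$ and we get $\inn{u}{\zeta}_{L^2(\Om)}+\inn{u}{\xi}_{L^2(\Ga)}$ appearing only through the $H^1$-pairings already handled; the extra content is that $\SS(u_k)\to 0$ in $\Ho$, and feeding this back through the equation $-\LB\SS_\Ga+\beta\pdnu\SS_\Om=-u_k$ with $\pdnu\SS_\Om=\tfrac1L(\beta\SS_\Ga-\SS_\Om)$ shows, after integrating over $\Gamma$ and over $\Omega$ separately, that both $\mean{u}_\Om=0$ and $\mean{u}_\Ga=0$ in the limit. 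Since $u$ is constant on $\Omega$ and on $\Gamma$ with both means zero, $u\equiv 0$, contradicting $\norm{u}_{L^2(\Om)}^2+\norm{u}_{L^2(\Ga)}^2=1$. The main obstacle is this last step: one must be careful that $u$ constant does not automatically mean $u=0$ because the single mass constraint of $\Wo$ only pins down one combination of the bulk and surface values, so genuinely using the structure of $\SS$ (equivalently, testing \eqref{WF:LIN} with $(\zeta,\xi)=(1,0)$ and $(\zeta,\xi)=(0,1)$, which are admissible in $\H^1$) to obtain the two separate mean conditions is essential. An alternative, possibly cleaner, route avoiding the limit in $\SS$ is to observe directly that $\norm{\cdot}_{L,\beta,*}$ restricted to the subspace of $\Wo$ of functions constant on $\Om$ and on $\Ga$ is in fact a genuine norm (this subspace is one-dimensional after imposing the $\Wo$ constraint, and $\SS$ is injective on $\Wod\supset\Wo$), so its vanishing forces $u=0$; I would then only need the weak-to-strong compactness and the fact that $\grad u_k\to 0$ and the $*$-norm $\to0$ localise $u$ to this one-dimensional space.
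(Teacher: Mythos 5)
Your proposal is correct and follows the paper's own contradiction-plus-compactness strategy almost exactly: normalise so that $\norm{u_k}_{L^2(\Om)}^2+\norm{u_k}_{L^2(\Ga)}^2=1$, extract a weak $H^1(\Om)$ limit $u$ with strong $\LL^2$ convergence, and exploit $\norm{u_k}_{L,\beta,*}\to 0$. The one place you go astray is in reading off the consequence of your own Cauchy--Schwarz step. The pairings $\inn{\cdot}{\cdot}_{H^1(\Om)}$ and $\inn{\cdot}{\cdot}_{H^1(\Ga)}$ in \eqref{WF:LIN} are duality pairings, which for $u\in\LL^2$ reduce to the $L^2$ inner products; hence your identity reads $\int_\Om u\,\zeta\dx+\int_\Ga u\,\xi\dG=0$ for all $(\zeta,\xi)\in\H^1$, and since $H^1(\Om)$ is dense in $L^2(\Om)$ and $H^1(\Ga)$ in $L^2(\Ga)$, taking $\xi=0$ and then $\zeta=0$ forces $u=0$ in $\Om$ and on $\Ga$ outright --- not merely ``$u$ constant.'' The entire subsequent detour (recovering $\mean{u}_\Om=0$ and $\mean{u}_\Ga=0$ from the Robin coupling by testing with $(1,0)$ and $(0,1)$, and the one-dimensional-subspace discussion) is therefore superfluous, although it is itself valid, so your proof does close. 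Compared with the paper, which passes to the weak limit $\SS(\tilde u_k)\wto\SS(u)$ in $\Ho$, uses weak lower semicontinuity of $\norm{\cdot}_{L,\beta}$ to get $\norm{\SS(u)}_{L,\beta}=0$, and only then invokes \eqref{WF:LIN}, your direct Cauchy--Schwarz on \eqref{WF:LIN} for $u_k$ is actually the more economical route --- you just did not notice that it already finishes the job. One minor caveat: your remark that the hypotheses give boundedness in $\Vo$ when $\kappa>0$ is unjustified (the contradiction inequality contains no bound on $\gradg u_k$), but it is also never needed, since boundedness in $H^1(\Om)$ suffices for both compact embeddings.
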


\medskip

\noindent The proof can be found in the appendix.

\section{Well-posedness}
\label{section:wellposedness}
\subsection{Weak well-posedness of the  reaction rate dependent model}\label{sec:wellposed}

\begin{thm}[Weak well-posedness for the system \eqref{CH:INT}] \label{THM:WP}
	Suppose that \eqref{ass:dom} - \eqref{ass:pot} hold and let $m\in\R$ be arbitrary. 
	Then, for any initial datum $u_0 \in \Wm$ satisfying $F(u_0)\in L^1(\Omega)$ and $G(u_0)\in L^1(\Gamma)$, there exists a unique weak solution $(u,\mu,\theta)$ of the system \eqref{CH} in the following sense:
	\begin{enumerate}[label=$(\mathrm{\roman*})$, ref = $\mathrm{\roman*}$]
	\item The functions $(u,\mu,\theta)$ have the following regularity
	\begin{align}
	\label{REG:INT}
	\left\{\;
	\begin{aligned}
	u & \in C^{0,\frac{1}{2}}([0,T];H^1(\Omega)') \cap C^{0,\frac{1}{4}}([0,T];L^2(\Omega)) \\
	&\qquad \cap L^\infty\big(0,T;H^1(\Omega)\cap L^p(\Omega)\big) 
		\cap H^1(0,T;H^1(\Omega)'), \\
	u\vert_{\Sigma_T}  & \in C^{0,\frac 1 2}([0,T];H^1(\Gamma)') \cap C([0,T];L^2(\Ga))\\
	&\qquad \cap L^{\infty}(0,T;\Xk \cap L^q(\Gamma)) \cap H^1(0,T;H^1(\Gamma)'), \\
	u\vert_{\Sigma_T}  &\in C^{0,\frac 1 4}([0,T];L^2(\Ga)) \;\;\,\text{ if } \kappa>0, \\
	\mu &\in L^2(0,T;H^1(\Omega)),
	\qquad
	\theta \in L^2\big(0,T;H^1(\Gamma)\big).
	\end{aligned}
	\right.
	\end{align}
	and
	it holds that $u(t) \in \Wm$ for all $t\in[0,T]$. 
	\item
	The weak formulation
	\begin{subequations}
	\label{WF:INT}
	\begin{align}
	\label{WF:INT:1}
	\inn{\delt u}{ w}_{H^1(\Omega)} &= - \intO \grad\mu \cdot \grad w \dx 
	+  \intG \tfrac 1 L(\beta\theta-\mu)  w \dG, \\
	\label{WF:INT:2}
	\inn{\delt u}{ z}_{H^1(\Gamma)} &= - \intG \gradg\theta \cdot \gradg z \dG 
	- \intG \tfrac 1 L (\beta\theta-\mu) \beta z \dG, \\
	\label{WF:INT:3}
	\begin{split}
	\intO \mu \eta \dx + \intG \theta \eta \dG 
		& = \intO \grad u\cdot\grad\eta + F'(u)\eta \dx  \\
		&\quad + \intG \kappa \gradg u\cdot\gradg\eta + G'(u)\eta  \dG	 
	\end{split}
	\end{align}
	\end{subequations}
	is satisfied almost everywhere in $[0,T]$ for all test functions $w\in H^1(\Omega)$, $z\in H^1(\Gamma)$, $\eta\in \Vk \cap L^\infty(\Omega)$ with $\eta\vert_\Gamma \in L^\infty(\Gamma)$. Moreover, the initial condition $u(0)=u_0$ is satisfied a.e.~in $\overline{\Omega}$.
	\item For $E$ as defined in \eqref{DEF:EN}, the energy inequality 
	\begin{align}
	\label{WF:INT:DISS}
	\begin{aligned}
	&E\big(u(t)\big) 
	+ \frac 1 2 \int_0^t 
		\norm{\grad\mu(s)}_{L^2(\Omega)}^2 
		+ \norm{\gradg\theta(s)}_{L^2(\Gamma)}^2
		+ \frac 1 L \norm{\beta\theta(s) - \mu(s)}_{L^2(\Gamma)}^2 
	\ds \\
	&\quad \le E(u_0)
	\end{aligned}
	\end{align}
	is satisfied for all $t\in[0,T]$.
\end{enumerate}
If we additionally assume that \eqref{ass:pot:reg} holds, then 
\begin{align}
	\label{REG:INT:2}
	\left\{
	\begin{aligned}
	&u\in L^2(0,T;H^3(\Omega)), &&\quad\text{and}\quad u\vert_{\Sigma_T} \in L^2(0,T;H^3(\Gamma)) &&\quad \text{if}\; \kappa >0, \\
	&u\in L^2(0,T;H^{5/2}(\Omega)), &&\quad\text{and}\quad u\vert_{\Sigma_T}  \in L^2(0,T;H^2(\Gamma)) &&\quad \text{if}\; \kappa =0.
	\end{aligned}
	\right.
\end{align}
\end{thm}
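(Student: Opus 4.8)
The plan is to prove existence via a time-discrete (minimizing movements / implicit Euler) scheme, which is natural here because, as noted in Remark~\ref{rem:gradflow}, the system is a gradient flow of $E$ with respect to the inner product $\scp{\cdot}{\cdot}_{L,\beta,*}$ on $\Wod$. First I would fix a time step $\tau = T/N$ and, given $u^{n-1}\in\Wm$, define $u^n$ as a minimizer of the functional $v\mapsto \tfrac{1}{2\tau}\norm{v-u^{n-1}}_{L,\beta,*}^2 + E(v)$ over $\Wm$; existence of a minimizer follows from the direct method using the coercivity and growth bounds in \eqref{ass:pot} and the weak lower semicontinuity of $E$, while the convexity of $F_1,G_1$ (assumption~\eqref{ass:pot:1}) together with the Lipschitz bounds on $F_2',G_2'$ (\eqref{ass:pot:3}) gives the Euler--Lagrange equation. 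Defining $\mu^n,\theta^n$ via the discrete analogue of \eqref{WF:INT:3} and identifying $(\delt u)^n = (u^n-u^{n-1})/\tau$ with $-\SS^{-1}$ applied to the appropriate quantity, one obtains the discrete weak formulation. Testing the scheme with the natural quantities yields the discrete energy--dissipation estimate, the key a~priori bound: $E(u^n) + \tfrac{\tau}{2}\sum \big(\norm{\grad\mu^k}_{L^2(\Om)}^2 + \norm{\gradg\theta^k}_{L^2(\Ga)}^2 + \tfrac1L\norm{\beta\theta^k-\mu^k}_{L^2(\Ga)}^2\big) \le E(u_0)$.

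Next I would construct piecewise-constant and piecewise-linear interpolants in time of the discrete solutions and pass to the limit $\tau\to 0$. The discrete energy estimate, combined with Lemma~\ref{LEM:INT} and the growth conditions, yields uniform bounds on $u$ in $L^\infty(0,T;\Vk\cap L^p)$, on $\mu,\theta$ in $L^2(0,T;H^1)$, and on $\delt u$ in $L^2(0,T;H^1(\Om)')$ and $L^2(0,T;H^1(\Ga)')$; the latter comes from reading off $\delt u$ from \eqref{WF:INT:1}--\eqref{WF:INT:2}. The Aubin--Lions--Simon lemma then gives strong convergence of $u$ in $C([0,T];L^2(\Om))$ and of the traces in $C([0,T];L^2(\Ga))$ (and the $H^1(\Ga)$-bound when $\kappa>0$ upgrades this), which is what is needed to pass to the limit in the nonlinear terms $F'(u),G'(u)$ using the continuity and growth of $F',G'$ and a generalized dominated convergence / Vitali argument. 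The Hölder-in-time regularity follows by interpolating the $L^\infty$ and $H^1$-in-time bounds. The energy inequality \eqref{WF:INT:DISS} is obtained by passing to the limit (lower semicontinuity) in the discrete energy estimate. Uniqueness I would prove directly at the PDE level: take two solutions, subtract, test the difference of \eqref{WF:INT:1}--\eqref{WF:INT:2} with $\SS(u^{(1)}-u^{(2)})$ (using that the mass constraint is preserved so the difference lies in $\Wod$) and the difference of \eqref{WF:INT:3} with $u^{(1)}-u^{(2)}$, exploit monotonicity of $F_1',G_1'$ and Lipschitz continuity of $F_2',G_2'$, and close with Gronwall in the $\norm{\cdot}_{L,\beta,*}$ norm.

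For the higher regularity \eqref{REG:INT:2} under the additional assumption~\eqref{ass:pot:reg}, the idea is to treat \eqref{WF:INT:3} for a.e.\ fixed $t$ as an elliptic problem for $u$ with right-hand side $(\mu - F'(u), \theta - G'(u))$, using the already-established $L^2(0,T;H^1)$-regularity of $\mu,\theta$ and $C^3$ regularity of $\Omega$. One bootstraps: Sobolev embeddings together with the growth restriction $p\le 4$ (and the bounds on $F_1'',G_1''$ in \eqref{ass:pot:reg}) ensure $F'(u)\in L^2(0,T;L^2(\Om))$ and $G'(u)\in L^2(0,T;\Yk)$, then elliptic regularity for the bulk--surface system with Robin-type coupling \eqref{EQ:LIN}-type structure (as in \cite{knopf-liu}) gives first $u\in L^2(0,T;H^2)$, improves the nonlinear terms, and finally yields $H^{5/2}(\Om)$/$H^2(\Ga)$ when $\kappa=0$ and $H^3(\Om)$/$H^3(\Ga)$ when $\kappa>0$, the surface Laplace--Beltrami term being present only in the latter case.

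**Main obstacle.** I expect the principal difficulty to be the rigorous handling of the discrete scheme and the limit passage in the presence of the Robin coupling term $\tfrac1L(\beta\theta-\mu)$, which couples the bulk and surface equations in a way that is not present in the pure GMS or LW settings: one must verify that the discrete solution operator $\SS$ is well-defined on the constrained space $\Wod$ (invoking \cite[Thm.~3.3]{knopf-liu}), that the mass constraint $\beta\abs{\Omega}\mean{u}_\Om + \abs{\Gamma}\mean{u}_\Ga = m$ is exactly preserved by the scheme, and that the coupling term passes to the limit — this last point is fine once the traces converge strongly, but bookkeeping the correct function spaces (especially the distinction $\kappa=0$ vs.\ $\kappa>0$ encoded in $\Xk,\Yk$) throughout is where the care is needed.
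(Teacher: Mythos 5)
Your proposal follows essentially the same route as the paper's proof: a minimizing-movements/implicit Euler scheme for the gradient flow in the $\norm{\cdot}_{L,\beta,*}$-metric, the interpolation Lemma~\ref{LEM:INT}, Aubin--Lions compactness for the limit passage, a Gronwall argument in the dual norm for uniqueness, and elliptic bootstrapping for \eqref{REG:INT:2}. The only detail you gloss over is that in the uniqueness step the difference $u^{(1)}-u^{(2)}$ is not an admissible test function in \eqref{WF:INT:3} (it need not be bounded), so one first tests with the truncation $\PM(u^{(1)}-u^{(2)})$ and then lets $M\to\infty$ by dominated convergence, exactly as the paper does.
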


To prove the assertions, we construct approximate solutions via an implicit time discretisation of the gradient flow equation \eqref{EQ:GF:DIR}. This technique which goes back to \cite{ambrosio} was first applied on a Cahn--Hilliard problem with dynamic boundary conditions in \cite{GK}, and later also in \cite{KL}. In the subsequent proof, we will employ the same strategy. Although some of the steps will be similar to those in \cite{GK} or \cite{KL}, a lot of arguments require a different reasoning. 

Due to the Robin type coupling condition \eqref{CH:INT:3}, the vector-spaces and the inner product involved in the gradient flow equation \eqref{EQ:GF:DIR} differ greatly from those in \cite{GK,KL}.
The crucial difference is that in the LW model, the chemical potentials are not coupled via the boundary condition $\deln\mu =0$, and as a consequence, the bulk and surface contribution in the inner product can be considered separately. 
This is not the case in the gradient flow equation \eqref{EQ:GF:DIR} as the space $\Wm$ and the inner product $\scp{\cdot}{\cdot}_{L,\beta,*}$ already comprises an interaction of bulk and surface quantities.
In particular, we thus require a new customized estimate for functions in $\Wo$ (see Lemma~\ref{LEM:INT}).

\begin{proof}
In this proof, we use the letter $C$ to denote generic positive constants independent of $N$, $n$ and $\tau$ that may change their value from line to line. The proof is split into several steps.

\subparagraph{Step 1: Implicit time discretisation.} Let $N \in \N$ be arbitrary and let $\tau := T/N$ denote the step size in time. For $n\in\{0,...,N\}$, we define functions $u^n$ recursively by the following construction.  The iterate with index zero is defined as the initial datum, i.e., $u^0=u_0$.  If $u^n$ is already constructed, we choose $u^{n+1}$ as a minimiser of the functional
\begin{align}
	\label{DEF:JN}
	J_n:\Wm \to \R,\quad u\mapsto \frac{1}{2 \tau} \norm{u - u^n}_{L,\beta,*}^2 + E(u)
\end{align}
where $\Wm$ is defined in \eqref{pre:W}. Note that $J_n$ may attain the value $+\infty$, see \cite[s.~4]{KL}.
The existence of such a minimiser will be addressed in Step 2. 
As $F_1$ and $G_1$ are convex, we can proceed as in \cite[Lem.\,3.2]{garckeelas} to infer that the Euler--Lagrange equation
\begin{align}\label{EQ:EUL}
\begin{aligned}
	0 \;&=\;  \scp{\frac{u^{n+1}-u^n}{\tau}}{\hat \eta}_{L,\beta,*} 
		+ \intO \grad u^{n+1} \cdot\grad \hat \eta + F'(u^{n+1})\hat \eta \dx \\
		&\quad + \intG \kappa \gradg u^{n+1}\cdot\gradg\hat \eta + G'(u^{n+1})\hat \eta \dG
\end{aligned}
\end{align}
holds for all directions $\hat \eta \in \Wo \cap L^\infty(\Omega)$ with $\hat \eta\vert_\Gamma \in L^\infty(\Gamma)$. This can be interpreted as a discretisation of the gradient flow equation \eqref{EQ:GF:DIR}. A straightforward computation shows that \eqref{EQ:EUL} is equivalent to	
\begin{align}
\label{EQ:EUL:D}
\begin{aligned}
	\intO \mathring\mu^{n+1} \hat \eta \dx + \intG \mathring\theta^{n+1} \hat\eta \dG &= \intO \grad u^{n+1} \cdot\grad\hat \eta + F'(u^{n+1})\hat \eta \dx \\
	&\qquad + \intG \kappa \gradg u^{n+1}\cdot\gradg\hat \eta + G'(u^{n+1})\hat \eta \dG
\end{aligned}
\end{align}
for all $\hat \eta \in \Wo \cap L^\infty(\Omega)$ with $\hat \eta\vert_\Gamma \in L^\infty(\Gamma)$, where 
\begin{align}
\label{DEF:MT0}
	(\mathring\mu^{n+1},\mathring\theta^{n+1}):=\SS\big(\tfrac 1 \tau (u^{n+1}-u^n)\big)\in \Ho.
\end{align}
For arbitrary $\eta \in \Vk \cap L^\infty(\Omega)$ with $\eta \vert_\Gamma \in L^\infty(\Gamma)$, we see that if $\beta |\Omega| + |\Gamma| \neq 0$, then 
\begin{align*}
\hat \eta = \eta + c_0, \quad c_0 = -\frac{\beta \intO \eta \dx + \intG \eta \dG}{\beta |\Omega| + |\Gamma|}
\end{align*}
satisfies $\hat \eta \in \Wo \cap L^\infty(\Omega)$ with $\hat \eta \vert_\Gamma \in L^\infty(\Gamma)$.  Then, we define a constant $c^{n+1} \in \R$ independent of the test function $\eta$ by
\begin{align*}
c^{n+1} = \frac{\intO F'(u^{n+1}) - \mathring{\mu}^{n+1} \dx + \intG G'(u^{n+1}) - \mathring{\theta}^{n+1} \dG}{\beta |\Omega| + |\Gamma|},
\end{align*}
so that the pair of functions
\begin{align}\label{mu:theta:n+1}
\mu^{n+1} := \mathring{\mu}^{n+1} + \beta c^{n+1} \quad \text{ and } \quad \theta^{n+1} := \mathring{\theta}^{n+1} + c^{n+1}
\end{align}
satisfies for arbitrary $\eta \in \Vk \cap L^\infty(\Omega)$ with $\eta \vert_\Gamma \in L^\infty(\Gamma)$,
\begin{equation}\label{dwf:int:3}
\begin{aligned}
\intO \mu^{n+1} \eta \dx + \intG \theta^{n+1} \eta \dG &= \intO \grad u^{n+1} \cdot\grad\eta + F'(u^{n+1})\eta \dx \\
	&\quad + \intG \kappa \gradg u^{n+1}\cdot\gradg\eta + G'(u^{n+1})\eta \dG.
\end{aligned}
\end{equation}
In the case $\beta |\Omega| + |\Gamma| = 0$, the above constant $c_0$ is not defined.  Hence, we consider fixing an arbitrary $\zeta \in C^\infty_c(\Omega)$ that is not identically zero, and define
\begin{align*}
\hat \eta = \eta + c_1 \zeta, \quad c_1 = - \frac{\beta \intO \eta \dx + \intG \eta \dG}{\beta \intO \zeta \dx},
\end{align*}
which satisfies $\hat \eta \in \Wo \cap L^\infty(\Omega)$ with $\hat \eta \vert_\Gamma \in L^\infty(\Gamma)$.  Then, we define the constant $c^{n+1} \in \R$ that is independent of $\eta$ as
\begin{align*}
c^{n+1} = \frac{\intO F'(u^{n+1}) \zeta - \mathring{\mu}^{n+1} \zeta + \nabla u^{n+1} \cdot \nabla \zeta \dx}{\beta \intO \zeta \dx}
\end{align*}
so that $\mu^{n+1}$ and $\theta^{n+1}$ as defined in \eqref{mu:theta:n+1} satisfy \eqref{dwf:int:3}.
By this construction, we find that the triplet $(u^{n+1},\mu^{n+1},\theta^{n+1})$ satisfies the equations
\begin{subequations}
	\label{DWF:INT}	
	\begin{align}
	\label{DWF:INT:1}
	&\intO \tfrac 1 \tau (u^{n+1}-u^n) w \dx = - \intO \grad\mu^{n+1} \cdot \grad w \dx 
	+  \intG \tfrac 1 L(\beta\theta^{n+1}-\mu^{n+1})  w \dG, \\
	\label{DWF:INT:2}
	&\intG \tfrac 1 \tau (u^{n+1}-u^n) z \dG = - \intG \gradg\theta^{n+1} \cdot \gradg z \dG 
	- \intG \tfrac 1 L (\beta\theta^{n+1}-\mu^{n+1}) \beta z \dG, \\[1ex]
	\label{DWF:INT:3}
	&\begin{aligned}
	\intO \mu^{n+1} \eta \dx + \intG \theta^{n+1} \eta \dG &= \intO \grad u^{n+1} \cdot\grad\eta + F'(u^{n+1})\eta \dx \\
	&\quad + \intG \kappa \gradg u^{n+1}\cdot\gradg\eta + G'(u^{n+1})\eta \dG,
	\end{aligned}
	\end{align}
\end{subequations}
for all test functions $w\in H^1(\Omega)$, $z\in H^1(\Gamma)$ and $\eta \in \Vk \cap L^\infty(\Omega)$ with $\eta\vert_\Gamma \in L^\infty(\Gamma)$.
This system can be interpreted as an implicit time discretisation of the weak formulation \eqref{WF:INT}. In this context, the collection $(u^{n}, \mu^{n}, \theta^{n})_{n =1, \dots, N}$ represents a time-discrete approximate solution.  For $t \in [0,T]$ and $n \in \{1,2, \dots, N\}$, we define the piecewise constant extension
\begin{align}\label{pi:cons}
(u_N, \mu_N, \theta_N)(\cdot, t) :=  (u_N^n, \mu_N^n, \theta_N^n) :=  (u^n, \mu^n, \theta^n),
\end{align}
for $t \in ((n-1)\tau, n\tau]$ and the piecewise linear extension
\begin{align}\label{pi:lin}
(\lu_N, \lmu_N, \lthN)(\cdot, t)  := \alpha(u_N^n, \mu_N^n, \theta_N^n) + (1-\alpha)(u_N^{n-1}, \mu_N^{n-1}, \theta_N^{n-1})
\end{align}
for any $\alpha \in [0,1]$ and $t = \alpha n \tau + (1-\alpha)(n-1)\tau$.

\subparagraph{Step 2: Existence of a minimiser.}
We apply the direct method in the calculus of variations to show that the functional $J_n$ has at least one minimiser in the set $\Wm$. To this end, we assume that $u^n$ is already constructed as described in Step 1. Recalling the definition of the energy functional \eqref{DEF:EN} and that the potentials $F$ and $G$ are bounded from below according to \eqref{ass:pot:2}, we infer that
\begin{align}
	\label{BND:JN}
	J_n(u) \ge - C^* \quad\text{where}\quad C^* := b_F\abs{\Omega} + b_G\abs{\Gamma},
\end{align}
for all $ u\in \Wm $. Consequently, $M:=\inf_{\Wm} J_n$ exists  and is finite, and we can find a minimising sequence $(u_k)_{k\in\N}\subset \Wm$ such that
\begin{align*}
	\lim_{k \to \infty} J_n(u_k) = M, 
	\quad \text{and}\quad	
	J_n(u_k) \le M+1 \;\; \text{ for all } k\in\N.
\end{align*}
From the definition of the functional $J_n$ (see \eqref{DEF:JN}) we deduce that 
\begin{align}
	\label{BND:MC}
	\frac 1 2 \norm{\grad u_k}_{L^2(\Omega)}^2 
	+ \frac \kappa 2 \norm{\gradg u_k}_{L^2(\Gamma)}^2
	+ \intO F( u_k ) \dx + \intG G( u_k ) \dG
	\le C,
\end{align}
for all $k\in\N$. Now the growth estimates \eqref{ass:pot:2} for $F$ and $G$ imply that
the sequence $(u_k)$ is bounded in $\Vk$. Hence, according to the Banach--Alaoglu theorem, there exists a function $\bar u \in \Vk$ such that $u_k\wto \bar u$ in $\Vk$ along a non-relabelled subsequence. Recalling the compact embeddings $H^1(\Omega) \emb L^2(\Omega)$ and $H^1(\Omega)\emb L^2(\Gamma)$, consequently $u_k \to \bar u$ in $L^2(\Omega)$ and $u_k \to \bar u$ in $L^2(\Gamma)$ along a non-relabelled subsequence, so that  $\bar u\in \Wm$.  It remains to show that $\bar u$ is actually a minimser of the functional $J_n$.
Since $F$ and $G$ are continuous and non-negative, we can use Fatou's lemma to infer that
\begin{align}
\label{LIMSUP:FG}
\intO F(\bar u) \dx \le \underset{k \to \infty}{\lim\inf}\intO F(u_k) \dx,
\quad\text{and}\quad
\intG G(\bar u) \dG  \le \underset{k\to \infty}{\lim\inf} \intG G(u_k) \dG.
\end{align}
As all other components of the energy are continuous and convex, we conclude that
\begin{align*}
J_n(\bar u) \leq \underset{k \to \infty}{\lim\inf} J_n(u_k) = M.
\end{align*}
This proves that $\bar u$ is a minimiser of $J_n$ on the set $\Wm$.

\subparagraph{Step 3: Uniform estimates.}
Next, we establish uniform estimates for the piecewise constant extension. We claim that
\begin{equation}
\label{EST:PCE}
\begin{aligned}
	\norm{u_N}_{L^\infty(0,T;H^1(\Omega) \cap L^p(\Omega))} + \norm{u_N}_{L^\infty(0,T;\Xk \cap L^q(\Gamma))} \quad& \\
	+ \norm{\mu_N}_{L^2(0,T;H^1(\Omega))}  + \norm{\theta_N}_{L^2(0,T;H^1(\Gamma))} &\leq C.
\end{aligned}
\end{equation}

To prove this assertion, we follow the reasoning in \cite[s.~4]{GK} and \cite[s.~5]{KL}. As $u^{n+1}$ is a minimiser of the functional $J_n$ on the set $\Wm$, we obtain the a priori estimate
\begin{align}
\label{EST:APR}
\frac{1}{2\tau} \norm{u^{n+1}-u^n}_{L,\beta,*}^2 + E(u^{n+1}) \le E(u^n) \;\; \text{ for all } n\in\{0,1, \dots ,N-1\}.
\end{align}
By induction we conclude that $E(u^{n}) \le E(u_0)$ for all $n \in \{0, 1, \dots, N\}$. Thanks to the definition of $E$ (see \eqref{DEF:EN}), we infer that
\begin{align}\label{EST:APR2}
&\frac{1}{2} \norm{\grad u^{n+1}}_{L^2(\Omega)}^2 
	+ \frac \kappa 2 \norm{\gradg u^{n+1}}_{L^2(\Gamma)}^2
	+ \intO F(u^{n+1}) \dx
	+ \intG G(u^{n+1}) \dG
	\leq E(u_0) + C^*.
\end{align}
From the growth assumptions \eqref{ass:pot:2} we deduce the uniform bound
\begin{align}
\label{EST:UVN}
\norm{u_N}_{H^1(\Omega)} + \norm{u_N}_{L^p(\Omega)} + \norm{u_N}_{\Xk} + \norm{u_N}_{L^q(\Ga)}\le C.
\end{align}
To derive a uniform bound on $\mu_N$ we can argue as in \cite[s.~4]{GK} and \cite[s.~5]{KL}. Proceeding this way, we use a generalised Poincar\'e inequality (see \cite[p.~242]{Alt}) to obtain the estimate
\begin{align}
\label{EST:MUN}
\norm{\mu^{n+1}}_{L^2(\Omega)} \le C\, \left(1+\norm{\grad \mu^{n+1}}_{L^2(\Omega)}\right)
\end{align}
for all $n \in \{0, 1,...,N-1\}$. To bound $\norm{\grad \mu^{n+1}}_{L^2(\Omega)}$ we first show that an energy dissipation law holds true on the discrete level.
We recall that, according to \eqref{pi:cons},
\begin{align*}
 \big(u_N,\mu_N,\theta_N)(s) = \big(u_N,\mu_N,\theta_N)(t) = \big(u_N^n,\mu_N^n,\theta_N^n)
\end{align*}
for all $s\in (t-\tau,t]$, $n\in\{1,...,N-1\}$ where $t=\tau n$ is fixed. Using the definitions of $\mu_N$ and $\theta_N$ and recalling \eqref{DEF:MT0} as well as the a priori estimate \eqref{EST:APR}, a straightforward computation shows that
\begin{align*}
&E\big(u_N(t)\big) + \frac 1 2 \int_{t-\tau}^t \norm{\grad\mu_N(s)}_{L^2(\Omega)}^2  + \norm{\gradg\theta_N(s)}_{L^2(\Gamma)}^2 + \frac 1 L \norm{\beta\theta_N(s)-\mu_N(s)}_{L^2(\Gamma)}^2 \ds \\
&\quad = E\big(u_N(t)\big) + \frac{1}{2\tau^2} \int_{t-\tau}^t \norm{u_N(s)-u_N(s-\tau)}_{L,\beta,*}^2 \ds\\
&\quad \le E\big(u_N(t)\big) + \frac{1}{2\tau} \norm{u_N(t)-u_N(t-\tau)}_{L,\beta,*}^2 
\;\le  E\big(u_N(t-\tau)\big).
\end{align*}
Performing a simple induction we conclude that
\begin{align}
\label{IEQ:EUN}
\begin{aligned}
&E\big(u_N(t)\big) + \frac 1 2 \int_{0}^t \norm{\grad\mu_N(s)}_{L^2(\Omega)}^2 
+ \norm{\gradg\theta_N(s)}_{L^2(\Gamma)}^2 
+ \frac 1 L \norm{\beta\theta_N(s)-\mu_N(s)}_{L^2(\Gamma)}^2\ds \\
&\quad \le E(u_0).
\end{aligned}
\end{align}
In particular, for $t=N\tau \equiv T$, we get
\begin{align}
\label{EST:GMUN}
\norm{\grad\mu_N}_{L^2(0,T;L^2(\Omega))}^2 +  \norm{\gradg\theta_N}_{L^2(0,T;L^2(\Gamma))}^2 
 \le 2E(u_0) 
\le C.
\end{align}
In combination with \eqref{EST:MUN} we infer that $\mu_N$ is uniformly bounded in $L^2(0,T;H^1(\Omega))$.
It remains to establish the uniform bound on $\theta_N$. From the growth estimates \eqref{ass:pot:2} (particularly, 
the upper bounds for $F'$ and $G'$) and \eqref{EST:UVN} we obtain that
\begin{alignat*}{2}
\intO \abs{F'(u^{n+1})} \dx &\le C + C \norm{u^{n+1}}_{L^p(\Omega)}^{p-1} &&\le C\\
\intG \abs{G'(u^{n+1})} \dG &\le C + C \norm{u^{n+1}}_{L^q(\Gamma)}^{q-1} &&\le C.
\end{alignat*}
for all $n\in \{0,...,N\}$.
Now, testing \eqref{DWF:INT:3} with $\eta\equiv 1$ and using the above estimates yields
\begin{align*}
\abs{\intG \theta^{n+1} \dG} 
	\le C + C\norm{\mu^{n+1}}_{L^2(\Omega)}.
\end{align*}
Using Poincar\'e's inequality on $\Gamma$ and the estimate \eqref{EST:MUN}, it follows that
\begin{align}
\label{EST:THN}
	\norm{\theta^{n+1}}_{L^2(\Ga)} 
	\le C\, \left(1+\norm{\grad \mu^{n+1}}_{L^2(\Omega)} +\norm{\gradg \theta^{n+1}}_{L^2(\Ga)}\right) 
\end{align}
for all $n \in \{0, 1,...,N-1\}$. Hence, by \eqref{EST:GMUN} we conclude that $\theta_N$ is uniformly bounded in $L^2(0,T;H^1(\Gamma))$.

\subparagraph{Step 4: H\"older-in-time estimates.}
We now use interpolation type arguments to show that the piecewise linear extension is H\"older continuous in time. In particular, we claim that for all $t,s\in[0,T]$,
\begin{subequations}
\begin{alignat}{4}
\SwapAboveDisplaySkip
\label{EST:HLD:U}
\norm{\lu_N(t)-\lu_N(s)}_{L^2(\Omega)} 
	&\le C \abs{t-s}^{\frac 1 4},\\
\label{EST:DIFF:U}
\norm{u_N(t)-\lu_N(t)}_{L^2(\Omega)} 
	&\le C \tau^{\frac 1 4}, \\
\label{EST:HLD:UV}
\norm{\lu_N(t)-\lu_N(s)}_{H^1(\Omega)'} + \norm{\lu_N(t)-\lu_N(s)}_{H^1(\Gamma)'} 
	&\le C \abs{t-s}^{\frac 1 2} \\
\label{EST:DIFF:UV}
\norm{u_N(t)-\lu_N(t)}_{H^1(\Omega)'} + \norm{u_N(t)-\lu_N(t)}_{H^1(\Gamma)'} 
	&\le C \tau^{\frac 1 2},  \\
\label{EST:HLD:V}
\norm{\lu_N(t)-\lu_N(s)}_{L^2(\Gamma)} 
	&\le C \abs{t-s}^{\frac 1 4} &&\quad\text{if}\; \kappa > 0,\\
\label{EST:DIFF:V}
\norm{u_N(t)-\lu_N(t)}_{L^2(\Gamma)} 
	&\le C \tau^{\frac 1 4} &&\quad\text{if}\; \kappa > 0
\end{alignat}
\end{subequations}
as well as
\begin{align}
\label{EST:HLD}
\norm{\delt \bar u_N}_{L^2(0,T;H^1(\Omega)')} + \norm{\delt \bar u_N}_{L^2(0,T;H^1(\Gamma)')} \le C.
\end{align}

To prove this claim, we first infer from \eqref{DWF:INT:1} and \eqref{DWF:INT:2} that for any
$w\in H^1(\Om)$, $z\in H^1(\Ga)$ and almost all $r  \in[0,T]$,
\begin{subequations}
	\label{EQ:DTU}
\begin{align}
\label{EQ:DTU:1}	\inn{\delt\lu_N( r )}{w}_{H^1(\Om)} 
		&= - \intO \grad \mu_N(r) \cdot \grad w \dx 
		+ \intG \tfrac 1 L \big(\beta \theta_N(r ) - \mu_N(r )\big) w \dG, \\
\label{EQ:DTU:2}	\inn{\delt\lu_N(r )}{z}_{H^1(\Ga)} 
		&= - \intG \gradg \theta_N(r ) \cdot \gradg z \dG 
		- \intG \tfrac 1 L \big(\beta \theta_N(r ) - \mu_N(r )\big) \beta z \dG.
\end{align}
\end{subequations}
Let $s, t \in [0,T]$ be arbitrary and without loss of generality suppose $s < t$. Integrating \eqref{EQ:DTU:1} from $s$ to $t$ and choosing $w = \lu_N(t) - \lu_N(s)$ yields
\begin{align*}
& \| \lu_N( t ) - \lu_N(s) \|_{L^2(\Omega)}^2 \\
& \quad \leq 2 \| \lu_N \|_{L^\infty(0,T;H^1(\Omega))} \big ( \norm{\mu_N}_{L^2(0,T;H^1(\Omega))} + \tfrac{1}{L} \norm{\beta \theta_N - \mu_N}_{L^2(0,T;L^2(\Gamma))} \big ) |t-s|^{\frac 1 2},
\end{align*}
which is \eqref{EST:HLD:U}.  Similarly, if $\kappa > 0$, then integrating \eqref{EQ:DTU:2} from $s$ to $t$, choosing $z = \lu_N(t) - \lu_N(s)$ leads to \eqref{EST:HLD:V}. Moreover, it is clear that \eqref{EST:HLD} follows directly from \eqref{EQ:DTU} and previous uniform estimates on $\nabla \mu_N$, $\gradg \theta_N$ and $\beta \theta_N - \mu_N$.

For almost all $r\in[0,T]$, applying the Cauchy--Schwarz inequality and the continuous embedding $H^1(\Om)\emb L^2(\Ga)$ to \eqref{EQ:DTU:1} yields
\begin{align}
	\label{EST:DTU}
	\begin{aligned}
	&\Big|\inn{\delt\lu_N(r )}{w}_{H^1(\Om)}\Big| \\
	&\quad \le C \left(1 + \tfrac{1}{\sqrt{L}}\right) 
	\Big( \norm{\grad \mu_N(r )}_{L^2(\Om)} 
		+ \tfrac{1}{\sqrt{L}}\norm{\beta\theta_N(r ) 
		- \mu_N(r)}_{L^2(\Ga)} \Big) \norm{w}_{H^1(\Om)}.
	\end{aligned}
\end{align}
For arbitrary $s < t$, using \eqref{IEQ:EUN}, \eqref{EST:DTU} and H\"older's inequality, we conclude that 
\begin{align*}
	& \norm{\lu_N(t)-\lu_N(s)}_{H^1(\Om)'} 
	= \underset{\norm{w}_{H^1(\Om)}=1}{\sup} \Big| \inn{\lu_N(t)-\lu_N(s)}{w}_{H^1(\Om)} \Big| \\
	&\quad \le \underset{\norm{w}_{H^1(\Om)}=1}{\sup}\; \int_s^t \Big| \inn{\delt\lu_N(r)}{w}_{H^1(\Om)} \Big| \,dr \\[1ex]
	&\quad \le C \left(1 + \tfrac{1}{\sqrt{L}}\right)
		\int_s^t 
		\norm{\grad \mu_N(r)}_{L^2(\Om)} 
			+ \tfrac{1}{\sqrt{L}}\norm{\beta\theta_N(r) - \mu_N(r)}_{L^2(\Ga)}  \,dr \\
	&\quad \le C \left(1 + \tfrac{1}{\sqrt{L}}\right) \abs{t-s}^{\frac 1 2} 
		\left( \int_0^t 
		\norm{\grad \mu_N(r)}_{L^2(\Om)}^2 
		+ \tfrac{1}{L}\norm{\beta\theta_N(r) - \mu_N(r)}_{L^2(\Ga)}^2  \,dr
		\right)^{\frac 12} \\
	&\quad \le C \left(1 + \tfrac{1}{\sqrt{L}}\right) \abs{t-s}^{\frac 1 2} .
\end{align*}
In a similar fashion, we can derive the estimate
\begin{align*}
	\norm{\lu_N(t)-\lu_N(s)}_{H^1(\Ga)'} 
	\le C \left(1 + \tfrac{1}{\sqrt{L}}\right)\abs{t-s}^{\frac 1 2} 
\end{align*}
which proves \eqref{EST:HLD:UV} as $C$ may depend on $L$.  
Next, for any $t\in[0,T]$ we can choose $n\in\{1,...,N\}$ and $\alpha\in [0,1]$ such that $t=\alpha n\tau + (1-\alpha)(n-1)\tau$.
Hence, it follows immediately that
\begin{align*}
\|\bar u_N(t)-u_N(t)\|_{X} 
&\le \| \alpha u^n + (1-\alpha) u_N^{n-1}(t) - u_N^n(t) \|_{X} \\
&= (1-\alpha)\,\| u_N^n(t)  - u_N^{n-1}(t) \|_{X} = (1-\alpha) \, \| \bar u_N(n\tau) - \bar u_N((n-1)\tau)\|_{X} 
\end{align*}
for $X=L^2(\Om)$ or $L^2(\Ga)$ or $H^1(\Om)'$ or $H^1(\Ga)'$.  Choosing $t = n\tau$ and $s = (n-1)\tau$ in \eqref{EST:HLD:U}, \eqref{EST:HLD:UV}, \eqref{EST:HLD:V} leads to \eqref{EST:DIFF:U}, \eqref{EST:DIFF:UV} and \eqref{EST:DIFF:V}, respectively.


\subparagraph{Step 5: Convergence assertions and regularity of the limit.} 
We now claim that there exist functions $(u,\mu,\theta)$ satisfying the regularity condition \eqref{REG:INT}
such that the following convergence properties hold along a non-relabelled subsequence:
\begin{align*}
\begin{aligned}
& u_N \to u 
	&&\text{ weakly-* in } L^{\infty}(0,T;H^1(\Omega)\cap L^p(\Omega)), \\
	&&&\quad \text{ strongly in } L^{\infty}(0,T;L^2(\Omega)), 
	\text{ and a.e.~in } Q_T, \\[1ex]
& u_N\vert_{\Sigma_T} \to u\vert_{\Sigma_T} 
	&&\text{ weakly-* in } L^{\infty}(0,T;\Xk \cap L^q(\Ga)), \\
	&&&\quad \text{ strongly in } L^\infty(0,T;\Yk), 
	\text{ and a.e.~in } \Sigma_T,  \\[1ex]
& \lu_N \to u 
	&&\text{ weakly in } H^1(0,T;H^1(\Omega)'), 
	\\ 
	&&&\quad  \text{ strongly in } C^{0,\gamma}([0,T];H^1(\Omega)') \text{ for all } \gamma\in (0,\tfrac 1 2 ),  \\
	&&&\quad  \text{ and strongly in } C^{0,\gamma}([0,T];L^2(\Omega)) \text{ for all } \gamma\in (0,\tfrac 1 4 ),  \\[1ex]
& \lu_N\vert_{\Sigma_T}  \to u\vert_{\Sigma_T}  
	&&\text{ weakly in } H^1(0,T;H^1(\Gamma)'), 
	\\ 
	&&&\quad  \text{ strongly in } C^{0,\gamma}([0,T];H^1(\Ga)') \text{ for all } \gamma\in (0,\tfrac 1 2 ),  \\
	&&&\quad  \text{ and strongly in } C^{0,\gamma}([0,T];L^2(\Ga)) \text{ for all } \gamma\in (0,\tfrac 1 4 ) 
		\text{ if } \kappa>0,  \\[1ex]
& \mu_N \to \mu 
	&&\text{ weakly in } L^2(0,T;H^1(\Omega)), 
	\text{ and weakly in } L^2(0,T;H^{1/2}(\Gamma)),\\
& \theta_N \to \theta 
	&&\text{ weakly in } L^2(0,T;H^1(\Gamma)). 
\end{aligned}
\end{align*}

These convergence assertions can be established using the same methods as in \cite[s.~4.5]{GK} and \cite[s.~5]{KL}.
Moreover, recalling the compact embedding $H^1(\Omega) \emb H^{3/4}(\Omega)$ and the continuous embedding $H^{3/4}(\Omega) \emb H^1(\Omega)'$, we infer from the Aubin--Lions lemma \cite{simon} that $u\in C([0,T];H^{3/4}(\Omega))$. By the continuous embedding $H^{3/4}(\Omega) \emb L^2(\Gamma)$, this additionally yields 
\begin{align*}
	u\vert_{\Sigma_T} \in C([0,T];L^2(\Ga)).
\end{align*}
Hence, the regularity assertion \eqref{REG:INT} is established.

\subparagraph{Step 6: Existence of weak solutions.}
We finally show that the limit $(u, \mu, \theta)$ is a weak solution of the system \eqref{CH}. We already know from Step 5 that the limit $(u, \mu, \theta)$ enjoys the regularity demanded in \eqref{REG:INT}. 
Using the convergence results from Step 5 we may pass to the limit in \eqref{EQ:DTU} after multiplying by an arbitrary $\zeta(t) \in C^\infty_c(0,T)$ and integrating over $(0,T)$. By a standard density argument, this directly implies that \eqref{WF:INT:1} and \eqref{WF:INT:2} are satisfied.
Moreover, we deduce that $F'(u_N)\to F'(u)$ a.e.~in $Q_T$ and $G'(u_N) \to G'(u)$ a.e.~on $\Sigma_T$.  Recalling the growth estimates on $F'$ and $G'$ and the uniform bounds on $u_N$, we can apply Lebesgue's general convergence theorem (see \cite[p.~60]{Alt}) to obtain 
\begin{align*}
	\int_{Q_T} F'(u_N) \zeta(t) \eta \dx \dt  \to \int_{Q_T} F'(u) \zeta(t)\eta \dx \dt, \\
	\int_{\Sigma_T} G'(u_N) \zeta(t) \eta \dG \dt \to \int_{\Sigma_T} G'(u) \zeta(t) \eta \dG \dt.
\end{align*}
This allows us obtain \eqref{WF:INT:3} from passing to the limit  in \eqref{DWF:INT:3}. Hence, the triplet $(u, \mu, \theta)$ satisfies the weak formulation \eqref{WF:INT}. Proceeding as in Step 2 we get
\begin{align*}
	\intO F(u(t)) \dx &\le \underset{N\to \infty}{\lim\inf}\;\intO F(u_N(t)) \dx,\\
	\intG G(u(t)) \dG &\le \underset{N\to \infty}{\lim\inf}\;\intO G(u_N(t)) \dG,
\end{align*}
for almost all $t\in [0,T]$. As all other contributions of the energy functional $E$ are continuous and convex, we can use the convergence properties from Step 5 to verify the energy inequality \eqref{WF:INT:DISS} from \eqref{IEQ:EUN}.

\subparagraph{Step 7: Uniqueness.}
Suppose that $(u_1, \mu_1, \theta_1)$ and $(u_2, \mu_2, \theta_2)$ are two weak solutions to the system \eqref{CH:INT} corresponding to the same initial data. We denote the difference of these solutions by
\begin{align*}
(\lu,\lmu,\ltheta) := (u_1,\mu_1,\theta_1) -(u_2,\mu_2,\theta_2).
\end{align*}
We point out that $\lu(t)\in \Wo$ for all $t\in[0,T]$. Let now $t_0 \in (0,T]$, $ w \in L^2(0,T;H^1(\Omega))$ and $z \in L^2(0,T;H^1(\Gamma))$ be arbitrary. In the following we use the notation $Q_{t_0} = \Omega \times (0, t_0)$, $\Sigma_{t_0} = \Gamma \times (0,t_0)$. We set 
\begin{align}\label{uniq:test}
\tilde w (\cdot,t) :=	\begin{cases}
\int_t^{t_0} w (\cdot,s) \ds, &\text{ if } t\le t_0,\\
0 &\text{ if } t>t_0
\end{cases}
\quad\text{and}\quad
\tilde z (\cdot,t) :=	\begin{cases}
\int_t^{t_0} z (\cdot,s) \ds, &\text{ if } t\le t_0,\\
0 &\text{ if } t>t_0,
\end{cases}
\end{align}
and thus, $\tilde w \in L^2(0,T;H^1(\Omega)) \cap H^1(0,T;L^2(\Omega))$ and $\tilde z \in L^2(0,T;H^1(\Gamma))\cap H^1(0,T;L^2(\Gamma))$. Plugging $\tilde w$ into \eqref{WF:INT:1} and $\tilde z$ into \eqref{WF:INT:2}, we find that
\begin{align*}
&\int_{Q_{t_0}} \lu w \dx \dt + \int_{\Sigma_{t_0}} \lu z \dG \dt \\
&\quad = -\int_{Q_{t_0}} \nabla \left ( \int_{0}^t \lmu \ds \right ) \cdot \nabla w \dx \dt
	 \;- \int_{\Sigma_{t_0}} \gradg \left ( \int_{0}^t \ltheta \ds \right ) \cdot \gradg z \dG \dt \\
&\qquad - \frac 1 L \int_{\Sigma_{t_0}} \left( \beta \int_{0}^t  \ltheta \ds - \int_{0}^t \lmu \ds \right) (\beta z-w) \dG\dt.
\end{align*}
In view of the solution operator $\SS$ from \eqref{pre:W} we obtain the identifications
\begin{align*}
	\SS_\Om(\bar u) = \int_{0}^t \lmu \ds + \beta c, \quad
	\SS_\Ga(\bar u) = \int_{0}^t \ltheta \ds + c 
\end{align*}
for some constant $c \in \R$, and thus,
\begin{align*}
	\delt \SS_\Om(\bar u) = \lmu, \quad
	\delt \SS_\Ga(\bar u) = \ltheta. 
\end{align*} 
We now choose $w = \lmu$ and $z = \ltheta$. Using $\bar u(0)=0$ and $\SS(0)=0$, we find that
\begin{align}
\label{EQ:UNQ}
\int_{Q_{t_0}} \lu \lmu \dx \dt + \int_{\Sigma_{t_0}} \lu \ltheta \dG \dt = - \frac 1 2 \norm{\lu(t_0)}_{L,\beta,*}^2.
\end{align}
For $M > 0$, we define the projection $\PM : \R \to [-M,M]$ as
\begin{align}
\label{DEF:PM}
\PM(s) = \begin{cases}
s & \text{ if } \abs{s} < M, \\
\frac{s}{\abs{s}} M & \text{ if } \abs{s} \geq M.
\end{cases}
\end{align}
Now, for any $M>0$, the test function $\eta = \chi_{[0,t_0]} \PM(\lu)$ belongs to $L^2(0,T;\Vk) \cap L^{\infty}(Q_T)$ and satisfies $\eta\vert_{\Sigma_T} \in \cap L^{\infty}(\Sigma_T)$. Hence, it can be used as a test function in \eqref{WF:INT:3}. Recalling the monotonicity of $F_1'$ and $G_1'$, we infer that
\begin{align}
\label{EST:UNQ}
\begin{aligned}
&\int_{Q_{t_0}} \lmu \PM(\lu) \dx \dt + \int_{\Sigma_{t_0}} \ltheta \PM(\lu) \dG \dt \\
&\quad \ge \int_{Q_{t_0}} \nabla \lu \cdot \nabla \PM(\lu) + \big(F_2'(u_1) - F_2'(u_2)\big) \PM(\lu) \dx \dt \\
&\qquad + \int_{\Sigma_{t_0}} \kappa \gradg \lu \cdot \gradg \PM(\lu) + \big(G_2'(u_1) - G_2'(u_2)\big) \PM(\lu) \dG \dt 
\end{aligned}
\end{align}
Applying the dominated convergence theorem, we can pass to the limit $M \to \infty$, leading to \eqref{EST:UNQ} with $\PM(\lu)$ replaced by $\lu$. Now, in combination with \eqref{EQ:UNQ}, we get
\begin{align}
	\label{EST:UNQ:2}
	\frac 1 2 \norm{\lu(t_0)}_{L,\beta,*}^2 + \norm{\grad\lu}_{L^2(Q_{t_0})}^2 + \kappa \norm{\gradg\lu}_{L^2(\Sigma_{t_0})}^2 \le C_\text{Lip} \big( \norm{\lu}_{L^2(Q_{t_0})}^2 + \norm{\lu}_{L^2(\Sigma_{t_0})}^2 \big)
\end{align}
where the constant $C_\text{Lip}$ depends only on the Lipschitz constants of $F_2'$ and $G_2'$. Invoking Lemma~\ref{LEM:INT} with $\alpha := (2C_\text{Lip})^{-1}$ we deduce from \eqref{EST:UNQ:2} that
\begin{align*}
	\frac 1 2 \norm{\lu(t_0)}_{L,\beta,*}^2 + \frac 1 2 \norm{\grad\lu}_{L^2(Q_{t_0})}^2  
		\le C_\text{Lip} C_\alpha \int_{0}^{t_0} \norm{\lu(t)}_{L,\beta,*}^2 \dt.
\end{align*}
Since $t_0$ was arbitrary this estimate holds for all $t_0\in[0,T]$. Hence, we can apply Gronwall's lemma to infer that 
\begin{align*}
	\norm{\SS(\bar u)}_{L,\beta} = \norm{\bar u}_{L,\beta,*} = 0.
\end{align*}
Recalling that $\SS(\bar u)$ is the weak solution of the system \eqref{EQ:LIN} to the right-hand side $\bar u$, we finally conclude that $\bar u=0$ a.e.~in $\Omega_T$ and $\bar u\vert_{\Sigma_T} =0$ a.e.~on $\Sigma_T$. In view of \eqref{WF:INT:3} for the difference of solutions, we obtain
\begin{align}\label{uniq}
\intO \bar \mu \eta \dx + \intG \bar \theta \eta \dG = 0
\end{align}
for arbitrary $\eta \in \Vk \cap L^\infty(\Omega)$ such that $\eta \vert_\Gamma \in L^\infty(\Gamma)$.  We first consider $\eta \in C^{\infty}_c(\Omega)$ and applying the fundamental lemma of calculus of variations to deduce that $\bar \mu = 0$ a.e.~in $\Omega$.  Then, the first term of \eqref{uniq} vanishes and consequently we infer that $\bar \theta = 0$ a.e.~on $\Gamma$.  Hence, we obtain the uniqueness of weak solutions.

\subparagraph{Step 8: Higher regularity.}
By arguing as in \cite[s.~4]{KL}, one can establish under assumption \eqref{ass:pot:reg} the additional regularity $u\in L^2(0,T;H^2(\Omega))$ for any $\kappa \geq 0$ and also $u\in L^2(0,T;H^2(\Gamma))$ if $\kappa>0$.  Let us sketch the arguments for the regularity assertions in \eqref{REG:INT:2}.  For $\kappa > 0$, since $\mu, F'(u) \in L^2(0,T;H^1(\Om))$ and $u \vert_{\Sigma_T} \in L^2(0,T;H^2(\Ga))$, elliptic regularity theory gives $u \in L^2(0,T;H^{5/2}(\Om))$.  Together with $\Delta u \in L^2(Q_T)$, a variant of the trace theorem implies $\pdnu u \in L^2(0,T;H^1(\Ga))$.  Then, as $\theta, G'(u) \in L^2(0,T;H^1(\Ga))$, by elliptic regularity we have $u \vert_{\Sigma_T} \in L^2(0,T;H^3(\Ga))$.  Employing this more regular boundary trace for $u$ with elliptic regularity yields $u \in L^2(0,T;H^3(\Om))$.

On the other hand, for $\kappa = 0$, we only have $u \in L^2(0,T;H^2(\Om))$ from \cite{KL}.  However, from $\eqref{CH:INT:2}_2$, since $\theta, G'(u) \in L^2(0,T;H^1(\Ga))$, we infer that $\pdnu u \in L^2(0,T;H^1(\Ga))$.  Then, by elliptic regularity theory we obtain $u \in L^2(0,T;H^{5/2}(\Om))$ and by the trace theorem $u \vert_{\Sigma_T} \in L^2(0,T;H^{2}(\Ga))$. 

\smallskip

Now, as all assertions are established, the proof of Theorem~\ref{THM:WP} is complete.
\end{proof}

\subsection{Improved regularity and strong solutions}

\begin{thm}
	\label{THM:SWP}
	Let $m\in\R$ be arbitrary. Suppose that \eqref{ass:dom} - \eqref{ass:pot:reg} hold and that 
	$u_0 \in \Wm$ with $(u_0,u_0\vert_\Gamma)\in\H^3$ if $\kappa > 0$ or with $u_0 \in H^3(\Omega)$ if $\kappa = 0$. Let $(u,\mu,\theta)$ denote the unique weak solution of the system \eqref{CH:INT} to the initial datum $u_0$ in the sense of Theorem~\ref{THM:WP}. Then, in addition to the regularity properties \eqref{REG:INT} and \eqref{REG:INT:2}, it holds that
	\begin{align}
	\label{REG:INT:3}
	\left\{\;
	\begin{aligned}
	&u \in H^1(0,T;\Vk), \quad &&\mu \in L^\infty(0,T;H^1(\Omega)) \cap L^2(0,T;H^2(\Omega)),\\
	&\pdnu \mu \in L^2(0,T;L^2(\Gamma)), \quad  &&\theta \in L^\infty(0,T;H^1(\Gamma)) \cap L^2(0,T;H^2(\Gamma)).
	\end{aligned}
	\right.
	\end{align}
	This means that $(u,\mu,\theta)$ is a strong solution of the system \eqref{CH:INT}.
\end{thm}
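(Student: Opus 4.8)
The plan is to obtain the additional regularity \eqref{REG:INT:3} from a higher-order energy estimate — morally the time derivative of the energy inequality \eqref{WF:INT:DISS} — followed by an elliptic bootstrap. All manipulations involving $\delt\mu$ and $\delt\theta$ must be justified rigorously by running the computation on the time difference quotients $\delta^h f(t):=h^{-1}\bigl(f(t\splus h)\sminus f(t)\bigr)$ and passing to the limit $h\to 0^+$, as in \cite[s.~4]{KL} (alternatively, by deriving the corresponding estimate on the time-discrete iterates $(u^n,\mu^n,\theta^n)$ from Step~1 of the proof of Theorem~\ref{THM:WP} and passing to the limit $N\to\infty$); below I describe the underlying formal computation. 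Set $\Phi(t):=\norm{\grad\mu(t)}_{L^2(\Om)}^2+\norm{\gradg\theta(t)}_{L^2(\Ga)}^2+\tfrac1L\norm{\beta\theta(t)\sminus\mu(t)}_{L^2(\Ga)}^2$. Testing \eqref{WF:INT:1} with $w=\delt\mu$ and \eqref{WF:INT:2} with $z=\delt\theta$ and adding, the two reaction terms combine into a perfect time derivative, giving $\intO\delt u\,\delt\mu\dx+\intG\delt u\,\delt\theta\dG=-\tfrac12\tfrac{d}{dt}\Phi(t)$. On the other hand, differentiating \eqref{WF:INT:3} in time and taking $\eta=\delt u$ yields $\intO\delt u\,\delt\mu\dx+\intG\delt u\,\delt\theta\dG=\norm{\grad\delt u}_{L^2(\Om)}^2+\kappa\norm{\gradg\delt u}_{L^2(\Ga)}^2+\intO F''(u)\abs{\delt u}^2\dx+\intG G''(u)\abs{\delt u}^2\dG$. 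Equating the two identities and using $F_1'',G_1''\ge0$ while $F_2'',G_2''$ are bounded (by \eqref{ass:pot:3}), we arrive at the differential inequality $\tfrac12\tfrac{d}{dt}\Phi+\norm{\grad\delt u}_{L^2(\Om)}^2+\kappa\norm{\gradg\delt u}_{L^2(\Ga)}^2\le C\bigl(\norm{\delt u}_{L^2(\Om)}^2+\norm{\delt u}_{L^2(\Ga)}^2\bigr)$.

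To close this, note that $\delt u(t)$ has vanishing generalised mean $\beta\abs{\Om}\mean{\delt u}_\Om+\abs{\Ga}\mean{\delt u}_\Ga=0$ (differentiate the constraint $u(t)\in\Wm$), so that the difference-quotient approximations $\delta^h u(t)$ lie in $\Wo$. Comparing \eqref{WF:INT:1}--\eqref{WF:INT:2} with the defining relation \eqref{WF:LIN} of $\SS$ shows that $\SS(\delt u(t))$ equals $(\mu(t),\theta(t))$ modulo a kernel element $(\beta c,c)$ of the seminorm $\norm{\cdot}_{L,\beta}$, hence the identity $\norm{\delt u(t)}_{L,\beta,*}^2=\Phi(t)$. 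Therefore Lemma~\ref{LEM:INT}, applied to $\delt u(t)$ with $\alpha$ sufficiently small, absorbs the right-hand side into $\tfrac12\norm{\grad\delt u}_{L^2(\Om)}^2+C_\alpha\Phi$, leaving $\tfrac{d}{dt}\Phi+\norm{\grad\delt u}_{L^2(\Om)}^2+2\kappa\norm{\gradg\delt u}_{L^2(\Ga)}^2\le C\Phi$. Since $(u_0,u_0\vert_\Gamma)\in\H^3$ if $\kappa>0$ (resp.\ $u_0\in H^3(\Om)$ if $\kappa=0$), evaluating \eqref{WF:INT:3} at $t=0$ and invoking elliptic regularity gives $\mu(0)\in H^1(\Om)$ and $\theta(0)\in H^1(\Ga)$, so $\Phi(0)<\infty$; Gronwall's lemma then yields $\Phi\in L^\infty(0,T)$, $\grad\delt u\in L^2(0,T;L^2(\Om))$, and $\gradg\delt u\in L^2(0,T;L^2(\Ga))$ when $\kappa>0$. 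A generalised Poincar\'e inequality — combining the $L^\infty$-in-time control of $\grad\mu$, $\gradg\theta$ and $\beta\theta\sminus\mu$ with the mean relation obtained from \eqref{WF:INT:3} with $\eta\equiv1$, and treating the degenerate case $\beta\abs{\Om}+\abs{\Ga}=0$ as in the proof of Theorem~\ref{THM:WP} — upgrades this to $\mu\in L^\infty(0,T;H^1(\Om))$ and $\theta\in L^\infty(0,T;H^1(\Ga))$. A final application of Lemma~\ref{LEM:INT} then gives $\delt u\in L^2(0,T;\Vk)$, i.e.\ $u\in H^1(0,T;\Vk)$.

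The elliptic bootstrap finishes the proof. From $\eqref{CH:INT:1}_1$ we have $\Lx\mu=\delt u\in L^2(0,T;L^2(\Om))$, and from \eqref{CH:INT:3}, $\pdnu\mu=\tfrac1L(\beta\theta\sminus\mu)\in L^2(0,T;L^2(\Ga))$; moreover $\theta\in L^\infty(0,T;H^1(\Ga))$ and $\mu\vert_{\Sigma_T}\in L^\infty(0,T;H^{1/2}(\Ga))$ give $\pdnu\mu\in L^2(0,T;H^{1/2}(\Ga))$, so elliptic regularity on the $C^3$-domain $\Om$ yields $\mu\in L^2(0,T;H^2(\Om))$. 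Then $\eqref{CH:INT:2}_1$ reads $\LB\theta=\delt u\vert_{\Sigma_T}+\beta\,\pdnu\mu\in L^2(0,T;L^2(\Ga))$, and elliptic regularity on the closed $C^3$-surface $\Ga$ gives $\theta\in L^2(0,T;H^2(\Ga))$. Together with \eqref{REG:INT} and \eqref{REG:INT:2}, this is exactly \eqref{REG:INT:3}, so $(u,\mu,\theta)$ is a strong solution of \eqref{CH:INT}.

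The main obstacle is the rigorous justification of the higher-order estimate: at this stage $\delt\mu$ and $\delt\theta$ are not yet known to lie in $H^1(\Om)$ and $H^1(\Ga)$, so both the pairings above and the time-differentiated form of \eqref{WF:INT:3} are purely formal. This is overcome by carrying out the whole computation on $\delta^h u$, $\delta^h\mu$, $\delta^h\theta$ — truncating $\delta^h u$ by $\PM$ so that it is an admissible test function in \eqref{WF:INT:3}, then letting $M\to\infty$ — deriving bounds uniform in $h$, and passing to $h\to0^+$; additional care is needed for the boundary trace $u\vert_{\Sigma_T}$ and for the degenerate parameter case $\beta\abs{\Om}+\abs{\Ga}=0$ noted above.
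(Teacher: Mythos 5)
Your proposal is correct and follows essentially the same route as the paper's proof: the paper carries out precisely this higher-order energy estimate on the time-discrete iterates $(u^n,\mu^n,\theta^n)$ from Step~1 of the proof of Theorem~\ref{THM:WP} (testing \eqref{DWF:INT:1}, \eqref{DWF:INT:2} with $-\pd_\tau\mu^{n+1}$, $-\pd_\tau\theta^{n+1}$ and the differenced \eqref{DWF:INT:3} with $\tfrac1\tau\pd_\tau u^{n+1}$), uses the same identity $\norm{\pd_\tau u^{n+1}}_{L,\beta,*}^2=\Phi^{n+1}$ together with Lemma~\ref{LEM:INT}, a discrete Gronwall argument and the $H^3$ initial data to bound $\mu^0,\theta^0$ in $H^1$, and finishes with the same elliptic bootstrap. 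The only cosmetic difference is that under \eqref{ass:pot:reg} the paper tests \eqref{DWF:INT:3} directly with $\eta\in\Vk$ (no $\PM$-truncation needed), and it works on the discrete scheme rather than on continuous-time difference quotients — the alternative you yourself identify as the rigorous justification.
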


\begin{proof}
To prove the assertion we will argue similar to the approach in \cite[s.~4.4]{Colli}.
Here, we use the letter $C$ to denote generic positive constants independent of $N$, $n$ and $\tau$ that may change their value from line to line.
Let $N$, $\tau$ and $(u^n,\mu^n,\theta^n), \; n=0,...,N$ be as defined in Step 1 of the proof of Theorem~\ref{THM:WP}. For brevity, we introduce the notation
\begin{align}
\label{DEF:DQ}
	\del_\tau u^{n+1} = \frac{u^{n+1}-u^n}{\tau},\quad
	\del_\tau \mu^{n+1} = \frac{\mu^{n+1}-\mu^n}{\tau},\quad
	\del_\tau \theta^{n+1} = \frac{\theta^{n+1}-\theta^n}{\tau}
\end{align}
to denote the backward difference quotient in time.
Let $n\in\{0,1,...,N-1\}$ be arbitrary. Testing \eqref{DWF:INT:1} with $w = -\pd_\tau \mu^{n+1} \in H^1(\Omega)$, \eqref{DWF:INT:2} with $z = -\pd_\tau \theta^{n+1} \in H^1(\Gamma)$ and adding the resulting equations leads to
\begin{align}
\label{EQ:REG:1}
\begin{aligned}
& -\intO \pd_\tau u^{n+1} \pd_\tau \mu^{n+1} \dx - \intG \pd_\tau u^{n+1} \pd_\tau \theta^{n+1} \dG \\
& \quad =  \frac{1}{2\tau} \big ( \norm{\nabla \mu^{n+1}}_{L^2(\Omega)}^2 - \norm{\nabla \mu^{n}}_{L^2(\Omega)}^2 + \norm{\nabla (\mu^{n+1} - \mu^n)}_{L^2(\Omega)}^2 \big ) \\
& \qquad + \frac{1}{2\tau} \big ( \norm{\gradg \theta^{n+1}}_{L^2(\Gamma)}^2 - \norm{\gradg \theta^{n}}_{L^2(\Gamma)}^2 + \norm{\gradg  (\theta^{n+1} - \theta^n)}_{L^2(\Gamma)}^2 \big ) \\
& \qquad + \frac{1}{2L \tau} \big (  \norm{\beta \theta^{n+1} - \mu^{n+1}}_{L^2(\Gamma)}^2 - \norm{\beta \theta^{n} - \mu^{n}}_{L^2(\Gamma)}^2 \\
& \qquad\qquad\qquad + \norm{\beta (\theta^{n+1} - \theta^n) -  (\mu^{n+1} - \mu^n)}_{L^2(\Gamma)}^2 \big ).
\end{aligned}
\end{align}
Since \eqref{ass:pot:reg} holds, the variational equation \eqref{DWF:INT:3} now holds for more general test functions $\eta \in \Vk$.
Taking the difference of \eqref{DWF:INT:3} for indices $n$ and $n+1$, and then choosing $\eta = \frac 1 \tau \pd_\tau u^{n+1} \in \Wk$ gives
\begin{align}
\label{EQ:REG:2}
\begin{aligned}
& \intO \pd_\tau \mu^{n+1} \pd_\tau u^{n+1} \dx + \intG \pd_\tau \theta^{n+1} \pd_\tau u^{n+1} \dG \\
& \quad = \norm{\nabla \pd_\tau u^{n+1}}_{L^2(\Omega)}^2 + \kappa \norm{\gradg \pd_\tau u^{n+1}}_{L^2(\Gamma)}^2 
	+ \intO \frac 1 \tau \big( F'(u^{n+1}) - F'(u^{n})\big) \pd_\tau u^{n+1} \dx\\
& \qquad  + \intG \frac 1 \tau \big( G'(u^{n+1}) - G'(u^{n}) \big) \pd_\tau u^{n+1} \dG.
\end{aligned}
\end{align}
Using the monotonicity of $F_1'$ and $G_1'$, the Lipschitz continuity of $F_2'$ and $G_2'$, after summing \eqref{EQ:REG:1} and \eqref{EQ:REG:2} and neglecting some non-negative terms we arrive at
\begin{equation}\label{Imp:reg:1}
\begin{aligned}
& \norm{\nabla \mu^{n+1}}_{L^2(\Omega)}^2 - \norm{\nabla \mu^{n}}_{L^2(\Omega)}^2 + \norm{\gradg \theta^{n+1}}_{L^2(\Gamma)}^2 - \norm{\gradg \theta^n}_{L^2(\Gamma)}^2   \\
& \qquad + \frac{1}{L} \norm{\beta \theta^{n+1} - \mu^{n+1}}_{L^2(\Gamma)}^2 - \frac{1}{L} \norm{\beta \theta^n - \mu^n}_{L^2(\Gamma)}^2 \\
& \qquad + 2 \tau \big ( \norm{\nabla \pd_\tau u^{n+1}}_{L^2(\Omega)}^2 + \kappa \norm{\gradg \pd_\tau u^{n+1}}_{L^2(\Gamma)}^2 \big ) \\[1ex]
& \quad \leq \tau C_\text{Lip} \big ( \norm{\pd_\tau u^{n+1}}_{L^2(\Omega)}^2 + \norm{\pd_\tau u^{n+1}}_{L^2(\Gamma)}^2 \big )
\end{aligned}
\end{equation}
where $C_\text{Lip}>0$ depends only on the Lipschitz constants of $F_2'$ and $G_2'$.
Since $\pd_\tau u^{n+1} \in \Wo$, we invoke Lemma~\ref{LEM:INT} (with $\alpha=C_\text{Lip}^{-1}$) to see that 
\begin{align}
\label{Imp:reg:1.5}
C_\text{Lip} \big ( \norm{\pd_\tau u^{n+1}}_{L^2(\Omega)}^2 + \norm{\pd_\tau u^{n+1}}_{L^2(\Gamma)}^2 \big ) \leq  \norm{\nabla \pd_\tau u^{n+1}}_{L^2(\Omega)}^2 +  C \norm{\pd_\tau u^{n+1}}_{L,\beta,*}^2.
\end{align}
According to \eqref{DEF:MT0} and \eqref{mu:theta:n+1}, the functions $\mu^{n+1}$ and $\theta^{n+1}$ can be expressed as
\begin{align*}
\mu^{n+1} = \SS_\Om(\pd_\tau u^{n+1}) + \beta c^{n+1},\quad \theta^{n+1} = \SS_\Ga(\pd_\tau u^{n+1}) + c^{n+1}.
\end{align*}
It thus follows that
\begin{align}\label{Imp:reg:2}
\begin{aligned}
&\norm{\pd_\tau u^{n+1}}_{L,\beta,*}^2 
= \norm{\SS(\pd_\tau u^{n+1})}_{L,\beta}^2
\\
&\quad = \norm{\nabla \mu^{n+1}}_{L^2(\Omega)}^2 + \norm{\gradg \theta^{n+1}}_{L^2(\Gamma)}^2 + \frac{1}{L}\norm{\beta \theta^{n+1} - \mu^{n+1}}_{L^2(\Gamma)}^2.
\end{aligned}
\end{align} 
Substituting the estimate \eqref{Imp:reg:1.5} and the identity \eqref{Imp:reg:2} into \eqref{Imp:reg:1}, we get 
\begin{equation}\label{Imp:reg:2.5}
\begin{aligned}
& \norm{\nabla \mu^{n+1}}_{L^2(\Omega)}^2 - \norm{\nabla \mu^{n}}_{L^2(\Omega)}^2 + \norm{\gradg \theta^{n+1}}_{L^2(\Gamma)}^2 - \norm{\gradg \theta^n}_{L^2(\Gamma)}^2   \\
& \qquad + \frac{1}{L} \norm{\beta \theta^{n+1} - \mu^{n+1}}_{L^2(\Gamma)}^2 - \frac{1}{L} \norm{\beta \theta^n - \mu^n}_{L^2(\Gamma)}^2 \\
& \qquad + \tau \big ( \norm{\nabla \pd_\tau u^{n+1}}_{L^2(\Omega)}^2 + \kappa \norm{\gradg \pd_\tau u^{n+1}}_{L^2(\Gamma)}^2 \big ) \\
& \quad \leq C \tau \left( \norm{\nabla \mu^{n+1}}_{L^2(\Omega)}^2 + \norm{\gradg \theta^{n+1}}_{L^2(\Gamma)}^2 + \frac{1}{L}\norm{\beta \theta^{n+1} - \mu^{n+1}}_{L^2(\Gamma)}^2 \right).
\end{aligned}
\end{equation}
Now we sum the inequalities \eqref{Imp:reg:2.5} from $n = 0$ to an arbitrary index $k \leq N-1$. With the help of the piecewise constant extensions \eqref{pi:cons} and piecewise linear extensions \eqref{pi:lin}, we find that 
\begin{equation}\label{Imp:reg:3}
\begin{aligned}
& \norm{\nabla \mu^{k+1}}_{L^2(\Omega)}^2 + \norm{\gradg \theta^{k+1}}_{L^2(\Gamma)}^2 + \frac{1}{L} \norm{\beta \theta^{k+1} - \mu^{k+1}}_{L^2(\Gamma)}^2 \\
& \qquad + \int_0^{k\tau} \norm{\nabla \bar u_N'(s)}_{L^2(\Omega)}^2 + \kappa \norm{\gradg \bar u_N'(s)}_{L^2(\Gamma)}^2 \ds \\
& \quad \leq C\int_0^T \norm{\nabla \mu_N(s)}_{L^2(\Omega)}^2 + \norm{\gradg \theta_N(s)}_{L^2(\Gamma)}^2 + \frac{1}{L} \norm{\beta \theta_N(s) - \mu_N(s)}_{L^2(\Gamma)}^2 ds \\
& \qquad + \norm{\nabla \mu_N(0)}_{L^2(\Omega)}^2 + \norm{\gradg \theta_N(0)}_{L^2(\Gamma)}^2 +\frac{1}{L} \norm{\beta \theta_N(0) - \mu_N(0)}_{L^2(\Gamma)}^2,
\end{aligned}
\end{equation}
where the prime indicates the derivative with respect to the time variable.
We now recall that $\mu_N(0) = \mu^0$ and $\theta_N(0) = \theta^0$, which according to \eqref{DWF:INT:3} satisfy
\begin{align}
\label{WF:INT:3:0}
\intO \mu^0 \eta \dx + \intG \theta^0 \eta \dG = \intO \nabla u^0 \cdot \nabla \eta + F'(u^0) \eta \dx + \intG \kappa \gradg u^0 \cdot \gradg \eta + G'(u^0) \eta \dG
\end{align}
for all $\eta \in \Vk$. We can first take $\eta \in C^\infty_c(\Omega) \subset \Vk$ to deduce that 
\begin{align*}
\mu^0 = - \Delta u^0 + F'(u_0) \; \text{ in the sense of distributions in } \Omega.
\end{align*}
By assumption of $u_0 \in H^3(\Omega)$, it holds that $\mu^0 \in H^1(\Omega)$ and the above identity holds a.e.~in $\Omega$.  Then, returning to \eqref{WF:INT:3:0}, we use the above identity to deduce that 
\begin{align*}
\theta^0 = - \kappa \LB u^0 + G'(u^0) + \pdnu u^0 \; \text{ in the sense of distributions on } \Gamma.
\end{align*}
If $\kappa > 0$, by the assumptions $u_0 \vert_\Ga \in H^3(\Ga)$ and \eqref{ass:pot}, we infer that $\theta^0 \in H^1(\Gamma)$, and if $\kappa = 0$, then by the assumption $u_0 \in H^3(\Omega) \subset H^{\frac{5}{2}}(\Ga)$, we see that $\pdnu u^0 \in H^1(\Ga)$ and thus $\theta^0 \in H^1(\Gamma)$ as well.  Hence, recalling the uniform estimate \eqref{IEQ:EUN}, we infer that the right-hand side of \eqref{Imp:reg:3} can be bounded by a constant $C >0$ independent of $N$, $\kappa$ and $\tau$.
As $k$ was arbitrary, we conclude that 
\begin{align}
\label{Imp:reg:4:1}
\norm{\grad\mu_N(t)}_{L^2(\Omega)}^2 + \norm{\gradg \theta_N(t)}_{L^2(\Gamma)}^2 + \frac{1}{L} \norm{\beta \theta_N(t) - \mu_N(t)}_{L^2(\Gamma)}^2 & \leq C,\\
\label{Imp:reg:4:2}
\int_0^T \norm{\nabla \bar u_N'(s)}_{L^2(\Omega)}^2 + \kappa \norm{\gradg \bar u_N'(s)}_{L^2(\Gamma)}^2 \ds &\leq C
\end{align}
for any $t \in (0,T]$. From the estimates \eqref{EST:MUN} and \eqref{EST:THN} we now deduce that 
\begin{align*}
	\norm{\mu_N(t)}_{L^2(\Omega)}^2  + \norm{\theta_N(t)}_{L^2(\Gamma)}^2 \leq C \quad \text{ for all } t \in (0,T].
\end{align*}
Moreover, invoking Lemma~\ref{LEM:INT}, \eqref{IEQ:EUN}, \eqref{Imp:reg:2} and \eqref{Imp:reg:4:2}, it holds that
\begin{equation}\label{Imp:reg:5}
\begin{aligned}
& \int_0^T \norm{\bar u_N'(s)}_{L^2(\Omega)}^2 + \norm{\bar u_N'(s)}_{L^2(\Gamma)}^2 \ds \leq \int_0^T \norm{\nabla \bar u_N'(s)}_{L^2(\Omega)}^2 + C \norm{\bar u_N'(s)}_{L,\beta,*}^2 \ds \\
&  \quad \leq C + C \int_0^T  \norm{\nabla \mu_N(s)}_{L^2(\Omega)}^2 + \norm{\gradg \theta_N(s)}_{L^2(\Gamma)}^2 + \frac{1}{L} \norm{\beta \theta_N(s) - \mu_N(s)}_{L^2(\Gamma)}^2 \ds \\
& \quad \leq C.
\end{aligned}
\end{equation}
Hence, in addition to \eqref{EST:PCE} and \eqref{EST:HLD}, we infer from \eqref{Imp:reg:4:1}-\eqref{Imp:reg:5} the following uniform estimates
\begin{align*}
\norm{\mu_N}_{L^\infty(0,T;H^1(\Omega))} + \norm{\theta_N}_{L^\infty(0,T;H^1(\Gamma))} + \norm{\beta \theta_N - \mu_N}_{L^\infty(0,T;L^2(\Gamma))} \leq C, \\
\norm{\bar u_N'}_{L^2(0,T;H^1(\Omega))} + \norm{\bar u_N'}_{L^2(0,T;L^2(\Gamma))} + \kappa \norm{\gradg \bar u_N'}_{L^2(0,T;L^2(\Gamma))} \leq C,
\end{align*}
leading to limit functions $(u, \mu, \theta)$ exhibiting the additional regularity
\begin{align*}
u \in H^1(0,T;\Vk), \quad \mu \in L^\infty(0,T;H^1(\Omega)), \quad \theta \in L^\infty(0,T;H^1(\Gamma)).
\end{align*}
Returning to \eqref{WF:INT:1} and \eqref{WF:INT:2}, which are the weak formulations of the elliptic problems
\begin{align*}
\begin{cases}
\Delta \mu = \pd_t u & \text{ in } \Omega, \\
\pdnu \mu = \tfrac 1 L(\beta \theta - \mu) & \text{ on } \Gamma,
\end{cases} 
\qquad \LB \theta = \pd_t u + \tfrac 1 L\beta (\beta \theta - \mu) \;\text{ on } \Gamma,
\end{align*}
we invoke elliptic regularity theory (see, e.g., \cite[s.~5, Prop.~7.7]{taylor} for the system in the bulk and \cite[s.~5, Thm.~1.3]{taylor} for the equation on the boundary) to find that 
\begin{align*}
\begin{split}
\norm{\mu}_{H^2(\Omega)} 
& \leq C \big ( \norm{\Delta \mu}_{L^2(\Omega)} + \norm{\mu}_{H^1(\Omega)} + \norm{\pdnu \mu}_{H^{1/2}(\Gamma)} \big ) \\
& = C \big ( \norm{\pd_t u}_{L^2(\Omega)} + \norm{\mu}_{H^1(\Omega)} + \norm{\theta}_{H^1(\Gamma)} \big ), 
\end{split}
\\[1ex]
\begin{split}
\norm{\theta}_{H^2(\Gamma)} 
& \leq C \big ( \norm{\LB \theta}_{L^2(\Gamma)} + \norm{\theta}_{H^1(\Gamma)} \big ) \\
& \leq C \big ( \norm{\pd_t u}_{L^2(\Gamma)} + \norm{\theta}_{H^1(\Gamma)} + \norm{\mu}_{H^1(\Omega)} \big ).
\end{split}
\end{align*}
Hence, we conclude that 
\begin{align*}
\mu \in L^2(0,T;H^2(\Omega)), \quad \pdnu \mu \in L^2(0,T;L^2(\Gamma)), \quad \theta \in L^2(0,T;H^2(\Gamma))
\end{align*}
and thus, the proof is complete.
\end{proof}

\subsection{Well-posedness results for the LW model and the GMS model}

For the reader's convenience, we now also present the well-posedness results for the LW model \eqref{CH:LW} and the GMS model \eqref{CH:GMS}. 

\begin{prop}[Well-posedness of the LW model] \label{prop:lw}
	Suppose that \eqref{ass:dom} - \eqref{ass:pot} hold and let $m = (m_b, m_s)  \in \R^2$ be arbitrary. Then for any 
	\begin{align}
	\label{DEF:VM}
		u_0^* \in \Vm := \{v \in \Vk \, : \, \mean{v}_\Om = m_b, \mean{v}_\Ga = m_s  \}
	\end{align}
	satisfying $F(u_0^*) \in L^1(\Omega)$ and $G(u_0^*) \in L^1(\Gamma)$, there exists a unique weak solution $(u^*, \mu^*, \theta^*)$ to \eqref{CH:LW}
	in the following sense:
	\begin{enumerate}[label=$(\mathrm{\roman*})$, ref = $\mathrm{\roman*}$]
		\item The functions $(u^*,\mu^*,\theta^*)$ have the following regularity
		\begin{align*}
		\left\{\;
		\begin{aligned}
		u^* & \in C([0,T];L^2(\Omega)) \cap L^\infty\big(0,T;H^1(\Omega)\cap L^p(\Omega)\big) \cap H^1(0,T;H^1(\Omega)'), \\
		u^*\vert_{\Sigma_T} & \in C([0,T];L^2(\Gamma)) \cap L^{\infty}(0,T;\Xk \cap L^q(\Ga)) \cap H^1(0,T;H^1(\Gamma)'), \\
		\mu^* &\in L^2(0,T;H^1(\Omega)),
		\qquad
		\theta^* \in L^2(0,T;H^1(\Gamma))
		\end{aligned}
		\right.
		\end{align*}
		and it holds that $u^*(t) \in \Vm$ for all $t\in[0,T]$. 
		\item
		The weak formulation
		\begin{subequations}\label{WF:LW}
			\begin{alignat}{2}
			\label{WF:LW:1}
			0 & = \inn{u^*_t}{w}_{H^1(\Omega)} + \intO \nabla \mu^* \cdot \nabla w \dx, \\
			\label{WF:LW:2}
			0 & = \inn{u^*_t}{z}_{H^1(\Gamma)} + \intG \gradg \theta^* \cdot \gradg z \dG, \\
			\label{WF:LW:3} 0 & = \intO \nabla u^* \cdot \nabla \eta + F'(u^*) \eta - \mu^* \eta \dx + \intG \kappa \gradg u^* \cdot \gradg \eta + G'(u^*) \eta - \theta^* \eta \dG,
			\end{alignat}
		\end{subequations}
		is satisfied almost everywhere in $[0,T]$ for all test functions $w\in H^1(\Omega)$, $z\in H^1(\Gamma)$ and $\eta\in \Vk \cap L^\infty(\Omega)$ with $\eta\vert_\Gamma \in L^\infty(\Gamma)$. Moreover, the initial condition $ u^*(0)=u^*_0 $ is satisfied a.e.~in $\Omega$.
		\item For $E$ as defined in \eqref{DEF:EN}, the energy inequality 
		\begin{align}
		\label{WF:LW:DISS}
		 E\big(u^*(t)\big) + \frac 1 2 \int_0^t \norm{\grad\mu^*(s)}_{L^2(\Om)}^2 
		+ \norm{\gradg \theta^*(s)}_{L^2(\Ga)}^2 \ds 
		\;\le\; E(u_0^*)
		\end{align}
		is satisfied for all $t\in[0,T]$.
	\end{enumerate}
	If we additionally assume that \eqref{ass:pot:reg} holds, then the regularity assertions \eqref{REG:INT:2} also hold.
\end{prop}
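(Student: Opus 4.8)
The assertion is essentially contained in \cite{LW} and \cite{KL}; the plan is to obtain it from the same minimising-movements scheme used in the proof of Theorem~\ref{THM:WP}, the only structural change being that the Robin coupling \eqref{CH:INT:3} is now absent. Because of the separate mass conservation laws \eqref{LW:MASS}, the natural Hilbert structure for the gradient-flow formulation of \eqref{CH:LW} is the \emph{product} inner product on pairs with vanishing bulk mean and vanishing surface mean, namely $\scp{(\phi,\psi)}{(\eta,\xi)}_* := (\phi,\eta)_{H^1(\Omega)',*} + (\psi,\xi)_{H^1(\Gamma)',*}$, where each summand is an inverse-Neumann-Laplacian inner product; equivalently one works with the decoupled solution operators $\SS_\Omega^{\LW}$ and $\SS_\Gamma^{\LW}$ of the problems $-\Delta \SS_\Omega^{\LW}(\phi) = -\phi$ in $\Omega$, $\deln \SS_\Omega^{\LW}(\phi) = 0$ on $\Gamma$, and $-\Delta_\Gamma \SS_\Gamma^{\LW}(\psi) = -\psi$ on $\Gamma$, both with vanishing mean. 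Accordingly, for fixed $N\in\N$ and $\tau = T/N$, one sets $u^0 = u_0^*$ and recursively chooses $u^{n+1}$ as a minimiser over $\Vm$ (as defined in \eqref{DEF:VM}) of $J_n(u) := \tfrac{1}{2\tau}\norm{u-u^n}_*^2 + E(u)$. Its existence follows from the direct method exactly as in Step~2 of the proof of Theorem~\ref{THM:WP} (a lower bound analogous to \eqref{BND:JN}; boundedness in $\Vk$ from \eqref{ass:pot:2}; a.e.~convergence along a subsequence via the compact embeddings $H^1(\Omega)\emb L^2(\Omega)$ and $H^1(\Omega)\emb L^2(\Gamma)$, so that both mean constraints pass to the limit; Fatou's lemma for $F,G$; convexity and weak lower semicontinuity for the rest). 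The Euler--Lagrange equation, rewritten with $\mu^{n+1} = \SS_\Omega^{\LW}(\partial_\tau u^{n+1}) + c_\Omega^{n+1}$ and $\theta^{n+1} = \SS_\Gamma^{\LW}(\partial_\tau u^{n+1}) + c_\Gamma^{n+1}$ (where $\partial_\tau u^{n+1}:=\tau^{-1}(u^{n+1}-u^n)$ and the constants $c_\Omega^{n+1},c_\Gamma^{n+1}$ are fixed so that \eqref{WF:LW:3} holds for all $\eta\in\Vk$), yields the time-discrete analogue of the weak formulation \eqref{WF:LW:1}--\eqref{WF:LW:3}.

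Next one derives the uniform estimates. Minimality of $u^{n+1}$ gives $E(u^{n+1}) + \tfrac{1}{2\tau}\norm{u^{n+1}-u^n}_*^2 \le E(u^n)$, hence $E(u^n)\le E(u_0^*)$ by induction; together with \eqref{ass:pot:2} this bounds the piecewise-constant extension $u_N$ in $L^\infty(0,T;\Vk\cap L^p(\Omega))$ and on the boundary, and the telescoped dissipation identity bounds $\nabla\mu_N$ in $L^2(Q_T)$ and $\gradg\theta_N$ in $L^2(\Sigma_T)$. The means of $\mu_N$ and $\theta_N$ are controlled via the generalised Poincar\'e inequality by testing \eqref{WF:LW:3} first with an $\eta\in C^\infty_c(\Omega)$ of unit integral and then with a fixed smooth function whose boundary integral does not vanish, as in the derivation of \eqref{EST:MUN} and \eqref{EST:THN}; in contrast to Theorem~\ref{THM:WP}, here the two means are controlled one after the other rather than jointly. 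H\"older-in-time bounds for the piecewise-linear interpolants in $H^1(\Omega)'$, $H^1(\Gamma)'$ and, by interpolation, in $L^2(\Omega)$ and $L^2(\Gamma)$ follow from the evolution equations as in Step~4 of the proof of Theorem~\ref{THM:WP}. Aubin--Lions--Simon compactness then produces a limit $(u^*,\mu^*,\theta^*)$ with the stated regularity; the nonlinearities $F'(u_N)$ and $G'(u_N)$ pass to the limit by Lebesgue's generalised convergence theorem using the growth bounds of \eqref{ass:pot:2} and the a.e.~convergence; and the energy inequality \eqref{WF:LW:DISS} follows from weak lower semicontinuity of the dissipation.

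For uniqueness one takes the difference $(\lu,\lmu,\ltheta)$ of two solutions with the same datum, notes $\mean{\lu(t)}_\Omega = \mean{\lu(t)}_\Gamma = 0$ for all $t$, applies the decoupled solution operators to $\lu$, tests the difference of \eqref{WF:LW:1}--\eqref{WF:LW:2} with time-antiderivatives of $\lmu$ and $\ltheta$ and the truncation $\eta = \chi_{[0,t_0]}\PM(\lu)$ in \eqref{WF:LW:3}, uses the monotonicity of $F_1'$ and $G_1'$ and the Lipschitz continuity of $F_2'$ and $G_2'$, lets $M\to\infty$, and closes with a (decoupled, hence more elementary) interpolation inequality of the type of Lemma~\ref{LEM:INT} together with Gronwall's lemma; that then $\lmu=0$ and $\ltheta=0$ follows from the fundamental lemma of the calculus of variations applied in the bulk and on the boundary.

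Finally, the higher regularity \eqref{REG:INT:2} under \eqref{ass:pot:reg} is obtained as in Step~8 of the proof of Theorem~\ref{THM:WP} and \cite[s.~4]{KL}: elliptic regularity for $-\Delta u^* = \mu^* - F'(u^*)$ together with a trace estimate for $\deln u^*$, followed by elliptic regularity for the boundary equation satisfied by $u^*\vert_{\Sigma_T}$, and a bootstrap. I expect the only genuinely delicate point---already present in \cite{KL}---to be the trace term $\deln u$ in the definition of $\theta$, which couples the bulk and surface elliptic subproblems for $u^*$ and hence forces one to invoke the trace theorem when improving the regularity, particularly in the case $\kappa=0$ where only $u^*\in L^2(0,T;H^2(\Omega))$ is available as a starting point.
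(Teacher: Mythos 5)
Your plan is correct and coincides with the approach the paper itself relies on: the paper does not prove Proposition~\ref{prop:lw} but refers to \cite[Thm.~2.1]{KL} (and \cite{LW,GK}), whose proof is precisely the minimising-movements scheme of Theorem~\ref{THM:WP} run with the decoupled product $H^{-1}$-type inner product and the two separate mean constraints, with the regularity \eqref{REG:INT:2} obtained as in Step~8 of Section~\ref{sec:wellposed}. The details you flag (two additive constants for $\mu^{n+1}$ and $\theta^{n+1}$, sequential control of the two means, the decoupled analogue of Lemma~\ref{LEM:INT} in the uniqueness step, and the trace coupling $\pdnu u$ in the $\kappa=0$ bootstrap) are exactly the points where the LW argument differs from that of Theorem~\ref{THM:WP}, and you handle them correctly.
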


The above well-posedness assertion was first established in \cite[Thm.~3.1 and Thm.~3.2]{LW}. In the case $\kappa=0$ the authors needed a strong assumption on the domain $\Omega$ and its boundary $\Gamma$. However, it was later shown in \cite{GK} that this assumption can actually be omitted if a slightly weaker notion of weak solutions is used. For a proof of Proposition~\ref{prop:lw} see \cite[Thm~2.1]{KL}, while the regularity assertion \eqref{REG:INT:2} can be shown with the arguments in Step 8 of Section \ref{sec:wellposed}.

\begin{prop}[Well-posedness of the GMS model]\label{prop:GMS} 
	Suppose that \eqref{ass:dom} - \eqref{ass:pot} hold with $\beta>0$ and let $m \in \R$ be arbitrary. Then for any $u_{0,*} \in \Wm$ satisfying $F(u_{0,*}) \in L^1(\Omega)$ and $G(u_{0,*}) \in L^1(\Gamma)$, there exists a unique weak solution $(u_*, \mu_*, \theta_*)$ to the system \eqref{CH:GMS} in the following sense
	\begin{enumerate}[label=$(\mathrm{\roman*})$, ref = $\mathrm{\roman*}$]
		\item The functions $(u_*,\mu_*,\theta_*)$ have the following regularity
		\begin{align}
		\label{REG:GMS}
		\left\{
		\begin{aligned}
			u_* & \in C([0,T];L^2(\Omega)) \cap L^\infty\big(0,T;H^1(\Omega)\cap L^p(\Omega)\big) 
				\cap H^1(0,T;\V'), \\
			u_*\vert_{\Sigma_T}  & \in C([0,T];L^2(\Ga)) \cap L^{\infty}(0,T;\Xk \cap L^q(\Ga)) 
			, \\
			\mu_* &\in L^2(0,T;H^1(\Omega)),
			\qquad
			\theta_* \in L^2(0,T;H^1(\Gamma))
		\end{aligned}
		\right.
		\end{align}
		and it holds that $\beta\theta_* = \mu_* \vert_{\Sigma_{T}}$ a.e.~on $\Sigma_T$.
		Moreover,
		$u_*(t) \in \Wm$ for all $t\in[0,T]$. 
		\item
		The weak formulation
		\begin{subequations}
			\label{WF:GMS}
			\begin{alignat}{2}
			\label{WF:GMS:1} 0 & = \inn{u_{*,t}}{w}_{\V,\beta}  + \beta\intO \nabla \mu_* \cdot \nabla w \dx + \intG \gradg \theta_* \cdot \gradg w\dG, \\
			\label{WF:GMS:2} 0 & = \intO \nabla u_* \cdot \nabla \eta + F'(u_*) \eta - \mu_* \eta \dx + \intG \kappa \gradg u_* \cdot \gradg \eta + G'(u_*) \eta - \theta_* \eta \dG
			\end{alignat}
		\end{subequations}
		is satisfied almost everywhere in $[0,T]$ for all $w\in \V$ and $\eta\in \Vk \cap L^\infty(\Omega)$ with $\eta\vert_\Gamma \in L^\infty(\Gamma)$. Moreover, the initial condition $ u_*(0)=u_{0,*} $ is satisfied a.e.~in $\Omega$.
		\item For $E$ as defined in \eqref{DEF:EN}, the energy inequality 
		\begin{align}
		\label{WF:GMS:DISS}
		 E\big(u_*(t)\big) + \frac 12 \int_0^t \norm{\grad\mu_*(s)}_{L^2(\Om)}^2 
			+ \norm{\gradg \theta_*(s)}_{L^2(\Ga)}^2 \ds 
		\;\le\; E(u_{0,*})
		\end{align}
		is satisfied for all $t\in[0,T]$.
	\end{enumerate}
	If we additionally assume that \eqref{ass:pot:reg} holds, then the regularity assertions \eqref{REG:INT:2} also hold. 
\end{prop}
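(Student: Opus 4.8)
The plan is to mimic the minimising-movements scheme from the proof of Theorem~\ref{THM:WP}, replacing the inner product $\scp{\cdot}{\cdot}_{L,\beta,*}$ by its formal limit as $L\to0$; this keeps the argument self-contained and avoids any circular dependence on the $L\to0$ analysis of Section~4 (the original existence and uniqueness proof being that of \cite{GMS}). First I would set up the functional framework. Since $\beta>0$, the Robin condition in \eqref{EQ:LIN} collapses as $L\to0$ into the compatibility constraint $\beta\SS_{0,\Ga}=\SS_{0,\Om}$ on $\Ga$, so for $\phi\in\Wod$ one solves the corresponding reduced elliptic problem ($-\Lx\SS_{0,\Om}=-\phi$ in $\Om$, $-\LB\SS_{0,\Ga}+\beta\pdnu\SS_{0,\Om}=-\phi$ on $\Ga$, $\beta\SS_{0,\Ga}=\SS_{0,\Om}$ on $\Ga$) whose $L\to0$ weak formulation, after eliminating $\SS_{0,\Ga}=\beta^{-1}\SS_{0,\Om}\vert_\Ga$, is a coercive symmetric problem on a closed codimension-one subspace of $\V$; Lax--Milgram then gives a unique $\SS_0(\phi)=(\SS_{0,\Om}(\phi),\SS_{0,\Ga}(\phi))\in\Ho$, just as in \cite[Thm.~3.3]{knopf-liu}. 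This yields the inner product
\begin{align*}
\scp{\phi}{\psi}_{0,\beta,*}:=\intO\nabla\SS_{0,\Om}(\phi)\cdot\nabla\SS_{0,\Om}(\psi)\dx + \intG\gradg\SS_{0,\Ga}(\phi)\cdot\gradg\SS_{0,\Ga}(\psi)\dG
\end{align*}
on $\Wod$, the formal $L\to0$ limit of $\scp{\cdot}{\cdot}_{L,\beta,*}$, whose restriction to $\Wo$ is an (incomplete) norm $\norm{\cdot}_{0,\beta,*}$. Repeating the compactness--contradiction argument behind Lemma~\ref{LEM:INT} --- the key point again being that a spatial constant lying in $\Wo$ must vanish because $\beta\abs{\Om}+\abs{\Ga}>0$ --- then gives the companion interpolation inequality: for every $\alpha>0$ there is $C_\alpha>0$ with $\norm{u}_{L^2(\Om)}^2+\norm{u}_{L^2(\Ga)}^2\le\alpha\norm{\nabla u}_{L^2(\Om)}^2+C_\alpha\norm{u}_{0,\beta,*}^2$ for all $u\in\Wo$.

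With these tools the remainder runs parallel to the proof of Theorem~\ref{THM:WP}. For $\tau=T/N$ and $u^0=u_{0,*}$ I would define $u^{n+1}$ as a minimiser over $\Wm$ of $J_n(u)=\tfrac{1}{2\tau}\norm{u-u^n}_{0,\beta,*}^2+E(u)$ (existence by the direct method, verbatim as in Step~2), derive the Euler--Lagrange equation, and set $(\mathring\mu^{n+1},\mathring\theta^{n+1})=\SS_0(\pd_\tau u^{n+1})$ together with $\mu^{n+1}=\mathring\mu^{n+1}+\beta c^{n+1}$, $\theta^{n+1}=\mathring\theta^{n+1}+c^{n+1}$ for a single constant $c^{n+1}$ --- the ratio $\beta:1$ preserves $\mu^{n+1}\vert_\Ga=\beta\theta^{n+1}$, and $c^{n+1}$ is well defined since $\beta\abs{\Om}+\abs{\Ga}>0$ --- obtaining the time-discrete analogue of \eqref{WF:GMS}. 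Minimality gives the discrete dissipation $\tfrac{1}{2\tau}\norm{u^{n+1}-u^n}_{0,\beta,*}^2+E(u^{n+1})\le E(u^n)$, hence after telescoping a discrete energy inequality bounding $u_N$ in $L^\infty(0,T;\Vk\cap L^p(\Om))$ and $\nabla\mu_N$ in $L^2(Q_T)$, $\gradg\theta_N$ in $L^2(\Sigma_T)$; a generalised Poincar\'e inequality and testing \eqref{WF:GMS:2} with $\eta\equiv1$ then upgrade these to uniform $L^2(0,T;H^1)$ bounds on $\mu_N,\theta_N$, as in Step~3. Testing the discrete version of \eqref{WF:GMS:1} gives a uniform bound on $\pd_t\bar u_N$ in $L^2(0,T;\V')$ and equi-H\"older-$\tfrac12$ continuity of $\bar u_N$ in $\V'$ (here the bulk and surface equations only deliver this single dual estimate, which is why \eqref{REG:GMS} asserts weaker temporal regularity than \eqref{REG:INT}); since $\Vk\emb\LL^2$ compactly and $\LL^2\emb\V'$ continuously, the Aubin--Lions lemma \cite{simon} gives $\bar u_N\to u$ in $C([0,T];\LL^2)$. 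Combining this with a.e.\ convergence of $u_N$, the growth bounds on $F',G'$ and Lebesgue's general convergence theorem, one passes to the limit in the discrete system to obtain a weak solution with the regularity \eqref{REG:GMS} and, by weak lower semicontinuity of $E$, the energy inequality \eqref{WF:GMS:DISS}.

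Uniqueness I would handle exactly as in Step~7: for the difference $(\bar u,\bar\mu,\bar\theta)$ of two solutions one has $\bar u(t)\in\Wo$ and $\pd_t\bar u\in\Wod$ (test \eqref{WF:GMS:1} with $w\equiv1$), so integrating \eqref{WF:GMS:1} in time against a primitive of $\bar\mu$, identifying $\SS_0(\bar u)$, and testing \eqref{WF:GMS:2} with the truncations $\PM(\bar u)$ (then letting $M\to\infty$) leads to
\begin{align*}
\tfrac12\norm{\bar u(t_0)}_{0,\beta,*}^2 &+\norm{\nabla\bar u}_{L^2(Q_{t_0})}^2+\kappa\norm{\gradg\bar u}_{L^2(\Sigma_{t_0})}^2\\
&\le C_\text{Lip}\bigl(\norm{\bar u}_{L^2(Q_{t_0})}^2+\norm{\bar u}_{L^2(\Sigma_{t_0})}^2\bigr);
\end{align*}
absorbing the right-hand side via the interpolation inequality above and applying Gronwall's lemma forces $\bar u\equiv0$, whence $\bar\mu\equiv0$ and $\bar\theta\equiv0$ from \eqref{WF:GMS:2}. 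Finally, under \eqref{ass:pot:reg} the bootstrap of Step~8 of Section~\ref{sec:wellposed}, using the relation $\mu\vert_{\Sigma_T}=\beta\theta$ to transfer regularity between the bulk and boundary elliptic subproblems, yields \eqref{REG:INT:2}.

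I expect the main obstacle to be the functional-analytic set-up of the limiting solution operator $\SS_0$ and the verification that $\norm{\cdot}_{0,\beta,*}$ and its companion interpolation inequality behave exactly like their $L>0$ counterparts even though the Robin boundary condition has degenerated into a rigid pointwise constraint; once this is secured, the gradient-flow machinery of Theorem~\ref{THM:WP} carries over with essentially only notational changes.
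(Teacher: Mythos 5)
Your proposal is correct, but it takes a genuinely different route from the paper: for this proposition the paper does not give a proof at all beyond citing \cite{GMS} (Thm.~3.2) for existence and uniqueness, adding only that the $C([0,T];L^2)$ regularity follows from the Aubin--Lions lemma as in Step~5 of Section~\ref{sec:wellposed} and that \eqref{REG:INT:2} follows from the Step~8 bootstrap. What you do instead is reconstruct the result self-containedly by running the minimising-movements machinery of Theorem~\ref{THM:WP} with the degenerate ($L=0$) solution operator $\SS_0$, in which the Robin condition of \eqref{EQ:LIN} collapses to the pointwise coupling $\beta\SS_\Ga=\SS_\Om$ on $\Ga$; this is exactly the gradient-flow structure the paper attributes to the GMS model (citing \cite{GK}), and it mirrors how \cite{GK,KL} treat the LW model. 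The pieces you need all check out: the reduced elliptic problem is coercive on the mean-constrained subspace of $\V$ after eliminating $\SS_{0,\Ga}=\beta^{-1}\SS_{0,\Om}\vert_\Ga$ (the compatibility condition for the data is precisely $\phi\in\Wod$); the contradiction proof of Lemma~\ref{LEM:INT} transfers verbatim because constancy of $\SS_{0,\Om}(u)$ and $\SS_{0,\Ga}(u)$ together with the built-in constraint again forces $u=0$ through the weak formulation; the single additive constant $c^{n+1}$ in the ratio $\beta:1$ is well defined since $\beta>0$ gives $\beta\abs{\Om}+\abs{\Ga}>0$ (so the awkward second case of Step~1 of Theorem~\ref{THM:WP} never arises) and preserves $\mu^{n+1}\vert_\Ga=\beta\theta^{n+1}$; and in the uniqueness step the single test function $w=\beta^{-1}\bar\mu\in\V$ plays the role of the pair $(\bar\mu,\bar\theta)$, yielding $-\tfrac12\norm{\bar u(t_0)}_{0,\beta,*}^2$ on the right-hand side of the analogue of \eqref{EQ:UNQ}. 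What your route buys is a proof under exactly the hypotheses \eqref{ass:dom}--\eqref{ass:pot} and in exactly the solution concept \eqref{WF:GMS} used later in Section~4 (Lipschitz domain, the specific growth and decomposition assumptions on $F$ and $G$, test functions in $\V$), so no reconciliation with the assumptions and weak formulation of \cite{GMS} is needed, and the energy inequality \eqref{WF:GMS:DISS} and the constraint $\beta\theta_*=\mu_*\vert_{\Sigma_T}$ come out directly; the price is length where the paper settles for a citation. One small point to make explicit when writing this up: the $L^2$ bound on $\mu_N$ in your Step~3 analogue needs the weighted mean $\intO\mu^{n+1}\dx+\intG\theta^{n+1}\dG$ controlled via the $\eta\equiv1$ test of the discrete potential equation before the generalised Poincar\'e inequality is applied, exactly as in the proof of Theorem~\ref{THM:WP}.
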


A proof of the well-posedness assertion can be found in \cite[Thm.~3.2]{GMS}. We point out that the regularity results $u_* \in C([0,T];L^2(\Omega))$ and $u_*\vert_{\Sigma_T} \in C([0,T];L^2(\Ga))$ are not mentioned in \cite[Thm.~3.2]{GMS} but follow straightforwardly from the Aubin--Lions lemma, see Step 5 of Section \ref{sec:wellposed}. 

To establish the convergence rates in Section~4, we will need the following regularity result for solutions of the GMS model in the case $\kappa > 0$.

\begin{prop}[Higher regularity for the GMS model]\label{prop:GMS:reg}
	Suppose that \eqref{ass:dom} - \eqref{ass:pot} hold with $\kappa,\beta>0$ and let $m \in \R$ be arbitrary. Then, if $u_{0,*} \in  \Wm$ with $(u_{0,*},u_{0,*}\vert_\Ga) \in \H^2$ the unique weak solution $(u_*, \mu_*,\theta_*)$ to the system \eqref{CH:GMS} satisfies the further regularity 
	\begin{alignat*}{4}
	u_* &\in H^1(0,T;L^2(\Omega)), \quad &\mu_* &\in L^\infty(0,T;L^2(\Omega)) \cap L^2(0,T;H^2(\Omega)), \\
	u_* \vert_{\Sigma_T} &\in H^1(0,T;L^2(\Gamma)), \quad  &\mu_* \vert_{\Sigma_T} &\in L^\infty(0,T;L^2(\Gamma)) \cap L^2(0,T;H^2(\Gamma)).
	\end{alignat*}
	This means that $(u_*,\mu_*,\theta_*)$ is a strong solution of the system \eqref{CH:GMS}.
\end{prop}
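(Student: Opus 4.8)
The plan is to adapt the proof of Theorem~\ref{THM:SWP} to the gradient-flow structure underlying the GMS model. Concretely, I would work with the minimising-movements time discretisation of the GMS flow -- the formal $L\to 0$ counterpart of the scheme constructed in Step~1 of the proof of Theorem~\ref{THM:WP} -- whose iterates $(u^n,\mu^n,\theta^n)_{n=0}^{N}$ satisfy $u^0=u_0$, the compatibility $\mu^n\vert_\Gamma=\beta\theta^n$ for $n\ge 1$, and the discrete analogues of the weak formulation \eqref{WF:GMS}, together with the discrete energy estimate $E(u^{n+1})+\tfrac1{2\tau}\norm{u^{n+1}-u^n}_{*}^2\le E(u^n)$ (with $\norm{\cdot}_*$ the norm induced by the GMS inner product, i.e.\ the $L\to0$ analogue of $\norm{\cdot}_{L,\beta,*}$), which reproduces the bounds already contained in Proposition~\ref{prop:GMS}. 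Equivalently, one may run the whole argument on difference quotients in time of the weak solution furnished by Proposition~\ref{prop:GMS}; the bookkeeping is the same.

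First I would difference the discrete system in the time index $n$, test the differenced dynamics equation \eqref{WF:GMS:1} with a backward difference quotient of the chemical potential -- admissible because $\mu^{n+1}\vert_\Gamma=\beta\theta^{n+1}\in H^1(\Gamma)$ when $\kappa>0$ -- and the differenced constitutive relation \eqref{WF:GMS:2} with a suitable multiple of $\partial_\tau u^{n+1}\in\Wk$, and add the two identities so that the cross terms $\int_\Omega\partial_\tau\mu^{n+1}\,\partial_\tau u^{n+1}+\int_\Gamma\partial_\tau\theta^{n+1}\,\partial_\tau u^{n+1}$ cancel, exactly as in \eqref{EQ:REG:1}--\eqref{EQ:REG:2}. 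The monotone parts $F_1',G_1'$ then produce nonnegative contributions which are discarded, while the Lipschitz parts $F_2',G_2'$ are dominated by $C_{\mathrm{Lip}}\bigl(\norm{\partial_\tau u^{n+1}}_{L^2(\Omega)}^2+\norm{\partial_\tau u^{n+1}}_{L^2(\Gamma)}^2\bigr)$; this lower-order term is absorbed by means of an interpolation estimate of the type of Lemma~\ref{LEM:INT} adapted to the GMS norm (which I would establish, as in the appendix, by a compactness-contradiction argument) together with the identity $\norm{\partial_\tau u^{n+1}}_{*}^2=\norm{\nabla\mu^{n+1}}_{L^2(\Omega)}^2+\norm{\gradg\theta^{n+1}}_{L^2(\Gamma)}^2$. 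Summing in $n$, telescoping and using Gronwall's lemma then gives uniform bounds for $(\mu^{k},\theta^{k})$ and for $\int_0^T\norm{\partial_\tau u_N}_{L^2(\Omega)}^2+\norm{\partial_\tau u_N}_{L^2(\Gamma)}^2$; passing to the limit $N\to\infty$ along a subsequence transfers these to $(u_*,\mu_*,\theta_*)$ and yields $u_*\in H^1(0,T;\LL^2)$ as well as $\mu_*\in L^\infty(0,T;L^2(\Omega))$ and $\theta_*\in L^\infty(0,T;L^2(\Gamma))$. Finally, since $\partial_t u_*\in L^2(0,T;L^2(\Omega))$ and $\partial_t u_*\vert_{\Sigma_T}\in L^2(0,T;L^2(\Gamma))$, I would invoke elliptic regularity theory for the coupled (Wentzell-type) problem $\Lx\mu_*=\partial_t u_*$ in $\Omega$ with $\mu_*\vert_\Gamma=\beta\theta_*$ and $\LB\theta_*=\partial_t u_*+\beta\pdnu\mu_*$ on $\Gamma$ to obtain $\mu_*\in L^2(0,T;H^2(\Omega))$ and $\theta_*\in L^2(0,T;H^2(\Gamma))$, in the spirit of the regularity results already cited in Theorem~\ref{THM:SWP}.

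A point requiring care is the time-zero contribution produced by the telescoping, which involves $(\mu^0,\theta^0)$; these are reconstructed from $u_0$ through the constitutive relation, $\mu^0=-\Lx u_0+F'(u_0)$ and $\theta^0=-\kappa\LB u_0+G'(u_0)+\pdnu u_0$. The differencing and the choice of test functions must be organised so that only the $\LL^2$-norms of $(\mu^0,\theta^0)$ enter at this step, and these are finite since $u_0\in H^2(\Omega)$, $u_0\vert_\Gamma\in H^2(\Gamma)$, \eqref{ass:pot} and the trace theorem ensure $\Lx u_0,F'(u_0)\in L^2(\Omega)$ and $\LB u_0,G'(u_0),\pdnu u_0\in L^2(\Gamma)$ -- with, notably, no compatibility between $\mu^0$ and $\beta\theta^0$ required. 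This is exactly why the conclusion is one notch weaker than in Theorem~\ref{THM:SWP}: $\mu_*,\theta_*\in L^\infty_tL^2$ and $u_*\in H^1_t\LL^2$ rather than $\mu_*,\theta_*\in L^\infty_tH^1$ and $u_*\in H^1_t\Vk$.

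The hard part will be the genuine bulk-surface coupling encoded by $\mu\vert_\Gamma=\beta\theta$: unlike in the LW-type situation, the GMS inner product mixes bulk and surface quantities, so both the interpolation estimate replacing Lemma~\ref{LEM:INT} and the final elliptic-regularity step have to be carried out for the coupled Wentzell operator rather than for decoupled bulk and surface problems. A secondary difficulty is ensuring that the time-zero estimate stays at the level of $\LL^2$, so that only $\H^2$-regularity of $u_0$ (and not $\H^3$) is used.
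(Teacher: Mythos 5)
Your overall strategy --- run the minimising-movement/difference-quotient version of the second energy estimate of Theorem~\ref{THM:SWP} on the GMS system, cancel the cross terms between the tested dynamics and the differenced constitutive relation, absorb the Lipschitz parts with a GMS-adapted version of Lemma~\ref{LEM:INT}, and finish with elliptic regularity for the coupled Wentzell problem --- is exactly the adaptation the paper has in mind, and the concluding elliptic step is carried out correctly. The problem lies where you yourself flag it, at the time-zero term, and your proposal does not actually resolve it. With the test functions you specify (the dynamics paired with a backward difference quotient of the potentials, the differenced constitutive relation paired with $\partial_\tau u^{n+1}$), the quantity that telescopes is, as in \eqref{EQ:REG:1}--\eqref{Imp:reg:1}, the \emph{gradient} energy $\beta\norm{\nabla\mu^{n+1}}_{L^2(\Omega)}^2+\norm{\gradg\theta^{n+1}}_{L^2(\Gamma)}^2$ of the potentials, because the dynamics equation \eqref{WF:GMS:1} only ever couples to $\nabla\mu$ and $\gradg\theta$. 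Consequently the boundary term produced by the summation is $\beta\norm{\nabla\mu^{0}}_{L^2(\Omega)}^2+\norm{\gradg\theta^{0}}_{L^2(\Gamma)}^2$, where $(\mu^0,\theta^0)$ are reconstructed from $u_0$ as in \eqref{WF:INT:3:0}; finiteness of this term requires $\mu^0\in H^1(\Omega)$ and $\theta^0\in H^1(\Gamma)$, hence $(u_0,u_0\vert_\Gamma)\in\H^3$, not $\H^2$. Your assertion that ``the differencing and the choice of test functions must be organised so that only the $\LL^2$-norms of $(\mu^0,\theta^0)$ enter'' is precisely the missing step: no choice of test functions within the family you describe produces an identity whose telescoped quantity is the $L^2$-norm of the potentials while still generating $\norm{\partial_\tau u^{n+1}}_{L^2(\Omega)}^2+\norm{\partial_\tau u^{n+1}}_{L^2(\Gamma)}^2$ on the dissipative side, and you give no alternative mechanism.

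Two related points. First, there is an internal mismatch: the estimate you actually describe, if it closed, would deliver the Theorem~\ref{THM:SWP}-type conclusions $u_*\in H^1(0,T;\Vk)$ and $\mu_*\in L^\infty(0,T;H^1(\Omega))$, not the weaker $H^1(0,T;L^2)$ and $L^\infty(0,T;L^2)$ regularity stated in Proposition~\ref{prop:GMS:reg}; weakening the asserted conclusion does not by itself weaken the data requirement of the estimate, so the claim that the $\H^2$ hypothesis suffices ``because the conclusion is one notch weaker'' is not substantiated. Second, the reconstructed pair $(\mu^0,\theta^0)$ will in general \emph{not} satisfy the GMS compatibility $\mu^0\vert_\Gamma=\beta\theta^0$ for $u_0\in\Wm\cap\H^2$, so it does not lie in the constrained space on which the GMS inner product and solution operator are defined; the $n=0$ step of the telescoping therefore needs a separate treatment (e.g.\ starting the summation at $n=1$ together with a first-step estimate, or a projection onto the constraint), which you mention in passing but do not supply. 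Until the initial-layer issue is handled, the proof establishes the proposition only under the stronger hypothesis $(u_{0,*},u_{0,*}\vert_\Gamma)\in\H^3$.
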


The assertions of Proposition~\ref{prop:GMS:reg} do not follow immediately from the results in \cite{Colli}. However, they can be established with slight modifications in the proof of Theorem~\ref{THM:SWP}, which follows the line of argument in \cite{Colli}.

\section{Asymptotic limits}

In this section we investigate the asymptotic limits $L\to\infty$ and $L\to 0$ of the system \eqref{CH:INT}.  We first present some general estimates for solutions to the system \eqref{CH:INT}.

\paragraph{Uniform estimates.} Suppose that \eqref{ass:dom} - \eqref{ass:pot} hold and let $u_{0} \in  \Wm$ be any initial datum satisfying $F(u_{0}) \in L^1(\Omega)$ and $G(u_{0}) \in L^1(\Gamma)$.
For any $L>0$, let $(u^L, \mu^L, \theta^L)$ denote the corresponding weak solution to the system \eqref{CH:INT} in the sense of Theorem~\ref{THM:WP}. In the following, we use the letter $C$ to denote generic positive constants independent of $L$. From the energy inequality \eqref{WF:INT:DISS} we conclude that
\begin{align}
\begin{aligned}
\| \Ll u \|_{L^{\infty}(0,T;H^1(\Om))} + \| \Ll u \|_{L^{\infty}(0,T;L^p(\Om))} + \| \Ll u \|_{L^{\infty}(0,T;\Xk)} + \| \Ll u \|_{L^{\infty}(0,T;L^q(\Ga))}
&\le C, \\
\label{lim:1}
\| \nabla \Ll \mu \|_{L^2(Q_T)}^2 
+ \| \gradg \Ll \theta \|_{L^2(\Sigma_T)}^2 
+ \tfrac 1 L \| \beta \Ll \theta - \Ll \mu \|_{L^2(\Sigma_T)}^2 
&\le C.
\end{aligned}
\end{align}
Arguing as in Step 3 of the proof Theorem~\ref{THM:WP}, we additionally infer that
\begin{align}\label{lim:2}
\| \mu^L \|_{L^2(Q_T)} + \| \theta^L \|_{L^2(\Sigma_T)} \leq C.
\end{align}
Proceeding similarly as in Step 4 of the proof of Theorem~\ref{THM:WP} and exploiting the energy inequality \eqref{WF:INT:DISS}, we derive the uniform estimate 
\begin{align}
\label{lim:3:LW}
\| u_t^L \|_{L^2(0,T;H^1(\Omega)')} + \| u_t^L \|_{L^2(0,T;H^1(\Gamma)')} \leq C\left(1+\tfrac{1}{\sqrt{L}}\right).
\end{align}
Let now $w\in\V$ be an arbitrary test function, then testing \eqref{WF:INT:1} with $\beta w$ and \eqref{WF:INT:2} with $w$, summing and integrating the resulting equations yields the bound
\begin{align}
\label{lim:3:GMS}
	\| u_t^L \|_{L^2(0,T;\V')} \leq C \quad \text{ if }\; \beta>0,
\end{align}
where $\V'$ is endowed with the norm $\norm{\cdot}_{\V',\beta}$ as introduced in \eqref{pre:V}.

Assume additionally that \eqref{ass:pot:reg} holds,
we note that the arguments to the regularity assertion \eqref{REG:INT:2} do not involve the parameter $L$, and so we deduce that
\begin{equation}\label{lim:4}
\begin{aligned}
\| (u^L, u^L \vert_{\Sigma_T}) \|_{L^2(0,T;\H^3)} \leq C & \text{ if } \kappa > 0, \\
\| (u^L, u^L \vert_{\Sigma_T}) \|_{L^2(0,T;(H^{5/2}(\Om) \times H^{2}(\Ga))} \leq C & \text{ if } \kappa = 0.
\end{aligned}
\end{equation}

These uniform estimates can now be used to establish our convergence results.

\subsection{Convergence to the LW model as $L\to\infty$}

\begin{thm}[Asymptotic limit $L \to \infty$]\label{THM:ASY:LW}
	Suppose that \eqref{ass:dom} - \eqref{ass:pot} hold and let $(m_b, m_s) \in\R^2$, $\beta \neq 0$ and $\kappa \geq 0$ be arbitrary.  
	For any initial datum $u_{0}^* \in \W_{\beta,m}^\kappa$ with $m := \beta |\Om| m_b + |\Ga| m_s$, $\mean{u_0^*}_\Omega = m_b $, $\mean{u_0^*}_\Gamma = m_s$,   $F(u_{0}^*) \in L^1(\Omega)$ and $G(u_{0}^*) \in L^1(\Gamma)$, let $(u^L, \mu^L, \theta^L)$ denote the unique weak solution of the system \eqref{CH:INT} in the sense of Theorem~\ref{THM:WP}. 
	Then there exist functions $(u^*,\mu^*,\theta^*)$ such that 
	\begin{alignat*}{2}
	u^L &\to u^* 
		&& \text{ weakly-* in } L^\infty(0,T;H^1(\Om) \cap L^p(\Om) ), \\
		&&&\quad \text{ weakly in } H^1(0,T;H^1(\Omega)'), \\
		&&&\quad \text{ and strongly in } C([0,T];L^2(\Om)), \\
	u^L \vert_{\Sigma_T} &\to u^* \vert_{\Sigma_T} \; 
		&& \text{ weakly-* in } L^\infty(0,T;\Xk \cap L^q(\Ga) ), \\
		&&&\quad \text{ weakly in } H^1(0,T;H^1(\Gamma)'), \\
		&&&\quad \text{ and strongly in } C([0,T];L^2(\Gamma)), \\
	\mu^L &\to \mu^* 
		&& \text{ weakly in } L^2(0,T;H^1(\Omega)), \\
	\theta^L & \to \theta^* 
		&& \text{ weakly in } L^2(0,T;H^1(\Gamma)),\\
	\tfrac 1 L (\beta\theta^L - \mu^L \vert_{\Sigma_T}) & \to 0 
		&& \text{ strongly in } L^2(\Sigma_T),
	\end{alignat*}
	as $L\to\infty$, and the limit $(u^*,\mu^*,\theta^*)$ is the unique weak solution of the LW model \eqref{CH:LW} to the initial datum $u_{0}^*$.
	
	\pagebreak[2] 
	
	\noindent  If additionally
	\begin{itemize}
	\item \eqref{ass:pot:reg} holds, then 
	\begin{align*}
	(u^L, u^L \vert_{\Sigma_T}) \to (u^*, u^* \vert_{\Sigma_T}) \text{ weakly in } \begin{cases}
	L^2(0,T; \H^3) &  \text{ if } \kappa > 0, \\
	L^2(0,T; H^{5/2}(\Om) \times H^2(\Ga)) & \text{ if } \kappa = 0.
	\end{cases}
	\end{align*}
	\item \eqref{ass:pot:reg} holds and $(u_0^*,u_0^*\vert_\Gamma) \in \H^3$ if $\kappa>0$ or $u_0^* \in H^3(\Om)$ if $\kappa=0$, then there exists a constant $C> 0$ independent of $L$ and $\kappa$ such that
	\begin{align*}
	\norm{\nabla (u^L - u^*)}_{L^2(Q_T)} +  \sqrt{\kappa}\; \norm{\gradg (u^L - u^*)}_{L^2(\Sigma_T)} \leq \frac{C}{\sqrt{L}}, \\
	\sup_{t \in (0,T)} \left \| \int_0^t \pdnu \mu^L(s) \ds \right \|_{L^2(\Gamma)}  + \norm{\pdnu \mu^L}_{L^2(\Sigma_T)} \leq \frac{C}{\sqrt{L}}
	\end{align*}
	\item \eqref{ass:pot:reg} holds, $\kappa > 0$ and $(u_0^*,u_0^*\vert_\Gamma) \in \H^3$, then there exists a constant $C>0$ independent of $L$ such that 
		\begin{align*}
		\norm{u^L - u^*}_{L^{\infty}(0,T;L^2(\Omega))} + \norm{u^L - u^*}_{L^{\infty}(0,T;L^2(\Gamma))} \leq \frac{C}{L^{1/4}}.
		\end{align*}
\end{itemize}

\end{thm}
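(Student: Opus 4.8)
For the qualitative convergence I would extract, via Banach--Alaoglu from the $L$-uniform bounds \eqref{lim:1}--\eqref{lim:3:LW}, a non-relabelled subsequence along which $(u^L,\mu^L,\theta^L)$ converges weakly-$\ast$/weakly in the spaces of \eqref{REG:INT}. Since the factor $1+1/\sqrt L$ in \eqref{lim:3:LW} is harmless as $L\to\infty$, $\delt u^L$ is bounded in $L^2(0,T;H^1(\Omega)')\cap L^2(0,T;H^1(\Gamma)')$, so the Aubin--Lions lemma upgrades this to strong convergence in $C([0,T];L^2(\Omega))\cap C([0,T];L^2(\Gamma))$ and a.e.\ convergence on $Q_T,\Sigma_T$; in particular $u^*(0)=u_0^*$. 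The dissipation bound \eqref{lim:1} gives $\tfrac1L(\beta\theta^L-\mu^L)=\tfrac1{\sqrt L}\cdot\tfrac1{\sqrt L}(\beta\theta^L-\mu^L)\to0$ strongly in $L^2(\Sigma_T)$, so passing to the limit in \eqref{WF:INT:1}--\eqref{WF:INT:3} — treating $F'(u^L),G'(u^L)$ by Lebesgue's generalised convergence theorem exactly as in Step~6 of the proof of Theorem~\ref{THM:WP} — makes the boundary flux terms drop out and produces \eqref{WF:LW:1}--\eqref{WF:LW:3}, while weak lower semicontinuity in \eqref{WF:INT:DISS} yields \eqref{WF:LW:DISS}. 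Since $u_0^*$ carries the prescribed separate means, Proposition~\ref{prop:lw} identifies the limit as \emph{the} weak solution of the LW model, so the entire family converges. Under \eqref{ass:pot:reg} the estimate \eqref{lim:4} is $L$-uniform, hence the weak limit inherits the regularity \eqref{REG:INT:2}, and equals $u^*$ by uniqueness of weak limits.

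For the rate $C/\sqrt L$, the $\pdnu\mu^L$-bounds are immediate: $\pdnu\mu^L=\tfrac1L(\beta\theta^L-\mu^L)$ by \eqref{CH:INT:3}, so \eqref{lim:1} gives $\norm{\pdnu\mu^L}_{L^2(\Sigma_T)}^2\le C/L$, and hence $\bigl\|\int_0^t\pdnu\mu^L\ds\bigr\|_{L^2(\Gamma)}^2\le T\norm{\pdnu\mu^L}_{L^2(\Sigma_T)}^2\le C/L$. For the gradient estimate, set $\bar u=u^L-u^*$, $\bar\mu=\mu^L-\mu^*$, $\bar\theta=\theta^L-\theta^*$. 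Under the present hypotheses $u^L$ is a strong solution (Theorem~\ref{THM:SWP}) and, since $\delt u^L$ is bounded in $L^2(0,T;\Vk)$ uniformly in $L$ (the constants behind \eqref{Imp:reg:5} not degenerating as $L\to\infty$), weak lower semicontinuity gives $\delt u^*\in L^2(0,T;\Vk)$ as well, so $\delt\bar u$ is an admissible test function. Testing the difference of the flux equations \eqref{WF:INT:1}--\eqref{WF:INT:2} against $(\bar\mu,\bar\theta)$, testing the difference of the chemical-potential identities \eqref{WF:INT:3}/\eqref{WF:LW:3} against $\eta=\delt\bar u$, and adding, while using $\beta\bar\theta-\bar\mu=(\beta\theta^L-\mu^L)-(\beta\theta^*-\mu^*)$ and Young's inequality, produces
\begin{align*}
&\frac{d}{dt}\Bigl(\tfrac12\norm{\grad\bar u}_{L^2(\Omega)}^2+\tfrac\kappa2\norm{\gradg\bar u}_{L^2(\Gamma)}^2\Bigr)+\norm{\grad\bar\mu}_{L^2(\Omega)}^2+\norm{\gradg\bar\theta}_{L^2(\Gamma)}^2\\
&\qquad\le\tfrac1{2L}\norm{\beta\theta^*-\mu^*}_{L^2(\Gamma)}^2+\bigl|(F'(u^L)-F'(u^*),\delt\bar u)_{L^2(\Omega)}\bigr|+\bigl|(G'(u^L)-G'(u^*),\delt\bar u)_{L^2(\Gamma)}\bigr|.
\end{align*}

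Integrating over $(0,t)$ with $\bar u(0)=0$, the first right-hand term contributes $\tfrac1{2L}\norm{\beta\theta^*-\mu^*}_{L^2(\Sigma_T)}^2\le C/L$ since $\mu^*\in L^2(0,T;H^1(\Omega))$, $\theta^*\in L^2(0,T;H^1(\Gamma))$. The nonlinear remainder is split via $F'=F_1'+F_2'$, $G'=G_1'+G_2'$: the Lipschitz parts are bounded by $C\norm{\bar u}_{L^2}\norm{\delt\bar u}_{L^2}$ and reabsorbed using Lemma~\ref{LEM:INT} together with the $L$-uniform control of $\delt u^L$, whereas the convex parts are rewritten by means of the inequality $F_1(a)-F_1(b)-F_1'(b)(a-b)\ge0$ and its $G_1$-analogue, the leftover contributions being estimated with the sub-quartic growth $p\le4$ of \eqref{ass:pot:reg}, the embedding $H^1\emb L^6$, and the uniform bound $\norm{\bar u}_{L^\infty(0,T;H^1(\Omega))}\le C$. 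A Gronwall argument then delivers
\[
\norm{\grad\bar u}_{L^\infty(0,T;L^2(\Omega))}^2+\kappa\norm{\gradg\bar u}_{L^\infty(0,T;L^2(\Gamma))}^2+\norm{\grad\bar\mu}_{L^2(Q_T)}^2+\norm{\gradg\bar\theta}_{L^2(\Sigma_T)}^2\le\frac{C}{L},
\]
which contains the claimed estimate (and, as a bonus, a rate for $\bar\mu,\bar\theta$ feeding into the last bullet). I expect this nonlinear bookkeeping to be \emph{the main obstacle}: in contrast to the well-posedness proof, the pairing $(F'(u^L)-F'(u^*),\delt(u^L-u^*))$ carries no sign, so the ``phase mismatch'' between the argument of $F'$ and the time derivative must be reconciled through the convexity identity and a careful tracking of Sobolev exponents, in the spirit of the continuous-dependence estimates in \cite{KL}.

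Finally, for the rate $C/L^{1/4}$ when $\kappa>0$, integrating the difference of the flux equations in time with $\bar u(0)=0$ gives, for every $t$,
\[
\inn{\bar u(t)}{w}_{H^1(\Omega)}=-\intO\grad\Phi(t)\cdot\grad w\dx+\intG P(t)\,w\dG,\qquad
\inn{\bar u(t)}{z}_{H^1(\Gamma)}=-\intG\gradg\Psi(t)\cdot\gradg z\dG-\beta\intG P(t)\,z\dG,
\]
where $\Phi(t)=\int_0^t\bar\mu\ds$, $\Psi(t)=\int_0^t\bar\theta\ds$, $P(t)=\int_0^t\pdnu\mu^L\ds$. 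By the previous step $\norm{\grad\Phi(t)}_{L^2(\Omega)}^2\le T\norm{\grad\bar\mu}_{L^2(Q_T)}^2\le C/L$, likewise $\norm{\gradg\Psi(t)}_{L^2(\Gamma)}^2\le C/L$, and $\norm{P(t)}_{L^2(\Gamma)}^2\le C/L$, so $\norm{\bar u(t)}_{H^1(\Omega)'}+\norm{\bar u(t)}_{H^1(\Gamma)'}\le C/\sqrt L$ uniformly in $t$. Interpolating with $\norm{\bar u(t)}_{H^1(\Omega)}\le C$ and $\norm{\bar u(t)}_{H^1(\Gamma)}\le C$ — the latter being exactly what $\kappa>0$ supplies — yields $\norm{\bar u(t)}_{L^2(\Omega)}^2+\norm{\bar u(t)}_{L^2(\Gamma)}^2\le C/\sqrt L$, which is the claimed $L^{-1/4}$ rate in $L^\infty(0,T;L^2)$.
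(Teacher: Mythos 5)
Your Step~1 (the qualitative convergence, identification of the limit with the LW solution, and the bound $\norm{\pdnu\mu^L}_{L^2(\Sigma_T)}\le C/\sqrt{L}$ read off from \eqref{lim:1} via $\pdnu\mu^L=\tfrac1L(\beta\theta^L-\mu^L)$) follows the paper's argument essentially verbatim and is fine.

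The gap is in your rate estimate for $\nabla(u^L-u^*)$. You test the difference of the potential identities with $\eta=\delt\bar u$, which produces $\tfrac{d}{dt}\tfrac12\norm{\grad\bar u}^2$ on the left but leaves the term $\int(F'(u^L)-F'(u^*))\,\delt\bar u$ on the right, and you never actually close it: you only announce that the convex part can be ``rewritten by means of $F_1(a)-F_1(b)-F_1'(b)(a-b)\ge 0$'' and the Lipschitz part ``reabsorbed using Lemma~\ref{LEM:INT}''. Neither works as stated. For the convex part, any such rewriting leaves over terms of the type $\int F_1''(u^*)\,\delt u^*\,\bar u$ and $\int(F_1'(u^L)-F_1'(u^*))\delt u^*$, which are merely $O(1)$ (since $\delt u^*$ is only bounded in $L^2$, not small in $L$) and cannot be absorbed into your left-hand side, which contains no control of $\norm{\bar u}_{L^2}$ or of any negative-order norm of $\bar u$; the $O(L^{-1})$ rate is therefore lost. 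For the Lipschitz part, $C\norm{\bar u}_{L^2}\norm{\delt\bar u}_{L^2}$ again only yields an $O(1)$ forcing, and Lemma~\ref{LEM:INT} is unusable here for two reasons: its constant $C_\alpha$ depends on $L$ (so it is not admissible in an $L$-uniform estimate), and the resulting term $\norm{\bar u}_{L,\beta,*}^2$ is controlled by nothing in your inequality. A symptom of the problem is that your claimed conclusion, $\norm{\grad\bar u}_{L^\infty(0,T;L^2)}^2\le C/L$, is strictly stronger than what the theorem asserts.

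The paper avoids testing with $\delt\bar u$ altogether. It reuses the duality device from the uniqueness proof (Step~7 of Theorem~\ref{THM:WP}): test the time-integrated flux equations with $(\hat\mu,\hat\theta)$ and the potential identity with $\hat u$ itself, so that the monotonicity of $F_1',G_1'$ gives the nonlinear difference a favourable sign and only the Lipschitz parts survive, paired with $\hat u$ rather than $\delt\hat u$. The resulting energy identity \eqref{IEQ:CR5} places $\sup_t\norm{\grad(1\star\hat\mu)(t)}_{L^2}^2$ and $L\sup_t\norm{(1\star\pdnu\mu^L)(t)}_{L^2}^2$ on the left, and the lower-order terms $\norm{\hat u}_{L^2(\Om)}^2+\norm{\hat u}_{L^2(\Ga)}^2$ are absorbed through the $L$-independent interpolation \eqref{GMS:interp} combined with the bound \eqref{H-1:est} of $\norm{\hat u}_{H^{-1}}$ by $\norm{\grad(1\star\hat\mu)}_{L^2}$ — precisely the quantity Gronwall then controls. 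Your third bullet (the $L^{-1/4}$ rate via $\norm{\bar u}_{L^2}^2\le\norm{\bar u}_{(H^1)'}\norm{\bar u}_{H^1}$) is a legitimate alternative to the paper's direct testing with $z=\hat u$, but as written it is contingent on $\norm{\grad\bar\mu}_{L^2(Q_T)}^2\le C/L$, which your Step~2 does not deliver; the paper's version only needs $\sup_t\norm{\grad(1\star\hat\mu)(t)}^2\le C/L$, which its identity provides directly.
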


\bigskip

\begin{proof}
	In this proof we use the letter $C$ to denote generic positive constants independent of $L$, $N$, $n$ and $\tau$ that may change their value from line to line.	
	\subparagraph{Step 1: Convergence in the limit $L\to\infty$.}
	Let $(L_k)_{k\in\N} \subset [1,\infty)$ denote an arbitrary sequence satisfying $L_k\to \infty$ as $k\to\infty$.
	For any $k\in\N$, let $(u^k, \mu^k, \theta^k) = (u^{L_k}, \mu^{L_k}, \theta^{L_k})$ denote the unique weak solution to the system \eqref{CH:INT} corresponding to the parameter $L_k$.
	Hence, from the uniform bounds \eqref{lim:1}, \eqref{lim:2} and \eqref{lim:3:LW} we infer the existence of functions $(u^*, \mu^*, \theta^*)$ such that 
	\begin{subequations}
		\label{LW:lim:comp}
	\begin{alignat}{4}
	u^k &\to u^* 
	&& \text{ weakly-* in } L^\infty(0,T;H^1(\Om) \cap L^p(\Om)) \\
	&&& \quad \text{ weakly in } H^1(0,T;H^1(\Omega)'), \\
	u^k \vert_{\Sigma_T} &\to u^* \vert_{\Sigma_T} \; 
	&& \text{ weakly-* in } L^\infty(0,T;\Xk \cap L^q(\Ga)) \\
	&&& \quad \text{ weakly in } H^1(0,T;H^1(\Gamma)'), \\
	\mu^k &\to \mu^* && \text{ weakly in } L^2(0,T;H^1(\Omega)), \\
	\theta^k & \to \theta^* && \text{ weakly in } L^2(0,T;H^1(\Gamma)), 
	\end{alignat}
	as $k \to \infty$ along a non-relabelled subsequence.
	By the Aubin--Lions lemma we deduce that
	\begin{alignat}{4}
	u^k &\to u^* &&\quad \text{ strongly in } C([0,T]; L^2(\Omega)) \\
	u^k \vert_{\Sigma_T} &\to u^* \vert_{\Sigma_T} \; &&\quad \text{ strongly in } C([0,T];L^2(\Gamma)),
	\end{alignat}
		\end{subequations}
	as $k \to \infty$ after another subsequence extraction. 
	Moreover, from \eqref{lim:1} it follows that 
	\begin{align}\label{LW:unif}
	\frac{1}{L_k}\norm{\beta\theta^k - \mu^k}_{L^2(\Sigma_T)} \leq \frac{C}{\sqrt{L_k}} \to 0 
	\quad \text{as}\; k\to\infty.
	\end{align}	
	It is clear from the convergence properties in \eqref{LW:lim:comp} that the triplet $(u^*, \mu^*, \theta^*)$ has the desired regularity as stated in item (i) of Proposition~\ref{prop:lw}.  For arbitrary $w\in H^1(\Om)$, $z\in H^1(\Ga)$ and  $\eta \in \Vk \cap L^\infty$ with $\eta\vert_\Ga \in L^\infty(\Ga)$, from the weak formulation \eqref{WF:INT} of the system \eqref{CH:INT} written for $(u^k, \mu^k, \theta^k)$:
	\begin{subequations}
		\label{WFL:INT}
		\begin{align} 
		\label{WFL:INT:1}
		\inn{\delt u^k}{ w}_{H^1(\Omega)}  &= - \intO \grad\mu^k \cdot \grad w  \dx  
		+ \frac{1}{L_k} \intG (\beta \theta^k - \mu^k) w \dG,  \\
		\label{WFL:INT:2}
		\inn{\delt u^k}{ z}_{H^1(\Gamma)} &= - \intG \gradg\theta^k \cdot \gradg z  \dG 
		- \frac{1}{L_k} \intG (\beta \theta^k - \mu^k) \beta z \dG,  \\
		\label{WFL:INT:3}
		\begin{split}
		\intO \mu^k \eta \dx + \intG \theta^k \eta \dG 
		& = \intO \grad u^k\cdot\grad\eta + F'(u^k)\eta \dx  \\
		&\quad + \intG \kappa \gradg u^k\cdot\gradg\eta + G'(u^k)\eta  \dG	,
		\end{split}
		\end{align}
	\end{subequations} 
We multiply all equations in \eqref{WFL:INT} with arbitrary test functions in $C^\infty_c([0,T])$ depending only on $t$, integrate with respect to $t$ from $0$ to $T$, and pass to the limit $k \to \infty$ with the help of the convergence results \eqref{LW:lim:comp} and \eqref{LW:unif}.  For the terms involving $F'(u^k)$ and $G'(u^k)$ the generalised dominated convergence theorem \cite[p.~60]{Alt} can be used.  Hence, we infer that $(u^*, \mu^*, \theta^*)$ satisfies the weak formulation \eqref{WF:LW}. 
	Moreover, using weak lower semicontinuity arguments, we can pass to the limit in the energy inequality \eqref{WF:INT:DISS} for $(u^k, \mu^k, \theta^k)$ whilst neglecting the non-negative boundary integral term involving $L_k$, leading to the energy inequality \eqref{WF:LW:DISS}. Lastly, choosing $w = 1$ in \eqref{WFL:INT:1} and $z = 1$ in \eqref{WFL:INT:2}, multiplying by a $C^\infty_c([0,T])$ function, integrating over $[0,T]$ and passing to the limit leads to the property that for all $t \in (0,T]$,
	\begin{align*}
	\mean{u^*(t)}_\Om = \mean{u^*_0}_\Om = m_b , \quad 
	\mean{u^*(t)}_\Ga = \mean{u^*_0}_\Ga = m_s .
	\end{align*}
	This means $u^*(t) \in \V^\kappa_{(m_b,m_s)} $ for all $t \in [0,T]$. This proves that $(u^*, \mu^*, \theta^*)$ is a weak solution of the LW model \eqref{CH:LW} in the sense of Proposition~\ref{prop:lw}.
	
	If \eqref{ass:pot:reg} holds, then \eqref{lim:4} implies the weak convergence of $(u^k, u^k \vert_{\Sigma_T})$ to $(u^*, u^* \vert_{\Sigma_T})$ in $L^2(0,T;\H^3)$ if $\kappa > 0$ and in $ L^2(0,T;H^{5/2}(\Om) \times H^2(\Ga)) $ if $\kappa = 0$.  Suppose further that the initial condition satisfies $(u_0^*,u_0^*\vert_\Gamma)\in\H^3$ if $\kappa>0$ or $u_0^*\in H^3(\Om)$ if $\kappa=0$, then by Theorem \ref{THM:SWP}, the solution $(u^k, \mu^k, \theta^k)$ is a strong solution to \eqref{CH:INT}, and thus we have the relation 
	\begin{align*}
	\deln\mu^{k} = \frac{1}{L_k} ( \beta\theta^k - \mu^k) \text{ holding a.e.~on }\Sigma_T,
	\end{align*}
	and \eqref{LW:unif} implies the estimate
	\begin{align*}
	\|\deln \mu^k\|_{L^2(\Sigma_T)} \leq \frac{C}{\sqrt{L_k}}.
	\end{align*}

	By uniqueness of solutions to the LW model, which is independent of the choice of the extracted subsequence, we conclude by standard arguments that the above convergence results hold true for the whole sequence. Moreover, as the sequence $(L_k)_{k\in\N}$ was arbitrary, the convergence assertions are established for $L\to\infty$.

	\subparagraph{Step 2: Convergence rates.}
	For $L \in [1,\infty)$, let $(u^L, \mu^L, \theta^L)$ denote the unique weak solution to \eqref{CH:INT} corresponding to initial data $u_0^*$. Recall that $L \deln\mu^L = \beta\theta^L - \mu^L$ holds a.e.~on $\Sigma_T$, we define
	\begin{align*}
		(\hat u,\hat \mu,\hat \theta) := (u^L - u^*,  \mu^L - \mu^*,  \theta^L - \theta^* ).
	\end{align*}
	Then, it follows from the weak formulations \eqref{WF:INT} and \eqref{WF:LW}  that
	\begin{subequations}\label{LW:d}
		\begin{alignat}{2}
		\label{LW:d:1} \inn{\hat u_t}{w}_{H^1(\Omega)} & =  - \intO \nabla \hat \mu \cdot \nabla w \dx + \intG \pdnu \mu^L w \dG,\\
		\label{LW:d:2} \inn{\hat u_t}{z}_{H^1(\Gamma)} & =  - \intG \gradg \hat \theta \cdot \gradg z \dG -  \intG \pdnu \mu^L \beta z \dG, \\
		\label{LW:d:3} 
		\begin{split}
			\int_\Omega \hat\mu \eta \dx + \intG \hat\theta \eta \dG & = \int_\Omega \nabla \hat u \cdot \nabla \eta + \big(F'(u^L) - F'(u^*)\big) \eta \dx \\
			& \quad + \intG \kappa \gradg \hat u \cdot \gradg \eta + \big(G'(u^L) - G'(u^*)\big) \eta \dG,
		\end{split}
		\end{alignat}
	\end{subequations}
	for all test functions $w\in H^1(\Om)$, $z\in H^1(\Ga)$ and  $\eta \in \Vk \cap L^\infty$ with $\eta\vert_\Ga \in L^\infty(\Ga)$.
	Let now $t_0 \in (0,T]$, $ w \in L^2(0,T;H^1(\Omega))$ and $z \in L^2(0,T;H^1(\Gamma))$ be arbitrary. In the following we use once more the notation $Q_{t_0} = \Omega \times (0, t_0)$, $\Sigma_{t_0} = \Gamma \times (0,t_0)$. 
	Proceeding as in Step 7 of the proof of Theorem~\ref{THM:WP}, we find that
	\begin{align}
		\label{IEQ:CR}
	\begin{aligned}
		&\int_{Q_{t_0}} \hat u w \dx \dt + \int_{\Sigma_{t_0}} \hat u z \dG \dt \\
		&\quad = -\int_{Q_{t_0}} \nabla \left ( \int_{0}^t \hat \mu \ds \right ) \cdot \nabla w \dx \dt
		\;- \int_{\Sigma_{t_0}} \gradg \left ( \int_{0}^t \hat \theta \ds \right ) \cdot \gradg z \dG \dt \\
		&\qquad - \frac 1 L \int_{\Sigma_{t_0}} \left( \int_{0}^t \beta \theta^L - \mu^L \ds \right) (\beta z-w) \dG\dt.
	\end{aligned}
	\end{align}
	In the following, we use the notation
	\begin{align}
	\label{DEF:*}
	(1 \star f)(t) = \int_0^t f(s) \ds.
	\end{align}
	Here, $f$ may be scalar or vector-valued. In particular, this implies the relations
	\begin{align}
	\label{REL:*}
	\begin{aligned}
	\frac{d}{dt}\, \frac 1 2 \intO |(1 \star f)(t)|^2 \dx &= \intO \big(1\star f\big)(t) \cdot f(t) \dx, \\[1ex]
	\frac{d}{dt}\, \frac 1 2 \intG |(1 \star f)(t)|^2 \dG &= \intG \big(1\star f\big)(t) \cdot f(t) \dG.
	\end{aligned}
	\end{align}
	Now, plugging $w=\hat\mu$ and $z=\hat\theta$ into \eqref{IEQ:CR} 
	and using the relations \eqref{REL:*} as well as the decomposition
	\begin{align*}
		\intG \big(1 \star \pdnu \mu^L\big)(t)\, \big(\beta \hat \theta(t) - \hat \mu(t)\big) \dG 
			= \intG \big(1 \star \pdnu \mu^L\big)(t)\, \big(L \pdnu \mu^L(t) - (\beta\theta^* - \mu^*)(t)\big) \dG,
	\end{align*}
	for almost all $t\in(0,T)$, a straightforward computation yields
	\begin{align}
	\label{IEQ:CR2}
		\begin{aligned}
		&\frac 1 2 \Big( \norm{ \grad(1\star \hmu)(t_0) }_{L^2(\Om)}^2 		 	
		 	+ \norm{ \gradg(1\star \htheta)(t_0) }_{L^2(\Ga)}^2  
		 	+ L \norm{(1\star \deln \mu^L)(t_0)}_{L^2(\Ga)}^2 \Big)\\
		&\quad= 	-\int_{Q_{t_0}} \hat u \hat \mu \dx \dt 
			- \int_{\Sigma_{t_0}} \hat u \hat \theta \dx \dt
			+ \int_{\Sigma_{t_0}} (1\star\deln \mu^L) (\beta\theta^* - \mu^*) \dG\dt.
		\end{aligned}
	\end{align}
	Furthermore, proceeding as in Step 7 of the proof of Theorem~\ref{THM:WP} we obtain 
	\begin{align}
	\label{IEQ:CR3}
		\begin{aligned}
		&\norm{\grad\hu}_{L^2(Q_{t_0})}^2 + \kappa \norm{\gradg\hu}_{L^2(\Sigma_{t_0})}^2	\\
		&\quad\le \int_{Q_{t_0}} \hat u \hat \mu \dx \dt 
			+ \int_{\Sigma_{t_0}} \hat u \hat \theta \dx \dt
			+ C_\text{Lip} \big( \norm{\hu}_{L^2(Q_{t_0})}^2 + \norm{\hu}_{L^2(\Sigma_{t_0})}^2 \big)
		\end{aligned}
	\end{align}
	where the constant $C_\text{Lip}> 0$ depends only on the Lipschitz constants of $F_2'$ and $G_2'$.
	Adding \eqref{IEQ:CR2} and \eqref{IEQ:CR3} and applying Young's inequality now gives
	\begin{align}
	\label{IEQ:CR4}
		\begin{aligned}
		&\frac 1 2 \norm{ \grad(1\star \hmu)(t_0) }_{L^2(\Om)}^2 		 	
			+ \frac 1 2\norm{ \gradg(1\star \htheta)(t_0) }_{L^2(\Ga)}^2  
			+ \frac L 2 \norm{(1\star \deln \mu^L)(t_0)}_{L^2(\Ga)}^2 \\
		&\qquad + \int_0^{t_0} \norm{\grad\hu}_{L^2(\Om)}^2 + \kappa \norm{\gradg\hu}_{L^2(\Ga)}^2 \dt	\\[1ex]
		&\quad\le \int_0^{t_0} 
			\frac{L}{2} \norm{(1\star \pdnu \mu^L)(t)}_{L^2(\Ga)}^2 
			+ \frac{1}{2L} \norm{\beta\theta^* - \mu^*}_{L^2(\Ga)}^2 \dt \\
		&\qquad + \int_0^{t_0}  C_\text{Lip} \big( \norm{\hu}_{L^2(\Om)}^2 + \norm{\hu}_{L^2(\Ga)}^2 \big) \dt.
		\end{aligned}
	\end{align}
	By the trace theorem as well as the chain of compact embeddings $H^1(\Omega) \emb H^{3/4}(\Omega) \emb H^{-1}(\Omega)$ 
	(where $H^{-1}(\Omega)$ denotes the dual space to $H^1_0(\Omega)$), we obtain the estimate (cf.~\cite[(3.67)]{GMS})
	\begin{align}\label{GMS:interp}
		C_\text{Lip} \Big ( \norm{\hat u}_{L^2(\Omega)}^2 + \norm{\hat u}_{L^2(\Gamma)}^2 \Big ) \leq C \norm{\hat u}_{H^{3/4}(\Omega)}^2 \leq \frac{1}{2} \norm{\nabla \hat u}_{L^2(\Omega)}^2 + C \norm{\hat u}_{H^{-1}(\Omega)}^2.
	\end{align}
	To control the $H^{-1}$-norm of $\hat u$, we introduce the function $\D(\hat u) \in H^2(\Omega) \cap H^1_0(\Omega)$ as the solution to Poisson's equation with homogeneous Dirichlet boundary condition and source term $ \hat u$, i.e., 
	\begin{align*}
	\begin{cases}
	- \Lx \D(\hat u) = \hat u& \text{ in } \Omega, \\
	\D(\hat u) = 0 & \text{ on } \Gamma,
	\end{cases}
	\end{align*}
	for a.e.~$t \in (0,T)$. It is well-known that $\norm{\nabla \D(\cdot)}_{L^2(\Omega)}$ is an equivalent norm to $\norm{\cdot}_{H^{-1}(\Omega)}$ on $H^{-1}(\Om)$.  Moreover, after integrating \eqref{LW:d:1} in time and testing with $\D(\hat u(t))$, we deduce that 
	\begin{equation*}
	\begin{aligned}
	\norm{\hat u(t)}_{H^{-1}(\Omega)}^2 & \leq \norm{\nabla \D(\hat u(t))}_{L^2(\Omega)}^2 
		=  \intO \hat u(t) \D(\hat u(t)) \dx \\
	& \leq C \norm{\nabla (1 \star \hat \mu)(t)}_{L^2(\Omega)} \norm{\nabla \D(\hat u(t))}_{L^2(\Omega)} \\
	& \leq C \norm{\nabla (1 \star \hat \mu)(t)}_{L^2(\Omega)} \norm{\hat u(t)}_{H^{-1}(\Omega)}
	\end{aligned}
	\end{equation*}
	for a.e.~$t\in(0,T)$. Consequently, for a.e.~$t\in(0,T)$,
	\begin{equation}\label{H-1:est}
	\norm{\hat u(t)}_{H^{-1}(\Omega)} 
	 	\leq C \norm{\nabla (1 \star \hat \mu)(t)}_{L^2(\Omega)}. 
	\end{equation} 
	Substituting this estimate into \eqref{GMS:interp} and plugging the resulting estimate into \eqref{IEQ:CR4}, we obtain
	\begin{align}
	\label{IEQ:CR5}
	\begin{aligned}
	&\frac 1 2 \norm{ \grad(1\star \hmu)(t_0) }_{L^2(\Om)}^2 		 	
	+ \frac 1 2\norm{ \gradg(1\star \htheta)(t_0) }_{L^2(\Ga)}^2  
	+ \frac L 2 \norm{(1\star \deln \mu^L)(t_0)}_{L^2(\Ga)}^2 \\
	&\qquad + \int_0^{t_0} \frac 1 2 \norm{\grad\hu}_{L^2(\Om)}^2 + \kappa \norm{\gradg\hu}_{L^2(\Ga)}^2 \dt	\\[1ex]
	&\quad\le \int_0^{t_0} 
	\frac{L}{2} \norm{(1\star \pdnu \mu^L)(t)}_{L^2(\Ga)}^2 
	+ \frac{1}{2L} \norm{\beta\theta^* - \mu^*}_{L^2(\Ga)}^2 + C \norm{\nabla (1 \star \hat \mu)(t)}_{L^2(\Omega)}^2  \dt 
	\end{aligned}
	\end{align}
	Now, since $t_0$ was arbitrary, a Gronwall argument implies the existence of a constant $C$ independent of $L$ and $\kappa$ such that 
	\begin{align*}
	& \sup_{t \in (0,T)} \Big ( \norm{ \nabla (1\star \hat \mu)(t)}_{L^2(\Omega)}^2 + \norm{ \gradg (1\star \hat \theta)(t)}_{L^2(\Gamma)}^2 + L \norm{(1 \star \pdnu \mu^L)(t)}_{L^2(\Gamma)}^2 \Big )  \\
	& \qquad +  \norm{\nabla \hat u}_{L^2(Q_T)}^2 + \kappa \norm{\gradg \hat u}_{L^2(\Sigma_T)}^2  \\[1ex]
	& \quad \leq \frac{C}{L} \norm{\beta\theta^* - \mu^*}_{L^2(\Sigma_T)}^2 \leq \frac{C}{L},
	\end{align*}
	From this we obtain the convergence rates 
	\begin{align}\label{LW:CR:1}
	\norm{\nabla \hat u}_{L^2(Q_T)} + \sqrt{\kappa} \norm{\gradg \hat u}_{L^2(\Sigma_T)} \leq \frac{C}{\sqrt{L}}, \quad  
	\sup_{t \in (0,T)} \left \|\int_0^t \pdnu \mu^L(s) \ds \right \|_{L^2(\Gamma)} \leq \frac{C}{L},
	\end{align}
	for a positive constant $C$ independent of $\kappa$ and $L$.	
	Next, assume $\kappa > 0$ and testing \eqref{LW:d:2}  with $z = \hat u$ yields after integration for a.e.~$t \in (0,T)$,
	\begin{align*}
	\| \hat u(t) \|_{L^2(\Ga)}^2 & \leq 2 \int_0^t \| \gradg \hat \theta \|_{L^2(\Ga)} \| \gradg \hat u \|_{L^2(\Ga)} + |\beta| \| \pdnu \mu^L \|_{L^2(\Ga)} \| \hat u \|_{L^2(\Ga)} \ds \\
	& \leq C \| \gradg (\theta^L - \theta^*) \|_{L^2(\Sigma_T)} \| \gradg \hat u \|_{L^2(\Sigma_T)} + C \| \pdnu \mu^L \|_{L^2(\Sigma_T)}^2 + \int_0^t \| \hat u \|_{L^2(\Ga)}^2 \ds \\
	& \leq \frac{C}{\sqrt{L}} + \int_0^t \| \hat u \|_{L^2(\Ga)}^2 \ds
	\end{align*}
on account of the uniform bound \eqref{lim:1} and the estimate \eqref{LW:CR:1}.   
By Gronwall's inequality we then infer that 
\begin{align*}
\| \hat u \|_{L^\infty(0,T;L^2(\Ga))} \leq \frac{C}{L^{1/4}}.
\end{align*}
Then, testing \eqref{LW:d:1} with $w = \hat u$ yields after integration for a.e.~$t \in (0,T)$,
\begin{align*}
\| \hat u (t) \|_{L^2(\Om)}^2 & \leq C \| \nabla (\mu^L - \mu^*) \|_{L^2(Q_T)} \| \nabla \hat u \|_{L^2(Q_T)} +  \| \pdnu \mu^L \|_{L^2(\Sigma_T)} \| \hat u \|_{L^2(\Sigma_T)} \leq \frac{C}{\sqrt{L}},
\end{align*}
and thus, the proof of Theorem~\ref{THM:ASY:LW} is complete.
\end{proof}

\subsection{Convergence to the GMS model as $L\to 0$}

\begin{thm}[Asymptotic limit $L \to 0$]\label{THM:ASY:GMS}
	Suppose that \eqref{ass:dom} - \eqref{ass:pot} hold and let $m \in \R, \kappa \geq 0$ and $\beta>0$ be arbitrary.  
	For any initial datum $u_{0,*} \in \W_{\beta,m}^\kappa$ with $F(u_{0,*}) \in L^1(\Omega)$ and $G(u_{0,*}) \in L^1(\Gamma)$, let $(u^L, \mu^L, \theta^L)$ denote the unique weak solution of the system \eqref{CH:INT} in the sense of Theorem~\ref{THM:WP}. 
	Then there exist functions $(u_*,\mu_*,\theta_*)$ such that 
	\begin{alignat*}{4}
	u^L &\to u_* && \text{ weakly in } H^1(0,T;\V'), \\
	u^L &\to u_* && \text{ weakly-* in } L^\infty(0,T;H^1(\Om) \cap L^p(\Om)), \\
	&&&\quad \text{ and strongly in } C([0,T];L^2(\Om)), \\
	u^L \vert_{\Sigma_T} &\to u_* \vert_{\Sigma_T} \; && \text{ weakly-* in } L^\infty(0,T;\Xk \cap L^q(\Ga)), \\
	&&&\quad \text{ and strongly in } C([0,T];L^2(\Gamma)), \\
	\mu^L &\to \mu_* && \text{ weakly in } L^2(0,T;H^1(\Omega)), \\
	\theta^L &\to \theta_*\; && \text{ weakly in } L^2(0,T;H^1(\Gamma)), \\
	\beta \theta^L - \mu^L\vert_{\Sigma_T} \; &\to 0 && \text{ strongly in } L^2(\Sigma_T)
	\end{alignat*}
	as $L\to 0$, with 
	\begin{align*}
	\norm{\beta\theta^L -  \mu^L}_{L^2(\Sigma_T)} \leq C\sqrt{L},
	\end{align*}
	and the limit $(u_*,\mu_*,\theta_*)$ is the unique weak solution of the GMS model \eqref{CH:GMS} to the initial datum $u_{0,*}$ with $\mu_*\vert_\Ga = \beta\theta_*$ a.e.~on $\Sigma_T$.
	
	\noindent 
	If additionally
	\begin{itemize}
	\item \eqref{ass:pot:reg} holds, then 
	\begin{align*}
	(u^L, u^L \vert_{\Sigma_T}) \to (u_*, u_* \vert_{\Sigma_T}) \text{ weakly in } \begin{cases}
	L^2(0,T; \H^3) &  \text{ if } \kappa > 0, \\
	L^2(0,T; H^{5/2}(\Om) \times H^2(\Ga)) & \text{ if } \kappa = 0.
	\end{cases}
	\end{align*}
	\item \eqref{ass:pot:reg} holds and $(u_{0,*},u_{0,*}\vert_\Gamma) \in \H^3$ if $\kappa>0$ or $u_{0,*} \in H^3(\Om)$ if $\kappa=0$, then there exists a constant $C> 0$ independent of $L$ and $\kappa$ such that
	\begin{align*}
	\norm{\nabla (u^L - u_*)}_{L^2(Q_T)} +  \sqrt{\kappa}\; 
	\norm{\gradg (u^L - u_*)}_{L^2(\Sigma_T)}  \leq C \sqrt{L}, \\[1ex]
		\sup_{t \in (0,T)} \left \| \int_0^t \big(\beta \theta^L -  \mu^L\big)(s) \ds \right \|_{L^2(\Gamma)}  \leq CL, \\[1ex]	
	\end{align*}
	\item \eqref{ass:pot:reg} holds, $\kappa > 0$ and $(u_{0,*},u_{0,*}\vert_\Gamma) \in \H^3$, then there exists a constant $C> 0$ independent of $L$ such that
	\begin{align*}
		\norm{u^L - u_*}_{L^{\infty}(0,T;L^2(\Omega))} + \norm{u^L - u_*}_{L^{\infty}(0,T;L^2(\Gamma))} \leq C L^{1/4}.
	\end{align*}
	\end{itemize}
\end{thm}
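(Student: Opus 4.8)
The plan is to mirror the two-step structure of the proof of Theorem~\ref{THM:ASY:LW}, with the quantity $\beta\theta^L-\mu^L$ now playing the role that $\pdnu\mu^L$ played there, and with the GMS model in place of the LW model. For the convergence statement, I would fix a sequence $L_k\to 0$, write $(u^k,\mu^k,\theta^k)$ for the associated weak solutions, and use the uniform bounds \eqref{lim:1}, \eqref{lim:2} together with the $L$-independent bound \eqref{lim:3:GMS} on $u^L_t$ in $L^2(0,T;\V')$, which is available precisely because $\beta>0$ (whereas \eqref{lim:3:LW} degenerates as $L\to 0$). Extracting weak and weak-$*$ limits $(u_*,\mu_*,\theta_*)$ with the regularity of Proposition~\ref{prop:GMS}, and invoking Aubin--Lions along $\Vk\emb\LL^2\emb\V'$, gives the strong $C([0,T];L^2(\Om))$ and $C([0,T];L^2(\Ga))$ convergences; from \eqref{lim:1} one reads off $\norm{\beta\theta^L-\mu^L}_{L^2(\Sigma_T)}\le C\sqrt L\to 0$, which yields both the asserted rate and the constraint $\beta\theta_*=\mu_*$ a.e.\ on $\Sigma_T$ in the limit. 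To pass to the limit in the weak formulation I would test \eqref{WF:INT:1} with $\beta w$ and \eqref{WF:INT:2} with $w$ for $w\in\V$ and add: the two $\tfrac1L(\beta\theta^L-\mu^L)$ contributions cancel, leaving exactly the structure of \eqref{WF:GMS:1}, in which one passes to the limit directly; \eqref{WF:INT:3} passes to \eqref{WF:GMS:2} with the generalised dominated convergence theorem \cite[p.~60]{Alt} for the $F',G'$ terms. Weak lower semicontinuity in \eqref{WF:INT:DISS} (dropping the non-negative $\tfrac1L$-term) gives \eqref{WF:GMS:DISS}; taking $w\equiv 1$ in the combined equation gives $u_*(t)\in\Wm$; uniqueness of the GMS solution (Proposition~\ref{prop:GMS}) upgrades the subsequential convergence to convergence of the whole family as $L\to 0$; and \eqref{lim:4} gives the weak $L^2(0,T;\H^3)$ (resp.\ $L^2(0,T;H^{5/2}(\Om)\times H^2(\Ga))$) convergence under \eqref{ass:pot:reg}.

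For the rates I would work under \eqref{ass:pot:reg} with the stated smoothness of $u_{0,*}$, so that $(u^L,\mu^L,\theta^L)$ is a strong solution by Theorem~\ref{THM:SWP}; in particular $\pdnu\mu^L\in L^2(\Sigma_T)$ and $\beta\theta^L-\mu^L=L\,\pdnu\mu^L$ a.e.\ on $\Sigma_T$. Moreover the GMS solution satisfies $\mu_*\in L^2(0,T;H^2(\Om))$ by Proposition~\ref{prop:GMS:reg} (and by its $\kappa=0$ counterpart, obtainable through the arguments in the proof of Theorem~\ref{THM:SWP}), hence $\pdnu\mu_*\in L^2(\Sigma_T)$ with a bound independent of $L$, and \eqref{CH:GMS} holds in the separate weak form with $\pdnu\mu_*$ in place of a relaxation term. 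Setting $(\hu,\hmu,\htheta):=(u^L-u_*,\mu^L-\mu_*,\theta^L-\theta_*)$, noting $\beta\htheta-\hmu=\beta\theta^L-\mu^L=L\,\pdnu\mu^L$, subtracting the separate weak formulations, integrating in time with the primitive notation $(1\star f)(t)=\int_0^t f(s)\,ds$, and reproducing the computations that led to \eqref{IEQ:CR2} and \eqref{IEQ:CR3} in the proof of Theorem~\ref{THM:ASY:LW} (testing the time-integrated bulk/surface equations with $\hmu$/$\htheta$ and the difference of \eqref{WF:INT:3}, \eqref{WF:GMS:2} with the truncation $\PM(\hu)$, then $M\to\infty$), I expect, after using monotonicity of $F_1',G_1'$, the Lipschitz bound on $F_2',G_2'$, and the splitting $\pdnu\mu^L=\pdnu\hmu+\pdnu\mu_*$, an inequality in which $\tfrac L2\norm{(1\star\pdnu\hmu)(t_0)}_{L^2(\Ga)}^2$ joins the coercive left-hand side $\tfrac12\norm{\nabla(1\star\hmu)(t_0)}_{L^2(\Om)}^2+\tfrac12\norm{\gradg(1\star\htheta)(t_0)}_{L^2(\Ga)}^2+\int_0^{t_0}\tfrac12\norm{\nabla\hu}_{L^2(\Om)}^2+\kappa\norm{\gradg\hu}_{L^2(\Ga)}^2$, the only genuinely $L$-dependent source term is $\tfrac L2\norm{\pdnu\mu_*}_{L^2(\Sigma_{t_0})}^2\le CL$, and the Lipschitz contribution $C_\text{Lip}\bigl(\norm{\hu}_{L^2(Q_{t_0})}^2+\norm{\hu}_{L^2(\Sigma_{t_0})}^2\bigr)$ is absorbed via the interpolation--duality estimate \eqref{GMS:interp}--\eqref{H-1:est} (with $\norm{\hu(t)}_{H^{-1}(\Om)}\le C\norm{\nabla(1\star\hmu)(t)}_{L^2(\Om)}$ following by integrating the bulk difference equation and testing with $\D(\hu(t))$, whose vanishing Dirichlet trace kills the boundary term). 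A Gronwall argument then bounds the left-hand side by $CL$, giving $\norm{\nabla\hu}_{L^2(Q_T)}+\sqrt{\kappa}\,\norm{\gradg\hu}_{L^2(\Sigma_T)}\le C\sqrt L$, and since $1\star(\beta\theta^L-\mu^L)=L\,(1\star\pdnu\hmu)+L\,(1\star\pdnu\mu_*)$ with $\norm{1\star\pdnu\mu_*}_{L^\infty(0,T;L^2(\Ga))}\le C$, also $\sup_{t}\norm{\int_0^t(\beta\theta^L-\mu^L)(s)\,ds}_{L^2(\Ga)}\le CL$.

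For the final estimate ($\kappa>0$) I would use that the same Gronwall bound also gives $\norm{\nabla(1\star\hmu)}_{L^\infty(0,T;L^2(\Om))}+\norm{\gradg(1\star\htheta)}_{L^\infty(0,T;L^2(\Ga))}\le C\sqrt L$. Integrating in time the difference of the combined $\V$-equations (again the relaxation terms cancel) yields $\inn{\hu(t)}{w}_{\V,\beta}=-\beta\intO\nabla(1\star\hmu)(t)\cdot\nabla w\,\dx-\intG\gradg(1\star\htheta)(t)\cdot\gradg w\,\dG$ for all $w\in\V$, hence $\norm{\hu(t)}_{\V',\beta}\le C\sqrt L$. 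For $\kappa>0$ the regularity in \eqref{REG:INT}, \eqref{REG:GMS} gives $\hu\in L^\infty(0,T;\V)$ uniformly in $L$, and interpolating along the Gelfand triple $\V\emb\LL^2\emb\V'$ (with the $\beta$-weighted $\LL^2$ inner product, equivalent to the standard one since $\beta>0$) gives $\norm{\hu(t)}_{\LL^2}\le C\norm{\hu(t)}_{\V}^{1/2}\norm{\hu(t)}_{\V',\beta}^{1/2}\le CL^{1/4}$, i.e.\ $\norm{u^L-u_*}_{L^\infty(0,T;L^2(\Om))}+\norm{u^L-u_*}_{L^\infty(0,T;L^2(\Ga))}\le CL^{1/4}$.

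The hard part will be the rate estimates. In contrast to the $L\to\infty$ limit, where the LW model has the clean condition $\pdnu\mu^*=0$, the GMS limit couples the bulk and boundary potentials ($\beta\theta_*=\mu_*$) and carries a nonvanishing normal flux $\pdnu\mu_*$, so the energy estimate for the difference genuinely requires the higher regularity of the GMS solution (Proposition~\ref{prop:GMS:reg}). The main bookkeeping difficulty is keeping all constants independent of $L$: since $\pdnu\mu^L=\tfrac1L(\beta\theta^L-\mu^L)$ is only controlled by $C/\sqrt L$ in $L^2(\Sigma_T)$, the computation has to be arranged so that only its time primitive $1\star\pdnu\mu^L$ --- which does stay of order one --- ever appears outside the coercive part of the inequality, and the three ``gradient'' pieces of the inner product $\scp{\cdot}{\cdot}_{L,\beta}$ have to be propagated together through the Gronwall step.
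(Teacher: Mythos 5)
Your proposal is correct and follows the same two-step architecture as the paper (subsequence extraction with the $L$-uniform bounds \eqref{lim:1}--\eqref{lim:3:GMS}, cancellation of the relaxation terms in the combined $\V$-equation, Aubin--Lions, uniqueness of the GMS solution; then a time-integrated energy estimate for the differences combined with \eqref{GMS:interp}--\eqref{H-1:est} and Gronwall). The two delicate steps are executed slightly differently, and both variants work. For the cross term involving $\pdnu\mu_*$, the paper keeps $\tfrac{1}{2L}\norm{(1\star[\beta\hat\theta-\hat\mu])(t_0)}_{L^2(\Gamma)}^2$ as the coercive boundary quantity and controls $\int_0^{t_0}\intG(1\star\pdnu\mu_*)(\beta\hat\theta-\hat\mu)$ by a Fubini swap followed by Young's inequality with weights $CL$ and $\tfrac{1}{4L}$; you instead split $\pdnu\mu^L=\pdnu\hat\mu+\pdnu\mu_*$, keep $\tfrac{L}{2}\norm{(1\star\pdnu\hat\mu)(t_0)}_{L^2(\Gamma)}^2$ on the left, and absorb $L\int(1\star\pdnu\hat\mu)\pdnu\mu_*$ directly by Young and Gronwall. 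Since $L(1\star\pdnu\hat\mu)=(1\star[\beta\hat\theta-\hat\mu])-L(1\star\pdnu\mu_*)$ and $\pdnu\mu_*\in L^2(\Sigma_T)$ uniformly, the two coercive quantities differ by $O(L)$ and both yield $\sup_t\norm{1\star(\beta\theta^L-\mu^L)(t)}_{L^2(\Gamma)}\le CL$; your route avoids the Fubini step at the cost of one extra triangle inequality at the end. For the $L^{1/4}$ rate, the paper tests the difference system with $w=\beta\hat u$, $z=\hat u$ so that the flux terms cancel and Cauchy--Schwarz with \eqref{GMS:CR:1} and \eqref{lim:1} gives $\norm{\hat u(t)}_{\LL^2}^2\le C\sqrt L$ directly; you instead read off $\norm{\hat u(t)}_{\V',\beta}\le C\sqrt L$ from the time-integrated combined equation and interpolate along $\V\emb\LL^2\emb\V'$ using the uniform $L^\infty(0,T;\V)$ bound available for $\kappa>0$. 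Both arguments are valid and give the stated rates with constants independent of $L$ (and, where claimed, of $\kappa$); you also correctly flag that the $\kappa=0$ analogue of Proposition~\ref{prop:GMS:reg} is needed for $\pdnu\mu_*\in L^2(\Sigma_T)$ in that case, which the paper handles only implicitly.
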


\bigskip

\begin{proof}
	In this proof we use the letter $C$ to denote generic positive constants independent of $L$, $N$, $n$ and $\tau$ that may change their value from line to line.
\subparagraph{Step 1: Convergence in the limit $L\to 0$.}
	Let $(L_k)_{k\in\N} \subset (0,1]$ denote an arbitrary sequence satisfying $L_k\to 0$ as $k\to\infty$.
	For any $k\in\N$, let $(u_k, \mu_k, \theta_k) = (u^{L_k}, \mu^{L_k}, \theta^{L_k})$ denote the unique weak solution to the system \eqref{CH:INT} corresponding to the parameter $L_k$. Then, in the limit $k\to\infty$, we infer from \eqref{lim:1}, \eqref{lim:2} and \eqref{lim:3:GMS} the existence of limit functions $(u_*, \mu_*, \theta_*)$ such that
	\begin{subequations}
		\label{GMS:lim:comp}
	\begin{alignat}{2}
	u_k &\to u_* &&\quad \text{ weakly in } H^1(0,T;\V'), \\
	u_k &\to u_* &&\quad \text{ weakly-* in } L^{\infty}\big(0,T;H^1(\Om) \cap L^p(\Om)\big), \\
	u_k\vert_{\Sigma_T} &\to u_*\vert_{\Sigma_T} &&\quad \text{ weakly-* in } L^{\infty}(0,T;\Xk  \cap L^q(\Ga)) ,\\
	\mu_k &\to \mu_* &&\quad \text{ weakly in } L^2(0,T;H^1(\Omega)), \\
	\theta_k &\to \theta_* &&\quad \text{ weakly in } L^2(0,T;H^1(\Gamma)), \\
	\label{CONV:DIR}
	\beta \theta_k - \mu_k \vert_{\Sigma_T} &\to 0 &&\quad \text{ strongly in } L^2(\Sigma_T).
	\end{alignat}
	along a non-relabelled subsequence. In particular, \eqref{CONV:DIR} implies that $\mu_* \vert_{\Sigma_T} = \beta\theta_*$. 
	Using the Aubin--Lions lemma, we conclude that
	\begin{alignat}{2}
		u_k &\to u_* &&\quad \text{ strongly in } C([0,T];L^2(\Omega)), \\
		u_k\vert_{\Sigma_T} &\to u_*\vert_{\Sigma_T} &&\quad \text{ strongly in } C([0,T];L^2(\Gamma)).
	\end{alignat}
	\end{subequations}
	It is clear from the convergence properties in \eqref{GMS:lim:comp} that the triplet $(u_*, \mu_*, \theta_*)$ has the desired regularity as stated in item (i) of Proposition~\ref{prop:GMS}.  For arbitrary $w\in \V$ and  $\eta \in \Vk \cap L^\infty$ with $\eta\vert_\Ga \in L^\infty(\Ga)$, testing \eqref{WF:INT:1} with $\beta w$, \eqref{WF:INT:2} with $w$ and \eqref{WF:INT:3} with $\eta$ gives
		\begin{align*}
		\inn{\delt u_k}{ w}_{\V,\beta} 
		&= - \beta \intO \grad\mu_k \cdot \grad w \dx - \intG \gradg\theta_k \cdot \gradg w \dG,  \\
		\intO \mu_k \eta \dx + \intG \theta_k \eta \dG 
		 &= \intO \grad u_k\cdot\grad\eta + F'(u_k)\eta \dx  
		+ \intG \kappa \gradg u_k\cdot\gradg\eta + G'(u_k)\eta  \dG.	 
		\end{align*}
	After multiplying the above by arbitrary test functions in $C^\infty_c([0,T])$ and integrating with respect to $t$ from $0$ to $T$, we can apply the convergence properties \eqref{GMS:lim:comp} to pass to the limit in the resulting equations, leading to the assertion that $(u_*, \mu_*, \theta_*)$ satisfies \eqref{WF:GMS}. Again by weak lower semicontinuity arguments, passing to the limit in the energy inequality \eqref{WF:INT:DISS} leads to \eqref{WF:GMS:DISS}, and so $(u_*, \mu_*, \theta_*)$ is the unique weak solution of the GMS model \eqref{CH:GMS} in the sense of Proposition~\ref{prop:GMS}. 

	If additionally \eqref{ass:pot:reg} holds, then we obtain as before the weak convergence of $(u_k, u_k \vert_{\Sigma_T})$ to $(u_*, u_* \vert_{\Sigma_T})$ in $L^2(0,T;\H^3)$ if $\kappa > 0$ and in $ L^2(0,T;H^{5/2}(\Om) \times H^2(\Ga)) $ if $\kappa = 0$. 
		
	By uniqueness of solutions to the GMS model, which is independent of the choice of the extracted subsequence, we conclude by standard arguments that the above convergence results 
	hold true for the whole sequence.  Moreover, as the sequence $(L_k)_{k\in\N}$ was arbitrary, the convergence assertions for $L \to 0$ are established.
	\subparagraph{Step 2: Convergence rates.}
	For $L \in (0,1]$, let $(u^L, \mu^L, \theta^L)$ denote the unique solution to \eqref{CH:INT} corresponding to the initial data $u_{0,*}$ in the sense of Theorem~\ref{THM:WP}.   
	 We now use the notation 
	\begin{align*}
	(\hat u,\hat \mu,\hat \theta) := (u^L - u_*,  \mu^L - \mu_*,  \theta^L - \theta_* ),
	\end{align*} 
	Recalling that $\beta \theta_* - \mu_* = 0$ a.e. on $\Sigma_T$, 
	the convergence rate
	\begin{align*}
		\norm{\beta \theta^L - \mu^L }_{L^2(\Sigma_T)} = \norm{\beta \hat \theta - \hat \mu}_{L^2(\Sigma_T)} \leq C \sqrt{L}
	\end{align*} 
	follows directly from \eqref{lim:1}, with a constant $C$ independent of $L$ and $\kappa$.  Under \eqref{ass:pot:reg} and the assumption $(u_{0,*},u_{0,*}\vert_\Gamma) \in \H^3$ if $\kappa>0$ or $u_{0,*} \in H^3(\Om)$ if $\kappa=0$, the triplet $(u^L, \mu^L, \theta^L)$ is a strong solution to \eqref{CH:INT} in the sense of Theorem~\ref{THM:SWP}. 
	In light of Proposition~\ref{prop:GMS:reg} for the limit solutions $(u_*, \mu_*, \theta_*)$,  we see that
	\begin{subequations}\label{GMS:d}
		\begin{alignat}{2}
		\label{GMS:d:1} 0 & = \inn{\hat u_t}{w}_{H^1(\Omega)} + \intO \nabla \hat \mu \cdot \nabla w \dx - \intG \pdnu \hat \mu \, w \dG, \\
		\label{GMS:d:2} 0 & = \inn{\hat u_t}{z}_{H^1(\Gamma)} + \intG \gradg \hat \theta \cdot \gradg z + \pdnu \hat \mu \, \beta z \dG, \\
		\begin{split}
		\label{GMS:d:3} 0 & = \intO \nabla \hat u \cdot \nabla \eta + (F'(\Ll u) - F'(u_*) - \hat \mu) \eta \dx \\
		& \quad + \intG \kappa \gradg \hat u \cdot \gradg \eta + (G'(u^L) - G'(u_*) - \hat \theta) \eta \dG,
		\end{split}
		\end{alignat}
	\end{subequations}
	for almost all $t \in (0,T)$ and for all $w \in H^1(\Omega)$, $z \in H^1(\Gamma)$ and $\eta \in\Vk \cap L^\infty(\Omega)$ with $\eta\vert_\Ga \in L^\infty(\Ga)$. 
	The only difference to \eqref{LW:d} is that here $\pdnu \mu^L$ is replaced by $\pdnu \hat \mu$.  
	Let now $t_0 \in (0,T]$ be arbitrary. Once more, we write $Q_{t_0} = \Omega \times (0, t_0)$ and  $\Sigma_{t_0} = \Gamma \times (0,t_0)$ and we  use  the notation introduced in \eqref{DEF:*}. 
	Recalling that
	\begin{align}
	\label{GMS:rel:1} 
	&\pdnu \hat \mu = \pdnu \mu^L - \pdnu \mu_* 
		= \tfrac{1}{L} ( \beta\theta^L - \mu^L) - \pdnu \mu_* 
		= \tfrac{1}{L} (\beta\hat \theta - \hat \mu) - \pdnu \mu_*
		\quad\text{a.e.~on}\; \Sigma_T, 
	\end{align}
	and thus,
	\begin{align}
	\label{GMS:rel:2}  
	&\intG \big(1 \star \pdnu \hat \mu \big)(t)\, \big(\beta\hat \theta - \hat \mu\big)(t) \dG 
		= \intG \big(1 \star \big[\tfrac 1 L (\beta \hat \theta - \hat \mu) - \pdnu \mu_*\big]\big)(t) \,
		\big(\beta\hat \theta - \hat \mu\big)(t) \dG
	\end{align}
	for almost all $t\in(0,T)$. Invoking also the relation \eqref{REL:*} we can proceed as in the derivation of \eqref{IEQ:CR5} to conclude that
	\begin{align}
	\label{GMS:Err:1}
	\begin{aligned}
	&  \frac 1 2 \norm{ \nabla (1\star \hat \mu)(t_0)}_{L^2(\Omega)}^2 
	+ \frac 1 2 \norm{ \gradg (1\star \hat \theta)(t_0)}_{L^2(\Gamma)}^2 
	+ \frac{1}{2L} \norm{(1 \star [\beta \hat \theta - \hat \mu])(t_0)}_{L^2(\Gamma)}^2  \\
	& \qquad +\int_0^{t_0}  \norm{\nabla \hat u}_{L^2(\Omega)}^2 + \kappa \norm{\gradg \hat u}_{L^2(\Gamma)}^2 \dt \\
	& \quad \leq C \int_0^{t_0}  \norm{ \nabla (1\star \hat \mu)(t) }_{L^2(\Omega)}^2 \dt
		+ \int_0^{t_0} \intG \big(1\star \deln \mu_*\big)(t) \, \big( \beta\hat\theta(t)-\hat\mu(t) \big) \dG \dt 
	\end{aligned}
	\end{align}
	By Fubini's theorem, the Cauchy--Schwarz inequality and Young's inequality, we see that 
	\begin{align*}
	\begin{aligned}
		&\int_0^{t_0} \intG (1 \star \pdnu \mu_*)(t) (\beta \hat \theta(t) - \hat \mu(t)) \dG \dt = \intG \int_0^{t_0} \int_s^{t_0} \pdnu \mu_*(s) (\beta \hat \theta(t) - \hat \mu(t)) \dt \ds  \dG \\
		& \quad = \int_0^{t_0} \intG  \pdnu \mu_*(s)\, \Big[ \big(1 \star (\beta \hat\theta - \hat \mu)\big)(t_0) - \big(1 \star (\beta \hat\theta - \hat \mu)\big)( s ) \Big] \dG \ds \\
		& \quad \leq \int_0^{t_0} \norm{ \pdnu \mu_*(s) }_{L^2(\Ga)} \, \Big[ \norm{ 1 \star ( \beta \hat \theta - \hat \mu)(t_0) }_{L^2(\Ga)} 
			+ \norm{ 1 \star ( \beta \hat \theta - \hat \mu)(s) }_{L^2(\Ga)} \Big] \ds \\
		& \quad \leq \int_0^{t_0} CL \norm{\pdnu \mu_*(s)}_{L^2(\Gamma)}^2 
			+ \frac{1}{4L} \norm{1\star (\beta \hat \theta - \hat \mu)(s)}_{L^2(\Gamma)}^2 \ds \;
			+ \frac{1}{4L} \norm{1\star (\beta \hat \theta - \hat \mu)(t_0)}_{L^2(\Gamma)}^2 .
	\end{aligned}
	\end{align*} 
	Plugging this estimate into \eqref{GMS:Err:1} we arrive at
	\begin{align*}
	&  \frac 1 2 \norm{ \nabla (1\star \hat \mu)(t_0)}_{L^2(\Omega)}^2 
		+ \frac 1 2 \norm{ \gradg (1\star \hat \theta)(t_0)}_{L^2(\Gamma)}^2 
		+ \frac{1}{4L} \norm{(1 \star [\beta \hat \theta - \hat \mu])(t_0)}_{L^2(\Gamma)}^2  \\
	& \qquad +\int_0^{t_0}  \norm{\nabla \hat u(t)}_{L^2(\Omega)}^2 + \kappa \norm{\gradg \hat u(t)}_{L^2(\Gamma)}^2 \dt \\
	& \quad \leq CL \norm{\pdnu \mu_*}_{L^2(\Sigma_T)}^2 
		+ C \int_0^{t_0}  \Big ( \norm{ \nabla (1\star \hat \mu)(t)}_{L^2(\Omega)}^2 + \frac{1}{4L} \norm{(1 \star [\beta\hat \theta - \hat \mu])(t)}_{L^2(\Gamma)}^2 \Big ) \dt.
	\end{align*}
	Invoking the integral form of Gronwall's inequality, we deduce the existence of a constant $C$ independent of $L$ and $\kappa$ such that 
	\begin{align*}
	\begin{aligned}
	& \sup_{t \in (0,T)} \Big ( \norm{ \nabla (1\star \hat \mu)(t)}_{L^2(\Omega)}^2 + \norm{ \gradg (1\star \hat \theta)(t)}_{L^2(\Gamma)}^2 + \frac{1}{L}\norm{(1 \star [\beta \hat \theta - \hat \mu])(t)}_{L^2(\Gamma)}^2 \Big )  \\
	& \qquad +  \norm{\nabla \hat u}_{L^2(Q_T)}^2 + \kappa \norm{\gradg \hat u}_{L^2(\Sigma_T)}^2  \\
	& \quad \leq CL 
	\end{aligned}
	\end{align*}
	which implies the convergence rates 
	\begin{align}\label{GMS:CR:1}
	\norm{\nabla \hat u}_{L^2(Q_T)} +  \sqrt{\kappa}\, \norm{\gradg \hat u}_{L^2(\Sigma_T)} \leq C\sqrt{L}, \quad \sup_{t \in (0,T)}  \left \| \int_0^t ( \beta \hat \theta - \hat \mu)(s) \ds \right \|_{L^2(\Gamma)} \leq  CL.
	\end{align}
	Now, assuming $\kappa > 0$ and choose $w = \beta\hat u$ in \eqref{GMS:d:1} and $z = \hat u$ in \eqref{GMS:d:2}, so that upon summing and integrating in time over $(0,t)$, we obtain
	\begin{align*}
		\beta\norm{\hat u(t)}_{L^2(\Omega)}^2 + \norm{\hat u(t)}_{L^2(\Gamma)}^2 
		& \leq \beta \norm{\nabla \hat \mu}_{L^2(Q_T)} \norm{\nabla \hat u}_{L^2(Q_T)} 
			+ \norm{\gradg \hat \theta}_{L^2(\Sigma_T)} \norm{\gradg \hat u}_{L^2(\Sigma_T)} \\
		& \leq C \sqrt{L}
	\end{align*}
	after invoking \eqref{GMS:CR:1} and the uniform boundedness of $\nabla \mu^L$ and $\gradg \theta^L$ due to \eqref{lim:1}. As $\beta>0$, this leads to the convergence rate
	\begin{align*}
	\norm{\hat u}_{L^\infty(0,T;L^2(\Om))}  +
	\norm{\hat u}_{L^\infty(0,T;L^2(\Ga))}  \leq CL^{1/4},
	\end{align*}
	and thus, the proof of Theorem~\ref{THM:ASY:GMS} is complete.
\end{proof}

\section{Numerical analysis}
 In this section, we assume that $\beta,\kappa>0$. 
 We derive an unconditionally stable, fully discrete finite element scheme which allows us to investigate  the model \eqref{CH:INT} as well as the limit models \eqref{CH:GMS} and \eqref{CH:LW}  numerically. We establish the existence of discrete solutions to this scheme and prove convergence for arbitrary $L\in [0,\infty]$ in the limit of vanishing spatial and temporal discretisation parameters. For simplicity, we also set $\mo = \mg = \eps=\delta=1$ in the subsequent approach, although different values are used for the simulations in Section~6.
 
 As the model \eqref{CH:INT} interpolates between the GMS model and the LW model, it naturally inherits the peculiarities of both.
Therefore, a discrete scheme that can be applied to the complete family of models needs to cope with the intricacies of both approaches.
In particular, the scheme has to include both chemical potentials $\muO$ and $\muG$, while ensuring $\beta\muG=\mu\vert_{\Sigma_T}$ for $L\searrow0$ and $\del_\n \muO=0$ for $L\nearrow\infty$.
Furthermore, extending the ideas from \cite{Metzger2019} to derive explicit, $u$-dependent expressions for the chemical potentials is deemed to be necessary to prevent the discrete scheme from becoming ill-conditioned for small time increments (see Section~5 in \cite{Metzger2019}).

\subsection{Technical preliminaries}
Concerning the discretisation in time, we consider
\begin{enumerate}[label=$(\mathrm{T})$, ref = $\mathrm{T}$]
\labitem{$(\mathrm{T})$}{item:disc:time} the time interval $I:=[0,T)$ that is subdivided into intervals $I_n:=[t_n,t_{n+1})$ with $t_0=0$ and $t_{n+1}=t_n+\tau_n$ for time increments $\tau_n>0$ and $n=0,...,N-1$ with $t_N=T$. For simplicity, we take $\tau_n\equiv\tau=\tfrac{T}{N}$ for $n=0,...,N-1$.
\end{enumerate}

 Throughout this section we assume the spatial domain $\Omega\subset\R^{d}$, $d\in\tgkla{2,3}$ to be bounded, convex, and polygonal (if $d = 2$) or polyhedral (if $d = 3$) to avoid additional technicalities.
When considering a smoother domain, one has to approximate $\Omega$ and $\Gamma$ by an $h$-dependent family of polygonal domains $\tgkla{\Omega\h}\h$ with boundaries $\tgkla{\Gamma\h}\h$ (cf. \cite{Dziuk1988,DziukElliot2013,ElliottRanner2012}).
This approach of course introduces an additional geometric error which also has to be considered.
For the application of this technique to the GMS model, we refer the reader to \cite{Harder2020}, where an error estimate for a semi-discrete finite element scheme for \eqref{CH:GMS} was derived. 

We introduce partitions $\Th$ of $\Omega$ and $\Th^\Gamma$ of $\Gamma=\partial\Omega$ depending on a spatial discretisation parameter $h > 0$ satisfying the following assumptions:
\begin{enumerate}[label=$(\mathrm{S \arabic*})$, ref = $\mathrm{S \arabic*}$]
\labitem{$(\mathrm{S1})$}{item:disc:space} Let $\tgkla{\Th}_{h>0}$ a quasiuniform family (in the sense of \cite{BrennerScott}) of partitions of $\Omega$ into disjoint, open, non-obtuse simplices $K$, so that
\begin{align*}
\overline{\Omega}\equiv\bigcup_{K\in\Th}\overline{K} \quad \text{ with }\max_{K\in\Th}\diam\trkla{K}\leq h\,.
\end{align*}
\labitem{$(\mathrm{S2})$}{item:disc:gamma} Let $\tgkla{\Th^\Gamma}_{h>0}$ a quasiuniform family of partitions of $\Gamma$ into disjoint, open, non-obtuse simplices $K^\Gamma$, so that
\begin{align*}
\forall K^\Gamma\in\Th^\Gamma~~\exists!K\in\Th\text{~such~that~} \overline{K^\Gamma}=\overline{K}\cap\Gamma\,,
\end{align*}
and
\begin{align*}
\Gamma\equiv\bigcup_{K^\Gamma\in\Th^\Gamma}\overline{K^\Gamma} \quad \text{ with }\max_{K^\Gamma\in\Th^\Gamma}\diam\trkla{K^\Gamma}\leq h\,.
\end{align*}
\end{enumerate}
The above assumption implies that $\Th^\Gamma$ is compatible to $\Th$ in the sense that all elements in $\Th^\Gamma$ are edges (if $d = 2$) or faces (if $d = 3$) of elements in $\Th$.
For the approximation of the phase-field $u$ and the chemical potential $\muO$ we use continuous, piecewise linear finite element functions on $\Th$.
This space, denoted by $\UhO$, is spanned by basis functions $\tgkla{\chi_{h,k}}\Sub[-2pt]{k=1, \dots, \dim\UhO}$ that also form a dual basis to the vertices 
$\tgkla{\bs{x}_k}\Sub[-2pt]{k=1, \dots, \dim\UhO}$ of $\Th$, i.e., $\chi_{h,k}\trkla{\bs{x}_k}=\delta_{k,l}$ for $k,l=1,...,\dim\UhO$.

Analogously, we denote the space of continuous, piecewise linear finite element functions on $\Th^\Gamma$ by $\UhG$, which is spanned by basis functions $\tgkla {\chi_{h,k}^\Gamma}\Sub[-2pt]{k=1,...,\dim\UhG}$ that also form a dual basis to the vertices $\tgkla{\bs{x}_k^\Gamma}\Sub[-2pt]{k=1,...,\dim\UhG}$ of $\Th^\Gamma$, 
i.e., $\chi^\Gamma_{h,k}\trkla{\bs{x}^\Gamma_k}=\delta_{k,l}$ for $k,l=1,...,\dim\UhG$.  
Due to the compatibility condition for $\Th$ and $\Th^\Gamma$, we have 
\begin{align}\label{eq:compatibility}
\UhG=\operatorname{span}\tgkla{\zeta\h \vert_{\Gamma} \,:\,\zeta\h\in\UhO}\,.
\end{align}
Without loss of generality, we assume that the first $\dim\UhG$ vertices of $\Th$ are located on $\Gamma$, 
i.e., $\tgkla{\bs{x}_k^\Gamma}\Sub[-2pt]{k=1,...,\dim\UhG}=\tgkla{\bs{x}_k}\Sub[-2pt]{k=1,...,\dim\UhG}$.
As all functions in $\UhO$ are continuous in $\overline{\Omega}$, we will often suppress the trace operator $\cdot\vert_\Gamma$ to simplify the notation.
We define the nodal interpolation operators $\Ihop\,:\,C^0\trkla{\overline{\Omega}}\rightarrow \UhO$ and $\IhGop\,:\,C^0\trkla{\Gamma}\rightarrow \UhG$ by
\begin{align}\label{eq:def:Ih}
\Ih{a}:=\sum_{k=1}^{\dim\UhO}a\trkla{\bs{x}_k}\chi_{h,k}\,, \quad \text{ and } \quad
\IhG{a}:=\sum_{k=1}^{\dim\UhG}a\trkla{\bs{x}_k}\chi^\Gamma_{h,k}\,.
\end{align}
 It is well-known that on the finite element spaces $\UhO$ and $\UhG$ the discrete $L^2$-norms given by $\big(\iOmega\Ih{\tabs{\,\cdot\,}^2}dx\big)^{1/2}$  and $\big(\iGamma\IhG{\tabs{\,\cdot\,}^2}d\Gamma\big)^{1/2}$  are equivalent to $\norm{\cdot}_{L^2\trkla{\Omega}}$  and $\norm{\cdot}_{L^2\trkla{\Gamma}}$, respectively.
Furthermore,  the following estimates (that can be found in \cite[Lem.~2.1]{Metzger2019}) hold true:
\begin{lemma}\label{lem:ihfe}
Let $\Th$ and $\Th^\Gamma$ satisfy \eqref{item:disc:space} and \eqref{item:disc:gamma}.
Furthermore, let $p\in[1,\infty)$, $1\leq q\leq\infty$, and $ q^*=\tfrac{q}{q-1} $ for $q<\infty$ or $q^*=1$ for $q=\infty$.
Then,
\begin{align}
\norm{\trkla{\ids-\Ihop}\tgkla{f\h g\h}}_{L^p\trkla{\Omega}}&\leq C h^2\norm{\nabla f\h}_{L^{pq}\trkla{\Omega}}\norm{\nabla g\h}_{L^{pq^*}\trkla{\Omega}}\,,\\
\norm{\trkla{\ids-\IhGop}\tgkla{\tilde{f}\h \tilde{g}\h}}_{L^p\trkla{\Gamma}}&\leq C h^2\norm{\gradg \tilde{f}\h}_{L^{pq}\trkla{\Gamma}}\norm{\gradg \tilde{g}\h}_{L^{pq^*}\trkla{\Gamma}}\,.
\end{align}
holds true for all $f\h,\,g\h\in\UhO$ and $\tilde{f}\h,\,\tilde{g}\h\in\UhG$.
\end{lemma}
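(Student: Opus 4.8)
As the text indicates, this is precisely \cite[Lem.~2.1]{Metzger2019}, so I only sketch the argument. The plan is to estimate the interpolation error one simplex at a time and then reassemble the local bounds by a discrete Hölder inequality. The mechanism producing the exponents $pq$ and $pq^*$ on the right-hand side is that for continuous piecewise linear functions the gradients $\nabla f_h$ and $\nabla g_h$ are \emph{constant} on each element, so that their $L^\infty$- and $L^r$-norms over a simplex differ only by a fixed power of its volume.

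First I would fix a simplex $K\in\Th$. Since $f_h|_K$ and $g_h|_K$ are affine, $\nabla f_h$ and $\nabla g_h$ are constant vectors on $K$ and the Hessian of the product is the constant matrix $D^2(f_h g_h)|_K=\nabla f_h\otimes\nabla g_h+\nabla g_h\otimes\nabla f_h$. Writing a point $x\in K$ in barycentric coordinates $x=\sum_i\lambda_i a_i$, using that $(\ids-\Ihop)\tgkla{f_h g_h}$ vanishes at the vertices $a_i$, that the first-order Taylor terms cancel because $\sum_i\lambda_i(a_i-x)=0$, and that $\tabs{a_i-x}\le h$, one obtains the shape-independent pointwise estimate
\[
\bigl|(\ids-\Ihop)\tgkla{f_h g_h}(x)\bigr|\le \tfrac12 h^2\,\norm{D^2(f_h g_h)}_{L^\infty(K)}\le h^2\,\norm{\nabla f_h}_{L^\infty(K)}\,\norm{\nabla g_h}_{L^\infty(K)} .
\]
Integrating the $p$-th power over $K$ yields $\norm{(\ids-\Ihop)\tgkla{f_h g_h}}_{L^p(K)}\le h^2\,\norm{\nabla f_h}_{L^\infty(K)}\norm{\nabla g_h}_{L^\infty(K)}\,\tabs{K}^{1/p}$.

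Next comes the exponent bookkeeping. Because $\nabla f_h$ and $\nabla g_h$ are constant on $K$, one has $\norm{\nabla f_h}_{L^\infty(K)}=\tabs{K}^{-1/(pq)}\norm{\nabla f_h}_{L^{pq}(K)}$ and $\norm{\nabla g_h}_{L^\infty(K)}=\tabs{K}^{-1/(pq^*)}\norm{\nabla g_h}_{L^{pq^*}(K)}$ (with the usual conventions when $q$ or $q^*$ equals $\infty$), and since $\tfrac1{pq}+\tfrac1{pq^*}=\tfrac1p$ the powers of $\tabs{K}$ cancel, leaving $\norm{(\ids-\Ihop)\tgkla{f_h g_h}}_{L^p(K)}\le C h^2\,\norm{\nabla f_h}_{L^{pq}(K)}\norm{\nabla g_h}_{L^{pq^*}(K)}$. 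Raising to the $p$-th power, summing over $K\in\Th$ and applying Hölder's inequality for sums with exponents $q$ and $q^*$ gives
\[
\norm{(\ids-\Ihop)\tgkla{f_h g_h}}_{L^p(\Omega)}^p\le C h^{2p}\Bigl(\sum_{K\in\Th}\norm{\nabla f_h}_{L^{pq}(K)}^{pq}\Bigr)^{1/q}\Bigl(\sum_{K\in\Th}\norm{\nabla g_h}_{L^{pq^*}(K)}^{pq^*}\Bigr)^{1/q^*}=C h^{2p}\norm{\nabla f_h}_{L^{pq}(\Omega)}^p\norm{\nabla g_h}_{L^{pq^*}(\Omega)}^p ,
\]
and taking $p$-th roots (legitimate since $p<\infty$) proves the first inequality. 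The boundary estimate follows verbatim, with $\Th$, $\Omega$, $\nabla$ and $\tabs{K}$ replaced by $\Th^\Gamma$, $\Gamma$, $\gradg$ and the $(d-1)$-dimensional measure of $K^\Gamma$, using that the elements of $\UhG$ are affine on each $K^\Gamma\in\Th^\Gamma$ and therefore have constant surface gradient.

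There is no genuine obstacle here; the only step that requires care is the bookkeeping in the middle paragraph. It is precisely the elementwise constancy of the gradients together with the conjugacy relation $\tfrac1q+\tfrac1{q^*}=1$ that allows the purely local $L^\infty$-bounds on $\nabla f_h$ and $\nabla g_h$ to be converted into the mixed global $L^{pq}/L^{pq^*}$ bound while retaining the full $h^2$ scaling; the limiting cases $q\in\{1,\infty\}$ need only the standard conventions for the norms involved.
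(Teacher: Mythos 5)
Your proposal is correct. The paper gives no proof of this lemma at all — it is quoted verbatim from the cited reference \cite[Lem.~2.1]{Metzger2019} — and your sketch is the standard argument underlying that result: the exact barycentric/Taylor identity on each simplex (the product of two affine functions is quadratic with constant Hessian $\nabla f\h\otimes\nabla g\h+\nabla g\h\otimes\nabla f\h$), the conversion of the elementwise $L^\infty$-bounds on the constant gradients into $L^{pq}$- and $L^{pq^*}$-norms via $\tfrac{1}{pq}+\tfrac{1}{pq^*}=\tfrac1p$, and H\"older's inequality for the sum over elements; the limiting cases $q\in\{1,\infty\}$ and the boundary version (flat faces $K^\Gamma$, constant surface gradients) are handled correctly.
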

In the forthcoming analysis,  we consider any initial datum $(u_0,u_0\vert_\Ga) \in \H^2$, and potentials $F$ and $G$  satisfying \eqref{ass:pot} with $p=q=4$. In addition we make the following assumption: 
\begin{enumerate}[label=$(\mathrm{D})$, ref = $\mathrm{D}$]
\labitem{$(\mathrm{D})$}{item:potentialsbounds} The convex parts $F_1$ and $G_1$ as well as the concave parts $F_2$ and $G_2$ can be further decomposed into a polynomial part of degree four and an additional part having a globally Lipschitz continuous first derivative. 
\end{enumerate}
In particular, we may thus choose the penalised double-well potential 
\begin{align}
W\trkla{s}:=\tfrac14\trkla{1-s^2}^2+\tfrac1{\delta^\prime}\max\tgkla{\tabs{s}-1,0}^2\label{eq:penalisedW}
\end{align}
with $0<\delta^\prime<\!\!<1$.  
The assumption $(u_0,u_0\vert_\Ga) \in \H^2$ allows us to define the discrete initial condition $u\h^0 \in \UhO$ via $u\h^0:=\Ih{u_0}$.  An immediate consequence is
\begin{subequations}
\begin{align}
\iOmega\tabs{\nabla u\h^0}^2 \dx + \iOmega\Ih{F\trkla{u\h^0}} \dx + \iGamma\tabs{\gradg u\h^0}^2  \dG +\iGamma\IhG{G\trkla{u\h^0}} \dG \leq C\trkla{u_0}\,,\label{eq:initialcond:bound}\\
\norm{u\h^0-u_0}_{H^1\trkla{\Omega}} +\norm{u\h^0-u_0 \vert_{\Gamma}}_{H^1\trkla{\Gamma}}\rightarrow 0\quad\text{for~}h\searrow0\label{eq:initialcond:conv}\,.
\end{align}
\end{subequations}
When passing to the limit $\trkla{h,\tau}\searrow\trkla{0,0}$, we will also need a compatibility condition for $h$ and $\tau$. 
In particular, we will assume that
\begin{enumerate}[label=$(\mathrm{C})$, ref = $\mathrm{C}$]
\labitem{$(\mathrm{C})$}{item:htau} $\tfrac{h^4}{\tau}\searrow0$ when passing to the limit $\trkla{h,\tau}\searrow\trkla{0,0}$.
\end{enumerate}
Furthermore, we introduce the matrices
\begin{subequations}
\begin{align}
\trkla{\MOmega}_{ij}&:=\iOmega\Ih{\chi_{hj}\chi_{hi}} \dx &&\forall i,j=1,...,\dim\UhO\,,\\
\trkla{\MGamma}_{ij}&:=\iGamma\IhG{\chi_{hj}^\Gamma\chi_{hi}^\Gamma} \dG&&\forall i,j=1,...,\dim\UhG\,,\\
\trkla{\LOmega}_{ij}&:=\iOmega\nabla\chi_{hj}\cdot\nabla\chi_{hi} \dx&&\forall i,j=1,...,\dim\UhO\,,\\
\trkla{\LGamma}_{ij}&:=\iGamma\gradg\chi_{hj}^\Gamma\cdot\gradg\chi_{hi}^\Gamma \dG&&\forall i,j=1,...,\dim\UhG,
\end{align}
\end{subequations}
and with a slight misuse of notation, we write $f\trkla{U\nn}$ when we apply a function $f$ to all components of $U\nn$.
Due to our consideration that the first $\dim\UhG$ vertices of $\Th$ are located on $\Gamma$, we can define an extension operator $\textend{\cdot}{\OmegaSymb}\,:\,\R^{\dim\UhG}\rightarrow\R^{\dim\UhO}$ via
\begin{align*}
\R^{\dim\UhG}\ni A\mapsto \begin{pmatrix} A\\0\end{pmatrix} \in\R^{\dim\UhO}
\end{align*}
and the restriction operator $\trestr{\cdot}{\GammaSymb}\,:\,\R^{\dim\UhO}\rightarrow\R^{\dim\UhG}$, which restricts a vector to its first $\dim\UhG$ entries.  For matrices, we define analogous restriction operators by splitting a matrix $\mb{A}\in\R^{\dim\UhO\times\dim\UhO}$ into submatrices
\begin{align}\label{eq:def:restriction}
\begin{matrix*}[l]
&\restr{\mb{A}}{\GammaSymb\times\GammaSymb}\in\R^{\dim\UhG\times\dim\UhG}\,,& \restr{\mb{A}}{\GammaSymb\times\InnerSymb}\in\R^{\dim\UhG\times\trkla{\dim\UhO-\dim\UhG}}\,,\\
& \restr{\mb{A}}{\InnerSymb\times\GammaSymb}\in\R^{\trkla{\dim\UhO-\dim\UhG}\times\dim\UhG}\,,&\restr{\mb{A}}{\InnerSymb\times\InnerSymb}\in\R^{\trkla{\dim\UhO-\dim\UhG}\times\trkla{\dim\UhO-\dim\UhG}}\,,\\
& \restr{\mb{A}}{\GammaSymb\times\OmegaSymb}\in\R^{\dim\UhG\times\dim\UhO}\,, &\restr{\mb{A}}{\InnerSymb\times\OmegaSymb}\in\R^{\trkla{\dim\UhO-\dim\UhG}\times\dim\UhO}\,,\\
&\restr{\mb{A}}{\OmegaSymb\times\GammaSymb}\in\R^{\dim\UhO\times\dim\UhG}\,,
&\restr{\mb{A}}{\OmegaSymb\times\InnerSymb}\in\R^{\dim\UhO\times\trkla{\dim\UhO-\dim\UhG}}\,,
\end{matrix*}
\end{align}
such that
\begin{align}
\mb{A}=\begin{pmatrix}
\restr{\mb{A}}{\GammaSymb\times\GammaSymb} & \restr{\mb{A}}{\GammaSymb\times\InnerSymb}\\
\restr{\mb{A}}{\InnerSymb\times\GammaSymb} & \restr{\mb{A}}{\InnerSymb\times\InnerSymb}
\end{pmatrix} = \begin{pmatrix}
\restr{\mb{A}}{\GammaSymb\times\OmegaSymb}\\\restr{\mb{A}}{\InnerSymb\times\OmegaSymb}
\end{pmatrix} = \begin{pmatrix}
\restr{\mb{A}}{\OmegaSymb\times\GammaSymb}&\restr{\mb{A}}{\OmegaSymb\times\InnerSymb}
\end{pmatrix}\,.
\end{align}
In the above, we employed the notation $\InnerSymb$ to denote the collection of degrees of freedoms corresponding to the interior nodal points of $\Omega$.  We also define an extension operator $\textend{\cdot}{\OmegaSymb\times\OmegaSymb}\,:\,\R^{\dim\UhG\times\dim\UhG}\rightarrow \R^{\dim\UhO\times\dim\UhO}$ via
\begin{align*}
\R^{\dim\UhG\times\dim\UhG}\ni\mb{A}\mapsto\begin{pmatrix}
\mb{A}&\mb{0}\\\mb{0}&\mb{0}
\end{pmatrix}\in\R^{\dim\UhO\times\dim\UhO}\,.
\end{align*}
\subsection{Derivation of the numerical scheme}

For $L\in(0,\infty)$, a finite element discretisation of the model \eqref{CH:INT} reads as
\begin{subequations}\label{eq:fe}
\begin{align}
\iOmega\Ih{\dtaum u\h\nn w\h} + \nabla\muOh\nn&\cdot\nabla w\h \dx -L^{-1}\iGamma\IhG{\rkla{\beta\muGh\nn-\muOh\nn}w\h} \dG = 0\,,\label{eq:fe:1}\\
\iGamma\IhG{\dtaum u\h\nn z\h} + \gradg\muGh\nn&\cdot\gradg z\h +\beta L^{-1} \IhG{\rkla{\beta\muGh\nn-\muOh\nn}z\h} \dG = 0\,,\label{eq:fe:2}\\
\iOmega\Ih{\muOh\nn\eta\h} \dx +\iGamma\IhG{\muGh\nn\eta\h} \dG &=\iOmega\nabla u\h\nn\cdot\nabla\eta\h \dx  +\kappa\iGamma\gradg u\h\nn\cdot\gradg\eta\h \dG \label{eq:fe:3} \\
\notag & \quad + \iOmega\Ih{\rkla{F_1^\prime\trkla{u\h\nn}+F^\prime_2 \trkla{u\h\no}}\eta\h} \dx \\
\notag &\quad + \iGamma\IhG{\rkla{G_1^\prime\trkla{u\h\nn}+G_2^\prime\trkla{u\h\no}}\eta\h} \dG \,,
\end{align}
\end{subequations}
holding for all $w\h\in\UhO$, $z\h\in\UhG$, and $\eta\h\in\UhO$.  In the above we have used the backward difference quotient $\dtaum a\nn:=\tau^{-1}\trkla{a\nn-a\no}$, and so, for given $u\h\no\in\UhO$, we search for $u\h\nn,~\muOh\nn\in\UhO$ and $\muGh\nn\in\UhG$ satisfying \eqref{eq:fe}. 
 Unlike the scheme used in Section \ref{section:wellposedness}, \eqref{eq:fe} is based on a convex-concave decomposition of the nonlinear functions $F$ and $G$, as this approach allows for an unconditionally stable discrete scheme (cf.~Lemma \ref{lem:matrix:energy}). 

Using the matrix notation introduced in the previous section, and collecting the nodal values of $u\h\nn$, $u\h\no$, $\muOh\nn$, and $\muGh\nn$ into the vectors $U\nn$, $U\no$, $\muOvec\nn$, and $\muGvec\nn$, we can express \eqref{eq:fe} equivalently as
\begin{subequations}
\begin{align}
&\MOmega U\nn-\MOmega U\no+\tau\LOmega\muOvec\nn-\tau L^{-1}\extend{\ekla{\MGamma\rkla{\beta\muGvec\nn-\restr{\muOvec\nn}{\GammaSymb}}}}{\OmegaSymb}
\;\,=0\,,\label{eq:matrix:phi:bulk}\\
&\MGamma \restr{U\nn}{\GammaSymb}-\MGamma\restr{U\no}{\GammaSymb} +\tau\LGamma\muGvec\nn+\tau\beta L^{-1}\MGamma\rkla{\beta\muGvec\nn-\restr{\muOvec\nn}{\GammaSymb}}
=0\,,\label{eq:matrix:phi:boundary}\\
&
\MOmega\muOvec\nn+\extend{\ekla{\MGamma\muGvec\nn}}{\OmegaSymb}  = \LOmega U\nn+\kappa\extend{\ekla{\LGamma \restr{U\nn}{\GammaSymb}}}{\OmegaSymb} + \MOmega\rkla{F_1^\prime\trkla{U\nn}+F_2^\prime\trkla{U\no}} \notag\\
&\hspace{120pt} +\extend{\ekla{\MGamma\rkla{G_1^\prime\trkla{\restr{U\nn}{\GammaSymb}}+G_2^\prime\trkla{\restr{U\no}{\GammaSymb}}}}}{\OmegaSymb}\,.\label{eq:matrix:potential}
\end{align}
\end{subequations}
Restricting \eqref{eq:matrix:phi:bulk} to the boundary and comparing with \eqref{eq:matrix:phi:boundary} leads to the compatibility condition
\begin{align}\label{eq:compatibility:1}
\begin{aligned}
&\tfrac{L}{L+1}\restr{\ekla{\MOmega^{-1}\LOmega\muOvec\nn}}{\GammaSymb}-\tfrac{1}{L+1}\restr{\MOmega^{-1}}{\GammaSymb\times\GammaSymb}\MGamma\rkla{\beta\muGvec\nn-\restr{\muOvec\nn}{\GammaSymb}}\\
&\quad =\tfrac{L}{L+1}\MGamma^{-1}\LGamma\muGvec\nn+\beta\tfrac{1}{L+1}\rkla{\beta\muGvec\nn-\restr{\muOvec\nn}{\GammaSymb}}.
\end{aligned}
\end{align}
Upon rearranging and recalling that $\MOmega$ is a diagonal matrix, \eqref{eq:compatibility:1} can be written as
\begin{align}\label{eq:compatibility:2}
&\underbrace{\rkla{\tfrac{L}{L+1}\restr{\MOmega^{-1}}{\GammaSymb\times\GammaSymb}\restr{\LOmega}{\GammaSymb\times\GammaSymb} +\tfrac{1}{L+1}\restr{\MOmega^{-1}}{\GammaSymb\times\GammaSymb}\MGamma+\beta\tfrac{1}{L+1}\eye}}_{=:\mb{A}}\restr{\muOvec\nn}{\GammaSymb} \\ \notag
&\;\;+ \underbrace{\rkla{\tfrac{L}{L+1}\restr{\MOmega^{-1}}{\GammaSymb\times\GammaSymb}\restr{\LOmega}{\GammaSymb\times\InnerSymb}}}_{=:\mb{B}}\restr{\muOvec\nn}{\InnerSymb}
= \underbrace{\rkla{\tfrac{L}{L+1}\MGamma^{-1}\LGamma +\beta\tfrac{1}{L+1}\restr{\MOmega^{-1}}{\GammaSymb\times\GammaSymb}\MGamma + \beta^2\tfrac{1}{L+1}\eye}}_{=:\mb{C}}\muGvec\nn\,.
\end{align}
Combining \eqref{eq:matrix:potential} with \eqref{eq:compatibility:2}, we are able to determine $\muOvec\nn$ and $\muGvec\nn$ for given $U\nn$ and $U\no$ by solving the linear system
\begin{align}\label{eq:mu:system}
\begin{pmatrix}
\restr{\MOmega}{\GammaSymb\times\GammaSymb} &\mb{0}&\MGamma\\
\mb{0}&\restr{\MOmega}{\InnerSymb\times\InnerSymb}&\mb{0}\\
\mb{A}&\mb{B}&-\mb{C}
\end{pmatrix}\begin{pmatrix}
\restr{\muOvec\nn}{\GammaSymb}\\\restr{\muOvec\nn}{\InnerSymb}\\\muGvec\nn
\end{pmatrix} = \begin{pmatrix}
R_{\GammaSymb}\trkla{U\nn} \phantom{\big|}\\ \RInn\trkla{U\nn} \phantom{\Big|} \\ \mb{0}
\end{pmatrix}
\end{align}
with 
\begin{align*}
\begin{split}
R_{\GammaSymb}\trkla{U\nn}& :=\restr{\LOmega}{\GammaSymb\times\OmegaSymb}U\nn+\restr{\MOmega}{\GammaSymb\times\OmegaSymb}\rkla{F_1^\prime\trkla{U\nn}+F_2^\prime\trkla{U\no}}\\
&\quad +\kappa\LGamma\restr{U\nn}{\GammaSymb}+\MGamma\rkla{G_1^\prime\trkla{\restr{U\nn}{\GammaSymb}}+G_2^\prime\trkla{\restr{U\no}{\GammaSymb}}}, 
\end{split}
\\ 
\RInn\trkla{U\nn} & := \restr{\LOmega}{\InnerSymb\times\OmegaSymb}U\nn+\restr{\MOmega}{\InnerSymb\times\OmegaSymb}\rkla{F_1^\prime\trkla{U\nn}+F_2^\prime\trkla{U\no}}\,,
\end{align*}
where we have suppressed the dependence of $R_{\GammaSymb}$ and $\RInn$ on $U\no$, as $U\no$ is known from the last time step.  Solving \eqref{eq:mu:system} for $\trestr{\muOvec\nn}{\GammaSymb}$, $\muOvec\nn\vert\SubInnerSymbH$, and $\muGvec\nn$ gives the equations
\begin{subequations}
\begin{align}
\muGvec\nn=&\,-\MGamma^{-1}\restr{\MOmega}{\GammaSymb\times\GammaSymb}\restr{\muOvec\nn}{\GammaSymb}
+\MGamma^{-1}R_{\GammaSymb}\trkla{U\nn}\,,\label{eq:potential:muGvec}\\
\restr{\muOvec\nn}{\InnerSymb}=&\,\restr{\MOmega}{\InnerSymb\times\InnerSymb}^{-1}\RInn\trkla{U\nn}\,,\label{eq:potential:muOInner}\\
\mb{A}\restr{\muOvec\nn}{\GammaSymb}=&\,-\mb{B}\restr{\muOvec\nn}{\InnerSymb}+\mb{C}\muGvec\nn\,.\label{eq:potential:muOG}
\end{align}
\end{subequations}
Plugging \eqref{eq:potential:muGvec} and \eqref{eq:potential:muOInner} into \eqref{eq:potential:muOG} and multiplying by $\trestr{\MOmega}{\GammaSymb\times\GammaSymb}$, we obtain
\begin{align}\label{eq:expression:muOG}
\mb{N}\restr{\muOvec\nn}{\GammaSymb}=-\restr{\MOmega}{\GammaSymb\times\GammaSymb}\mb{B}\restr{\MOmega}{\InnerSymb\times\InnerSymb}^{-1}\RInn\trkla{U\nn}+\restr{\MOmega}{\GammaSymb\times\GammaSymb}\mb{C}\MGamma^{-1} R_{\GammaSymb}\trkla{U\nn}
\end{align}
with
\begin{align}\label{eq:definition:N}
\begin{split}
\mb{N}:=&\,\restr{\MOmega}{\GammaSymb\times\GammaSymb}\mb{A}+\restr{\MOmega}{\GammaSymb\times\GammaSymb}\mb{C}\MGamma^{-1}\restr{\MOmega}{\GammaSymb\times\GammaSymb}\\
=&\,\tfrac{L}{L+1}\rkla{\restr{\LOmega}{\GammaSymb\times\GammaSymb}+\restr{\MOmega}{\GammaSymb\times\GammaSymb}\MGamma^{-1}\LGamma\MGamma^{-1}\restr{\MOmega}{\GammaSymb\times\GammaSymb}}\\
&\,+\tfrac{1}{L+1}\rkla{\MGamma+2\beta\restr{\MOmega}{\GammaSymb\times\GammaSymb}+\beta^2\restr{\MOmega}{\GammaSymb\times\GammaSymb}\MGamma^{-1}\restr{\MOmega}{\GammaSymb\times\GammaSymb}}\,.
\end{split}
\end{align}
By following along similar lines of argument in \cite[Lem.~2.4]{Metzger2019}, the matrix $\mb{N}$ is symmetric and positive definite.
Therefore, \eqref{eq:expression:muOG}, \eqref{eq:potential:muOInner}, and \eqref{eq:potential:muGvec} provide explicit, $U\nn$-dependent expressions for $\muOvec\nn$ and $\muGvec\nn$.  Multiplying \eqref{eq:matrix:phi:boundary} with $\beta^{-1}$ and adding to \eqref{eq:matrix:phi:bulk}, we obtain using \eqref{eq:potential:muGvec} the discrete scheme
\begin{align}\label{eq:discscheme}
\begin{aligned}
&\rkla{\MOmega+\beta^{-1}\extend{\MGamma}{\OmegaSymb\times\OmegaSymb}}\rkla{U\nn-U\no}+\tau\LOmega\muOvec\nn\\
&\qquad +\tau\beta^{-1}\extend{\ekla{\LGamma\MGamma^{-1}R_{\GammaSymb}\trkla{U\nn}-\LGamma\MGamma^{-1}\restr{\MOmega}{\GammaSymb\times\GammaSymb}\restr{\muOvec\nn}{\GammaSymb}}}{\OmegaSymb}=0
\end{aligned}
\end{align}
with $\muOvec\nn$ given by \eqref{eq:expression:muOG} and \eqref{eq:potential:muOInner}.
Since the parameter $L$ only appears in the numerical scheme as prefactors $\tfrac1{L+1}$ and $\tfrac{L}{L+1}$, the proposed scheme is also well-defined for $L=0$ and the formal limit $L = \infty$, whereby in the latter we set $\tfrac{1}{L+1} = 0$ and $\tfrac{L}{L+1} = 1$.  In the following, we will analyze \eqref{eq:discscheme} and show that we indeed recover discretisations of \eqref{CH:GMS} and \eqref{CH:LW} for $L\searrow0$ and $L\nearrow\infty$, respectively.

As the compatibility condition \eqref{eq:compatibility:1} will be a crucial ingredient for the analysis of the proposed scheme, we will verify that our expressions for $\muOvec\nn$ and $\muGvec\nn$ satisfy \eqref{eq:compatibility:1}.
From \eqref{eq:potential:muGvec} and \eqref{eq:potential:muOInner}, we obtain
\begin{align*}
&\tfrac{L}{L+1}\restr{\ekla{\MOmega^{-1}\LOmega\muOvec\nn}}{\GammaSymb}-\tfrac{L}{L+1}\MGamma^{-1}\LGamma\muGvec\nn \\
&\quad -\tfrac{1}{L+1}\restr{\MOmega^{-1}}{\GammaSymb\times\GammaSymb}\MGamma\rkla{\beta\muGvec\nn - \restr{\muOvec\nn}{\GammaSymb}}
	-\beta \tfrac{1}{L+1}\rkla{\beta\muGvec\nn - \restr{\muOvec\nn}{\GammaSymb}}\\[1ex]
&=\tfrac{L}{L+1}\restr{\MOmega^{-1}}{\GammaSymb\times\GammaSymb}\restr{\LOmega}{\GammaSymb\times\GammaSymb}\restr{\muOvec\nn}{\GammaSymb}+\tfrac{L}{L+1}\restr{\MOmega^{-1}}{\GammaSymb\times\GammaSymb}\restr{\LOmega}{\GammaSymb\times\InnerSymb}\restr{\MOmega}{\InnerSymb\times\InnerSymb}^{-1} \RInn\trkla{U\nn}\\
&\quad+\tfrac{L}{L+1}\MGamma^{-1}\LGamma\MGamma^{-1}\restr{\MOmega}{\GammaSymb\times\GammaSymb}\restr{\muOvec\nn}{\GammaSymb}-\tfrac{L}{L+1}\MGamma^{-1}\LGamma\MGamma^{-1} R_{\GammaSymb}\trkla{U\nn}\\
&\quad+\tfrac{1}{L+1}\beta\restr{\muOvec\nn}{\GammaSymb} -\beta \tfrac{1}{L+1}\restr{\MOmega^{-1}}{\GammaSymb\times\GammaSymb} R_{\GammaSymb}\trkla{U\nn} +\tfrac{1}{L+1}\restr{\MOmega^{-1}}{\GammaSymb\times\GammaSymb}\MGamma\restr{\muOvec\nn}{\GammaSymb}\\
&\quad+\tfrac{1}{L+1}\beta^2\MGamma^{-1}\restr{\MOmega}{\GammaSymb\times\GammaSymb}\restr{\muOvec\nn}{\GammaSymb} 	
	-\tfrac{1}{L+1}\beta^2\MGamma^{-1}R_{\GammaSymb}\trkla{U\nn}+\tfrac{1}{L+1}\beta\restr{\muOvec\nn}{\GammaSymb} \\[1ex]
&=\restr{\MOmega^{-1}}{\GammaSymb\times\GammaSymb}\mb{N}\restr{\muOvec\nn}{\GammaSymb}-\tfrac{L}{L+1}\MGamma^{-1}\LGamma\MGamma^{-1} R_{\GammaSymb}\trkla{U\nn} \\
&\quad+\tfrac{L}{L+1}\restr{\MOmega^{-1}}{\GammaSymb\times\GammaSymb}\restr{\LOmega}{\GammaSymb\times\InnerSymb}\restr{\MOmega}{\InnerSymb\times\InnerSymb}^{-1} \RInn\trkla{U\nn}\\
&\quad-\beta \tfrac{1}{L+1}\restr{\MOmega^{-1}}{\GammaSymb\times\GammaSymb}R_{\GammaSymb}\trkla{U\nn}-\tfrac{1}{L+1}\beta^2\MGamma^{-1} R_{\GammaSymb}\trkla{U\nn}\,\\[1ex]
&= \restr{\MOmega^{-1}}{\GammaSymb\times\GammaSymb}\mb{N}\restr{\muOvec\nn}{\GammaSymb} + \mb{B}\restr{\MOmega}{\InnerSymb\times\InnerSymb}^{-1}\RInn\trkla{U\nn} - \mb{C}\MGamma^{-1} R_{\GammaSymb}\trkla{U\nn},
\end{align*}
which vanishes due to \eqref{eq:expression:muOG}.

Although \eqref{eq:discscheme} is based on the sum of \eqref{eq:matrix:phi:bulk} and \eqref{eq:matrix:phi:boundary} multiplied by $\beta^{-1}$, solutions to \eqref{eq:discscheme}, if they exist, satisfy \eqref{eq:matrix:phi:bulk} and \eqref{eq:matrix:phi:boundary} individually.
\begin{lemma}\label{lem:equiv}
For any $L\geq 0$ such that $\tfrac{1}{L+1}\,,\tfrac{L}{L+1}\in\tekla{0,1}$, let $U\nn$ be a solution to \eqref{eq:discscheme} for given $U\no$. 
Then, $U\nn$ satisfies
\begin{align}
\MOmega\trkla{U\nn-U\no}+\tau\LOmega\muOvec\nn+\beta^{-1}\extend{\ekla{\MGamma\rkla{\restr{U\nn}{\GammaSymb}-\restr{U\no}{\GammaSymb}}+\tau\LGamma\muGvec\nn}}{\OmegaSymb}=0\,,\label{eq:matrixscheme:1}
\\[1ex]
\beta^{-1}\tfrac{L}{L+1}\MGamma\trkla{\restr{U\nn}{\GammaSymb}-\restr{U\no}{\GammaSymb}}+\tau\beta^{-1}\tfrac{L}{L+1}\LGamma\muGvec\nn + \tau\tfrac{1}{L+1}\MGamma\trkla{\beta\muGvec\nn-\restr{\muOvec\nn}{\GammaSymb}}=0\label{eq:matrixscheme:2}
\end{align}
with $\muOvec\nn$ and $\muGvec\nn$ defined in \eqref{eq:expression:muOG}, \eqref{eq:potential:muOInner}, and \eqref{eq:potential:muGvec}.
Furthermore, $U\nn$, $U\no$, $\muOvec\nn$, and $\muGvec\nn$ satisfy \eqref{eq:matrix:potential}.
\end{lemma}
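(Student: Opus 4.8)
The plan is to show that the algebraic elimination which produced the reduced scheme \eqref{eq:discscheme} together with the explicit formulas \eqref{eq:expression:muOG}, \eqref{eq:potential:muOInner}, \eqref{eq:potential:muGvec} out of the matrix system \eqref{eq:matrix:phi:bulk}--\eqref{eq:matrix:potential} is in fact reversible. Throughout I would use that $\MOmega$ and $\MGamma$ are diagonal with strictly positive entries (hence invertible), that the extension operators $\extend{\cdot}{\OmegaSymb}$ and $\extend{\cdot}{\OmegaSymb\times\OmegaSymb}$ are supported on the boundary block, and that $\beta,\kappa>0$; I would also use the compatibility identity \eqref{eq:compatibility:1}, which was just verified (in the display preceding the lemma) to be a consequence of \eqref{eq:expression:muOG}, \eqref{eq:potential:muOInner}, \eqref{eq:potential:muGvec} alone.

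For \eqref{eq:matrix:potential} I would split it into the rows belonging to interior nodes and to boundary nodes. Since the extended terms vanish at interior nodes and $\MOmega$ is diagonal, the interior rows reduce exactly to $\restr{\MOmega}{\InnerSymb\times\InnerSymb}\restr{\muOvec\nn}{\InnerSymb}=\RInn\trkla{U\nn}$, which is \eqref{eq:potential:muOInner}, while the boundary rows reduce to $\restr{\MOmega}{\GammaSymb\times\GammaSymb}\restr{\muOvec\nn}{\GammaSymb}+\MGamma\muGvec\nn=R_{\GammaSymb}\trkla{U\nn}$, which is $\MGamma$ times \eqref{eq:potential:muGvec}. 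As $\restr{\muOvec\nn}{\InnerSymb}$ and $\muGvec\nn$ are defined by precisely these relations, \eqref{eq:matrix:potential} holds by construction, independently of \eqref{eq:discscheme}. For \eqref{eq:matrixscheme:1} the key observation is that \eqref{eq:discscheme} was obtained from \eqref{eq:matrixscheme:1} by only two rewrites: expanding $(\MOmega+\beta^{-1}\extend{\MGamma}{\OmegaSymb\times\OmegaSymb})(U\nn-U\no)$, and replacing $\LGamma\muGvec\nn$ by $\LGamma\MGamma^{-1}R_{\GammaSymb}\trkla{U\nn}-\LGamma\MGamma^{-1}\restr{\MOmega}{\GammaSymb\times\GammaSymb}\restr{\muOvec\nn}{\GammaSymb}$ via \eqref{eq:potential:muGvec}. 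Since \eqref{eq:potential:muGvec} is available by definition, reading these two identities backwards turns \eqref{eq:discscheme} verbatim into \eqref{eq:matrixscheme:1}, so any $U\nn$ solving \eqref{eq:discscheme} solves \eqref{eq:matrixscheme:1}.

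The hard part will be \eqref{eq:matrixscheme:2}, which is no longer separately visible in \eqref{eq:discscheme}. Here I would first restrict the (now established) \eqref{eq:matrixscheme:1} to the boundary nodes, obtaining
\[
\restr{\MOmega}{\GammaSymb\times\GammaSymb}\bigl(\restr{U\nn}{\GammaSymb}-\restr{U\no}{\GammaSymb}\bigr)+\tau\restr{\LOmega}{\GammaSymb\times\OmegaSymb}\muOvec\nn+\beta^{-1}\Bigl(\MGamma\bigl(\restr{U\nn}{\GammaSymb}-\restr{U\no}{\GammaSymb}\bigr)+\tau\LGamma\muGvec\nn\Bigr)=0 .
\]
Writing $r:=\beta\muGvec\nn-\restr{\muOvec\nn}{\GammaSymb}$ and letting $E$ be $\beta$ times the left-hand side of \eqref{eq:matrixscheme:2}, so that $E=\tfrac{L}{L+1}\bigl(\MGamma(\restr{U\nn}{\GammaSymb}-\restr{U\no}{\GammaSymb})+\tau\LGamma\muGvec\nn\bigr)+\tfrac{\tau\beta}{L+1}\MGamma r$, I would use the displayed boundary identity to eliminate $\MGamma(\restr{U\nn}{\GammaSymb}-\restr{U\no}{\GammaSymb})+\tau\LGamma\muGvec\nn$ from $E$, and then use \eqref{eq:compatibility:1} (multiplied through by $(L+1)\restr{\MOmega}{\GammaSymb\times\GammaSymb}$) to rewrite the surviving combination $\tfrac{L}{L+1}\restr{\LOmega}{\GammaSymb\times\OmegaSymb}\muOvec\nn-\tfrac1{L+1}\MGamma r$ in terms of $\muGvec\nn$ and $r$. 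A short computation then collapses $E$ into $-\beta\,\restr{\MOmega}{\GammaSymb\times\GammaSymb}\MGamma^{-1}E$, hence $(\eye+\beta\,\restr{\MOmega}{\GammaSymb\times\GammaSymb}\MGamma^{-1})E=0$; since $\beta>0$ and $\restr{\MOmega}{\GammaSymb\times\GammaSymb}\MGamma^{-1}$ is a strictly positive diagonal matrix, this forces $E=0$, i.e.\ \eqref{eq:matrixscheme:2}.

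The essential obstacle is this last step: because \eqref{eq:discscheme} is the $\beta^{-1}$-weighted sum of the bulk and boundary equations, with the $L^{-1}$-contributions arranged so that they cancel, the separate boundary balance \eqref{eq:matrixscheme:2} cannot be recovered from \eqref{eq:discscheme} and \eqref{eq:matrixscheme:1} alone --- one genuinely needs the compatibility identity \eqref{eq:compatibility:1} encoded in the definitions of the potentials, together with the strict positivity of $\beta$ and of the lumped mass matrices, to disentangle the two equations. A secondary technical point, which also dictates how the conclusion is phrased, is that every manipulation must be kept in the $\tfrac1{L+1},\tfrac{L}{L+1}$-normalisation so that the argument (in particular the quantity $E$ and the equations \eqref{eq:matrixscheme:1}, \eqref{eq:matrixscheme:2}) remains meaningful at the limiting values $L=0$ and $L=\infty$.
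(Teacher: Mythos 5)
Your proposal is correct and follows essentially the same route as the paper: \eqref{eq:matrix:potential} and \eqref{eq:matrixscheme:1} are read off from the definitions, and \eqref{eq:matrixscheme:2} is recovered by restricting \eqref{eq:matrixscheme:1} to the boundary and invoking the compatibility identity \eqref{eq:compatibility:1}. The only (cosmetic) difference is in the final step, where you arrive at $(\eye+\beta\,\restr{\MOmega}{\GammaSymb\times\GammaSymb}\MGamma^{-1})E=0$ while the paper factors out and inverts $\restr{\MOmega}{\GammaSymb\times\GammaSymb}+\beta^{-1}\MGamma$ — these are the same invertible positive diagonal matrix up to multiplication by $\beta\MGamma^{-1}$, so the arguments coincide.
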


\begin{proof}
The validity of \eqref{eq:matrix:potential} follows directly from the definitions \eqref{eq:potential:muOInner}, \eqref{eq:potential:muGvec} and the definitions of $R_{\GammaSymb}\trkla{U\nn}$ and $\RInn\trkla{U\nn}$.  Moreover, using \eqref{eq:potential:muGvec}, a solution of \eqref{eq:discscheme} clearly satisfies \eqref{eq:matrixscheme:1}. Therefore, it remains to show that it also satisfies \eqref{eq:matrixscheme:2}.  By \eqref{eq:matrixscheme:1} it holds that 
\begin{align*}
0 & = \tfrac{L}{L+1}\trkla{\trkla{\restr{\MOmega}{\GammaSymb\times\GammaSymb}+\beta^{-1}\MGamma}\trkla{\restr{U\nn}{\GammaSymb}-\restr{U\no}{\GammaSymb}}+\tau\beta^{-1}\LGamma\muGvec\nn+\tau\restr{\tekla{\LOmega\muOvec\nn}}{\GammaSymb}}\\[1ex]
& =  \tfrac{L}{L+1}\trkla{\trkla{\restr{\MOmega}{\GammaSymb\times\GammaSymb}+\beta^{-1}\MGamma}\trkla{\restr{U\nn}{\GammaSymb}-\restr{U\no}{\GammaSymb}}} \\
& \quad + \tau \tfrac{L}{L+1} \trkla{\restr{\MOmega}{\GammaSymb\times\GammaSymb}+\beta^{-1}\MGamma} \trkla{\MGamma^{-1} \LGamma \muGvec\nn} \\
& \quad + \tau  \tfrac{L}{L+1}  \restr{\tekla{\LOmega \muOvec\nn}}{\GammaSymb} - \tau \tfrac{L}{L+1} \trkla{\restr{\MOmega}{\GammaSymb\times\GammaSymb} \MGamma^{-1} \LGamma \muGvec\nn}.
\end{align*}
Using the following identity from the rearrangement of \eqref{eq:compatibility:1} 
\begin{align*}
&\tfrac{1}{L+1}\beta\restr{\MOmega^{-1}}{\GammaSymb\times\GammaSymb}\rkla{\restr{\MOmega}{\GammaSymb\times\GammaSymb}+\beta^{-1}\MGamma}\rkla{\beta\muGvec\nn-\restr{\muOvec\nn}{\GammaSymb}} \\
&\quad = \tfrac{L}{L+1}\restr{\tekla{\MOmega^{-1}\LOmega\muOvec\nn}}{\GammaSymb}-\tfrac{L}{L+1}\MGamma^{-1}\LGamma\muGvec\nn\,,
\end{align*}
we arrive at 
\begin{align*}
0 & =  \tfrac{L}{L+1}\trkla{\trkla{\restr{\MOmega}{\GammaSymb\times\GammaSymb}+\beta^{-1}\MGamma}\trkla{\restr{U\nn}{\GammaSymb}-\restr{U\no}{\GammaSymb}}} \\
& \quad + \tau \tfrac{L}{L+1} \trkla{\restr{\MOmega}{\GammaSymb\times\GammaSymb}+\beta^{-1}\MGamma} \trkla{\MGamma^{-1} \LGamma \muGvec\nn} \\
& \quad + \tau \tfrac{1}{L+1}\beta \rkla{\restr{\MOmega}{\GammaSymb\times\GammaSymb}+\beta^{-1}\MGamma}\rkla{\beta\muGvec\nn-\restr{\muOvec\nn}{\GammaSymb}}.
\end{align*}
Multiplying by $\trkla{\restr{\MOmega}{\GammaSymb\times\GammaSymb}+\beta^{-1}\MGamma}^{-1}$ and then by $\beta^{-1} \MGamma$ yields \eqref{eq:matrixscheme:2}.
\end{proof}

\begin{lemma}\label{lem:matrix:energy}
Given $U\no\in\R^{\dim\UhO}$, let $U\nn\in\R^{\dim\UhO}$ be a solution to \eqref{eq:discscheme} with $\muOvec\nn\in\R^{\dim\UhO}$ and $\muGvec\nn\in\R^{\dim\UhG}$ be defined in \eqref{eq:expression:muOG}, \eqref{eq:potential:muOInner}, and \eqref{eq:potential:muGvec}.  Under \eqref{item:disc:time}, \eqref{item:disc:space}, \eqref{item:disc:gamma} and \eqref{ass:pot},
the following estimate holds true:
\begin{equation}\label{matrix:energy}
\begin{aligned}
& \tfrac12 {U\nn}^T\LOmega U\nn + \tfrac12 \rkla{U\nn-U\no}^T\LOmega\rkla{U\nn-U\no}  \\
& \qquad +\tfrac12\kappa\restr{U\nn}{\GammaSymb}^T\LGamma\restr{U\nn}{\GammaSymb} +\tfrac12\kappa\rkla{\restr{U\nn}{\GammaSymb}-\restr{U\no}{\GammaSymb}}^T\LGamma \rkla{\restr{U\nn}{\GammaSymb}-\restr{U\no}{\GammaSymb}}\\
& \qquad +\boldsymbol{1}^T\MOmega  F\trkla{U\nn}  +\boldsymbol{1}_{\GammaSymb}^T\MGamma G\trkla{U\nn} +\tau{\muOvec\nn}^T\LOmega\muOvec\nn +\tau{\muGvec\nn}^T\LGamma\muGvec\nn + \mathcal{B}_L\\[1ex]
& \quad \leq \tfrac12{U\no}^T\LOmega U\no + \tfrac12\kappa \restr{U\no}{\GammaSymb}^T\LGamma\restr{U\no}{\GammaSymb}\\
& \qquad +\boldsymbol{1}^T\MOmega  F\trkla{U\no} +\boldsymbol{1}_{\GammaSymb}^T\MGamma G\trkla{U\no}\,,
\end{aligned}
\end{equation}
with $\boldsymbol{1}:=\trkla{1,...,1}^T\in \R^{\dim\UhO}$, $\boldsymbol{1}_{\GammaSymb}:=\restr{\boldsymbol{1}}{\GammaSymb}$, and 
\begin{align*}
\mathcal{B}_L:=\left\{\begin{matrix}
\tau L^{-1}\rkla{\beta\muGvec\nn-\restr{\muOvec\nn}{\GammaSymb}}^T\MGamma \rkla{\beta\muGvec\nn-\restr{\muOvec\nn}{\GammaSymb}}&\text{if~}L>0,\\
0&\text{if~} L = 0.
\end{matrix}\right. 
\end{align*}
Furthermore, we have $\beta\muGvec\nn=\restr{\muOvec\nn}{\GammaSymb}$ if $L=0$.
\end{lemma}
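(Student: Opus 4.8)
The plan is to establish \eqref{matrix:energy} as a fully discrete counterpart of the energy law \eqref{INT:NRG}, obtained by testing the three equations of the scheme with the natural "gradient-flow" test functions and summing. Fix an index $n$. For $L>0$, Lemma~\ref{lem:equiv} (together with the observation preceding it) guarantees that the iterate also satisfies the individual identities \eqref{eq:matrix:phi:bulk}, \eqref{eq:matrix:phi:boundary} and \eqref{eq:matrix:potential}; equivalently, $(u\h\nn,\muOh\nn,\muGh\nn)$ solves the variational system \eqref{eq:fe}. I would left-multiply \eqref{eq:matrix:phi:bulk} by ${\muOvec\nn}^T$, \eqref{eq:matrix:phi:boundary} by ${\muGvec\nn}^T$, and \eqref{eq:matrix:potential} by $\rkla{U\nn-U\no}^T$. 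Since $\MOmega$ and $\MGamma$ are symmetric, the mixed mass products $\rkla{U\nn-U\no}^T\MOmega\muOvec\nn$ and $\rkla{\restr{U\nn}{\GammaSymb}-\restr{U\no}{\GammaSymb}}^T\MGamma\muGvec\nn$ appearing on the left of the third identity coincide with those produced by the first two, so summing the three identities cancels these terms, while the two $L^{-1}$-coupling contributions combine through
\begin{align*}
-\restr{\muOvec\nn}{\GammaSymb}^T\MGamma\rkla{\beta\muGvec\nn-\restr{\muOvec\nn}{\GammaSymb}}+\beta{\muGvec\nn}^T\MGamma\rkla{\beta\muGvec\nn-\restr{\muOvec\nn}{\GammaSymb}}=\rkla{\beta\muGvec\nn-\restr{\muOvec\nn}{\GammaSymb}}^T\MGamma\rkla{\beta\muGvec\nn-\restr{\muOvec\nn}{\GammaSymb}}\ge 0,
\end{align*}
so that, carrying along the prefactor $\tau L^{-1}$ already present in the scheme, they yield exactly $\mathcal{B}_L$. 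The resulting identity reads
\begin{align*}
&\rkla{U\nn-U\no}^T\LOmega U\nn+\kappa\rkla{\restr{U\nn}{\GammaSymb}-\restr{U\no}{\GammaSymb}}^T\LGamma\restr{U\nn}{\GammaSymb}+\mathcal{P}\nn\\
&\qquad+\tau{\muOvec\nn}^T\LOmega\muOvec\nn+\tau{\muGvec\nn}^T\LGamma\muGvec\nn+\mathcal{B}_L=0,
\end{align*}
where $\mathcal{P}\nn:=\rkla{U\nn-U\no}^T\MOmega\rkla{F_1'(U\nn)+F_2'(U\no)}+\rkla{\restr{U\nn}{\GammaSymb}-\restr{U\no}{\GammaSymb}}^T\MGamma\rkla{G_1'(\restr{U\nn}{\GammaSymb})+G_2'(\restr{U\no}{\GammaSymb})}$ collects the nonlinear terms.

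Next I would lower-bound the four remaining blocks. For the stiffness terms I use the elementary identity, valid for any symmetric positive semidefinite matrix $A$,
\begin{align*}
\rkla{x-y}^TAx=\tfrac12 x^TAx-\tfrac12 y^TAy+\tfrac12\rkla{x-y}^TA\rkla{x-y},
\end{align*}
applied with $A=\LOmega$ and with $A=\kappa\LGamma$. For $\mathcal{P}\nn$ I use the convex--concave structure underlying the scheme: convexity of $F_1,G_1$ gives $F_1'(a)(a-b)\ge F_1(a)-F_1(b)$ at each vertex, and concavity of the explicitly-treated parts $F_2,G_2$ gives $F_2'(b)(a-b)\ge F_2(a)-F_2(b)$, so that $\rkla{F_1'(a)+F_2'(b)}(a-b)\ge F(a)-F(b)$ nodewise; multiplying by the strictly positive diagonal entries of $\MOmega$, respectively $\MGamma$, and summing over vertices yields $\mathcal{P}\nn\ge\boldsymbol{1}^T\MOmega F(U\nn)-\boldsymbol{1}^T\MOmega F(U\no)+\boldsymbol{1}_{\GammaSymb}^T\MGamma G(U\nn)-\boldsymbol{1}_{\GammaSymb}^T\MGamma G(U\no)$. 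Substituting these bounds into the identity above and rearranging gives precisely \eqref{matrix:energy}.

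For the limit value $L=0$ one has $\mathcal{B}_0=0$ by definition, and the last assertion $\beta\muGvec\nn=\restr{\muOvec\nn}{\GammaSymb}$ follows at once from Lemma~\ref{lem:equiv}: setting $\tfrac{1}{L+1}=1$ and $\tfrac{L}{L+1}=0$ in \eqref{eq:matrixscheme:2} leaves $\tau\MGamma\rkla{\beta\muGvec\nn-\restr{\muOvec\nn}{\GammaSymb}}=0$, and $\MGamma$ is invertible. Since \eqref{eq:matrix:phi:bulk} and \eqref{eq:matrix:phi:boundary} are undefined at $L=0$, I would instead left-multiply the summed identity \eqref{eq:matrixscheme:1} by $\beta{\muOvec\nn}^T$ and use $\beta\muGvec\nn=\restr{\muOvec\nn}{\GammaSymb}$ to rewrite $\restr{\muOvec\nn}{\GammaSymb}^T\bigl[\MGamma\rkla{\restr{U\nn}{\GammaSymb}-\restr{U\no}{\GammaSymb}}+\tau\LGamma\muGvec\nn\bigr]=\beta{\muGvec\nn}^T\MGamma\rkla{\restr{U\nn}{\GammaSymb}-\restr{U\no}{\GammaSymb}}+\beta\tau{\muGvec\nn}^T\LGamma\muGvec\nn$; dividing by $\beta$ reproduces the same summed identity as above with $\mathcal{B}_L$ replaced by $0$, and the rest of the argument is identical.

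The main obstacle is bookkeeping rather than a single hard step: one has to track the $\beta$-weights and signs carefully so that the two boundary-coupling terms assemble into $\mathcal{B}_L$ (which relies on $\MGamma$ being symmetric and on $\restr{\cdot}{\GammaSymb}$ and $\extend{\cdot}{\OmegaSymb}$ being mutually transposed), and one must observe that the "squared-increment" quantities $\tfrac12\rkla{U\nn-U\no}^T\LOmega\rkla{U\nn-U\no}$, $\tfrac{\kappa}{2}\rkla{\restr{U\nn}{\GammaSymb}-\restr{U\no}{\GammaSymb}}^T\LGamma\rkla{\restr{U\nn}{\GammaSymb}-\restr{U\no}{\GammaSymb}}$ and $\mathcal{B}_L$ all carry the favourable sign, which is exactly what renders the scheme unconditionally stable (no relation between $\tau$ and $h$ is needed). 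The one structural point requiring care is that the estimate for $\mathcal{P}\nn$ genuinely uses concavity of $F_2,G_2$ --- it is the explicit-in-time treatment of these parts that turns the discrete dissipation into an inequality without an uncontrolled $\tau\norm{u\h\nn-u\h\no}^2$ remainder.
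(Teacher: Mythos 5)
Your proof is correct, and it arrives at the same discrete energy law by a cleaner organization than the paper's. The paper does not split the scheme back into its components: it multiplies the condensed equation \eqref{eq:discscheme} by the single test vector $\rkla{\MOmega+\beta^{-1}\extend{\MGamma}{\OmegaSymb\times\OmegaSymb}}^{-1}\MOmega\muOvec\nn+\extend{\ekla{\rkla{\restr{\MOmega}{\GammaSymb\times\GammaSymb}+\beta^{-1}\MGamma}^{-1}\MGamma\muGvec\nn}}{\OmegaSymb}$, expands the result into six terms $I_1,\dots,I_6$, and then invokes the compatibility condition \eqref{eq:compatibility:1} to reassemble $I_3+\cdots+I_6$ into $\tau{\muOvec\nn}^T\LOmega\muOvec\nn+\tau{\muGvec\nn}^T\LGamma\muGvec\nn+\mathcal{B}_L$. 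You instead use Lemma~\ref{lem:equiv} (equivalently, the remark preceding it) to recover \eqref{eq:matrix:phi:bulk} and \eqref{eq:matrix:phi:boundary} individually for $L>0$, and then perform the standard three-fold testing with $\muOvec\nn$, $\muGvec\nn$ and $U\nn-U\no$; your convex--concave estimate for the potential terms and the square-completion identity for the stiffness terms coincide with the paper's treatment of $I_1+I_2$. What your route buys is transparency: the two boundary-coupling terms visibly combine into $\mathcal{B}_L\ge0$ without the $I_3,\dots,I_6$ bookkeeping, and the compatibility condition enters only through Lemma~\ref{lem:equiv}, which is already established at that point. Your separate handling of $L=0$ --- reading $\beta\muGvec\nn=\restr{\muOvec\nn}{\GammaSymb}$ off \eqref{eq:matrixscheme:2} (the paper gets it from \eqref{eq:compatibility:1}) and then testing \eqref{eq:matrixscheme:1} alone --- is also sound and reproduces the paper's conclusion for that case.
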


\begin{proof}
Multiplying \eqref{eq:discscheme} by  the transpose of the vector 
\begin{align}
\rkla{\MOmega+\beta^{-1}\extend{\MGamma}{\OmegaSymb\times\OmegaSymb}}^{-1} \MOmega {\muOvec\nn}  +\extend{\ekla{\rkla{\restr{\MOmega}{\GammaSymb\times\GammaSymb}+\beta^{-1}\MGamma}^{-1}\MGamma\muGvec\nn}}{\OmegaSymb}\,,
\end{align}
and using \eqref{eq:potential:muGvec} we obtain
\begin{align*}
\begin{split}
0& = {\muOvec\nn}^T\MOmega\rkla{U\nn-U\no}+{\muGvec\nn}^T\MGamma\rkla{\restr{U\nn}{\GammaSymb}-\restr{U\no}{\GammaSymb}} \\
& \quad +\tau{\muOvec\nn}^T\LOmega\rkla{\MOmega+\beta^{-1}\extend{\MGamma}{\OmegaSymb\times\OmegaSymb}}^{-1}\MOmega\muOvec\nn \\
& \quad +\tau{\muGvec\nn}^T\MGamma\rkla{\restr{\MOmega}{\GammaSymb\times\GammaSymb}+\beta^{-1}\MGamma}^{-1}\restr{\ekla{\LOmega\muOvec\nn}}{\GammaSymb}  \\
& \quad + \tau\beta^{-1}\restr{\muOvec\nn}{\GammaSymb}^T\restr{\MOmega}{\GammaSymb\times\GammaSymb}\rkla{\restr{\MOmega}{\GammaSymb\times\GammaSymb}+\beta^{-1}\MGamma}^{-1}\LGamma\muGvec\nn \\
& \quad + \tau\beta^{-1}{\muGvec\nn}^T\MGamma\rkla{\restr{\MOmega}{\GammaSymb\times\GammaSymb}+\beta^{-1}\MGamma}^{-1}\LGamma\muGvec\nn\\
& =:  I_1 + I_2 + I_3 + I_4 + I_5 + I_6\,.
\end{split}
\end{align*}
By the convexity of $F_1$ and concavity of $F_2$, it is easy to see that for any $a, b \in \R$,
\begin{align*}
F_1(a) - F_1(b) \leq F_1'(a)(a-b), \quad F_2(a) - F_2(b) \leq F_2'(b)(a-b).
\end{align*}
Then, testing \eqref{eq:matrix:potential} with $\rkla{U\nn-U\no}$ leads to
\begin{align}
\begin{split}
I_1 + I_2 & \geq \tfrac12 {U\nn}^T\LOmega U\nn + \tfrac12 \rkla{U\nn-U\no}^T\LOmega\rkla{U\nn-U\no} -\tfrac12{U\no}^T\LOmega U\no \\
&\quad +\tfrac12\kappa\restr{U\nn}{\GammaSymb}^T\LGamma\restr{U\nn}{\GammaSymb} +\tfrac12\kappa\rkla{\restr{U\nn}{\GammaSymb}-\restr{U\no}{\GammaSymb}}^T\LGamma\rkla{\restr{U\nn}{\GammaSymb}-\restr{U\no}{\GammaSymb}} \\
& \quad -\tfrac12\kappa \restr{U\no}{\GammaSymb}^T\LGamma\restr{U\no}{\GammaSymb}
+\boldsymbol{1}^T\MOmega F\trkla{U\nn}-\boldsymbol{1}^T\MOmega  F\trkla{U\no} \\
&\quad +\boldsymbol{1}_{\GammaSymb}^T\MGamma G\trkla{U\nn}-\boldsymbol{1}_{\GammaSymb}^T\MGamma G\trkla{U\no}\,.
\end{split}
\end{align}
For the terms $I_3, \dots, I_6$, we use the compatibility condition \eqref{eq:compatibility:1}.
For the case $L>0$, \eqref{eq:compatibility:1} can be written as
\begin{align}
\begin{aligned}
&\restr{\ekla{\MOmega^{-1}\LOmega\muOvec\nn}}{\GammaSymb}-\MGamma^{-1}\LGamma\muGvec\nn \\
&\quad= \beta L^{-1}\restr{\MOmega^{-1}}{\GammaSymb\times\GammaSymb}\rkla{\restr{\MOmega}{\GammaSymb\times\GammaSymb}+\beta^{-1}\MGamma}\rkla{\beta\muGvec\nn-\restr{\muOvec\nn}{\GammaSymb}}\,.
\end{aligned}
\end{align}
Then, using the symmetry of the matrices $\MOmega$ and $\MGamma$, we find that
\begin{align*}
I_5 + I_6 & = \tau \beta^{-1}\restr{\muOvec\nn}{\GammaSymb}^T \MGamma \rkla{\restr{\MOmega}{\GammaSymb\times\GammaSymb}+\beta^{-1}\MGamma}^{-1} \restr{\MOmega}{\GammaSymb\times\GammaSymb}\restr{\ekla{\MOmega^{-1}\LOmega\muOvec\nn}}{\GammaSymb} \\
& \quad - \tau L^{-1} \restr{\muOvec\nn}{\GammaSymb}^T \MGamma \rkla{\beta\muGvec\nn-\restr{\muOvec\nn}{\GammaSymb}} + \tau\beta^{-1}{\muGvec\nn}^T\MGamma\rkla{\restr{\MOmega}{\GammaSymb\times\GammaSymb}+\beta^{-1}\MGamma}^{-1}\LGamma\muGvec\nn.
\end{align*}
On the other hand,
\begin{align*}
I_3 + I_4 
& =  \tau{\muOvec\nn}^T\LOmega\rkla{\MOmega+\beta^{-1}\extend{\MGamma}{\OmegaSymb\times\OmegaSymb}}^{-1}\MOmega\muOvec\nn  \\
& \quad + \tau{\muGvec\nn}^T \restr{\MOmega}{\GammaSymb\times\GammaSymb} \rkla{\restr{\MOmega}{\GammaSymb\times\GammaSymb}+\beta^{-1}\MGamma}^{-1} \LGamma\muGvec\nn +  \beta L^{-1} \tau{\muGvec\nn}^T\MGamma \rkla{\beta\muGvec\nn-\restr{\muOvec\nn}{\GammaSymb}},
\end{align*}
and so we infer
\begin{align}\label{I3I6}
I_3 + \cdots + I_6 & = \tau L^{-1}\rkla{\beta\muGvec\nn-\restr{\muOvec\nn}{\GammaSymb}}^T\MGamma \rkla{\beta\muGvec\nn-\restr{\muOvec\nn}{\GammaSymb}} + \tau{\muGvec\nn}^T  \LGamma\muGvec\nn + \tau{\muOvec\nn}^T  \LOmega\muOvec\nn.
\end{align}
Combining with the inequality for $I_1 + I_2$ we arrive at \eqref{matrix:energy} for the case $L > 0$.  Meanwhile, for the case $L = 0$, we directly infer from the compatibility condition \eqref{eq:compatibility:1} that $\beta\muGvec\nn=\trestr{\muOvec\nn}{\GammaSymb}$.  Then, we obtain directly
\begin{align*}
I_3 + I_4 = \tau{\muOvec\nn}^T\LOmega\muOvec\nn, \quad I_5 + I_6 = \tau{\muGvec\nn}^T\LGamma\muGvec\nn,
\end{align*}
which leads to \eqref{matrix:energy} for the case $L = 0$.
\end{proof}
Next, we use the a priori estimate established in Lemma \ref{lem:matrix:energy} to prove the existence of discrete solutions.
\begin{lemma}
 Given $U\no\in\R^{\dim\UhO}$, under \eqref{item:disc:time}, \eqref{item:disc:space}, \eqref{item:disc:gamma}, and \eqref{ass:pot}, there exists at least one vector $U\nn\in\R^{\dim\UhO}$ solving \eqref{eq:discscheme}.
\end{lemma}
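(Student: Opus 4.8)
Since \eqref{eq:discscheme} provides an explicit, $U\nn$-dependent expression for $\muOvec\nn$ and $\muGvec\nn$ (via \eqref{eq:expression:muOG}, \eqref{eq:potential:muOInner} and \eqref{eq:potential:muGvec}), the scheme reduces to a single nonlinear equation for the vector $U\nn\in\R^{\dim\UhO}$. I would define a continuous map $\Phi:\R^{\dim\UhO}\to\R^{\dim\UhO}$ whose zeros are precisely the solutions of \eqref{eq:discscheme}; concretely, $\Phi(V)$ is the left-hand side of \eqref{eq:discscheme} evaluated at $U\nn=V$, with $\muOvec\nn$, $\muGvec\nn$ regarded as the (explicitly given, hence continuous in $V$) functions of $V$ from \eqref{eq:expression:muOG}, \eqref{eq:potential:muOInner}, \eqref{eq:potential:muGvec}. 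Continuity of $\Phi$ follows because $F_1',F_2',G_1',G_2'$ are continuous (indeed $C^1$ by \eqref{ass:pot}), $\mb N$ is symmetric positive definite hence invertible with continuous inverse, and all the remaining ingredients are fixed matrices.

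**The key step is a coercivity estimate that lets me invoke the standard corollary of Brouwer's theorem** (e.g.\ \cite[Ch.~IV, Cor.~1.1]{taylor}-type statement, or the version used in \cite{Metzger2019}): if $\Phi$ is continuous and there is $R>0$ with $\Phi(V)\cdot(V-U\no)\ge 0$ for all $V$ with $\|V-U\no\|=R$, then $\Phi$ has a zero in the closed ball of radius $R$ around $U\no$. To produce such an estimate I would essentially re-run the computation of Lemma~\ref{lem:matrix:energy}, but starting from a generic test vector rather than a solution: testing $\Phi(V)$ against the vector
\[
\rkla{\MOmega+\beta^{-1}\extend{\MGamma}{\OmegaSymb\times\OmegaSymb}}^{-1}\MOmega\muOvec\nn(V) + \extend{\ekla{\rkla{\restr{\MOmega}{\GammaSymb\times\GammaSymb}+\beta^{-1}\MGamma}^{-1}\MGamma\muGvec\nn(V)}}{\OmegaSymb}
\]
and using the convexity/concavity splitting exactly as in that proof yields, after the manipulations with the compatibility condition \eqref{eq:compatibility:1}, a bound of the form
\[
\Phi(V)\cdot(\text{test vector}) \ge \text{(discrete energy at }V\text{)} - \text{(discrete energy at }U\no\text{)} + \text{nonnegative dissipation terms}.
\]
The growth assumption \eqref{ass:pot:2} (with $p=q=4$) ensures the discrete energy is coercive in $V$, so the right-hand side becomes strictly positive once $\|V-U\no\|$ is large enough; this gives the sign condition on a sufficiently large sphere. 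One must also check that the test vector is a continuous, invertible linear image of $\Phi(V)$ plus lower-order-in-$V$ corrections — i.e.\ that testing against it is comparable to testing against $\Phi(V)$ itself — so that the coercivity transfers to $\Phi(V)\cdot(V-U\no)$; alternatively, one reformulates the fixed-point map directly in the variable $\muOvec\nn$ so the pairing is the natural one.

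**The main obstacle I anticipate is bookkeeping rather than conceptual:** carefully tracking that the "test vector" above is a bijective linear function of the unknown (so that the Brouwer corollary applies in the correct coordinates), and confirming that the terms $I_3,\dots,I_6$ reorganize into the manifestly nonnegative dissipation $\tau\,\muOvec\cdot\LOmega\muOvec + \tau\,\muGvec\cdot\LGamma\muGvec + \mathcal B_L$ even off the solution manifold — which they do, because the identity \eqref{I3I6} used only the compatibility relation \eqref{eq:compatibility:1}, and the latter holds by construction of $\muOvec\nn(V)$, $\muGvec\nn(V)$ for \emph{every} $V$ (this was verified just before Lemma~\ref{lem:equiv}). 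Once the sign condition on a large sphere is in hand, Brouwer's fixed-point theorem (in its coercivity-corollary form) immediately yields the existence of $U\nn$, completing the proof.
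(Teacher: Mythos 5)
Your overall strategy (reduce to a single nonlinear equation for $U\nn$, exploit that $\muOvec\nn$, $\muGvec\nn$ are explicit continuous functions of $U\nn$, and combine a Brouwer-type argument with the coercivity coming from the convex splitting and the discrete energy law) is exactly the family of argument the paper uses, and your observation that the compatibility condition \eqref{eq:compatibility:1} holds for \emph{every} $V$ by construction — so that the dissipation terms $I_3,\dots,I_6$ reorganize as in Lemma~\ref{lem:matrix:energy} off the solution manifold — is correct and is indeed the reason the energy computation can be re-run for a generic test vector.

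However, the step you dismiss as ``bookkeeping'' is the genuine gap, and the paper's proof is structured precisely to avoid it. The coercivity corollary of Brouwer's theorem that you invoke requires the sign condition for the pairing $\Phi(V)\cdot(V-U\no)$, whereas the energy computation naturally produces a lower bound for the pairing of $\Phi(V)$ against the vector $V(U^*):=\MOmega\muOvec\trkla{U^*}+\extend{\ekla{\MGamma\muGvec\trkla{U^*}}}{\OmegaSymb}$, which is a \emph{nonlinear} function of the unknown (it contains $F_1'(U^*)$, $G_1'(U^*)$). It is not a linear image of $V-U\no$ plus lower-order corrections, and there is no evident way to transfer the coercivity from one pairing to the other; your alternative of reparametrising in $\muOvec\nn$ runs into the problem that $U\mapsto\muOvec\trkla{U}$ is not obviously invertible. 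The paper instead uses the retraction form of the argument: assuming $\mathcal{G}$ has no root in the ball $B_R$ (intersected with the affine mean-constraint hyperplane \eqref{num:mean:0}), the normalised map $\mathcal{H}$ in \eqref{fp} retracts $B_R$ onto its boundary and Brouwer gives a fixed point $U^*$ satisfying $U^*=-R\,\mathcal{G}\trkla{U^*}/\sqrt{\mathcal{G}\trkla{U^*}^T\MOmega\mathcal{G}\trkla{U^*}}$. Pairing \emph{both} $U^*$ and $\mathcal{G}\trkla{U^*}$ against the same nonlinear vector $V(U^*)$, the coercivity estimates give ${U^*}^T V>0$ and $\mathcal{G}\trkla{U^*}^T V>0$ for $R$ large, while the fixed-point relation forces these two quantities to have opposite signs — a contradiction. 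This sidesteps any comparison between the two pairings. Two further ingredients you omit but that the paper needs for the coercivity are the reduction to the hyperplane \eqref{num:mean:0} (which is preserved by the scheme) and the discrete Poincar\'e-type inequality \eqref{dis:Poin} on that hyperplane. So: right idea and right estimates, but the version of the fixed-point theorem you cite does not apply as stated, and the repair is a different (retraction/contradiction) argument rather than bookkeeping.
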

\begin{proof}
Firstly, we note that $\muOvec\nn$ and $\muGvec\nn$ are uniquely determined if $U\nn$ and $U\no$ are given.  Hence, for given $U\no$ and an arbitrary vector $U$, we use the notation $\muOvec\trkla{U}$ and $\muGvec\trkla{U}$ to denote the corresponding vectors for the chemical potentials.  In particular, $\muOvec\trkla{U\nn} = \muOvec\nn$ and $\muGvec\trkla{U\nn} = \muGvec\nn$.  

Next, testing \eqref{eq:discscheme} by $\boldsymbol{1}$ shows that 
\begin{align*}
\boldsymbol{1}^T\rkla{\MOmega+\beta^{-1}\extend{\MGamma}{\OmegaSymb\times\OmegaSymb}}\rkla{U\nn-U\no}=0,
\end{align*}
and so, without loss of generality we assume that $\boldsymbol{1}^T\rkla{\MOmega+\beta^{-1}\extend{\MGamma}{\OmegaSymb\times\OmegaSymb}}U^0 = 0$, which in turn implies
\begin{align}\label{num:mean:0}
\boldsymbol{1}^T\rkla{\MOmega+\beta^{-1}\extend{\MGamma}{\OmegaSymb\times\OmegaSymb}}U\nn=0 \quad \forall n \geq 1.
\end{align}
A consequence is the following Poincar\'e-type inequality: There exists a positive constant $c$ such that for all vectors $U$ fulfilling \eqref{num:mean:0},
\begin{align}\label{dis:Poin}
 {U}^T\rkla{\MOmega+\beta^{-1}\extend{\MGamma}{\OmegaSymb\times\OmegaSymb}} U \leq c \left ( {U}^T\rkla{\LOmega+\beta^{-1}\extend{\LGamma}{\OmegaSymb\times\OmegaSymb}} U \right ).
\end{align}
Recalling the definition of the matrices $\MOmega$ and $\LOmega$, if we associate the vector $U$ to a function $u\h \in \UhO$ then the above inequality \eqref{dis:Poin} reads as
\begin{align*}
\| u\h \|_{L^2(\Omega)}^2 + \beta^{-1} \| u\h \|_{L^2(\Gamma)}^2  \leq c \big ( \| \nabla u\h \|_{L^2(\Omega)}^2 + \beta^{-1} \| \gradg u\h \|_{L^2(\Gamma)}^2 \big )
\end{align*}
for functions $u\h \in U_h^\Omega$ such that $\beta\abs{\Omega}\mean{u\h}_\Om + \abs{\Gamma}\mean{u\h}_\Ga = 0$.  We mention the proof of this Poincar\'e-type inequality follows from the usual contradiction argument using the condition $\beta\abs{\Omega}\mean{u}_\Om + \abs{\Gamma}\mean{u}_\Ga = 0$.

We can establish the existence of discrete solutions as follows.  Assuming that \eqref{eq:discscheme} has no solution in the closed set
\begin{align*}
B_R:=\gkla{W\in \R^{\dim\UhO}\,:\,\boldsymbol{1}^T\rkla{\MOmega+\beta^{-1}\extend{\MGamma}{\OmegaSymb\times\OmegaSymb}} W=0\text{~and~} W^T\MOmega W\leq R^2}
\end{align*}
for any $R>0$, the function
\begin{align*}
\mathcal{G}\trkla{U}& :=U-U\no+\tau \rkla{\MOmega+\beta^{-1}\extend{\MGamma}{\OmegaSymb\times\OmegaSymb}}^{-1}\LOmega\muOvec\trkla{U} \\
& \quad +\tau \beta^{-1} \rkla{\MOmega+\beta^{-1}\extend{\MGamma}{\OmegaSymb\times\OmegaSymb}}^{-1}\extend{\ekla{\LGamma\muGvec\trkla{U}}}{\OmegaSymb}
\end{align*}
has no roots in $B_R$, and consequently, the function
\begin{align}\label{fp}
\mathcal{H}\trkla{U}:=-R\frac{\mathcal{G}\trkla{U} }{\sqrt{\mathcal{G}\trkla{U}^T\MOmega\mathcal{G}\trkla{U}}}
\end{align}
is a continuous mapping from $B_R$ to $\partial B_R\subset B_R$.
According to Brouwer's fixed point theorem, there exists at least one fixed point $U^*$ of $\mathcal{H}$.
In the following, we show that $U^*$ satisfies
\begin{align}\label{eq:contradiction}
0<{U^*}^T \Big ( \MOmega\muOvec\trkla{U^*}+\extend{\ekla{\MGamma\muGvec\trkla{U^*}}}{\OmegaSymb} \Big ) <0
\end{align}
for $R$ sufficiently large.  This contradiction shows that our initial assumption on the non-existence of roots of $\mathcal{G}$ in $B_R$ is false, implying the existence of solutions to \eqref{eq:discscheme}.

For convenience we denote $V=\MOmega\muOvec\trkla{U^*}+\extend{\ekla{\MGamma\muGvec\trkla{U^*}}}{\OmegaSymb}$.  
To obtain the first inequality in \eqref{eq:contradiction},  we use \eqref{eq:potential:muGvec}, \eqref{eq:potential:muOInner}, Young's inequality with $0<\alpha \ll 1$, and the convex-concave decomposition of $F$ and $G$ to deduce that 
\begin{align*}
\begin{split}
{U^*}^T V=&\,{U^*}^T\MOmega\muOvec\trkla{U^*}+\restr{U^*}{\GammaSymb}^T\MGamma\muGvec\trkla{U^*}\\
=&\,  {U^*}^T\LOmega U^* +\kappa\restr{U^*}{\GammaSymb}^T\LGamma\restr{U^*}{\GammaSymb}\\
&+ {U^*}^T\MOmega\rkla{F^\prime_1\trkla{U^*}+F_2^\prime\trkla{0}}+ {U^*}^T\MOmega\rkla{F^\prime_2\trkla{U\no}-F^\prime_2\trkla{0}}\\
&+ \restr{U^*}{\GammaSymb}^T\MGamma\rkla{G^\prime_1\trkla{\restr{U^*}{\GammaSymb}}+G^\prime_2\trkla{0}} +\restr{U^*}{\GammaSymb}^T\MGamma\rkla{G^\prime_2\trkla{\restr{U\no}{\GammaSymb}}-G^\prime_2\trkla{0}}\\
\geq& \min(1, \kappa \beta) {U^*}^T\rkla{\LOmega+\beta^{-1}\extend{\LGamma}{\OmegaSymb\times\OmegaSymb}} U^*+ \boldsymbol{1}^T\MOmega\rkla{F\trkla{U^*}-F\trkla{0}} \\
& \, -\alpha {U^*}^T\MOmega U^* + \boldsymbol{1}_{\GammaSymb}\MGamma\rkla{G\trkla{\restr{U^*}{\GammaSymb}}-G\trkla{0}} -\alpha\restr{U^*}{\GammaSymb}^T\MGamma\restr{U^*}{\GammaSymb} -C_\alpha
\end{split}
\end{align*}
for some constant $C_\alpha > 0$ depending only on $\alpha$,  $U\no$, $F_2^\prime\trkla{U\no}$, $F_2^\prime\trkla{0}$, $G_2^\prime\trkla{U\no}$, and $G_2^\prime\trkla{0}$.  Since $F$ and $G$ are bounded from below, after applying the Poincar\'e-type inequality \eqref{dis:Poin}, we obtain for some positive constant $\tilde{c}$ independent of $U^*$ that
\begin{align*}
{U^*}^T V \geq \tilde{c}{U^*}^T\rkla{\MOmega+\beta^{-1}\extend{\MGamma}{\OmegaSymb\times\OmegaSymb}}U^*  -\alpha {U^*}^T\MOmega U^* -\alpha\restr{U^*}{\GammaSymb}^T\MGamma\restr{U^*}{\GammaSymb} -C_\alpha\,.
\end{align*}
Choosing $\alpha$ sufficiently small, we absorb the second and third term into the first term and infer for positive constants $\hat{c}$ and $C$ independent of $U^*$ that
\begin{align*}
{U^*}^T V \geq \hat{c} {U^*}^T\rkla{\MOmega+\beta^{-1}\extend{\MGamma}{\OmegaSymb\times\OmegaSymb}}U^*- C \geq \hat{c} {U^*}^T\rkla{\MOmega}U^* - C = \hat{c} R^2 - C.
\end{align*}
Then, choosing $R$ sufficiently large yields the first inequality ${U^*}^T V > 0$.
To derive the second inequality in \eqref{eq:contradiction}, we recall the computations from the proof of Lemma \ref{lem:matrix:energy} and \eqref{dis:Poin} which provide
\begin{align*}
&\mathcal{G}\trkla{U^*}^TV\geq c{U^*}^T\rkla{\LOmega+\beta^{-1}\extend{\LGamma}{\OmegaSymb\times\OmegaSymb}}U^*-C\geq \tilde{c}{U^*}^T\rkla{\MOmega+\beta^{-1}\extend{\MGamma}{\OmegaSymb\times\OmegaSymb}}U^* -C \\
&\quad \geq \tilde{c}R^2-C\,,
\end{align*}
where the right-hand side is positive for $R$ sufficiently large. 
Hence, using \eqref{fp}, we see that $\mathcal{G}\trkla{U^*}^T V > 0$ is equivalent to the second inequality ${U^*}^TV < 0$ in \eqref{eq:contradiction}.
\end{proof}

\subsection{Uniform bounds}
In this section, we collect uniform bounds on the discrete solutions established in the last section.
As shown in Lemma \ref{lem:equiv}, given $u\h\no\in\UhO$, for any $L \geq 0$, the proposed scheme is equivalent to finding $u\h\nn\in\UhO$ satisfying
\begin{subequations}\label{eq:fe:final}
\begin{alignat}{2}
& \iOmega\Ih{\dtaum u\h\nn w\h} + \nabla\muOh\nn\cdot\nabla w\h \dx +\beta^{-1}\iGamma \IhG{\dtaum u\h\nn w\h} +\gradg\muGh\nn\cdot\gradg w\h \dG=0\,,\label{eq:fe:final:1}\\
&\tfrac{1}{L+1}\iGamma L \big ( \IhG{\dtaum u\h\nn z\h}+ \gradg\muGh\nn\cdot \gradg z\h \big ) + \beta \trkla{\beta\muGh\nn-\muOh\nn}z\h \dG=0\,,\label{eq:fe:final:2}\\
& \iOmega\Ih{\muOh\nn\eta\h} \dx +\iGamma\IhG{\muGh\nn\eta\h} \dG =\iOmega  \nabla u\h\nn\cdot\nabla\eta\h + \Ih{\trkla{F^\prime_1\trkla{u\h\nn}+F^\prime_2\trkla{u\h\no}}\eta\h} \dx \nonumber\\
&\qquad\qquad \qquad +\iGamma \kappa\gradg u\h\nn\cdot\gradg\eta\h + \IhG{\trkla{G^\prime_1\trkla{u\h\nn}+G_2^\prime\trkla{u\h\no}}\eta\h} \dG\,,\label{eq:fe:final:3}
\end{alignat}
\end{subequations}
for all $w\h,\eta\h\in\UhO$ and $z\h\in\UhG$, with $\muOh\in\UhO$, $\muGh\in\UhG$ uniquely prescribed by $u\h\nn,\,u\h\no\in\UhO$. It is worth noting that in the limit $L\rightarrow\infty$, \eqref{eq:fe:final:1} and \eqref{eq:fe:final:2} become
\begin{align}
 \iOmega\Ih{\dtaum u\h\nn w\h}  + \nabla\muOh\nn\cdot\nabla w\h =0 \dx \,,\quad 
 \iGamma\IhG{\dtaum u\h\nn z\h} + \gradg\muGh\nn\cdot \gradg z\h \dG =0,\label{eq:fe:LW1}
\end{align}
which together with \eqref{eq:fe:final:3} is a discretisation of \eqref{CH:LW} that was analysed in \cite{Metzger2019}.  On the other hand, for the case $L=0$, \eqref{eq:fe:final:2} reduces to $\iGamma\IhG{\trkla{\beta\muGh\nn-\muOh\nn}z\h} \dG =0$, and together with \eqref{eq:fe:final:1} and \eqref{eq:fe:final:3} we obtain a discretisation of \eqref{CH:GMS}.

\begin{lemma}\label{lem:energy:global}
Given $u\h\no \in \UhO$,  under \eqref{item:disc:time}, \eqref{item:disc:space}, \eqref{item:disc:gamma}, and \eqref{ass:pot},  let $\trkla{u\h\nn,\muOh\nn,\muGh\nn}\in \UhO\times\UhO\times\UhG$ be a solution to \eqref{eq:fe:final} for $n=1,...,N$.  Then, there exists a constant $C > 0$ depending only on $u_0$ and $\kappa$ such that 
\begin{align}\label{disc:unif:bdd}
\notag & \max_{n=0,...,N} \norm{u\h\nn}_{H^1\trkla{\Omega}}^2 +\max_{n=0,...,N} \iOmega\Ih{F\trkla{u\h\nn}} \dx + \max_{n=0,...,N} \norm{u\h\nn}_{H^1\trkla{\Gamma}}^2 \\
\notag & \quad +\max_{1,...,N}\iGamma\IhG{G\trkla{u\h\nn}} \dG +\sum_{n=1}^N\norm{\nabla u\h\nn-\nabla u\h\no}_{L^2\trkla{\Omega}}^2 + \sum_{n=1}^N\norm{\gradg u\h\nn-\gradg u\h\no}_{L^2\trkla{\Gamma}}^2  \\
& \quad + \tau\sum_{n=1}^N\norm{\muOh\nn}_{H^1\trkla{\Omega}}^2  +\tau\sum_{n=1}^N\norm{\muGh\nn}_{H^1\trkla{\Gamma}}^2  \leq C.
\end{align}
Additionally it holds that 
\begin{align*}
\tau \sum_{n=1}^N \norm{\beta\muGh\nn-\muOh\nn}_{L^2\trkla{\Gamma}}^2 \leq CL.
\end{align*}
\end{lemma}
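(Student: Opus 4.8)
\emph{Proof idea.} The plan is to derive everything from the discrete energy estimate \eqref{matrix:energy} of Lemma~\ref{lem:matrix:energy}, rewritten in terms of the finite element functions. Using the elementary identities $V^T\LOmega V=\norm{\nabla v\h}_{L^2(\Omega)}^2$, $V^T\LGamma V=\norm{\gradg v\h}_{L^2(\Gamma)}^2$, $\boldsymbol{1}^T\MOmega F\trkla{U}=\iOmega\Ih{F\trkla{v\h}}\dx$, $\boldsymbol{1}_{\GammaSymb}^T\MGamma G\trkla{U}=\iGamma\IhG{G\trkla{v\h}}\dG$ and, for $L>0$, $\mathcal B_L=\tau L^{-1}\iGamma\IhG{\trkla{\beta\muGh\nn-\muOh\nn}^2}\dG$, inequality \eqref{matrix:energy} takes the form $\mathcal E^n+\mathcal D^n\le\mathcal E^{n-1}$, where $\mathcal E^n:=\tfrac12\norm{\nabla u\h\nn}_{L^2(\Omega)}^2+\tfrac{\kappa}{2}\norm{\gradg u\h\nn}_{L^2(\Gamma)}^2+\iOmega\Ih{F\trkla{u\h\nn}}\dx+\iGamma\IhG{G\trkla{u\h\nn}}\dG$ and $\mathcal D^n\ge0$ collects $\tfrac12\norm{\nabla u\h\nn-\nabla u\h\no}_{L^2(\Omega)}^2$, $\tfrac{\kappa}{2}\norm{\gradg u\h\nn-\gradg u\h\no}_{L^2(\Gamma)}^2$, $\tau\norm{\nabla\muOh\nn}_{L^2(\Omega)}^2$, $\tau\norm{\gradg\muGh\nn}_{L^2(\Gamma)}^2$ and $\mathcal B_L$. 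Summing $\mathcal E^n+\mathcal D^n\le\mathcal E^{n-1}$ from $n=1$ to an arbitrary $k\le N$ telescopes the energy and gives $\mathcal E^k+\sum_{n=1}^k\mathcal D^n\le\mathcal E^0$; by \eqref{eq:initialcond:bound} the right-hand side is bounded by a constant depending only on $u_0$ (with the dependence on $\kappa$ absorbed).

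Taking the maximum over $k$ and using that every summand is non-negative, one immediately reads off the bounds on $\max_n\norm{\nabla u\h\nn}_{L^2(\Omega)}$, $\max_n\norm{\gradg u\h\nn}_{L^2(\Gamma)}$, $\max_n\iOmega\Ih{F\trkla{u\h\nn}}\dx$, $\max_n\iGamma\IhG{G\trkla{u\h\nn}}\dG$, the sums over $n$ of $\norm{\nabla u\h\nn-\nabla u\h\no}_{L^2(\Omega)}^2$ and $\norm{\gradg u\h\nn-\gradg u\h\no}_{L^2(\Gamma)}^2$, the quantities $\tau\sum_n\norm{\nabla\muOh\nn}_{L^2(\Omega)}^2$ and $\tau\sum_n\norm{\gradg\muGh\nn}_{L^2(\Gamma)}^2$, and finally $\tau L^{-1}\sum_n\iGamma\IhG{\trkla{\beta\muGh\nn-\muOh\nn}^2}\dG\le C$. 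By the equivalence of the discrete and the continuous $L^2(\Gamma)$-norm on $\UhG$ this last estimate is exactly $\tau\sum_n\norm{\beta\muGh\nn-\muOh\nn}_{L^2(\Gamma)}^2\le CL$, which for $L=0$ is trivial since then $\beta\muGh\nn=\muOh\nn$ on $\Gamma$ by Lemma~\ref{lem:matrix:energy}. Upgrading the gradient bounds on $u\h\nn$ to full $H^1$-bounds is routine: the coercivity estimates $F(s)\ge a_F\abs{s}^p-b_F$ and $G(s)\ge a_G\abs{s}^q-b_G$ from \eqref{ass:pot:2}, together with the bounds on $\iOmega\Ih{F\trkla{u\h\nn}}\dx$ and $\iGamma\IhG{G\trkla{u\h\nn}}\dG$, control the discrete $L^p(\Omega)$- and $L^q(\Gamma)$-norms of $u\h\nn$, hence, by norm equivalence and $p,q\ge2$, also the $L^2$-norms, so that $\max_n\trkla{\norm{u\h\nn}_{H^1(\Omega)}+\norm{u\h\nn}_{H^1(\Gamma)}}\le C$.

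It remains to upgrade the gradient bounds on $\muOh\nn$ and $\muGh\nn$, i.e.\ to prove $\tau\sum_n\bigl(\norm{\muOh\nn}_{H^1(\Omega)}^2+\norm{\muGh\nn}_{H^1(\Gamma)}^2\bigr)\le C$ with a constant that does \emph{not} deteriorate as $L\to0$ or $L\to\infty$; by the Poincaré inequalities on $\Omega$ and $\Gamma$ it suffices to bound $\abs{\mean{\muOh\nn}_\Omega}$ and $\abs{\mean{\muGh\nn}_\Gamma}$ by $C\bigl(1+\norm{\nabla\muOh\nn}_{L^2(\Omega)}+\norm{\gradg\muGh\nn}_{L^2(\Gamma)}\bigr)$. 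Testing \eqref{eq:fe:final:3} with $\eta\h\equiv1$ gives $\iOmega\muOh\nn\dx+\iGamma\muGh\nn\dG=\iOmega\Ih{F_1'(u\h\nn)+F_2'(u\h\no)}\dx+\iGamma\IhG{G_1'(u\h\nn)+G_2'(u\h\no)}\dG$, whose right-hand side is bounded by a constant thanks to the growth conditions \eqref{ass:pot:2}, \eqref{ass:pot:3} and the $L^p(\Omega)$-/$L^q(\Gamma)$-bounds on $u\h\nn$; this, however, only pins down the single combination $\abs{\Omega}\mean{\muOh\nn}_\Omega+\abs{\Gamma}\mean{\muGh\nn}_\Gamma$. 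To obtain a second relation that is free of $L$, I fix once and for all a cut-off $\eta\in C^\infty(\overline{\Omega})$ with $0\le\eta\le1$, $\eta\equiv1$ in a neighbourhood of $\Gamma$ and $\eta<1$ on a set of positive measure, and test \eqref{eq:fe:final:3} with $\eta\h:=\Ih{\eta}\in\UhO$; since $\eta\h\vert_\Gamma\equiv1$ one has $\gradg\eta\h\vert_\Gamma=0$ (so the $\kappa$-term disappears), $0\le\eta\h\le1$, $\norm{\nabla\eta\h}_{L^2(\Omega)}\le C$, and $\iOmega\eta\h\dx\le\abs{\Omega}-\delta_0$ uniformly in $h$ for some $\delta_0>0$.

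Subtracting the two identities and writing $\muOh\nn=\mean{\muOh\nn}_\Omega+\bigl(\muOh\nn-\mean{\muOh\nn}_\Omega\bigr)$, then invoking the discrete Cauchy--Schwarz inequality, the Poincaré estimate $\norm{\muOh\nn-\mean{\muOh\nn}_\Omega}_{L^2(\Omega)}\le C\norm{\nabla\muOh\nn}_{L^2(\Omega)}$ and the uniform lower bound $\abs{\Omega}-\iOmega\eta\h\dx\ge\delta_0$, one obtains $\abs{\mean{\muOh\nn}_\Omega}\le C\bigl(1+\norm{\nabla\muOh\nn}_{L^2(\Omega)}\bigr)$, and the $\eta\h\equiv1$-identity then also yields $\abs{\mean{\muGh\nn}_\Gamma}\le C\bigl(1+\norm{\nabla\muOh\nn}_{L^2(\Omega)}\bigr)$. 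Squaring, multiplying by $\tau$, summing over $n$, and using the bounds $\tau\sum_n\norm{\nabla\muOh\nn}_{L^2(\Omega)}^2\le C$ and $\tau\sum_n\norm{\gradg\muGh\nn}_{L^2(\Gamma)}^2\le C$ from the energy estimate, we conclude $\tau\sum_n\bigl(\norm{\muOh\nn}_{H^1(\Omega)}^2+\norm{\muGh\nn}_{H^1(\Gamma)}^2\bigr)\le C$, with a constant independent of $h$, $\tau$ and $L$; this completes the proof. The main obstacle is exactly this last mean estimate: the constant test function controls only one linear combination of $\mean{\muOh\nn}_\Omega$ and $\mean{\muGh\nn}_\Gamma$, and closing the argument through the coupling term $\beta\muGh\nn-\muOh\nn$ (whose discrete-in-time $L^2$-norm is merely of order $\sqrt{L}$) would spoil uniformity; the fixed, mesh-independent width of the cut-off $\eta$ is what supplies the needed $L$-free second relation.
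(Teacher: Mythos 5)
Your proof is correct and follows essentially the same route as the paper's: telescoping the discrete energy inequality \eqref{matrix:energy} of Lemma~\ref{lem:matrix:energy}, invoking \eqref{eq:initialcond:bound} and the coercivity of $F$ and $G$ from \eqref{ass:pot:2} for the $L^2$-bounds on $u\h\nn$, and controlling the spatial means of $\muOh\nn$ and $\muGh\nn$ to upgrade the gradient bounds to full $H^1$-bounds. The paper disposes of that last step by citing Step~3 of the proof of Theorem~\ref{THM:WP} (a generalised Poincar\'e inequality using a test function supported away from $\Gamma$); your explicit cut-off with $\eta\h\vert_\Gamma\equiv 1$ extracts the same $L$-independent second linear relation between the means, so the two arguments coincide in substance.
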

\begin{proof}
Summing \eqref{matrix:energy} over the time steps  from $n = 0$ to $n = k \leq N$,  applying \eqref{eq:initialcond:bound}  and then take the maximum over $k$  yields
\begin{align*}
& \max_{n=1,...,N}\tfrac{1}{2} \iOmega\abs{\nabla u\h\nn}^2 \dx +\max_{n=1,...,N}\iOmega\Ih{F\trkla{u\h\nn}} +\max_{n=1,...,N}\tfrac{\kappa}2\iGamma\abs{\gradg u\h\nn}^2 \\
& \quad  +\max_{n=1,...,N}\iGamma\IhG{G\trkla{u\h\nn}} \dG+\sum_{n=1}^N\tfrac{1}{2} \iOmega\abs{\nabla u\h\nn-\nabla u\h\no}^2 \dx  \\
& \quad + \sum_{n=1}^N\tfrac{\kappa}{2}\iGamma\abs{\gradg u\h\nn-\gradg u\h\no}^2 \dG + \tau\sum_{n=1}^N \iOmega\abs{\nabla\muOh\nn}^2 \dx +\tau\sum_{n=1}^N\iGamma\abs{\gradg\muGh\nn}^2 \dG  \leq C\trkla{u_0}\,.
\end{align*}
Furthermore, we have
\begin{align*}
\tau\sum_{n=1}^N \iGamma\IhG{\abs{\beta\muGh\nn-\muOh\nn}^2} \dG \leq C\trkla{u_0}L\,,
\end{align*}
where the statement for $L=0$ is trivial due to Lemma \ref{lem:matrix:energy}.
 By \eqref{ass:pot:2}, the  bounds on $\Ih{F\trkla{u\h\nn}}$ and $\IhG{G\trkla{u\h\nn}}$ also provide bounds on $\Ih{|u\h\nn|^2}$ and $\IhG{|u\h\nn|^2}$ which allows us to deduce the bounds in the $H^1$-norms.  For the $L^2$-norms on $\muOh\nn$ and $\muGh\nn$, we can employ similar arguments used above in Step 3 of Section \ref{sec:wellposed}, see also \cite[Cor.~4.1]{Metzger2019}.
\end{proof}

\begin{lemma}\label{lem:nikolskii}
 Suppose that \eqref{item:disc:time}, \eqref{item:disc:space}, \eqref{item:disc:gamma} and \eqref{ass:pot} hold.  Given $u\h\no \in \UhO$, let $\trkla{u\h\nn,\muOh\nn,\muGh\nn}$ ${\in \UhO\times\UhO\times\UhG}$ be a solution to \eqref{eq:fe:final} for $n=1,...,N$.  Then, the following estimates hold
\begin{align*}
\tau\sum_{k=0}^{N-l}\norm{u\h^{k+l}-u\h^k}_{L^2\trkla{\Omega}}^2 + \tau\sum_{k=0}^{N-l}\norm{u\h^{k+l}-u\h^k}_{L^2\trkla{\Gamma}}^2\leq C\tau l,
\end{align*}
for $l\in\tgkla{1,...,N}$ with $C>0$ independent of $l$, $L$, $h$, and $\tau$.
\end{lemma}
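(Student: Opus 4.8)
The plan is to base everything on the ``summed'' discrete equation \eqref{eq:fe:final:1}, which couples the bulk and surface evolution and, crucially, contains no explicit occurrence of the parameter $L$, together with the uniform bounds from Lemma~\ref{lem:energy:global}. First I would telescope in time: fix $l\in\{1,\dots,N\}$ and $k$ with $0\le k\le N-l$, sum \eqref{eq:fe:final:1} over $n=k+1,\dots,k+l$, use $\sum_{n=k+1}^{k+l}\dtaum u\h\nn=\tau^{-1}(u\h^{k+l}-u\h^k)$, multiply by $\tau$, and test with $w\h=u\h^{k+l}-u\h^k\in\UhO$ (whose trace lies in $\UhG$). The difference-quotient contributions collapse and we obtain
\[
\iOmega\Ih{|u\h^{k+l}-u\h^k|^2}\dx+\beta^{-1}\iGamma\IhG{|u\h^{k+l}-u\h^k|^2}\dG
= -\tau\sum_{n=k+1}^{k+l}\Big(\iOmega\nabla\muOh\nn\cdot\nabla(u\h^{k+l}-u\h^k)\dx+\iGamma\gradg\muGh\nn\cdot\gradg(u\h^{k+l}-u\h^k)\dG\Big).
\]
By the equivalence of the discrete and continuous $L^2$-norms on $\UhO$ and $\UhG$, the left-hand side bounds $\norm{u\h^{k+l}-u\h^k}_{L^2(\Omega)}^2+\norm{u\h^{k+l}-u\h^k}_{L^2(\Gamma)}^2$ from below up to a fixed constant.

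For the right-hand side I would apply the Cauchy--Schwarz inequality in space together with the uniform bound $\norm{\nabla u\h^m}_{L^2(\Omega)}+\norm{\gradg u\h^m}_{L^2(\Gamma)}\le C$ from \eqref{disc:unif:bdd}, which gives $\norm{\nabla(u\h^{k+l}-u\h^k)}_{L^2(\Omega)}+\norm{\gradg(u\h^{k+l}-u\h^k)}_{L^2(\Gamma)}\le C$, and hence
\[
\norm{u\h^{k+l}-u\h^k}_{L^2(\Omega)}^2+\norm{u\h^{k+l}-u\h^k}_{L^2(\Gamma)}^2
\le C\,\tau\sum_{n=k+1}^{k+l}\big(\norm{\nabla\muOh\nn}_{L^2(\Omega)}+\norm{\gradg\muGh\nn}_{L^2(\Gamma)}\big).
\]
Next I would multiply by $\tau$, sum over $k=0,\dots,N-l$, and interchange the order of summation: each index $n$ occurs in at most $l$ of the inner sums (namely for $n-l\le k\le n-1$), whence
\[
\tau\sum_{k=0}^{N-l}\Big(\norm{u\h^{k+l}-u\h^k}_{L^2(\Omega)}^2+\norm{u\h^{k+l}-u\h^k}_{L^2(\Gamma)}^2\Big)
\le C\,\tau l\cdot\tau\sum_{n=1}^{N}\big(\norm{\nabla\muOh\nn}_{L^2(\Omega)}+\norm{\gradg\muGh\nn}_{L^2(\Gamma)}\big).
\]
Finally, one more application of the Cauchy--Schwarz inequality in $n$ yields $\tau\sum_{n=1}^{N}\norm{\nabla\muOh\nn}_{L^2(\Omega)}\le(\tau N)^{1/2}\big(\tau\sum_{n=1}^{N}\norm{\nabla\muOh\nn}_{L^2(\Omega)}^2\big)^{1/2}\le T^{1/2}C^{1/2}$, and analogously for the surface term, by \eqref{disc:unif:bdd}. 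Thus the right-hand side is bounded by $C\tau l$ with $C$ independent of $l$, $L$, $h$ and $\tau$, which is exactly the claim.

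There is no serious obstacle here; the one point that needs care is to work with the $L$-free equation \eqref{eq:fe:final:1} rather than with \eqref{eq:fe:final:2}, so that the coupling term $\beta\muGh\nn-\muOh\nn$, whose a priori bound degenerates like $\sqrt L$, never enters the estimate and the final constant is genuinely independent of $L$. One should also observe that $u\h^{k+l}-u\h^k$ is an admissible test function in \eqref{eq:fe:final:1}, being a difference of elements of $\UhO$. The remaining steps --- the double-sum interchange and the conversion of the $\ell^1$-in-time sum of $\norm{\nabla\muOh\nn}_{L^2(\Omega)}$ into the controlled $\ell^2$-in-time sum --- are routine bookkeeping.
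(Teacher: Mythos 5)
Your proposal is correct and follows essentially the same route as the paper: test the $L$-free summed equation \eqref{eq:fe:final:1} with $w\h=u\h^{k+l}-u\h^k$ after telescoping in time, use the norm equivalence of the mass-lumped $L^2$-norms, and control the right-hand side via the uniform bounds of Lemma~\ref{lem:energy:global}. The only (immaterial) difference is in the final bookkeeping: you absorb $\norm{\nabla(u\h^{k+l}-u\h^k)}_{L^2(\Omega)}$ into a constant at once and apply Cauchy--Schwarz in $n$ at the end, whereas the paper keeps both factors and applies Cauchy--Schwarz in $k$ for each shift $m$; both yield the same $C\tau l$ bound with $C$ independent of $l$, $L$, $h$, $\tau$.
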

\begin{proof}
For $0\leq k\leq N-l$, we test \eqref{eq:fe:final:1} by $w\h=\trkla{u\h^{k+l}-u\h^k}$, sum from $n=k+1$ to $k+l$ and employ \eqref{disc:unif:bdd} which yields
\begin{align*}
& \iOmega\Ih{\abs{u\h^{k+l}-u\h^k}^2} \dx +\beta^{-1}\iGamma\IhG{\abs{u\h^{k+l}-u\h^k}^2} \dG\\
& \quad \leq \abs{\tau\sum_{n=k+1}^{k+l} \iOmega\nabla\muOh\nn\cdot\nabla\trkla{u\h^{k+l}-u\h^k} \dx}  +\beta^{-1}\abs{\tau\sum_{n=k+1}^{k+l}\iGamma\gradg\muGh\nn\cdot\gradg\trkla{u\h^{k+l}-u\h^k} \dG}\\
& \quad \leq\tau\sum_{n=k+1}^{k+l}\norm{\nabla\muOh\nn}_{L^2\trkla{\Omega}}\norm{\nabla u\h^{k+l}-\nabla u\h^k}_{L^2\trkla{\Omega}} \\
& \qquad + \beta^{-1}\tau\sum_{n=k+1}^{k+l}\norm{\gradg\muGh\nn}_{L^2\trkla{\Gamma}}\norm{\gradg u\h^{k+l}-\gradg u\h^k}_{L^2\trkla{\Gamma}}\,.
\end{align*}
Multiplying by $\tau$ and summing from $k=0$ to $N-l$, we infer that with the help of \eqref{disc:unif:bdd} that
\begin{align*}
& \tau\sum_{k=0}^{N-l}\norm{u\h^{k+l}-u\h^k}_{L^2\trkla{\Omega}}^2 +\tau\sum_{k=0}^{N-l}\norm{u\h^{k+l}-u\h^k}_{L^2\trkla{\Gamma}}^2\\
& \quad \leq C\tau^2\sum_{m=1}^l\rkla{\sum_{k=0}^{N-l}\norm{\nabla u\h^{k+l}-\nabla u\h^k}_{L^2\trkla{\Omega}}^2}^{1/2}\rkla{\sum_{k=0}^{N-l}\norm{\nabla\muOh^{k+m}}_{L^2\trkla{\Omega}}^2}^{1/2}\\
& \qquad +C\tau^2\sum_{m=1}^l\rkla{\sum_{k=0}^{N-l}\norm{\gradg u\h^{k+l}-\gradg u\h^k}_{L^2\trkla{\Gamma}}^2}^{1/2}\rkla{\sum_{k=0}^{N-l}\norm{\gradg\muGh^{k+m}}_{L^2\trkla{\Gamma}}^2}^{1/2}\\
& \quad \leq C\tau l \rkla{\tau N \max_{n=1,...,N}\norm{\nabla u\h\nn}_{L^2\trkla{\Omega}}^2}^{1/2}\rkla{\tau\sum_{n=1}^N\norm{\nabla\muOh\nn}_{L^2\trkla{\Omega}}}^{1/2}\\
& \qquad +C\tau l \rkla{\tau N \max_{n=1,...,N}\norm{\gradg u\h\nn}_{L^2\trkla{\Gamma}}^2}^{1/2}\rkla{\tau\sum_{n=1}^N\norm{\gradg\muGh\nn}_{L^2\trkla{\Gamma}}}^{1/2}\\
& \quad \leq C\tau l\,.
\end{align*}
Thus, the proof is complete.
\end{proof}

\subsection{Passing to the limit}
Let $L_0 \in [0,\infty]$ be arbitrary.  To pass to the limit $\trkla{h,\tau,L}\rightarrow\trkla{0,0,L_0}$, we define three interpolation functions for a collection of time-discrete functions $\{a^n\}_{n=0}^{N}$ as follows:
\begin{subequations}
\begin{align}
a\tl(\cdot,t)&:=\tfrac{t-t\no}{\tau} a^n\trkla{\cdot} + \tfrac{t\nn-t}{\tau}a\no\trkla{\cdot},&&t\in\tekla{t\no,t\nn},\,n\geq1\,,\\
a\tp\trkla{\cdot,t}&:=a^n\trkla{\cdot}\,~a\tm\trkla{\cdot,t}:=a\no\trkla{\cdot},&&t\in(t\no,t\nn],\,n\geq1\,.
\end{align}
\end{subequations}
If a statement is valid for $a\tl$, $a\tp$, and $a\tm$, we will use $a\tpm$.
Using this notation, we are able to write the uniform bounds established in the last section as
\begin{subequations}\label{eq:disc:bounds}
\begin{alignat}{2}
\notag & \norm{u\h\tpm}_{L^\infty\trkla{0,T;H^1\trkla{\Omega}}}^2 + \norm{u\h\tpm}_{L^\infty\trkla{0,T;H^1\trkla{\Gamma}}}^2 \\
\notag & \quad + \tau^{-1}\norm{\nabla u\h\tp- \nabla u\h\tm}_{L^2\trkla{0,T;L^2\trkla{\Omega}}}^2 + \tau^{-1}\norm{\gradg u\h\tp- \gradg u\h\tm}_{L^2\trkla{0,T;L^2\trkla{\Gamma}}}^2 \\
& \quad +\norm{\muOh\tp}_{L^2\trkla{0,T;H^1\trkla{\Omega}}}^2+\norm{\muGh\tp}_{L^2\trkla{0,T;H^1\trkla{\Gamma}}}^2\leq C\,, \label{eq:bounds:1}\\[1ex]
& \norm{\beta\muGh\tp-\muOh\tp}_{L^2\trkla{0,T;L^2\trkla{\Gamma}}}^2 \leq CL\,,\label{eq:bounds:1.5}\\
&\norm{u\h\tpm\trkla{\cdot+l\tau}-u\h\tpm\trkla{\cdot}}_{L^2\trkla{0,T;L^2\trkla{\Omega}}}^2\leq C\tau l\,,\label{eq:bounds:2}\\
&\norm{u\h\tpm\trkla{\cdot+l\tau}-u\h\tpm\trkla{\cdot}}_{L^2\trkla{0,T;L^2\trkla{\Gamma}}}^2\leq C\tau l\label{eq:bounds:3}\,.
\end{alignat}
\end{subequations}

\begin{lemma}\label{lem:disc:conv}
 Under \eqref{item:disc:time}, \eqref{item:disc:space}, \eqref{item:disc:gamma}, and \eqref{ass:pot},  there exist functions $\trkla{u,u_\Gamma,\muO,\muG}$ and a subsequence, denoted by $\tgkla{u\h\tpm,\muOh\tpm,\muGh\tpm}\Sub[-3pt]{h,\tau,L}$, satisfying
\begin{align}
\label{REG:LIM} 
\left\{
\begin{aligned}
	&u\in L^\infty\trkla{0,T;H^1\trkla{\Omega} \cap L^4(\Om)}\,,\quad &&u_\Gamma \in L^\infty\trkla{0,T;H^1\trkla{\Gamma} \cap L^4(\Ga)}\,,\\
	&\muO\in L^2\trkla{0,T;H^1\trkla{\Omega}}\,,\quad &&\muG\in L^2\trkla{0,T;H^1\trkla{\Omega}}
\end{aligned}
\right. 
\end{align}
such that $u \vert_{\Sigma_T}=u_\Gamma$ a.e.~on $\Sigma_T$, and as $\trkla{h,\tau,L}\rightarrow\trkla{0,0,L_0}$,
\begin{subequations}
\begin{alignat}{4}
u\h\tpm & \to u && \text{ weakly* in } L^\infty\trkla{0,T;H^1\trkla{\Omega}}\,,\label{eq:conv:weakstar:u}\\
&&& \quad \text{ strongly in } L^r\trkla{0,T;L^s\trkla{\Omega}}\quad \text{with~}r<\infty,~s\in[1,\tfrac{2d}{d-2})\,,\label{eq:conv:strong:u} \\
u\h\tpm \vert_{\Sigma_T} &\to u_\Gamma && \text{ weakly* in } L^\infty\trkla{0,T;H^1\trkla{\Gamma}}\,,\label{eq:conv:weakstar:traceu}\\
&&& \quad \text{ strongly in } L^r\trkla{0,T;L^s\trkla{\Gamma}}\quad\text{with~}r,s<\infty\,,\label{eq:conv:strong:traceu}\\
\muOh\tp &\to \muO&& \text{ weakly in }L^2\trkla{0,T;H^1\trkla{\Omega}}\,,\label{eq:conv:weak:muO}\\
\muOh\tp \vert_{\Sigma_T} &\to \muO\vert_{\Sigma_T} &&\text{ weakly in } L^2\trkla{0,T;H^{1/2}\trkla{\Gamma}}\,,\label{eq:conv:weak:tracemuO} \\
\muGh\tp & \to \muG&&\text{ weakly in } L^2\trkla{0,T;H^1\trkla{\Gamma}}\label{eq:conv:weak:muG}\,.
\end{alignat}
\end{subequations}
If $L_0=0$, we additionally obtain
\begin{align}\label{eq:conv:mudiff}
\beta\muGh\tp- \muOh\tp \vert_{\Sigma_T} \to 0\text{ strongly in }L^2\trkla{0,T;L^2\trkla{\Gamma}}\,.
\end{align}
\end{lemma}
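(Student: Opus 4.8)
The plan is to prove Lemma~\ref{lem:disc:conv} by extracting convergent subsequences from the uniform bounds \eqref{eq:disc:bounds}, using compactness and a Nikolskii-type argument to upgrade weak convergence to strong convergence, and finally identifying the boundary traces. First I would treat the bulk phase-field: the bound \eqref{eq:bounds:1} gives $u\h\tpm$ bounded in $L^\infty(0,T;H^1(\Omega))$, hence by Banach--Alaoglu there is a subsequence and a limit $u \in L^\infty(0,T;H^1(\Omega))$ with \eqref{eq:conv:weakstar:u}. The $L^4$-integrability in \eqref{REG:LIM} follows since \eqref{disc:unif:bdd} together with \eqref{ass:pot:2} (with $p=q=4$) and the equivalence of the discrete and continuous $L^2$-norms provides a uniform bound on $\|u\h\nn\|_{L^4(\Omega)}$. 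For the strong convergence \eqref{eq:conv:strong:u}, I would combine the spatial bound in $H^1(\Omega)$ with the translation-in-time estimate \eqref{eq:bounds:2}: these two ingredients allow an application of a discrete Aubin--Lions / Simon-type compactness lemma (as in \cite{simon}, or the discrete version used in \cite{Metzger2019}), yielding relative compactness of $\{u\h\tpm\}$ in $L^2(0,T;L^2(\Omega))$; interpolating between this and the $L^\infty(0,T;H^1(\Omega))$ bound gives strong convergence in $L^r(0,T;L^s(\Omega))$ for $r<\infty$ and $s<\tfrac{2d}{d-2}$, and one also obtains a.e.~convergence in $Q_T$ along a further subsequence. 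The same argument applied on $\Gamma$, using the $L^\infty(0,T;H^1(\Gamma))$ bound and \eqref{eq:bounds:3}, yields \eqref{eq:conv:weakstar:traceu} and \eqref{eq:conv:strong:traceu} for a limit $u_\Gamma$; note that on the compact surface $\Gamma$ there is no dimensional restriction on $s$.

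Next I would handle the chemical potentials. The bounds \eqref{eq:bounds:1} on $\muOh\tp$ in $L^2(0,T;H^1(\Omega))$ and on $\muGh\tp$ in $L^2(0,T;H^1(\Gamma))$ give, by weak compactness in Hilbert spaces, limits $\muO \in L^2(0,T;H^1(\Omega))$ and $\muG \in L^2(0,T;H^1(\Gamma))$ with \eqref{eq:conv:weak:muO} and \eqref{eq:conv:weak:muG}. For \eqref{eq:conv:weak:tracemuO} I would invoke the continuity of the trace operator $\gamma: H^1(\Omega)\to H^{1/2}(\Gamma)$, which is bounded and linear, hence weakly continuous; thus $\muOh\tp\vert_{\Sigma_T} \to \muO\vert_{\Sigma_T}$ weakly in $L^2(0,T;H^{1/2}(\Gamma))$. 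One subtlety here: since the finite element functions are only piecewise linear, $\muOh\tp\vert_\Gamma$ is the genuine trace of the bulk function and lies in $\UhG$ by the compatibility \eqref{eq:compatibility}, so the trace bound is inherited directly from the $H^1(\Omega)$ bound.

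To identify $u\vert_{\Sigma_T}=u_\Gamma$ I would argue as follows: the trace operator is continuous from $H^1(\Omega)$ to $L^2(\Gamma)$, and by the strong convergence $u\h\tpm\to u$ in $L^2(0,T;L^2(\Omega))$ together with the $L^\infty(0,T;H^1(\Omega))$ bound one can pass to the limit in $\int_0^T\int_\Gamma (u\h\tpm\vert_\Gamma)\,\phi\,\dG\dt$ for test functions $\phi$, matching it against the strong limit $u_\Gamma$ on $\Gamma$; alternatively, since $u\h\tpm \to u$ weakly in $L^2(0,T;H^1(\Omega))$ (which follows from the uniform $H^1$ bound) and the trace is weakly continuous into $L^2(0,T;L^2(\Gamma))$, while also $u\h\tpm\vert_\Gamma\to u_\Gamma$ strongly in $L^2(\Sigma_T)$, the two limits must coincide a.e.~on $\Sigma_T$. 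Finally, for the case $L_0=0$, the estimate \eqref{eq:bounds:1.5} gives $\|\beta\muGh\tp - \muOh\tp\|_{L^2(0,T;L^2(\Gamma))}^2 \le CL \to 0$ as $(h,\tau,L)\to(0,0,0)$, which is precisely \eqref{eq:conv:mudiff} (and, taking weak limits, forces $\beta\muG = \muO\vert_{\Sigma_T}$ in that regime). The main obstacle is the rigorous application of the discrete Aubin--Lions compactness lemma: one must carefully check that the piecewise-constant-in-time interpolants $u\h\tpm$ (not just the piecewise-linear $u\h\tl$) satisfy the hypotheses, which is exactly what the uniform translation estimates of Lemma~\ref{lem:nikolskii} are designed for, and one must ensure the $h\to 0$ and $\tau\to 0$ limits can be taken simultaneously along a diagonal subsequence — here the compatibility condition \eqref{item:htau} does not even enter, since this lemma is purely about compactness of the phase-field and potentials, but keeping the bookkeeping of the joint limit $(h,\tau,L)\to(0,0,L_0)$ clean is the delicate point.
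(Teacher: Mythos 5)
Your proposal is correct and follows essentially the same route as the paper: weak-$*$ limits from the uniform bounds \eqref{eq:bounds:1}, Simon's compactness theorem combined with the Nikolskii translation estimates \eqref{eq:bounds:2}--\eqref{eq:bounds:3} for the strong convergences, the trace theorem for \eqref{eq:conv:weak:tracemuO}, and \eqref{eq:bounds:1.5} for \eqref{eq:conv:mudiff}. Your observation that \eqref{item:htau} plays no role here, and your direct identification of the trace limits via weak continuity of the trace operator (where the paper simply refers to arguments in \cite{Metzger2019}), are both accurate refinements rather than deviations.
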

\begin{proof}
The convergences expressed in \eqref{eq:conv:weakstar:u}, \eqref{eq:conv:weakstar:traceu}, \eqref{eq:conv:weak:muO}, and \eqref{eq:conv:weak:muG} are direct consequences of bounds established in \eqref{eq:bounds:1}.
To obtain the strong convergence in \eqref{eq:conv:strong:u} and \eqref{eq:conv:strong:traceu}, we have combined \eqref{eq:bounds:1} with \eqref{eq:bounds:2}, \eqref{eq:bounds:3}, and apply a compactness result \cite[s.~8, Thm.~5]{simon}.

As $\muOh\tp$ is uniformly bounded in $L^2\trkla{0,T;H^1{\trkla{\Omega}}}$, the trace theorem provides an additional uniform bound in $L^2\trkla{0,T;H^{1/2}\trkla{\Gamma}}$.
Consequently, there is a subsequence of $\{\muOh\tp \vert_{\Sigma_T}\}$ converging weakly towards some limit function $\nu\in L^2\trkla{0,T;H^{1/2}\trkla{\Gamma}}$. The identification of $\nu$ with $\muO \vert_{\Sigma_T}$ follows from similar arguments as in \cite{Metzger2019}, while the remaining convergence property stated in \eqref{eq:conv:mudiff} follows from \eqref{eq:bounds:1.5}.
\end{proof}
In \eqref{eq:fe:final}, we now consider test functions $w\h,\,\eta\h\in L^2\trkla{0,T;\UhO}$ and $z\h\in L^2\trkla{0,T;\UhG}$.  Then, summing over $n = 0, \dots, N-1$, we see that the time-interpolation functions satisfy
\begin{subequations}\label{eq:timecont}
\begin{alignat}{2}
\label{eq:timecont:1}
& \int_{Q_T} \Ih{\partial_tu\h\tl w\h} \dx \dt +\beta^{-1}\int_{\Sigma_T} \IhG{\partial_tu\h\tl w\h} \dG \dt \\
\notag & \quad  +\int_{Q_T} \nabla \muOh\tp\cdot\nabla w\h \dx \dt +\beta^{-1}\int_{\Sigma_T} \gradg\muGh\tp\cdot\gradg w\h \dG \dt=0\,, \\
\label{eq:timecont:2}
& \tfrac{L}{L+1}\int_{\Sigma_T} \IhG{\partial_t u\h\tl z\h} \dG \dt +\tfrac{L}{L+1}\int_{\Sigma_T} \gradg\muGh\tp\cdot\gradg z\h \dG \dt \\
\notag & \quad  +\beta \tfrac1{L+1}\int_{\Sigma_T} \IhG{\trkla{\beta\muGh\tp-\muOh\tp}z\h} \dG \dt =0\,, \\
\label{eq:timecont:3}
& \int_{Q_T} \Ih{\muOh\tp\eta\h} \dx \dt +\int_{\Sigma_T} \IhG{\muGh\tp\eta\h}  \dG \dt\\
\notag & \quad = \int_{Q_T} \nabla u\h\tp\cdot\nabla \eta\h \dx \dt + \int_{Q_T} \Ih{\trkla{F^\prime_1\trkla{u\h\tp}+F^\prime_2\trkla{u\h\tm}}\eta\h} \dx \dt \\
\notag & \qquad +\kappa \int_{\Sigma_T} \gradg u\h\tp\cdot\gradg\eta\h \dG \dt +\int_{\Sigma_T} \IhG{\trkla{G^\prime_1\trkla{u\h\tp}+G^\prime_2\trkla{u\h\tm}}\eta\h} \dG \dt.
\end{alignat}
\end{subequations}
We aim to pass to the limit $(h,\tau,L) \to (0,0,L_0)$ to deduce the convergence of our numerical solutions.

\begin{thm}[Convergence of numerical solutions]\label{thm:limit}
	 Under \eqref{item:disc:time}, \eqref{item:disc:space}, \eqref{item:disc:gamma}, \eqref{ass:pot},
	\eqref{item:potentialsbounds}, 
	and \eqref{item:htau},  the limit triplet $\trkla{u,\muO,\muG}$ obtained from Lemma \ref{lem:disc:conv} by passing to the limit $\trkla{h,\tau,L}\rightarrow\trkla{0,0,L_0}$ solves \eqref{CH:INT} in the following weak sense:
	\begin{subequations}
		\begin{alignat}{2}
		\label{eq:limit1:1} 
		& \beta \int_{Q_T} \trkla{u_0-u}\partial_t w  + \nabla \muO\cdot\nabla w \dx \dt  + \int_{\Sigma_T} (u_0-u)  \partial_t w  + \gradg\muG\cdot\gradg w \dG \dt =0\,,
		\\[1ex]
		\label{eq:limit1:2}
		&\left\{
		\begin{aligned}
		&\int_{\Sigma_T} L_0 (u_0 -u )\partial_t z + L_0 \gradg\muG\cdot\gradg z + \beta \trkla{\beta\muG-\muO  }z \dG \dt =0\, 
		&& \text{ if } L_0 \in [0,\infty),  \\
		& \int_{\Sigma_T} (u_0  -u ) \partial_t z + \gradg\muG\cdot\gradg z \dG \dt = 0 \, 
		&& \text{ if } L_0 = \infty, 
		\end{aligned} 
		\right.
		\\[1ex]
		\label{eq:limit1:3} 
		&\begin{aligned}
		& \int_{Q_T} \mu \eta \dx \dt + \int_{\Sigma_T} \muG \eta \dG \dt \\
		& \quad = \int_{Q_T}  \nabla u \cdot \nabla \eta +  F^\prime(u) \eta \dx \dt + \int_{\Sigma_T} \kappa  \gradg u \cdot \gradg \eta +  G^\prime(u) \eta \dG \dt
		\end{aligned}
		\end{alignat} 
	\end{subequations}
	for all $w\in H^1\trkla{0,T;\V}$ satisfying $w\trkla{\cdot,T}\equiv0$, $z\in H^1\trkla{0,T;H^1\trkla{\Gamma}}$ satisfying $z\trkla{\cdot,T}\equiv0$, and $\eta\in L^2\trkla{0,T;\V}$.  
\end{thm}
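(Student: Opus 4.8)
The plan is to pass to the limit $(h,\tau,L)\to(0,0,L_0)$ in the discrete variational identities \eqref{eq:timecont:1}--\eqref{eq:timecont:3}, using the uniform bounds \eqref{eq:disc:bounds} and the convergences collected in Lemma~\ref{lem:disc:conv}. First I would deal with the discrete time derivative terms: an integration by parts in time (a discrete summation by parts), together with $w(\cdot,T)\equiv0$ and the discrete initial condition $u\h^0=\Ih{u_0}$ and its convergence \eqref{eq:initialcond:conv}, rewrites $\int_{Q_T}\Ih{\partial_t u\h\tl w\h}\dx\dt$ as $-\int_{Q_T}\Ih{u\h\tl \partial_t w\h}\dx\dt - \int_\Omega \Ih{u_0 w\h(\cdot,0)}\dx$ up to the mass-lumping operator; this explains why $u_0$ appears in \eqref{eq:limit1:1}--\eqref{eq:limit1:2} rather than an $L^2(0,T;\V')$ bound on $\partial_t u$. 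I would then choose discrete test functions of the form $w\h=\Ih{w(t)}$ (resp.\ $z\h=\IhG{z(t)}$, $\eta\h=\Ih{\eta(t)}$) for smooth $w,z,\eta$, invoke the standard interpolation estimates $\|w-\Ih{w}\|_{H^1(\Omega)}\le Ch\|w\|_{H^2(\Omega)}\to0$ together with \eqref{eq:compatibility} to handle the boundary, and afterwards extend to the full test-function classes $H^1(0,T;\V)$, $H^1(0,T;H^1(\Gamma))$ and $L^2(0,T;\V)$ by density.

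The genuinely new estimates needed are the ones controlling the mass-lumping (quadrature) error. Here Lemma~\ref{lem:ihfe} is the key tool: terms like $\int_{Q_T}(\ids-\Ihop)\{u\h\tpm w\h\}\,\partial_t(\cdots)$ are bounded by $Ch^2\|\nabla u\h\tpm\|_{L^\infty(0,T;L^2)}\|\nabla w\h\|\cdots$, which vanishes as $h\to0$ thanks to the uniform $H^1$-bounds in \eqref{eq:bounds:1}; the same applies on $\Gamma$. The nonlinear terms require the strong convergences \eqref{eq:conv:strong:u}, \eqref{eq:conv:strong:traceu} (hence a.e.\ convergence along a further subsequence), the growth conditions \eqref{ass:pot:2} with $p=q=4$, the extra decomposition \eqref{item:potentialsbounds}, and the compatibility condition \eqref{item:htau} $h^4/\tau\to0$, which is precisely what is needed to absorb the $O(h^2)$ quadrature error against the $O(\tau^{-1/2})$-type factors coming from the time interpolants; I would use the generalized dominated convergence theorem as in \cite[p.~60]{Alt} to pass $F_1'(u\h\tp)+F_2'(u\h\tm)\to F'(u)$ and similarly on $\Gamma$. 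For the $L$-dependent coupling term in \eqref{eq:timecont:2}, the prefactors $\tfrac{L}{L+1}$ and $\tfrac{1}{L+1}$ converge to the values displayed in \eqref{eq:limit1:2} for each $L_0\in[0,\infty]$, and when $L_0=0$ the term $\beta\muGh\tp-\muOh\tp$ vanishes strongly by \eqref{eq:conv:mudiff}, while for $L_0\in(0,\infty)$ its weak $L^2$-limit is $\beta\muG-\muO$ by \eqref{eq:conv:weak:tracemuO}, \eqref{eq:conv:weak:muG}; for $L_0=\infty$ one divides \eqref{eq:timecont:2} by $\tfrac{L}{L+1}$ first and uses \eqref{eq:bounds:1.5} to kill the coupling term.

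I anticipate the main obstacle to be the bookkeeping of the mass-lumping errors in combination with the time-interpolation structure, in particular checking that every such error term is controlled by a product of a uniformly bounded factor and a factor tending to zero, using \eqref{item:htau} where the naive $h^2$ bound is not by itself enough (e.g.\ when one factor involves $\partial_t w\h$ or the difference quotient $\partial_t u\h\tl$ which is only $O(\tau^{-1/2})$ in a norm sense). A secondary subtlety is the identification $u\vert_{\Sigma_T}=u_\Gamma$ and the weak trace convergence \eqref{eq:conv:weak:tracemuO}: one argues as in \cite{Metzger2019} that the trace of the weak $H^1(\Omega)$-limit coincides with the limit of the traces because the trace operator is weakly continuous from $L^2(0,T;H^1(\Omega))$ into $L^2(0,T;H^{1/2}(\Gamma))$. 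Once these points are settled, passing to the limit in \eqref{eq:timecont:1}--\eqref{eq:timecont:3} and undoing the discrete integration by parts in time yields \eqref{eq:limit1:1}--\eqref{eq:limit1:3}, and the regularity \eqref{REG:LIM} is exactly what Lemma~\ref{lem:disc:conv} provides.
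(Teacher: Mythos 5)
Your proposal follows essentially the same route as the paper's proof: interpolate smooth test functions vanishing at $t=T$, integrate the discrete time derivative by parts so that $u_0$ appears via the initial datum and \eqref{eq:initialcond:conv}, control the mass-lumping errors with Lemma~\ref{lem:ihfe} combined with \eqref{item:htau} and (for the coupling term) the $\sqrt{L}$ bound \eqref{eq:bounds:1.5}, treat the nonlinearities by strong convergence and generalized dominated convergence, split the three cases $L_0=0$, $L_0\in(0,\infty)$, $L_0=\infty$ exactly as the paper does (multiplying \eqref{eq:timecont:2} by $\tfrac{L+1}{L}$ in the last case), and conclude by density. The only detail you leave implicit is the inverse estimate $\norm{\gradg z\h}_{L^2(\Gamma)}\leq Ch^{-1}\norm{z\h}_{L^2(\Gamma)}$ used to turn the $O(h^2)$ quadrature bound on the coupling term into the $O(h)$ bound that survives the $L^{-1}$ prefactor, but this is a minor bookkeeping point within the strategy you already describe.
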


\begin{proof}
For an arbitrary $w\in C^1\trkla{\tekla{0,T};C^\infty\trkla{\overline{\Omega}}}$ with $w(\cdot,T) \equiv 0$, we denote its interpolation as $w\h:=\Ih{w}$, which allows us to interchange the interpolation and the trace operator, i.e., $(\Ih{w}) \vert_{\Sigma_T} =\IhG{ w \vert_{\Sigma_T}}$.  For the first term in \eqref{eq:timecont:1}, we obtain
\begin{align*}
\int_{Q_T} \Ih{\partial_t u\h\tl w\h } \dx \dt =\int_{Q_T} \partial_t u\h\tl w\h \dx \dt -\int_{Q_T}\trkla{I-\Ihop}\tgkla{\partial_t u\h\tl w\h} \dx \dt=:A_1+A_2\,.
\end{align*}
Integrating by parts, applying the fact $w\h \to w$ in $L^2(0,T;H^1(\Omega))$ (see \cite{BrennerScott}) and \eqref{eq:initialcond:conv}, we obtain
\begin{align*}
A_1=-\int_{Q_T} u\h\tl\partial_t w\h \dx \dt - \iOmega u\h^0w\h\trkla{\cdot ,0} \dx & \rightarrow -\int_{Q_T} u\partial_t w \dx \dt-\iOmega u_0 w\trkla{\cdot,0} \dx \\
& =\int_{Q_T} \trkla{u_0-u}\partial_t w \dx \dt \,.
\end{align*}
Concerning $A_2$, we employ Lemma \ref{lem:ihfe} with assumption \eqref{item:htau} to obtain
\begin{align}
\tabs{A_2}\leq C\tfrac{h^2}{\sqrt{\tau}} \tfrac{1}{\sqrt{\tau}}\norm{\nabla u\h\tp-\nabla u\h\tm}_{L^2\trkla{0,T;L^2\trkla{\Omega}}}\norm{\nabla w\h}_{L^2\trkla{0,T;L^2\trkla{\Omega}}}\rightarrow0\,,
\end{align}
due to the uniform estimates in \eqref{eq:bounds:1}.
Similar arguments provide the convergence of the second term in \eqref{eq:timecont:1}, and the convergence of the remaining terms in \eqref{eq:timecont:1} follows from \eqref{eq:conv:weak:muO}, \eqref{eq:conv:weak:muG}, and the strong convergences $\Ih{w}\rightarrow w$ in $L^2(0,T;H^1(\Omega))$ and $\IhG{w\vert_{\Sigma_T}}\rightarrow w\vert_{\Sigma_T}$ in $L^2(0,T;H^1(\Gamma))$.  Hence, we recover \eqref{eq:limit1:1} in the limit $(h, \tau, L) \to (0,0,L_0)$.

We refer the reader to \cite[Proof of Thm.~4.4]{Metzger2019} for the arguments to pass to the limit in \eqref{eq:timecont:3} to recover \eqref{eq:limit1:3}, as the equation treated there is identical to \eqref{eq:timecont:3}.  Hence, to finish the proof, it remains to pass to the limit in \eqref{eq:timecont:2}.  For arbitrary $z\in C^1\trkla{\tekla{0,T};C^\infty\trkla{\Gamma}}$ satisfying $z\trkla{\cdot,T}\equiv0$, we consider the interpolation function $z_h:=\IhG{z}$.  Then, for the case $L_0 \in (0,\infty)$ the first two terms in \eqref{eq:timecont:2} can be treated with analogous arguments used above.  Meanwhile for the third term in \eqref{eq:timecont:2}, we see that 
\begin{align}\label{eq:conv:tmp:1}
& \tfrac1{L+1}\int_{\Sigma_T} \beta \IhG{\trkla{\beta\muGh\tp- \muOh\tp\vert_{\Sigma_T} }z\h} \dG \dt \\
\notag & \quad  =  \tfrac1{L+1}\int_{\Sigma_T} \beta \trkla{\beta\muGh\tp-\muOh\tp\vert_{\Sigma_T} }z\h \dG \dt - \tfrac1{L+1}\int_{\Sigma_T} \beta \trkla{I-\IhGop}\tgkla{\trkla{\beta\muGh\tp-\muOh\tp\vert_{\Sigma_T} }z\h} \dG \dt\,.
\end{align}
For the first term on the right-hand side, thanks to \eqref{eq:conv:weak:tracemuO} and \eqref{eq:conv:weak:muG} we find that  
\begin{align*}
\tfrac{1}{L+1} \int_{\Sigma_T} \beta \trkla{\beta\muGh\tp-\muOh\tp\vert_{\Sigma_T} }z\h \dG \dt \rightarrow \tfrac{1}{L_0+1} \int_{\Sigma_T} \beta \trkla{\beta\muG-\muO \vert_{\Sigma_T} }z \dG \dt,
\end{align*}
while for the second term on the right-hand side, by \eqref{eq:bounds:1.5}, Lemma \ref{lem:ihfe} and a standard inverse estimate  $\norm{\gradg a\h}_{L^2\trkla{\Gamma}} \leq C h^{-1} \norm{a \h}_{L^2\trkla{\Gamma}}$  (see e.g., \cite[Thm.~4.5.11]{BrennerScott}), we have
\begin{align*}
& \tfrac{1}{L+1}\abs{\int_{\Sigma_T} \beta \trkla{I-\IhGop}\tgkla{\trkla{\beta\muGh\tp-\muOh\tp \vert_{\Sigma_T}}z\h} \dG \dt}\\
& \quad \leq C\tfrac{L}{L+1}  h L^{-1}\norm{\beta\muGh\tp-\muOh\tp\vert_{\Sigma_T}}_{L^2\trkla{0,T;L^2\trkla{\Gamma}}}\norm{\gradg z\h}_{L^2\trkla{0,T;L^2\trkla{\Gamma}}}\leq Ch \to 0.
\end{align*}
Therefore, the second term on the right-hand side of \eqref{eq:conv:tmp:1} vanishes for all $L\geq0$ as $h\searrow0$.
Hence, for the case $L_0 \in (0,\infty)$, passing to the limit $(h,\tau, L) \to (0,0,L_0)$ in \eqref{eq:timecont:2} yields \eqref{eq:limit1:2}.
For the case $L_0 = 0$, the uniform estimate \eqref{eq:disc:bounds} imply the first two terms in \eqref{eq:timecont:2} converge to zero in the limit, and thus we obtain from passing to the limit in \eqref{eq:conv:tmp:1} the identity
\begin{align*} 
\int_{\Sigma_T} (\beta \muG - \mu \vert_{\Sigma_T}) z \dG \dt = 0,
\end{align*}
which is \eqref{eq:limit1:2} with $L_0 = 0$.  For the case $L_0 = \infty$, we multiply \eqref{eq:timecont:2} with $\frac{L+1}{L}$, leading to 
\begin{align*}
\int_{\Sigma_T} \IhG{\partial_t u\h\tl z\h} + \gradg\muGh\tp\cdot\gradg z\h \dG \dt  + \beta \tfrac1{L}\int_{\Sigma_T} \IhG{\trkla{\beta\muGh\tp-\muOh\tp}z\h} \dG \dt =0,
\end{align*}
and passing to the limit $(h,\tau,L) \to (0,0,\infty)$ yields \eqref{eq:limit1:2}.  Hence, \eqref{eq:limit1:1}-\eqref{eq:limit1:2} hold for all $w\in C^1\trkla{\tekla{0,T};C^\infty\trkla{\overline{\Omega}}}$ satisfying $w\trkla{\cdot,T}\equiv0$, $z\in C^1\trkla{\tekla{0,T};C^\infty\trkla{\Gamma}}$ satisfying $z\trkla{0,T}\equiv0$, and the proof is complete after employing the density of $C^1\trkla{\tekla{0,T};C^\infty\trkla{\overline{\Omega}}}$ in $H^1\trkla{0,T;\V}$ and the density of $C^1\trkla{\tekla{0,T};C^\infty\trkla{\Gamma}}$ in $H^1\trkla{0,T;H^1(\Gamma)}$. 
\end{proof}

\begin{cor}\label{cor:limit}
	 Suppose that \eqref{item:disc:time}, \eqref{item:disc:space}, \eqref{item:disc:gamma}, \eqref{ass:pot},
	\eqref{item:potentialsbounds} 
	and \eqref{item:htau} hold,  and let $\trkla{u,\muO,\muG}$ denote the triplet obtained by Lemma \ref{lem:disc:conv}. 
	\begin{enumerate}[label=$(\alph*)$, ref = $(\alph*)$]
		\item If $L_0=0$, then $\trkla{u,\muO,\muG}$ is a weak solution of the GMS model \eqref{CH:GMS} in the sense of Proposition \ref{prop:GMS}.
		\item If $0<L_0<\infty$, then $\trkla{u,\muO,\muG}$ is a weak solution of the  reaction rate dependent model  \eqref{CH:INT} in the sense of Theorem \ref{THM:WP} (with $L=L_0$).
		\item If $L_0=\infty$, then $\trkla{u,\muO,\muG}$ is a weak solution of the LW model \eqref{CH:LW} in the sense of Proposition \ref{prop:lw}.
	\end{enumerate}	
\end{cor}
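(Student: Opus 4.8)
The plan is to observe that the essential content is already furnished by Theorem~\ref{thm:limit}, which supplies the space--time variational identities \eqref{eq:limit1:1}--\eqref{eq:limit1:3} for the limit triplet $\trkla{u,\muO,\muG}$; what remains is to match these to the precise notions of weak solution in Theorem~\ref{THM:WP}, Proposition~\ref{prop:GMS} and Proposition~\ref{prop:lw}, according to the value of $L_0$. I would proceed by (i) upgrading the time regularity of $u$ so that $\partial_t u$ lies in the appropriate dual space; (ii) recovering the pointwise-in-time weak formulations and the initial condition $u(0)=u_0$; (iii) verifying the mean value (mass) constraints; and (iv) establishing the energy inequality. Uniqueness of each of the three limit problems then identifies $\trkla{u,\muO,\muG}$ with the unique weak solution, and since the limit is independent of the extracted subsequence, the convergences in Lemma~\ref{lem:disc:conv} hold along the whole family $\trkla{h,\tau,L}\rightarrow\trkla{0,0,L_0}$.

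For step (i) I would return to the scheme in the form \eqref{eq:fe:final} prior to the summation in time, test \eqref{eq:fe:final:1} and \eqref{eq:fe:final:2} with discrete functions, and argue exactly as in the derivation of \eqref{lim:3:LW}--\eqref{lim:3:GMS}: with the uniform bounds \eqref{disc:unif:bdd} and \eqref{eq:bounds:1} this yields $\norm{\partial_t u\h\tl}_{L^2(0,T;\V')}\le C$ whenever $\beta>0$ --- the bound relevant for $L_0=0$, with $\V'$ carrying the $\beta$-weighted norm of \eqref{pre:V} --- and $\norm{\partial_t u\h\tl}_{L^2(0,T;H^1(\Om)')}+\norm{\partial_t u\h\tl}_{L^2(0,T;H^1(\Ga)')}\le C\trkla{1+L^{-1/2}}$, which is uniform along the chosen sequence as soon as $L_0>0$. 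Passing to the limit and using weak lower semicontinuity transfers these bounds to $u$, respectively $u\vert_{\Sigma_T}$. For step (ii) I would take test functions of the form $\phi(t)\psi(x)$ in \eqref{eq:limit1:1}--\eqref{eq:limit1:2} and integrate by parts in time: the term $\int(u_0-u)\,\partial_t w$ becomes $\int_0^T\inn{\partial_t u}{w}\dt$ minus $\int_\Omega(u_0-u(0))\,w(\cdot,0)\dx$ (and its surface analogue), so that the choice $\phi(0)\neq0$ together with a density argument forces $u(0)=u_0$ a.e.\ in $\overline{\Omega}$, while $\phi\in C^\infty_c(0,T)$ and the fundamental lemma of the calculus of variations yield the a.e.-in-time identities. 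For $L_0=0$ one reads off $\beta\muG=\muO\vert_{\Sigma_T}$ from \eqref{eq:limit1:2} and recovers \eqref{WF:GMS}; for $L_0=\infty$, \eqref{eq:limit1:2} becomes the surface equation and \eqref{eq:limit1:1} minus its surface restriction then yields \eqref{WF:LW}; for $0<L_0<\infty$ the pair \eqref{eq:limit1:1}--\eqref{eq:limit1:2} is, after integration by parts, precisely $\beta$ times \eqref{WF:INT:1} plus \eqref{WF:INT:2} together with $L_0$ times \eqref{WF:INT:2}, from which \eqref{WF:INT} (with $L=L_0$) is disentangled after a density argument in the bulk test function. The regularity in item (i) of the three statements follows from \eqref{eq:conv:weakstar:u}--\eqref{eq:conv:weak:muG}, Lemma~\ref{lem:nikolskii} and the Aubin--Lions lemma as in Step~5 of Section~\ref{sec:wellposed}.

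For steps (iii) and (iv): choosing $w\equiv1$, and $z\equiv1$ where applicable, in the recovered equations and using $u(0)=u_0$ with $u\h^0\to u_0$ from \eqref{eq:initialcond:conv} yields the mass laws --- the weighted identity $\beta\abs{\Omega}\mean{u(t)}_\Om+\abs{\Gamma}\mean{u(t)}_\Ga=m$ if $L_0\in[0,\infty)$, the two separate identities $\mean{u(t)}_\Om=m_b$ and $\mean{u(t)}_\Ga=m_s$ if $L_0=\infty$ --- so that $u(t)\in\Wm$, respectively $\Vm$, for all $t\in[0,T]$. For the energy inequality I would sum \eqref{matrix:energy} from $0$ to an arbitrary $k$, rewrite it through the interpolation functions as in the proof of Lemma~\ref{lem:energy:global}, bound the initial contribution by \eqref{eq:initialcond:bound}, and pass to the limit: the gradient terms and the penalisation term $\tfrac1L\norm{\beta\muGh\tp-\muOh\tp}_{L^2(\Gamma)}^2$ are weakly lower semicontinuous, while $F\trkla{u\h\tp}$ and $G\trkla{u\h\tp}$ are handled by Fatou's lemma via the a.e.\ convergence in \eqref{eq:conv:strong:u}--\eqref{eq:conv:strong:traceu}; when $L_0\in\tgkla{0,\infty}$ the nonnegative penalisation term is dropped beforehand. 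This gives \eqref{WF:INT:DISS}, \eqref{WF:GMS:DISS} or \eqref{WF:LW:DISS}, and if \eqref{ass:pot:reg} holds the higher regularity \eqref{REG:INT:2} follows as in Step~8 of Section~\ref{sec:wellposed}. The main obstacle --- as already in Section~4 --- is to invoke along the given sequence \emph{the correct} uniform control of $\partial_t u\h\tl$ in each regime: the $\beta$-weighted $\V'$-bound when $L_0=0$ (which would fail in $H^1(\Om)'$ as $L\searrow0$) versus the separate $H^1(\Om)'$ and $H^1(\Ga)'$ bounds when $L_0=\infty$, and to pass carefully to the limit in the $\tfrac1{L+1}$- and $\tfrac{L}{L+1}$-weighted boundary terms of \eqref{eq:limit1:2}, so that each of the three cases is matched to the appropriate definition of weak solution.
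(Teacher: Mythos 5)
Your proposal is correct and, for the most part, follows the same path as the paper: match the space--time identities \eqref{eq:limit1:1}--\eqref{eq:limit1:3} from Theorem~\ref{thm:limit} to the three notions of weak solution via tensor-product test functions and the fundamental lemma, recover $u(0)=u_0$ and the mass constraints by testing with constants, extend to bulk test functions by density, and obtain the energy inequality by lower semicontinuity and Fatou applied to the summed discrete inequality \eqref{matrix:energy}, exactly as in Step~6 of Section~\ref{sec:wellposed}.

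The one place where you genuinely diverge is step (i). The paper does \emph{not} return to the discrete level to bound $\partial_t u\h\tl$: it reads the regularity of $\delt u$ directly off the limit identities, i.e.\ once \eqref{eq:limit1:1} (resp.\ \eqref{eq:limit1:2}) holds for all admissible $w$ (resp.\ $z$), the spatial terms define an element of $L^2(0,T;\V')$ (resp.\ $L^2(0,T;H^1(\Om)')$ and $L^2(0,T;H^1(\Ga)')$ when $L_0>0$), so $\delt u$ exists in that space by the very definition of the weak derivative. Your alternative --- deriving uniform bounds $\norm{\partial_t u\h\tl}_{L^2(0,T;\V')}\le C$ at the discrete level ``exactly as in the derivation of \eqref{lim:3:LW}--\eqref{lim:3:GMS}'' --- is more delicate than you suggest: the discrete equations \eqref{eq:fe:final} only admit test functions in $\UhO$ and $\UhG$ and carry mass-lumped products, so pairing $\partial_t u\h\tl$ with an arbitrary $w\in\V$ requires an intermediate ($H^1$-stable) projection onto the finite element space plus control of the interpolation errors via Lemma~\ref{lem:ihfe}, none of which is needed in the continuous computation you cite. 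This can be repaired on quasi-uniform meshes, but it is an unnecessary detour; since you already hold the limit identities, the paper's shortcut is available to you for free and renders your step (i) redundant. Your diagnosis of the $L_0$-dependent choice of dual norm (the $\beta$-weighted $\V'$ bound for $L_0=0$ versus separate $H^1(\Om)'$, $H^1(\Ga)'$ bounds for $L_0>0$) is correct and matches the structure of the paper's case distinction.
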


\begin{proof} As the proof is rather straightforward, we merely sketch the most important steps.	
	\subparagraph{The case $L_0=0$.} 
	By the definition of a weak derivative, we infer that $\delt u$ exists and belongs to $L^2(0,T;\V')$. In particular, testing \eqref{eq:limit1:1} with $\tilde w = w \varphi$ where $w\in \V$ and $\varphi\in H^1((0,T))$ are arbitrary, we can use the fundamental theorem of calculus of variations to conclude that
	\begin{align*}
		\inn{\delt u}{w}_{\V,\beta} +  \beta  \intO \nabla \muO\cdot\nabla w \dx + \intG \gradg\muG\cdot\gradg w \dG = 0
	\end{align*} 
	for all $w\in\V$. This verifies \eqref{WF:GMS:1} whereas \eqref{WF:GMS:2} follows immediately from \eqref{eq:limit1:3}. Furthermore, we obtain from \eqref{eq:limit1:2} that $\mu\vert_{\Sigma_T} =\beta\theta$ a.e.~on $\Sigma_T$. From $\delt u \in L^2(0,T;\V')$ and \eqref{REG:LIM},
	we deduce that \eqref{REG:GMS} holds where the conditions $u\in C([0,T];L^2(\Om))$ and $u\vert_\Ga \in C([0,T];L^2(\Ga))$ can be obtained a posteriori by the Aubin--Lions lemma. The energy inequality \eqref{WF:GMS:DISS} can be verified similarly to Step 6 of the proof of Theorem \ref{THM:WP}. This implies that $\trkla{u,\muO,\muG}$ is indeed a weak solution to the system~\eqref{CH:GMS}.
	
	\subparagraph{The case $0<L_0<\infty$.} 
	We proceed similarly as in the case $L_0=0$. Here we use both \eqref{eq:limit1:1} and \eqref{eq:limit1:2} to infer that $\delt u \in L^2(0,T;H^1(\Om)')$ and $\delt u\vert_{\Sigma_T} \in L^2(0,T;H^1(\Ga)')$ with 
	\begin{align*}
	\inn{\delt u}{ w}_{H^1(\Omega)} &= - \intO \grad\mu \cdot \grad w \dx 
	+  \intG \tfrac 1 {L_0}(\beta\theta-\mu)  w \dG, \\
	\inn{\delt u}{ z}_{H^1(\Gamma)} &= - \intG \gradg\theta \cdot \gradg z \dG 
	- \intG \tfrac 1 {L_0} (\beta\theta-\mu) \beta z \dG
	\end{align*}
	for all test functions $w\in\V$ and $z\in H^1(\Ga)$. By a density argument, the first line remains valid for all $w\in H^1(\Om)$. Along with \eqref{eq:limit1:3}, we conclude that the weak formulation \eqref{WF:INT} is satisfied. The regularity condition \eqref{REG:INT} follows from $\delt u \in L^2(0,T;H^1(\Om)')$, $\delt u\vert_{\Sigma_T} \in L^2(0,T;H^1(\Ga)')$, and \eqref{REG:LIM}. 
	We point out that the H\"older regularities can be obtained a posteriori by proceeding as in Step 4 of the proof of Theorem \ref{THM:WP}. The energy inequality \eqref{WF:INT:DISS} can be verified by following the line of argument in Step 6 of the proof of Theorem \ref{THM:WP}. This proves that $\trkla{u,\muO,\muG}$ is a weak solution to the system \eqref{CH:INT}.
	
	\subparagraph{The case $L_0=\infty$.} 
	The assertion can be established similarly to the approach in the case $0<L_0<\infty$. Therefore, we do not present the details.
	
	\medskip
	
	\noindent Thus, the proof is complete.
\end{proof}

\begin{remark}
The accuracy of discrete scheme discussed in this section can be improved by approximating the derivative of the polynomial double-well potential by a difference quotient and replacing $\iOmega \nabla u\h\nn\cdot\nabla\eta\h\,dx$ and $\iGamma\gradg u\h\nn\cdot\gradg\eta\h\,d\Gamma$ in \eqref{eq:fe:3} by
\begin{align*}
\alpha\iOmega \nabla u\h\nn\cdot\nabla\eta\h\,dx &+\trkla{1-\alpha}\iOmega \nabla u\h\no\cdot\nabla\eta\h\,dx\\
\text{ and } \hat{\alpha}\iGamma\gradg u\h\nn\cdot\gradg\eta\h\,d\Gamma&+\trkla{1-\hat{\alpha}}\iGamma\gradg u\h\no\cdot\gradg\eta\h\,d\Gamma\,,
\end{align*}
with $\alpha,\hat{\alpha}\in(0.5,1]$. For more details we refer the reader to \cite{GrunGuillenMetzger2016,Metzger2018}. 
\end{remark}

\FloatBarrier
\section{Simulations}
In this section, we investigate the convergence of discrete solutions for $L\nearrow\infty$ and $L\searrow0$ numerically.
The discrete scheme proposed in the last section is implemented in the \texttt{C++} framework EconDrop (cf. \cite{GrunKlingbeil,Campillo2012,GrunGuillenMetzger2016, Metzger2018,Metzger_2018b}).
In principle, this framework allows for adaptivity in space and time using the ideas presented in \cite{GrunKlingbeil}, i.e., we are able to use meshes with a high resolution in the evolving interfacial area and a lower resolution in the bulk phases where $u\approx\pm1$. 
Similarly, the time increments can be varied such that they are small, when the solution changes rapidly and larger when the solution is almost stationary.
However, as we are interested in the dependence on $L$ and therefore want to omit any additional effects which might be introduced by adaptivity, we choose to use a fixed time increment and mesh.

We consider the domain $\Omega:=\rkla{0,1}^2\subset\R^2$ and place an elliptical shaped droplet with with barycenter at $\trkla{0.1,0.5}$, a maximal horizontal elongation of $0.6814$, and a maximal vertical elongation of $0.367$ (see Figure~\ref{fig:initialcondition}).
The domain $\Omega$ is discretised using a triangulation $\Th$ with $h=\sqrt{2}\cdot 2^{-8}$, which provides a partition of $\Gamma$ into elements of length $2^{-8}$.
This corresponds to $\dim\UhO=66049$ and $\dim\UhG=1024$.
Choosing $F$ and $G$ of the form \eqref{eq:penalisedW} with $\tfrac1{\delta^\prime}=250$ and the remaining parameters as specified in Table~\ref{tab:params}, we simulate the behaviour of the droplet from $t=0$ to $t=T=0.05$ using a fixed time increment $\tau=6\times 10^{-7}$.
 The discretization parameters $h$ and $\tau$ used for the presented simulations are chosen very small, as we are interested in the convergence with respect to the parameter $L$ and therefore want to reduce the impact of the spatial and temporal approximations.

The evolution of the droplet is visualised in Figure~\ref{fig:evolution} for different values of $L$.
The corresponding evolution of $\iOmega u\,dx$ and $\iGamma u \,d\Gamma$ are plotted in Figure~\ref{fig:conservation}.
In the case $L=\infty$, the integral of $u$ is conserved in $\Omega$ and on $\Gamma$ individually (cf.~the red, continuous line in Figure~\ref{fig:conservation}). 
Therefore, the contact area in this case can not change.
However, the elliptical droplet still tries to attain circular shape with constant mean curvature (cf.~Figure~\ref{fig:evolution}(a)).
For $L<\infty$, the individual conservation is relaxed (see Figure~\ref{fig:conservation}) to $\beta\iOmega u\,dx+\iGamma u \,d\Gamma$, which allows the contact area to grow ($\iGamma u \,d\Gamma$ is increasing in Figure~\ref{fig:conservation}(b)), while the droplet's bulk volume is decreasing (cf.~Figure~\ref{fig:conservation}(a)).
The effect intensifies for decreasing $L$ (cf.~Figures \ref{fig:evolution}(b)-\ref{fig:evolution}(e)), i.e., for larger reaction rates.
However, we want to emphasise that in this scenario our implementation allows for a perfect conservation of $\beta\iOmega u\,dx+\iGamma u \,d\Gamma$.

According to \eqref{INT:NRG}, the total free energy $E=E_\text{bulk}+E_\text{surf}$ is non-increasing over time.
In the two-dimensional scenario discussed in this section, the boundary $\Gamma$ is only one-dimensional and the interface given as the zero level set of $u$ cuts $\Gamma$ always in two points.
Therefore the surface free energy $E_\text{surf}$ depends mainly on the profile of $u\vert_{\Sigma_T}$ in the transition regions.
However, as the optimal $u$-profile in the transition region is given by a hyperbolic tangent which attains the values $\pm1$ only infinitely far away from the zero level set of the phase-field variable, the length of the section covered by the droplet might still have a small influence on the surface free energy.
As we start with an interface profile which is close to the stationary one, we expect only little changes in $E_\text{surf}$.
The bulk free energy $E_\text{bulk}$, however, depends mainly on the droplet's surface area.
As the initial surface area is not minimal, we can expect a significant decrease in $E_\text{bulk}$. 
The evolution of the energy is plotted in Figure~\ref{fig:energy} for several values of $L$.
As expected, the bulk free energy and the total free energy depicted in Figure~\ref{fig:energy}(a) and \ref{fig:energy}(c) decrease over time.
Comparing the evolution of $E_\text{bulk}$ for different values of $L$, we notice that after an initial drop which occurs for all $L$, the further evolution of the energy depends significantly on $L$.
In the case $L=\infty$, the rate of energy decrease diminishes and $E_\text{bulk}$ attains a stationary value, the energy decrease continues for $L<\infty$.

As the initial shape of the droplet is elliptical, the right tip of the droplet exhibits high curvature and therefore vanishes quickly when the droplet optimises its shape, thus causing the initial energy drop.
As $\iGamma u\,d\Gamma$ is conserved for $L=\infty$, the droplet is not able to decrease its overall surface by increasing the contact area.
Consequently, $E_\text{bulk}$ stagnates in this case.
On the other hand, for $L<\infty$, only $\beta\iOmega u\,dx+\iGamma u\,d\Gamma$ is conserved  and the droplet's surface area can be further decreased by increasing the contact area which results in a further decrease of $E_\text{bulk}$.
As expected, the rate of energy reduction increases with decreasing $L$, while the total energy decrease is bounded by the energetically optimal droplet shape.

While the bulk free energy is decreasing, the surface free energy $E_\text{surf}$ which is depicted in Figure~\ref{fig:energy}(b) increases.
To explain the initial rapid increase in $E_\text{surf}$, we want to point out that our discrete initial condition does not exhibit the optimal $u$-profile in the transition region and that the parameters in this scenario are chosen in a way that the optimal transition profiles in $\Omega$ and on $\Gamma$ differ.
Therefore, optimizing the profile in $\Omega$ to reduce $E_\text{bulk}$ leads to a slight increase in $E_\text{surf}$.
After this initial incline, evolution of $E_\text{surf}$ depends on $L$, as the contact angle determines how the $u$-profile in $\Omega$ influences the profile on $\Gamma$.
It is also worth mentioning that the transition profiles are almost identical.
Figure~\ref{fig:levelsets} shows an overlay of level sets of $u$ for $L=0$ (red) and $L=\infty$ (blue) at $t=T=0.05$.
It is striking that the distances between the depicted level sets for $u=\tgkla{\pm0.9,\pm0.8,\pm0.7,\pm0.6,\pm0.5,\pm0.4,\pm0.3,\pm0.2,\pm0.1,0}$ are completely identical.
However, as $E_\text{surf}$ does not attain the same value for all $L$ at $t=0.05$, the profiles have to differ for $\tabs{u}>0.9$.
This might be a result of the small size of the wetted section of $\Gamma$ for $L=\infty$ and $L=100$ and the resulting interactions between the transition regions.

In order to deduce an experimental order of convergence (EOC) for the phase-field $u$ for $L\searrow0$, we compare discrete solutions $u_{L_i}\in\UhO$ for a decreasing sequence $\tgkla{L_i}$ with the discrete solution $u_*\in\UhO$ obtained for $L=0$ and define the corresponding error as
\begin{align}
\text{err}_{L_i}:=\norm{u_{L_i}-u_*}_{L^2\trkla{0,T;L^2\trkla{\Omega}}}\,.
\end{align}
Here, the time integral is approximated using the trapezoidal rule with time increment $\tilde{\tau}= 1.02 \times 10^{-5}$. 
The experimental order is then defined as
\begin{align*}
\text{EOC}\trkla{L_i}:=\frac{\log\rkla{\tfrac{\text{err}_{L_{i+1}}}{\text{err}_{L_i}}}}{\log\rkla{\tfrac{L_{i+1}}{L_i}}}\,.
\end{align*}
For $L\nearrow\infty$ and the convergence of $u\vert_{\Sigma_T}$, $\muO$, and $\theta$, we proceed analogously.
The results for the convergence of $u$ on $Q_T$ which are collected Table~\ref{tab:convergence:u} indicate that for $L\leq 1\times 10^{-3}$ the convergence rate is almost 1.
A similar pattern emerges for the EOC of $u\vert_{\Sigma_T}$ which is displayed in Table~\ref{tab:convergence:ugamma}, the EOC of $\muO$ displayed in Table~\ref{tab:convergence:muO}, and the EOC of $\muG$ that can be found in Table~\ref{tab:convergence:muG}.

As a last test case, we investigate the behaviour of $\muG$ and $\muO\vert_{\Sigma_T}$ for $L\searrow0$.
According to the theoretical results, $\norm{\beta\muG-\muO\vert_{\Sigma_T}}_{L^2\trkla{\Sigma_T}}\rightarrow0$ with a rate of at least $\sqrt{L}$.
As shown in Table~\ref{tab:potentials}, the numerical errors we obtain in the case $L=0$ for $\norm{\beta\muG-\muO\vert_{\Sigma_T}}_{L^2\trkla{0,T;L^2\trkla{\Gamma}}}$ are only of order $10^{-8}$ and of order $10^{-5}$ if we use the $L^\infty\trkla{0,T;L^\infty\trkla{\Gamma}}$-norm.
Similar to the results described above, our simulations yield an experimental order of convergence rate of $1$ for small values of $L$, but still reach the expected rate of $0.5$ for $L=10$.

\begin{table}
\center
\begin{tabular}{c|c|c|c|c|c}
$\varepsilon$& $\delta$& $\kappa$ & $\mo$ &$\mg$ & $\beta$\\
\hline
0.01&0.02&0.25&1&0.4&4
\end{tabular}
\caption{Parameters used for the simulations presented in this section.}
\label{tab:params}
\end{table}

\begin{figure}
\center
\includegraphics[width=0.3\textwidth]{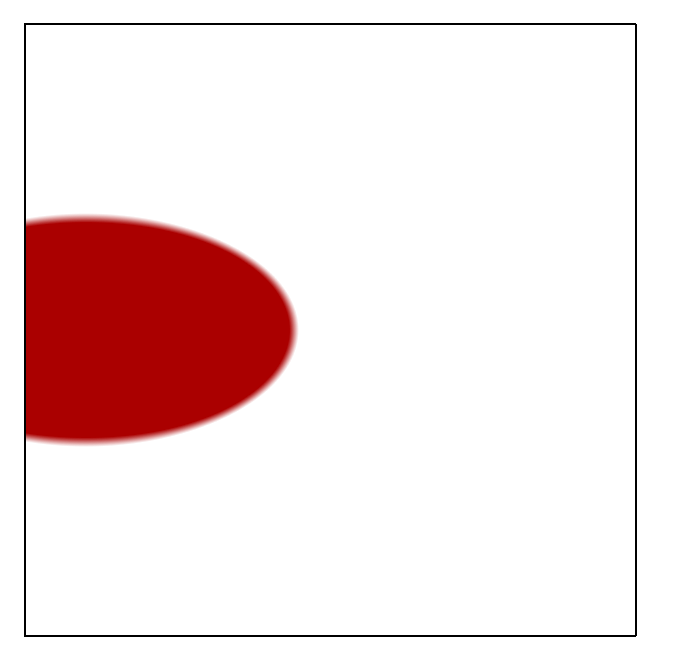}
\caption{Initial datum.}
\label{fig:initialcondition}
\end{figure}

\begin{figure}[p]
\newcommand{\scale}{.23\textwidth}
\subfloat[][$L=\infty$]{
\includegraphics[width=\scale]{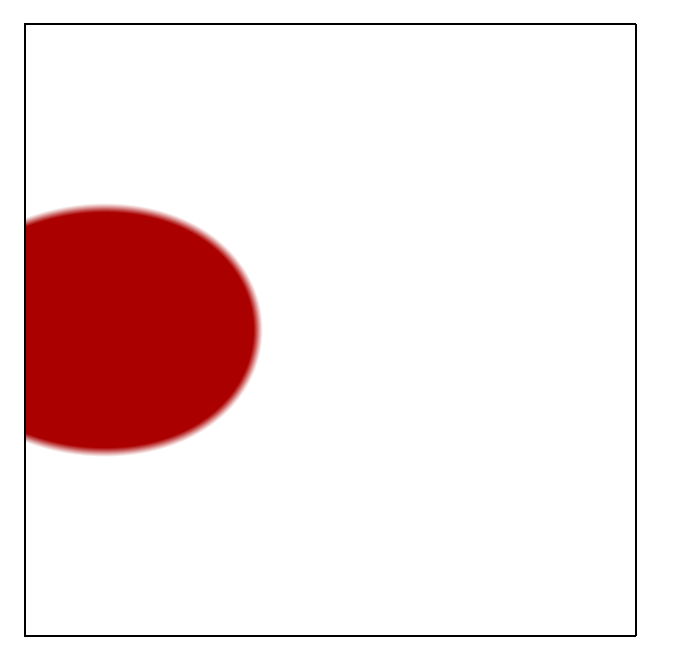}

\includegraphics[width=\scale]{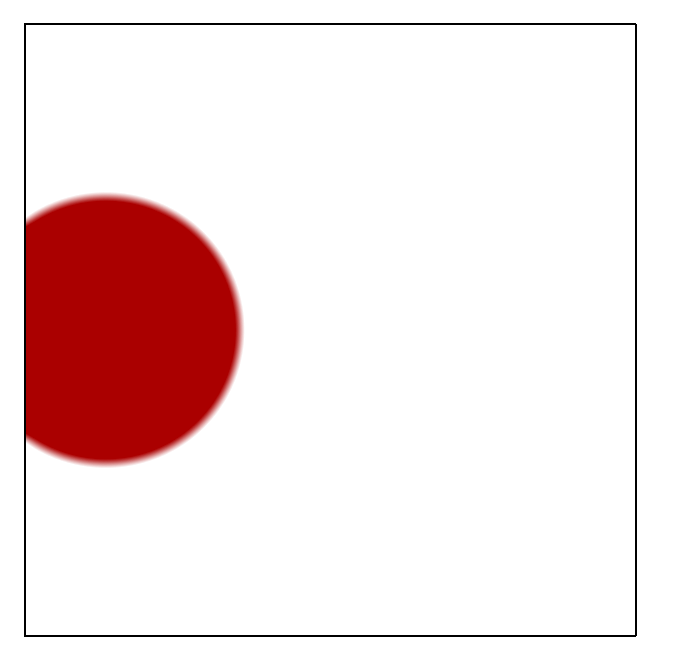}

\includegraphics[width=\scale]{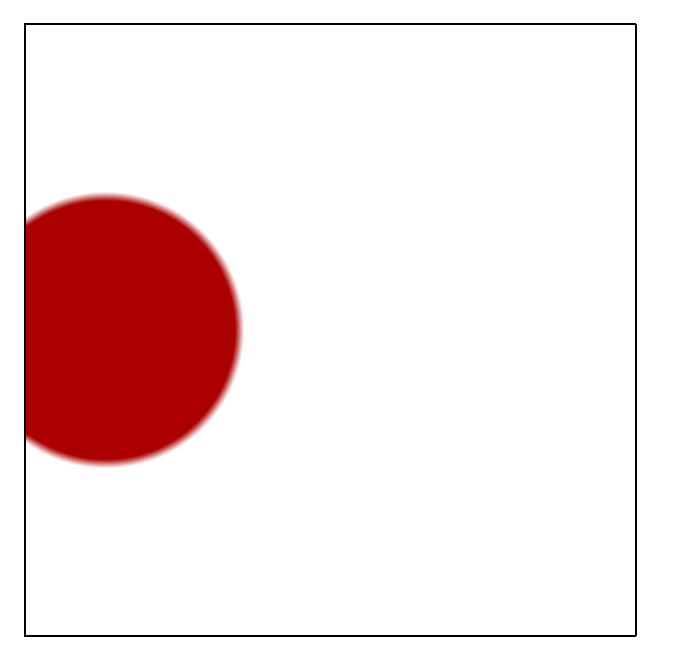}

\includegraphics[width=\scale]{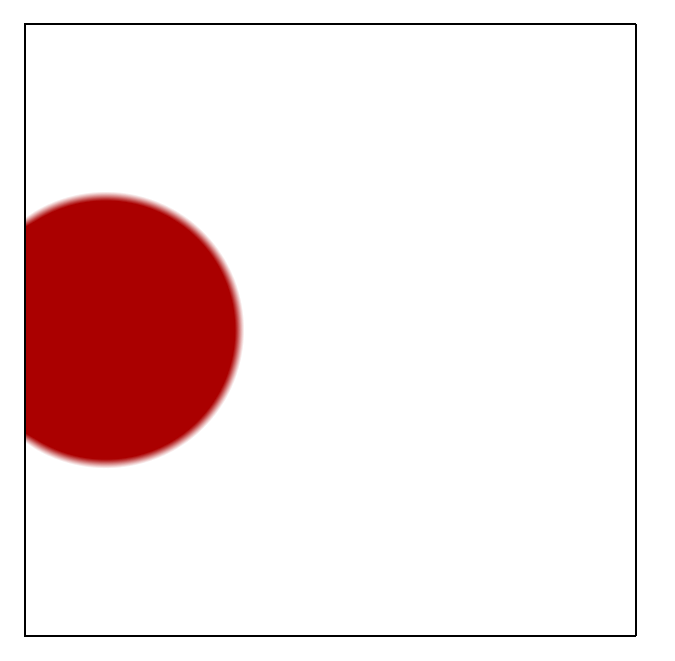}
}\\

\subfloat[][$L=10$]{
\includegraphics[width=\scale]{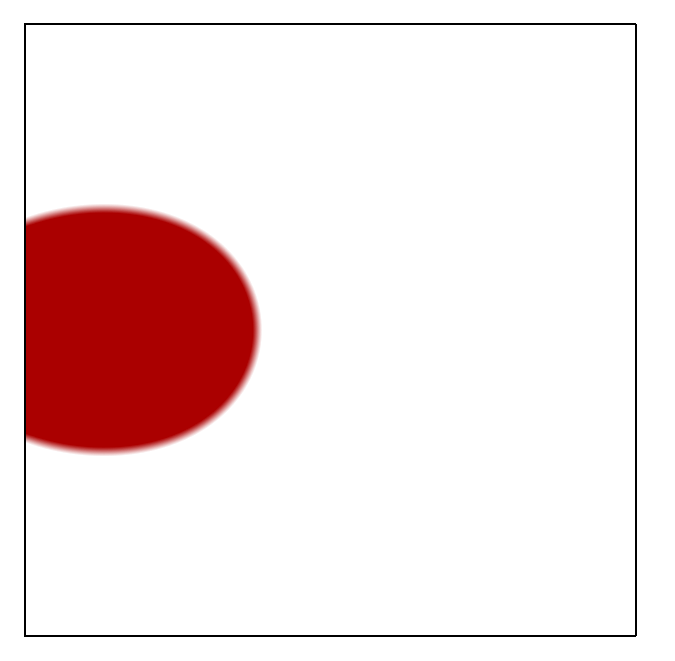}

\includegraphics[width=\scale]{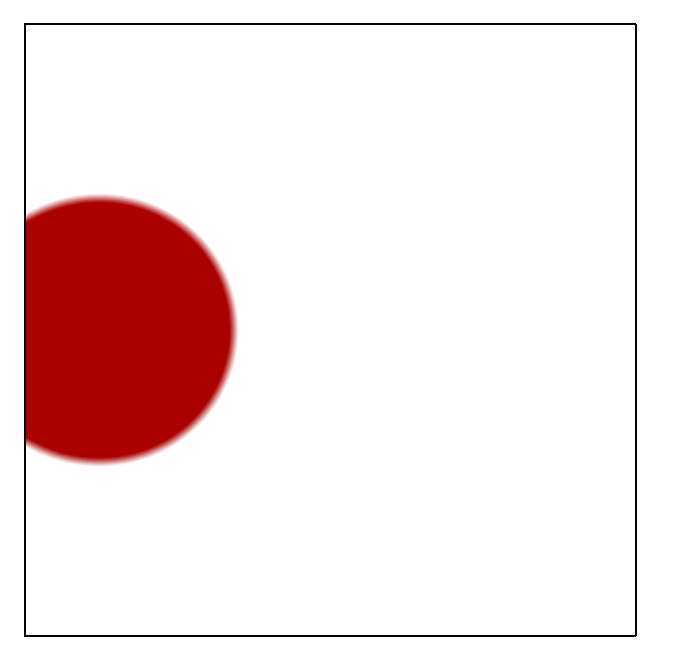}

\includegraphics[width=\scale]{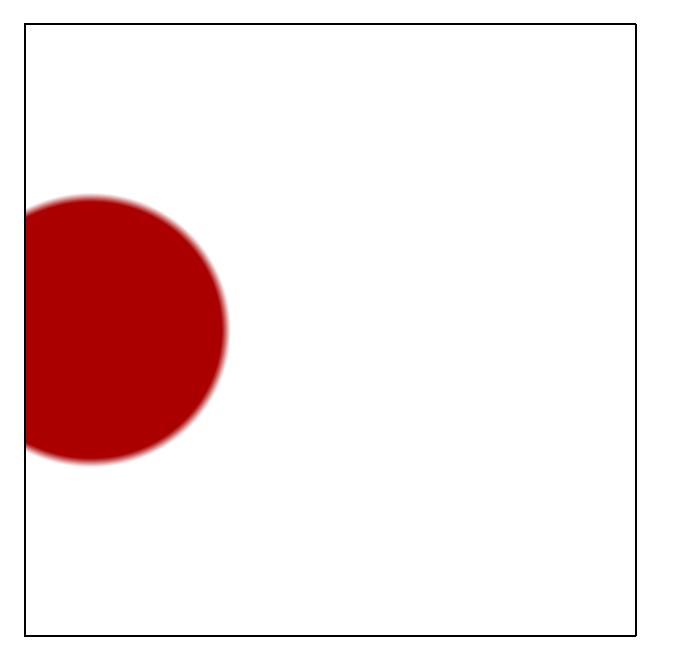}

\includegraphics[width=\scale]{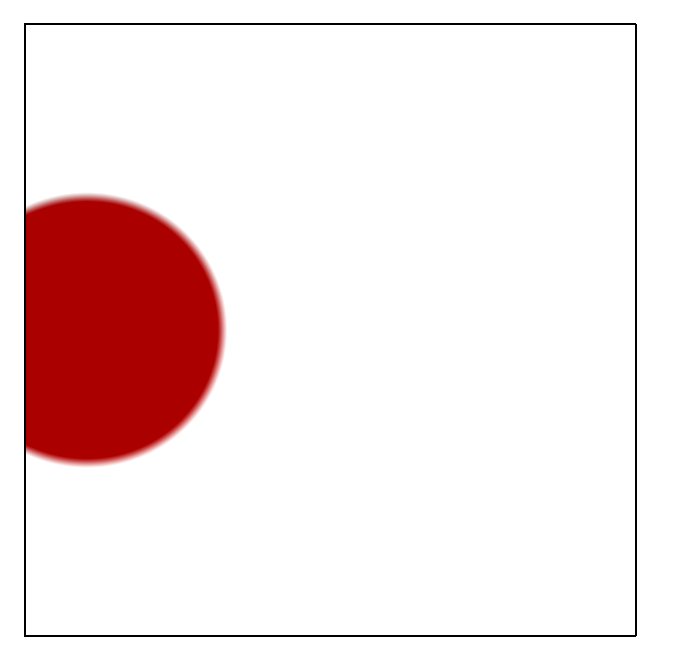}
}\\
\subfloat[][$L=1$]{
\includegraphics[width=\scale]{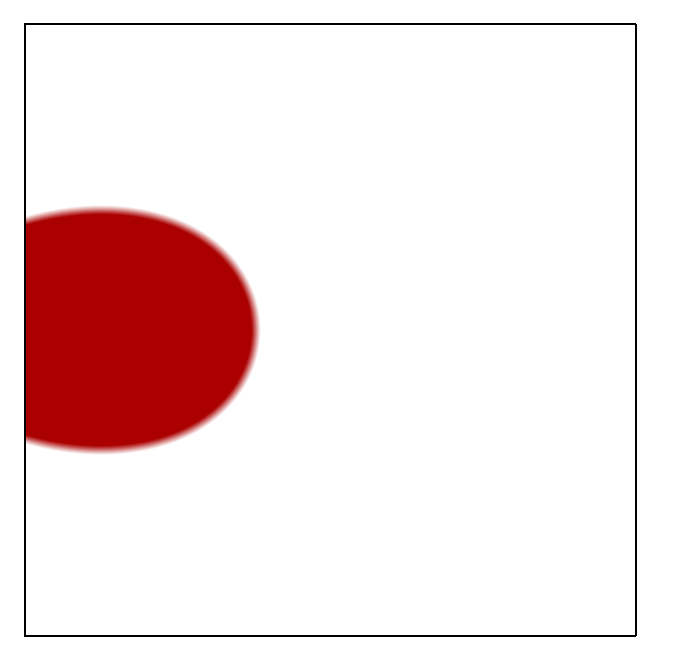}

\includegraphics[width=\scale]{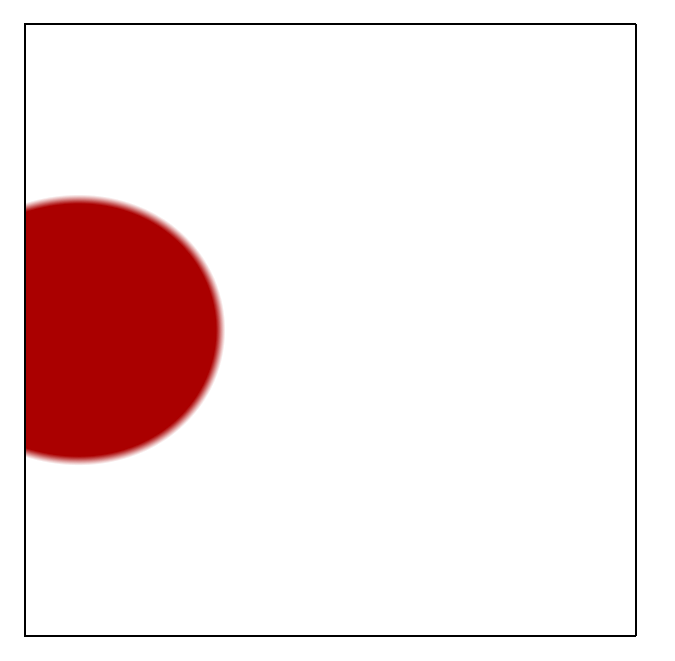}

\includegraphics[width=\scale]{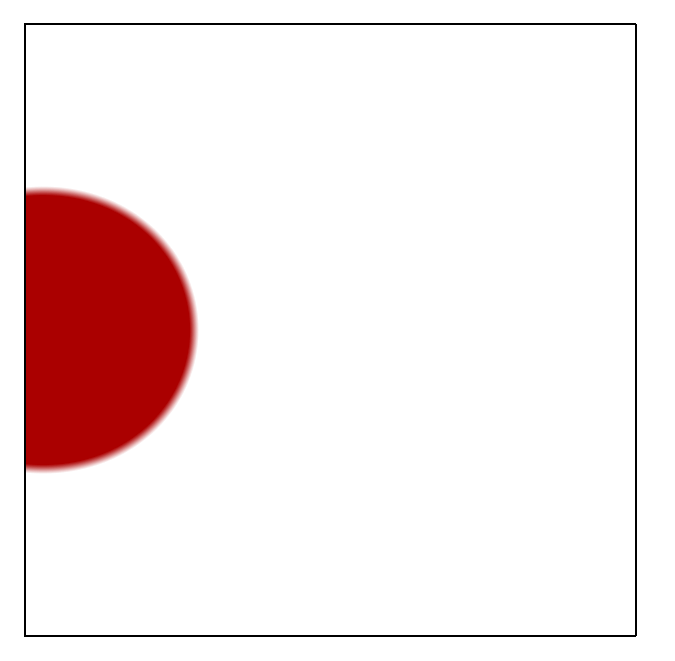}

\includegraphics[width=\scale]{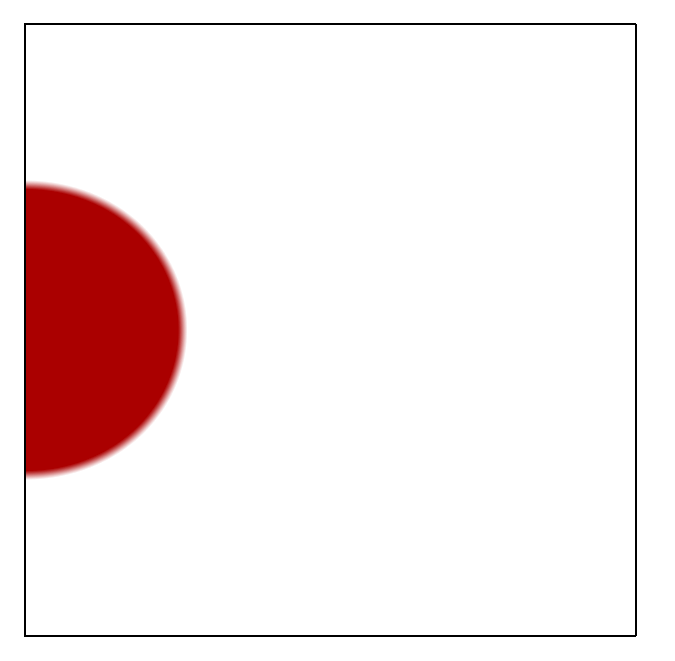}
}\\
\subfloat[][$L=0.1$]{
\includegraphics[width=\scale]{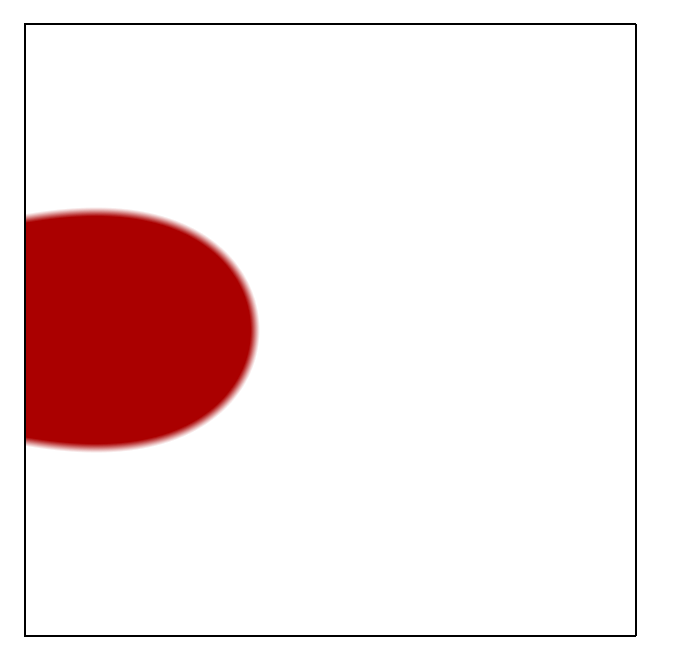}

\includegraphics[width=\scale]{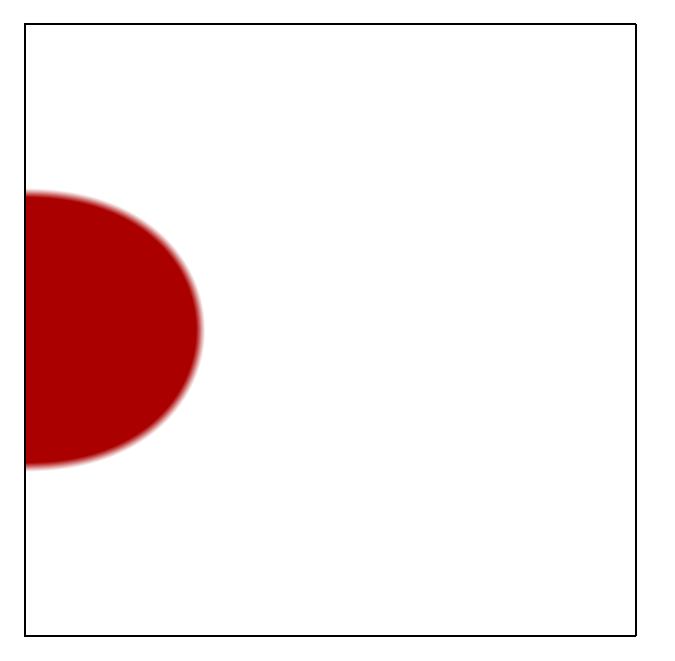}

\includegraphics[width=\scale]{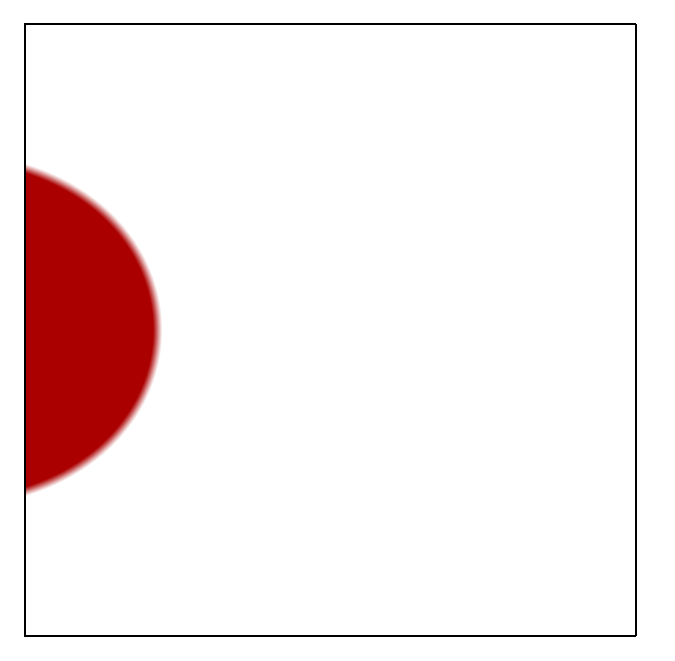}

\includegraphics[width=\scale]{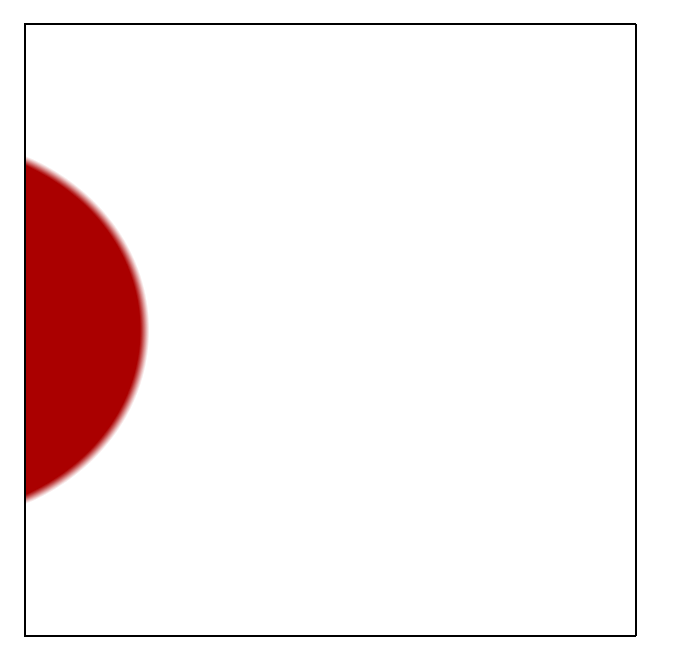}
}\\
\subfloat[][$L=0$]{
\includegraphics[width=\scale]{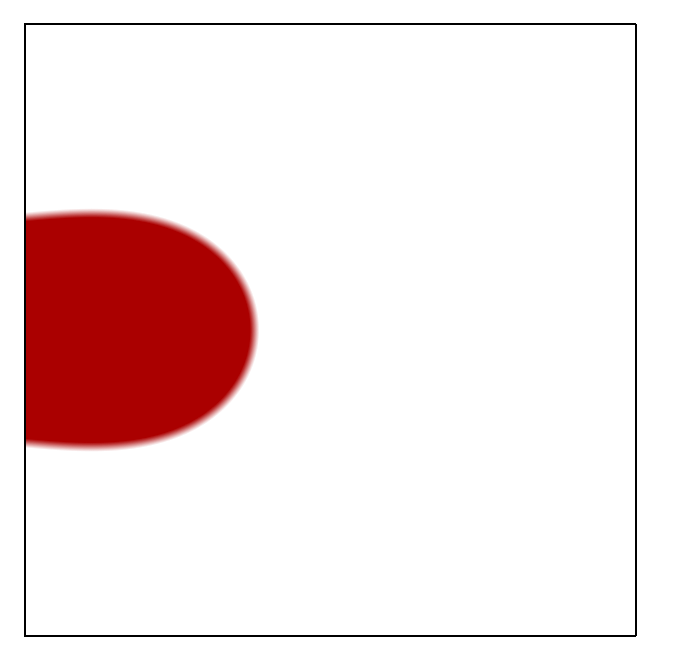}

\includegraphics[width=\scale]{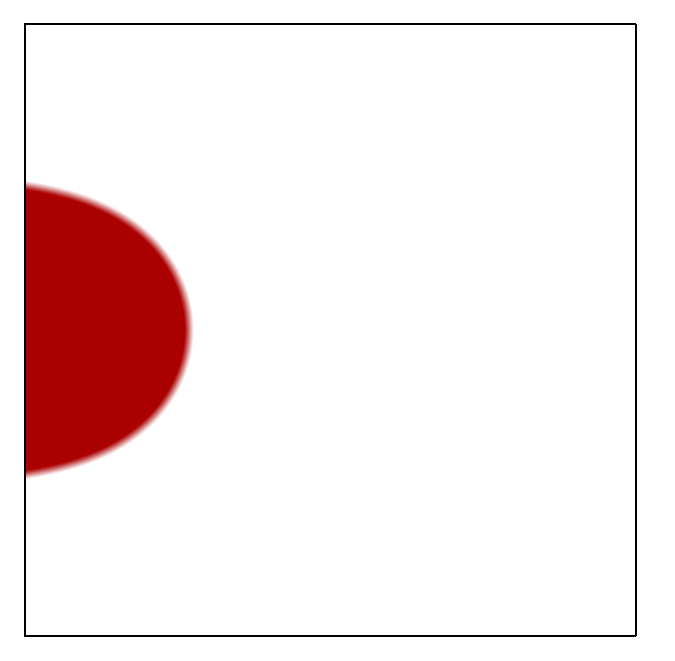}

\includegraphics[width=\scale]{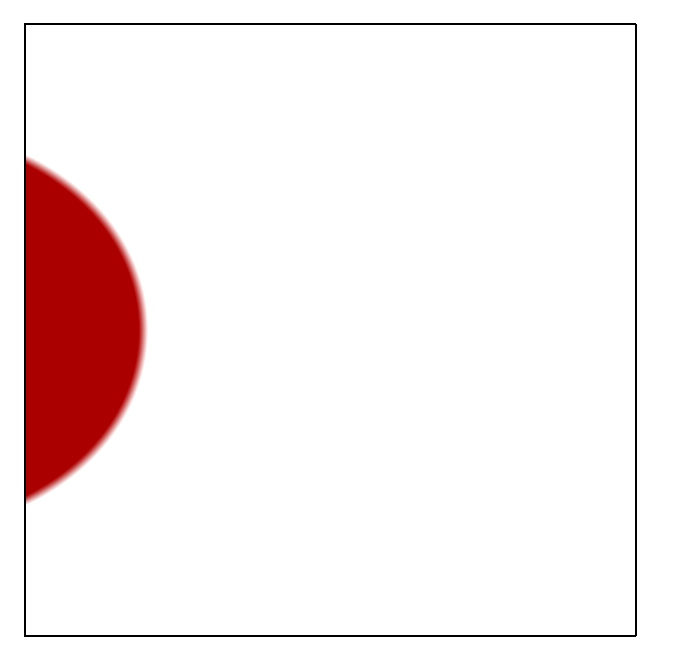}

\includegraphics[width=\scale]{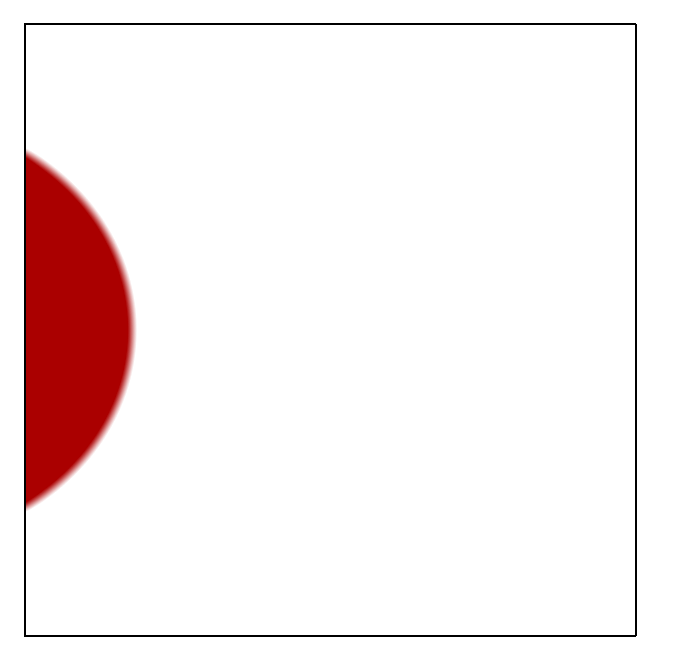}
}
\caption{Phase-field at $t=0.004$, $t=0.02$, $t=0.04$, and $t=0.05$.}
\label{fig:evolution}
\end{figure}

\begin{figure}
\center
\subfloat[][Evolution of $\displaystyle\int_\Omega u\,dx$.]{
\includegraphics[width=0.47\textwidth]{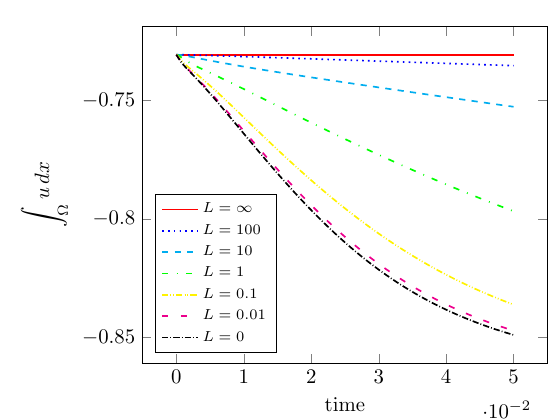}
}
\quad
\subfloat[][Evolution of $\displaystyle\int_\Gamma u\,d\Gamma$.]{
\includegraphics[width=0.47\textwidth]{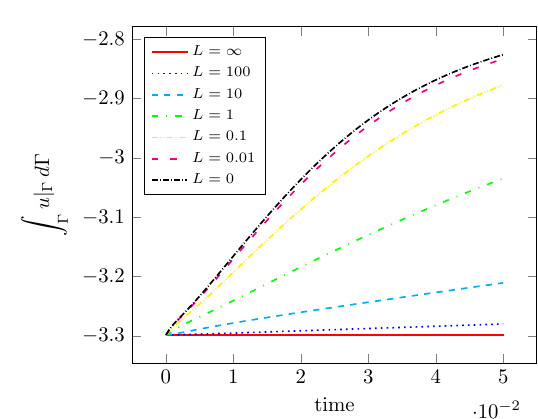}
}
\caption{Time evolution of the bulk mass and the surface mass of $u$.}
\label{fig:conservation}
\end{figure}

\begin{figure}
\center
\subfloat[][Evolution of $\displaystyle E_\text{bulk}\trkla{u}$.]{
\includegraphics[width=0.47\textwidth]{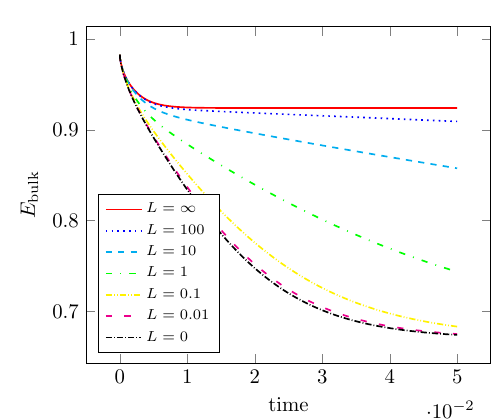}
}
\subfloat[][Evolution of $\displaystyle E_\text{surf}\trkla{u}$.]{
\includegraphics[width=0.47\textwidth]{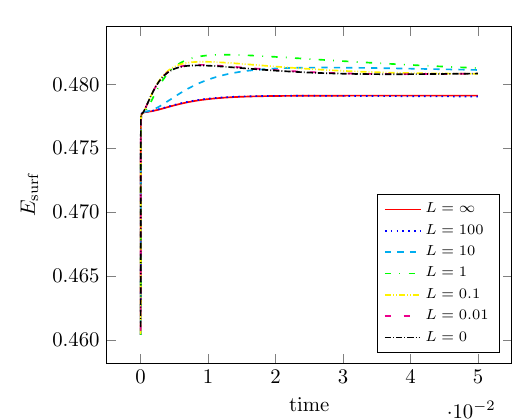}
}
\\
\subfloat[][Evolution of $\displaystyle E(u) = E_\text{bulk}\trkla{u}+E_\text{surf}\trkla{u}$.]{
\includegraphics[width=0.47\textwidth]{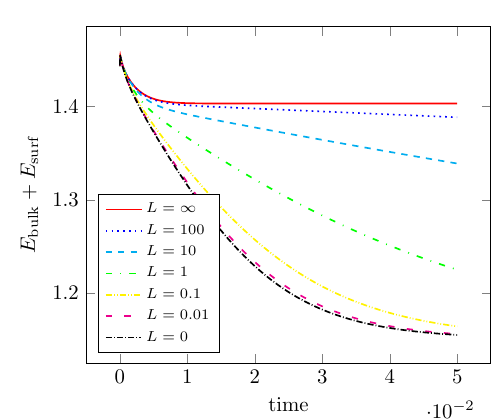}
}
\caption{Evolution of the energy.}
\label{fig:energy}
\end{figure}

\begin{figure}
\center
\includegraphics[width=0.32\textwidth]{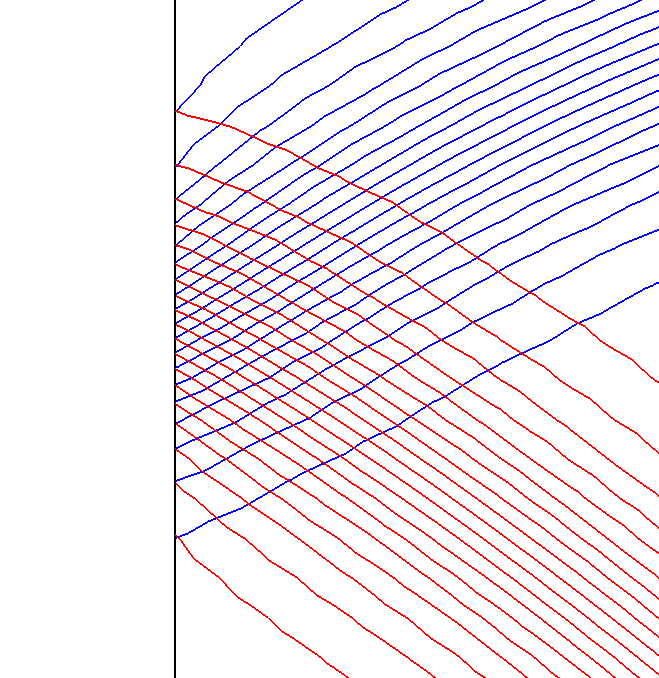}
\caption{Overlay of level sets of $u$ for $L=0$ (red) and $L=\infty$ (blue).}
\label{fig:levelsets}
\end{figure}


\begin{table}[h!]
	\footnotesize
\begin{center}
	\hfill
\begin{tabular}{l|cc}
 \diagbox{$L^{~}$}{$u$}& $\norm{\cdot}_{L^2\trkla{0,T;L^2\trkla{\Omega}}}$ & EOC \\
 \hline  \\[-7pt]
$0.0001$ & 4.01E-05 &  -\\ 
$0.0002$ & 8.02E-05 & 1.00 \\
$0.0003$ & 1.20E-04 & 1.00 \\
$0.0004$ & 1.60E-04 & 1.00 \\
$0.0005$ & 2.00E-04 & 0.99 \\
$0.00075$ & 2.98E-04 & 0.99 \\
$0.001$ & 3.96E-04 & 0.99 \\
$0.01$ & 3.58E-03 & 0.96\\
$0.1$ & 2.16E-02 & 0.78\\
$1$ & 5.66E-02 & 0.42 \\
\end{tabular}
\hfill
\begin{tabular}{l|cc}
 \diagbox{$L^{-1}$}{$u$}& $\norm{\cdot}_{L^2\trkla{0,T;L^2\trkla{\Omega}}}$ & EOC \\
 \hline \\[-7pt]
$0.0001$ & 6.12E-05 & - \\ 
$0.0002$ & 1.22E-04 & 0.99 \\
$0.0003$ & 1.82E-04 & 0.99 \\
$0.0004$ & 2.42E-04 & 0.99 \\
$0.0005$ & 3.01E-04 & 0.98 \\
$0.00075$ & 4.47E-04 & 0.97 \\
$0.001$ & 5.90E-04 & 0.97 \\
$0.01$ & 4.46E-03 & 0.88\\
$0.1$ & 2.06E-02 & 0.66\\
$1$ & 5.32E-02 & 0.41 \\
\end{tabular}
	\hfill $\,$
	\vspace{-15pt}	
\end{center}
\caption{Comparison of the phase-field parameters for different $L$ with the solution for $L=0$ (left) and $L=\infty$ (right).}
\label{tab:convergence:u}
\end{table}

\begin{table}[h!]
	\footnotesize
\begin{center}
	\hfill
\begin{tabular}{l|cc}
 \diagbox{$L^{~}$}{$u\vert_{\Sigma_T}$}& $\norm{\cdot}_{L^2\trkla{0,T;L^2\trkla{\Gamma}}}$ & EOC \\
 \hline
$0.0001$ & 6.78E-05 & - \\ 
$0.0002$ & 1.35E-04 & 1.00 \\
$0.0003$ & 2.03E-04 & 1.00 \\
$0.0004$ & 2.70E-04 & 1.00 \\
$0.0005$ & 3.37E-04 & 0.99 \\
$0.00075$ & 5.04E-04 & 0.99 \\
$0.001$ & 6.70E-04 & 0.99 \\
$0.01$ & 6.05E-03 & 0.96\\
$0.1$ & 3.71E-02 & 0.79\\
$1$ & 1.00E-01 & 0.43 \\
\end{tabular}
\hfill
\begin{tabular}{l|cc}
 \diagbox{$L^{-1}$}{$u\vert_{\Sigma_T}$}& $\norm{\cdot}_{L^2\trkla{0,T;L^2\trkla{\Gamma}}}$ & EOC \\
 \hline
$0.0001$ & 1.07E-04 & - \\ 
$0.0002$ & 2.13E-04 & 0.99 \\
$0.0003$ & 3.18E-04 & 0.99 \\
$0.0004$ & 4.22E-04 & 0.99 \\
$0.0005$ & 5.25E-04 & 0.98 \\
$0.00075$ & 7.79E-04 & 0.97 \\
$0.001$ & 1.03E-03& 0.96 \\
$0.01$ & 7.41E-03 & 0.86\\
$0.1$ & 3.12E-02 & 0.62\\
$1$ & 8.86E-02 & 0.45 \\
\end{tabular}
	\hfill $\,$
	\vspace{-15pt}
\end{center}
\caption{Comparison of the phase-field parameters for different $L$ with the solution for $L=0$ (left) and $L=\infty$ (right).}
\label{tab:convergence:ugamma}
\end{table}

\begin{table}[h!]
\footnotesize
\begin{center}
	\hfill
\begin{tabular}{l|cc}
 \diagbox{$L^{~}$}{$\muO$}& $\norm{\cdot}_{L^2\trkla{0,T;L^2\trkla{\Omega}}}$ & EOC \\
 \hline\\[-7pt]
$0.0001$ & 5.54E-05 & - \\ 
$0.0002$ & 1.11E-04 & 1.00 \\
$0.0003$ & 1.66E-04 & 1.00 \\
$0.0004$ & 2.21E-04 & 0.99 \\
$0.0005$ & 2.75E-04 & 0.99 \\
$0.00075$ & 4.12E-04 & 0.99 \\
$0.001$ & 5.47E-04 & 0.99 \\
$0.01$ & 4.92E-03 & 0.95\\
$0.1$ & 2.99E-02 & 0.78\\
$1$ & 8.41E-02 & 0.45 \\
\end{tabular}\hfill
\begin{tabular}{l|cc}
 \diagbox{$L^{-1}$}{$\muO$}& $\norm{\cdot}_{L^2\trkla{0,T;L^2\trkla{\Omega}}}$ & EOC \\
 \hline\\[-7pt]
$0.0001$ & 6.05E-05 & - \\ 
$0.0002$ & 1.20E-04 & 0.99 \\
$0.0003$ & 1.80E-04 & 0.99 \\
$0.0004$ & 2.38E-04 & 0.98 \\
$0.0005$ & 2.96E-04 & 0.98 \\
$0.00075$ & 4.39E-04 & 0.97 \\
$0.001$ & 5.78E-04 & 0.96 \\
$0.01$ & 4.00E-03 & 0.84\\
$0.1$ & 1.44E-02 & 0.56\\
$1$ & 4.45E-02 & 0.49 \\
\end{tabular}
	\hfill $\,$
	\vspace{-15pt}
\end{center}
\caption{Comparison of the chemical potentials for different $L$ with the solution for $L=0$ (left) and $L=\infty$ (right).}
\label{tab:convergence:muO}
\end{table}

\FloatBarrier

\begin{table}[h!]
	\footnotesize
\begin{center}
	\hfill
\begin{tabular}{l|cc}
 \diagbox{$L^{~}$}{$\muG$}& $\norm{\cdot}_{L^2\trkla{0,T;L^2\trkla{\Gamma}}}$ & EOC \\
 \hline\\[-7pt]
$0.0001$ & 2.72E-05 & - \\ 
$0.0002$ & 5.43E-05 & 0.99 \\
$0.0003$ & 8.13E-05 & 1.00 \\
$0.0004$ & 1.08E-04 & 0.99 \\
$0.0005$ & 1.35E-04 & 0.99 \\
$0.00075$ & 2.02E-04 & 0.99 \\
$0.001$ & 2.68E-04& 0.99 \\
$0.01$ & 2.41E-03 & 0.95\\
$0.1$ & 1.50E-02& 0.79\\
$1$ &  5.17E-02& 0.54 \\
\end{tabular}\hfill
\begin{tabular}{l|cc}
 \diagbox{$L^{-1}$}{$\muG$}& $\norm{\cdot}_{L^2\trkla{0,T;L^2\trkla{\Gamma}}}$ & EOC \\
 \hline\\[-7pt]
$0.0001$ & 1.66E-03 & - \\ 
$0.0002$ & 3.30E-03 & 0.99 \\
$0.0003$ & 4.93E-03 & 0.99 \\
$0.0004$ & 6.54E-03 & 0.98 \\
$0.0005$ & 8.13E-03 & 0.98 \\
$0.00075$ & 1.20E-02 & 0.97 \\
$0.001$ & 1.58E-02 & 0.95 \\
$0.01$ & 1.02E-01 & 0.81\\
$0.1$ & 2.43E-01 & 0.38\\
$1$ & 2.98E-01 & 0.09 \\
\end{tabular}
	\hfill $\,$
	\vspace{-15pt}
\end{center}
\caption{Comparison of the chemical potentials for different $L$ with the solution for $L=0$ (left) and $L=\infty$ (right).}
\label{tab:convergence:muG}
\end{table}

\begin{table}[h!]
\footnotesize
\begin{center}
\hfill
\begin{tabular}{l|cc|c}
 \diagbox{$L^{~}$}{$\beta\muG-\muO\vert_{\Sigma_T}$}& $\norm{\cdot}_{L^2\trkla{0,T;L^2\trkla{\Gamma}}}$ & EOC & $\norm{\cdot}_{L^\infty\trkla{0,T;L^\infty\trkla{\Gamma}}}$ \\
 \hline\\[-7pt]
 $0$ & 1.19E-08 & - &1.17E-05\\ 
 \hline\\[-7pt]
$0.0001$ & 5.93E-05 & - & 4.22E-02 \\ 
$0.0002$ & 1.19E-04 & 1.00 & 8.35E-02 \\
$0.0003$ & 1.78E-04 & 1.00 & 1.24 E-01\\
$0.0004$ & 2.37E-04 & 1.00 & 1.63E-01 \\
$0.0005$ & 2.96E-04 & 1.00 & 2.02E-01\\
$0.00075$ & 4.44E-04 & 1.00 & 2.95E-02\\
$0.001$ & 5.91E-04 & 1.00 & 3.84E-01\\
$0.01$ & 5.76E-03 & 0.99& 2.06E-00\\
$0.1$ & 4.98E-02 & 0.94& 3.77E-00\\
$1$ & 2.97E-01 & 0.78 & 4.12E-00\\
$10$& 9.88E-01 & 0.52 & 4.16E-00\\
$100$& 2.08E-00 & 0.32 & 4.17E-00
\end{tabular}
\hfill $\,$
	\vspace{-15pt}
\end{center}
\caption{Difference between $\beta\muG$ and $\muO\vert_{\Sigma_T}$ for different $L$.}
\label{tab:potentials}
\end{table}

\section{Appendix}

\begin{proof}[Proof of Lemma~\ref{LEM:INT}]
	We prove the assertion by contradiction. To this end, we assume that the estimate is false. This means we can find an $\alpha>0$ such that for any $k\in\N$ there exists a function $u_k \in \Wo$ with
	\begin{align}
	\label{A1}
		\norm{u_k}_{L^2(\Om)}^2 + \norm{u_k}_{L^2(\Ga)}^2  
			> \alpha \norm{\grad u_k}_{L^2(\Omega)}^2 + k \norm{u_k}_{L,\beta,*}^2
	\end{align}
	Now we define a sequence $(\tilde u_k)_{k\in\N}\subset\Vk$ by 
	\begin{align*}
		\tilde u_k := \frac{u_k}{\big(\norm{u_k}_{L^2(\Om)}^2 + \norm{u_k}_{L^2(\Ga)}^2\big)^{1/2}} 
	\end{align*}
	for all $k\in\N$. By this construction, it holds that 
	\begin{align}
		\label{A2}
		\norm{\tilde u_k}_{L^2(\Om)}^2 + \norm{\tilde u_k}_{L^2(\Ga)}^2 = 1
	\end{align}  
	as well as $\tilde u_k \in \Wo$ for all $k\in\N$.
	Moreover, it follows from \eqref{A1} that  
	\begin{align}
	\label{A3}
		\alpha\norm{\grad \tilde u_k}_{L^2(\Omega)}^2 + k\,\norm{\tilde u_k}_{L,\beta,*}^2
		< 1
		\quad\text{for all}\; k\in\N.
	\end{align}
	Consequently, the sequence $(\tilde u_k)$ is bounded in $H^1(\Omega)$. Hence, according to the Banach--Alaoglu theorem, there exists $u\in H^1(\Omega)$ such that $\tilde u_k \wto u$ in $H^1(\Omega)$ along a non-relabelled subsequence. We now deduce from the compact embeddings $H^1(\Omega)\emb L^2(\Omega)$ and $H^1(\Omega)\emb L^2(\Gamma)$ that $\tilde u_k \to u$ in $L^2(\Omega)$ and $\tilde u_k \to u$ in $L^2(\Gamma)$ after another subsequence extraction. In particular, this implies $u\in\Wo \subset \Wod$ and $\norm{u}_{L^2(\Om)}^2 + \norm{u}_{L^2(\Ga)}^2 = 1$. 
	It now follows from \eqref{A3} that
	\begin{align}
		\norm{\SS(\tilde u_k)}_{L,\beta}^2 = \norm{\tilde u_k}_{L,\beta,*}^2 < \frac 1 k \le 1
		\quad\text{for all}\; k\in\N.
	\end{align}
	Hence, the Banach--Alaoglu theorem yields the existence of a function $\SS^*\in \Ho$ such that $\SS(\tilde u_k)\wto \SS^*$ with respect to the inner product $\scp{\cdot}{\cdot}_{L,\beta}$ on $\Ho$ as $k\to \infty$. As $\SS(\tilde u_k)$ is the weak solution of the system \eqref{EQ:LIN} to the right-hand side $\tilde u_k$ we can pass to the limit in the weak formulation (see \eqref{WF:LIN}) to conclude that $\SS^* = \SS(u)$. Since the norm $\norm{\cdot}_{L,\beta}$ on $\Ho$ is weakly lower semicontinuous, we can use \eqref{A3} to obtain
	\begin{align}
		\label{A4}
		\norm{\SS(u)}_{L,\beta} 
		\;\le\; \underset{k\to\infty}{\lim\inf}\; \norm{\SS(\tilde u_k)}_{L,\beta}
		\;\le\; \underset{k\to\infty}{\lim\inf}\; \frac{1}{\sqrt{k}} 
		\;=\; 0
		\quad\text{for all}\; k\in\N.
	\end{align}
	This means that
	\begin{align*}
		\grad S_\Om( u) = 0 \;\;\text{a.e.~in}\;\Om
		\quad\text{and}\quad
		\gradg S_\Ga( u) = 0,\;\;
		\beta S_\Ga( u) - S_\Om( u) = 0 \;\;\text{a.e.~on}\;\Ga.
	\end{align*}
	Finally, as $\SS(u)$ is the weak solution of \eqref{EQ:LIN} (with $\phi=u$), this is enough to conclude that $u = 0$ a.e.~in $\Omega$ and also $u\vert_\Gamma = 0$ a.e.~on $\Gamma$. However, this is a contradiction to $\norm{u}_{L^2(\Om)} + \norm{u}_{L^2(\Ga)} = 1$. This completes the proof.
\end{proof}

\section*{Acknowledgement}
Patrik Knopf and Stefan Metzger were partially supported by the RTG 2339 ``Interfaces, Complex Structures, and Singular Limits''
of the German Science Foundation (DFG). 
Kei Fong Lam is partially supported by a Direct Grant (project 4053288) of the Chinese University of Hong Kong.
Chun Liu is partially supported by the NSF-DMS 1759536 ``Energetic Variational Approaches in Complex Fluids and Electrophysiology''
of the National Science Foundation (NSF).
The support is gratefully acknowledged. 


\footnotesize
\bibliographystyle{plain}
\bibliography{kllm}

\end{document}